\theoremstyle{plain}
\newtheorem{theorem}{Theorem}[section]
\newtheorem{definition}[theorem]{Definition}
\newtheorem{lemma}[theorem]{Lemma}
\newtheorem{corollary}[theorem]{Corollary}
\newtheorem{proposition}[theorem]{Proposition}
\theoremstyle{remark}
\newtheorem{remark}[theorem]{Remark}
\numberwithin{equation}{section}
\newcommand{\C}{\mathbb{C}}
\newcommand{\R}{\mathbb{R}}
\newcommand{\Z}{\mathbb{Z}}
\newcommand{\F}{\mathcal{F}}
\renewcommand{\Re}{\operatorname{Re}}
\newcommand{\I}{\infty}
\newcommand{\abs}[1]{\left\lvert #1\right\rvert}
\newcommand{\Jbr}[1]{\left\langle #1 \right\rangle}
\newcommand{\norm}[1]{\left\lVert #1\right\rVert}
\newcommand{\hMn}[4]{\left\lVert #1 \right\rVert_{\hat{M}^{#2}_{{#3},{#4}}}}
\newcommand{\hM}[3]{\hat{M}^{#1}_{{#2},{#3}}}
\newcommand{\hLebn}[2]{\left\lVert #1 \right\rVert_{\hat{L}^{#2}}}
\newcommand{\dSobn}[2]{\left\lVert #1 \right\rVert_{\dot{H}^{#2}}}
\newcommand{\IN}{\quad\text{in }}
\newcommand{\wIN}{\quad\text{weakly in }}
\def\({\left(}
\def\){\right)}
\def\<{\left\langle}
\def\>{\right\rangle}
\def\le{\leqslant}
\def\ge{\geqslant}
\def\d{{\partial}}
\newcommand{\eps}{\varepsilon}
\DeclareMathOperator{\supp}{supp}
\DeclareMathOperator{\dist}{dist}
\newcommand{\rre}{\mathbb{R}}
\newcommand{\pt}{\partial}
\begin{document}
\title[On mass-subcritical generalized KdV equation]
{Existence of a minimal non-scattering\\ 
solution to the mass-subcritical\\
generalized Korteweg-de Vries
equation}
\author[S.Masaki and J.Segata]
{Satoshi Masaki  
and Jun-ichi Segata}
\address{Laboratory of Mathematics\\
Institute of Engineering\\
Hiroshima University\\
Higashihiroshima Hirhosima, 739-8527, Japan}
\email{masaki@amath.hiroshima-u.ac.jp}

\address{Mathematical Institute, Tohoku University\\
6-3, Aoba, Aramaki, Aoba-ku, Sendai 980-8578, Japan}
\email{segata@m.tohoku.ac.jp}

\subjclass[2000]{Primary 35Q53, 35B40; Secondary 35B30}

\keywords{generalized Korteweg-de Vries equation, scattering problem, threshold solution}

\maketitle
\vskip-15mm
\begin{abstract}
In this article, we prove existence of a 
non-scattering solution, which is minimal in some sense, to the mass-subcritical
generalized Korteweg-de Vries (gKdV) equation
in the scale critical $\hat{L}^r$ space where 
$\hat{L}^r=\{f\in{{\mathcal S}}'(\rre)| \norm{f}_{\hat{L}^r}=\|\hat{f}\|_{L^{r'}}<\infty\}$. 
We construct this solution by a concentration compactness argument.
Then, key ingredients are a linear profile decomposition 
result adopted to $\hat{L}^r$-framework and
 approximation of solutions 
to the gKdV equation which involves rapid linear oscillation
by means of solutions to the nonlinear Schr\"{o}dinger equation.
\end{abstract}


\section{Introduction}

In this article, we consider 
generalized Korteweg-de Vries (gKdV) equation
\begin{equation}\tag{gKdV}\label{gKdV}
\left\{
\begin{aligned}
&\pt_t u+\pt_x^3 u=\mu\pt_x(|u|^{2\alpha}u),
&& t,x\in\rre,\\
&u(0,x)=u_{0}(x) \in \hat{L}^\alpha(\R),
&&  x\in\rre
\end{aligned}
\right.
\end{equation}
where $u:\rre\times\rre\to\rre$ is an unknown function, 
$u_{0}:\rre\to\rre$ is a given data, 
and $\mu=\pm1$ and $\alpha>0$ are constants.
The space $\hat{L}^r$ is defined for $1\le r \le \infty$ by 
\begin{eqnarray*}
	\hat{L}^r=\hat{L}^r(\rre):=\{f\in{{\mathcal S}}'(\rre)|
	\norm{f}_{\hat{L}^r} =\|\hat{f}\|_{L^{r'}}<\infty\},
\end{eqnarray*}
where $\hat{f}=\F f$ stands for Fourier transform of $f$ with respect to space variable
and $r'=(1-1/r)^{-1}$ denotes the H\"older conjugate of $r$
with conventions $1'=\I$ and $\I'=1$. 
We call that (\ref{gKdV}) is defocusing if 
$\mu=+1$ and focusing if $\mu=-1$. 
Our aim here is to study time global behavior of solutions to \eqref{gKdV}
with focusing nonlinearities in the \emph{mass-subcritical} range $\alpha<2$.
More specifically, we investigate existence of a threshold solution which lies 
on the boundary of small scattering solutions around zero and other solutions.

The class of equations \eqref{gKdV} arises in several fields of physics. 
Equation \eqref{gKdV} is a generalization
of the Korteweg-de Vries equation which models long waves 
propagating in a channel \cite{KV}.  
Equation (\ref{gKdV}) with $\alpha=1$ is also known as the 
modified Korteweg-de Vries equation which 
describes a time evolution for the  curvature of 
certain types of helical space curves \cite{L}. 

The equation (\ref{gKdV}) has the following scale property; 
if $u(t,x)$ is a solution to (\ref{gKdV}), then
\[
	u_{\lambda}(t,x):=\lambda^{\frac{1}{\alpha}} u(\lambda^3 t, \lambda x)
\]
is also a solution to (\ref{gKdV}) with a initial data $u_{\lambda}(0,x)=
\lambda^{\frac{1}{\alpha}} u_{0}(\lambda x)$ for any $\lambda>0$.
When $\alpha=2$, \eqref{gKdV} is called mass-critical
because the above scale leaves the mass invariant.

The small data global existence results of \eqref{gKdV} in scale critical spaces
have been studied by several authors. 
Kenig-Pone-Vega \cite{KPV2} proved the small data global well-posedness 
and scattering of (\ref{gKdV}) 
in the scale critical space $\dot{H}^{s_{\alpha}}$ 
for $\alpha\ge2$, where $s_{\alpha}:=1/2-1/\alpha$ 
is a scale critical exponent. 
Since the scale critical exponent $s_{\alpha}$ is
negative in the mass-subcritical case $\alpha<2$, 
well-posedness of (\ref{gKdV}) in $\dot{H}^{s_{\alpha}}$ 
becomes rather a difficult problem. 
Tao \cite{T} proved 
global well-posedness for small data for (\ref{gKdV}) 
with the quartic nonlinearity $\mu\pt_{x}(u^{4})$ 
in $\dot{H}^{s_{3/2}}$.  
Later on, Koch-Marzuola \cite{KM} simplified Tao's 
proof and extended his result to a Besov space 
$\dot{B}^{s_{3/2}}_{\infty,2}$. 
As for the $\hat{L}^r$-framework, 
Gr\"{u}nrock and his collaborator
proved well-posedness for various nonlinear 
dispersive equations, see \cite{G1,G2,GV}. 

On the other hand, the asymptotic behavior in time of 
solution to (\ref{gKdV}) is studied for the small initial data 
in weighted Sobolev spaces \cite{St,SSS,PV,CW,HN1,HN2,HN3}. 
It is known that $\alpha=1$ is a critical exponent for scattering 
problem of (\ref{gKdV}). 
More precisely, if $\alpha>1$, then solution to (\ref{gKdV}) 
converges to solution to Airy equation $\pt_{t}v+\pt_{x}^{3}v=0$ (see \cite{HN1}) 
and if $0<\alpha\le1$, then solution to (\ref{gKdV}) does not 
converge to solution to Airy equation (see \cite{R,HN4}). 
Furthermore,  
for the case $\alpha=1$, Hayashi-Naumkin \cite{HN2,HN3} proved a existence 
of modified scattering states for (\ref{gKdV}). 
Note that for the case $\alpha=1$,  (\ref{gKdV}) is completely 
integrable and the inverse scattering method is available. 
By using the inverse scattering method, Deift-Zhou \cite{DZ} 
obtained more precise asymptotic behavior in time of solution to 
(\ref{gKdV}) with $\alpha=1$. 

The well-posedness of \eqref{gKdV} and  small data scattering  in $\hat{L}^\alpha$
is established by the authors as long as
$8/5 < \alpha < 10/3$ by introducing a generalized version of Stichartz's estimates
adopted to the $\hat{L}^r$-framework, see \cite{MS}.
The mass
\[
	M[u] = \frac12 \norm{u}_{L^2}^2
\]
and the energy
\[
	E[u] = \frac12 \norm{\d_x u}_{L^2}^2 + \frac{\mu}{2\alpha+2} \norm{u}_{L^{2\alpha+2}}^{2\alpha+2}
\]
are well-known conserved quantities for \eqref{gKdV}.
However, neither makes sense in general for $\hat{L}^\alpha$-solutions.
Thus, global existence is nontrivial for large data
even in the mass-subcritical range $\alpha <2$.

As a step next to small data scattering, in this article, we consider existence of 
a threshold solution
which lies on the boundary of small scattering solutions around zero and other solutions,
via concentration compactness argument.
Let us make our setup more precise.
We say an $\hat{L}^{\alpha}$-solution $u(t)$ scatters forward in time
(resp.\ backward in time)
if maximal existence interval of $u(t)$ is not bounded from above (resp. from below)
and if $e^{t\d_x^3} u(t)$ converges in $\hat{L}^{\alpha}$ as $t\to\I$ (resp. 
$t\to-\I$).
We define a \emph{forward scattering set} $\mathcal{S}_+$ as follows
\[
	\mathcal{S}_{+} :=
 	\left\{ u_0 \in \hat{L}^{\alpha} \left|
 	\begin{aligned}
	&\text{a solution }u(t) \text{ to }\eqref{gKdV}\text{ with }u_{|t=0} = u_0 \\
	&\text{scatters forward in time}
	\end{aligned}
	\right.\right\}.
\]
A \emph{backward scattering set} $\mathcal{S}_{-}$ is defined in a similar way.
We now introduce a quantity
\begin{equation}\label{eq:wided}
	\widetilde{d} := \inf_{u_0 \in \hat{L}^\alpha \setminus \mathcal{S}_+} \norm{u_0}_{\hat{L}^\alpha} .
\end{equation}
The question we address in this article is that
existence of a special solution
which belongs to $\hat{L}^\alpha \setminus \mathcal{S}_+$ at each time
and
attains $\widetilde{d}$ in a suitable sense.
By small data scattering result in \cite{MS}, we know that
$\widetilde{d}$ is bounded by a positive constant from below.
Remark that there are several choice on notion of minimality of non-scattering solutions
since $\norm{u(t)}_{\hat{L}^\alpha} $ is not a conserved quantity.
The above $\widetilde{d}$ is a number that gives a sharp scattering criterion; if $\norm{u_0}_{\hat{L}^\alpha} 
< \widetilde{d}$ then a corresponding solution scatters for positive time direction.
However, we actually work with a weaker formulation by some technical reason
(see \eqref{eq:d_+}, below).

The above problem has a connection with stability of solitons.
In the focusing case (i.e., $\mu=-1$), 
\eqref{gKdV} admits a soliton solution
\[
	Q_c(t,x) = c^{\frac1\alpha} Q(c(x-c^2t)),
\]
where $Q(x)$ is a (unique) positive even 
solution of $- Q^{\prime\prime} + Q = Q^{2\alpha+1}$ and
$c>0$ is a parameter describing amplitude and propagating speed of soliton.
Let us remind ourselves that we consider the mass-subcritical problem.
It is well known that $Q$ is orbitally stable if $\alpha <2$ \cite{Be,W2} 
and unstable if $\alpha \ge 2$ (see \cite{BSS} for $\alpha>2$ and \cite{MM1} for $\alpha=2$).
When the soliton solutions are unstable, for example in the mass-critical case $\alpha =2$,
it is conjectured that the above $\tilde{d}$ coincide with $L^2$-norm (since $\alpha =2$) of 
$Q_c$. 
So far, it is known that 
if $\alpha=2$ then $Q$
lies on the boundary of sets of {\it global} solutions 
and non-global solutions in $H^1$, see 
Weinstein \cite{W1} for the sharp global existence result 
and Martel-Merle \cite{MM2} for the existence of 
a finite time blow up solution. 

On the other hand, in mass-subcritical case, solitons are stable (in $H^1$) and so
they are not thresholds any longer.
Indeed, it follows from \cite[Theorem 1.10]{MS} that $\widetilde{d} \le c_\alpha \norm{Q}_{\hat{L}^\alpha}$,
where 
\begin{equation}\label{eq:c_a}
c_\alpha =\(\frac{(\alpha+1) \norm{Q'}_{L^2}^2}{\norm{Q}_{L^{2\alpha+2}}^{2\alpha+2}}\)^{\frac{1}{2\alpha}} <1
\end{equation}
is a constant such that $E[c_\alpha Q] =0$.

Recently, there are much progress on analysis of global behavior 
of dispersive equations by so-called concentration compactness/rigidity argument,
after a pioneering work by Kenig and Merle \cite{KMe}.
The existence of a critical element is one of the main step of the argument.
As for generalized KdV equation \eqref{gKdV}, the mass-critical case is 
most extensively studied in this direction. 
Killip-Kwon-Shao-Visan \cite{KKSV} constructed 
a minimal blow-up solution to the 
mass critical KdV equation in $L^{2}$ under the assumption 
on the space time 
bounds for the one dimensional mass-critical Schr\"{o}dinger (NLS) equation. 
Subsequently, Dodson \cite{D1} proved the global well-posedness for 
the one dimensional, defocusing, mass-critical NLS in $L^{2}$. 
As by product of his result, 
the assumption imposed in \cite{KKSV} was removed for the defocusing case. 
Furthermore, 
Dodson \cite{D2} 
has shown the global well-posedness for the defocusing mass-critical KdV 
equation for any initial data in $L^{2}$. 
For the focusing mass-critical KdV 
equation, 
Martel-Merle-Rapha\"{e}l \cite{MMR1,MMR2,MMR3} and 
Martel-Merle-Nakanishi-Rapha\"{e}l \cite{MMNR} classified the dynamics of 
solution into three cases (blow-up, soliton, away from soliton) 
in the small neighborhood of $Q$.
As for the mass-subcritical nonlinear Schr\"dinger equation,
the first author treated a minimization problem similar to \eqref{eq:wided}
in a framework of weighted space and showed existence of
a threshold solution which is smaller than ground state solutions (see \cite{M1,M2}).
%
%

A main contribution of the this article is to extend
the concentration compactness argument to $\hat{L}^\alpha$-framework.
We then come across two difficulties because 
of the fact that the $\hat{L}^{\alpha}$-norm 
is invariant under the following four group actions;
\begin{enumerate}
\item Translation in physical space: $(T(a) f)(x) = f(x-a)$, $\quad a\in \R$,
\item Translation in Fourier space: $(P(\xi) f)(x) = e^{-ix\xi} f(x)$, $\quad \xi\in\R$,
\item Dilation: $(D(h)f)(x)=(D_\alpha(h)f)(x)=h^{1/\alpha} f(h x)$, $\quad h\in 2^\Z$,
\item Airy flow: $(A(t) f)(x) = e^{-t\d_x^3}f(x)$, $\quad t\in \R$.
\end{enumerate}
They are one parameter groups of linear isometries in $\hat{L}^{\alpha}$.
In this article, we call a bijective linear isometry from a Banach space $X$ to $X$ itself
a \emph{deformation on $X$}.
Further,
we refer to a deformation 
of the form $\times \phi(x)$ as a \emph{phase-like deformation}, and a deformation
of the form $\phi((1/i)\d_x) = \F^{-1} \phi(\xi) \F$ as a \emph{multiplier-like deformation},
where $\phi(x):\R \to \C$ is some function with $|\phi| =1 $.
With these terminologies,
$ T(a) = e^{-a \d_x}$ and $A(t)$ are multiplier-like deformations on $\hat{L}^\alpha$ and
$P(\xi)$ is a phase-like deformation on $\hat{L}^\alpha$.

%
%

\vskip2mm

The first difficulty lies in a linear profile decomposition,
which  is roughly speaking a decomposition of 
a bounded sequence of functions into a sum of characteristic profiles and a remainder by
finding weak limit(s) of the sequence modulo deformations.
Intuitively, this decomposition is done by a recursive use of a suitable concentration compactness
result.
Then, to ensure smallness of remainder as the number of detected profiles increases,
a decoupling equality, so-called Pythagorean decomposition, plays a crucial role.

Let us now be more precise on the Pythagorean decomposition.
Let $\{ f_n \}$ be a bounded sequence of $\hat{L}^\alpha$.
Since $\hat{L}^\alpha$ is reflexive as long as $1< \alpha <\I$,
by extracting subsequence,  $f_n$ converges 
to some function $f \in \hat{L}^\alpha$ in weak $\hat{L}^\alpha$ sense.
Now we suppose that $f\neq 0$.
Then, the Pythagorean decomposition is a decoupling equality of the form
\begin{equation}\label{eq:decouple}
	\|\hat{f_n}\|_{L^{\alpha'}(\R)}^{\alpha'} = \|\hat{f}\|_{L^{\alpha'}(\R)}^{\alpha'} + 
	\|\hat{f}- \hat{f_n}\|_{L^{\alpha'}(\R)}^{\alpha'}  + o(1)
\end{equation}
as $n\to\I$, 
It is well-known that the above decoupling holds for $\alpha = 2$ and
may fail for $\alpha\neq 2$. 
Remark that the Brezis-Lieb lemma tells us 
that a sufficient condition for the decoupling (for $\alpha \neq2$)
is that $\hat{f_n}$ converges to $\hat{f}$ almost everywhere.
However, in our case, due to multiplier-like deformations $T$ and $A$, which are phase-like in the Fourier side,
Fourier transform of considering sequence does not necessarily converge almost everywhere.
Thus, we may not expect that \eqref{eq:decouple} holds for $\hat{L}^\alpha$-norm\footnote{
Actually, when $\alpha'=4$, $f_n=f+T(n)g$ with $f,g \in \hat{L}^{4/3}$ 
is a counter example to the above decoupling.}.
This respect is rather a serious problem for linear profile decomposition,
because a decoupling like \eqref{eq:decouple} is a key for obtaining smallness of 
remainder term as the number of detected profiles increases, as mentioned above.

To overcome this difficulty, we shall show a decoupling \emph{inequality} with respect to
a weaker norm, a \emph{generalized Morrey} norm, defined as follows:
\begin{definition}
For $4/3 < \alpha < 2 $ and for $\sigma\in (\alpha',\frac{6\alpha}{3\alpha-2})$, we introduce a 
\emph{generalized Morrey norm} $\norm{\cdot}_{\hat{M}^{\alpha}_{2,\sigma}}$ by
\[
	\norm{f}_{\hat{M}^\alpha_{2,\sigma}} =
	\norm{2^{j(\frac1\alpha-\frac12)} \|\hat{f}\|_{L^{2}(\tau_k^j)} }_{\ell^{\sigma}_{j,k}}
	=\norm{|\tau_{k}^{j}|^{\frac12-\frac1\alpha}
	\|\hat{f}\|_{L^{2}(\tau_k^j)} }_{\ell^{\sigma}_{j,k}},
\]
where $\tau_k^j = [k2^{-j}, (k+1)2^{-j})$.
Further, 
we introduce 
\begin{equation}\label{def:ell}
	\ell (u) = \ell_\sigma(u) := \inf_{\xi\in \R}\hMn{P(\xi)u}{\alpha}{2}{\sigma},
\end{equation}
for $u\in \hat{L}^\alpha$.
\end{definition}
Details on generalized Morrey space are summarized in Section 2.
Here, we only note that the embedding $\hat{L}^\alpha \hookrightarrow \hat{M}^{\alpha}_{2,\sigma}$ holds, that $\ell(f) \sim \norm{f}_{\hat{M}^\alpha_{2,\sigma}}$,
and so that $\ell(f)$ is a quasi-norm and makes sense for all $f \in \hat{L}^\alpha$.
It is obvious by definition that $T(a)$ and $A(t)$ are deformations on $\hat{M}^\alpha_{2,\sigma}$ for any $a,t \in \R$.
Similarly, $D(h)$ is a deformation on $\hat{M}^\alpha_{2,\sigma}$
if $h$ is a dyadic number.
We introduce $\ell(\cdot)$
because $\hat{M}^{\alpha}_{2,\sigma}$ norm is not invariant (but bounded from
above and below)
under $P(\xi)$ action.
The heart of matter is that local (in the Fourier side) $L^2$ norm decouples
even under presence of multiplier-like deformations $T$ and $A$.
Hence, summing up the local $L^2$ decoupling
with respect to intervals, we recover a decoupling \emph{inequality} for $\ell(\cdot)$.
This is one of the main ideas of this article.

Because our decoupling inequality is established only for $\ell(\cdot)$,
a natural choice of the meaning of ``minimality'' of the solution is not with respect to $\norm{\cdot}_{\hat{L}^\alpha}$ any longer but to $\ell (\cdot)$.
Thus, we consider the minimization problem for
\begin{equation}\label{eq:d_+}
	d_{+}  = d_{+}(\sigma, M) :=
	\inf \{ \ell(u_0) \ |\ u_0 \in B_M \setminus \mathcal{S}_{+} \},
\end{equation}
where $M>0$ is a parameter and
$B_M:=\{ f \in \hat{L}^{\alpha} | \norm{f}_{\hat{L}^\alpha} \le M \}$ is a ball.
We consider minimization problem in a ball in $\hat{L}^\alpha$ because well-posedness
of \eqref{gKdV} is not known in the generalized Morrey space $M^{\alpha}_{2,\sigma}$.
As a result, our threshold solution may depend on $M$. 

Here, it is worth mentioning that
the generalized Morrey space naturally appear in the context of refinement 
of Stirchartz's estimate.
The refinement, which goes back to Bourgain \cite{B1} (see also \cite{B2,B3,MVV1,MVV2,KPV3}),
have been used for linear profile decomposition in $L^2$-framework.
See \cite{MV,CK,BV} for decomposition associated with Schr\"odinger equation and
see \cite{Shao} for that with Airy equation.
We show a similar refinement for 
a Stein-Tomas type inequality 
which is a version of Strichartz's estimate adopted to $\hat{L}^\alpha$-framework,
\[
	\norm{ |\d_x|^{\frac{1}{3\alpha}} e^{-t\d_x^3} f }_{ L^{3\alpha}_{t,x} (\R\times \R) } 
	\le C \norm{f}_{\hat{M}^{\alpha}_{2,\sigma}}.
\]
For the details on this estimate, see Theorem \ref{thm:rST}.

The second difficulty comes from a linking
between generalized KdV equation and nonlinear Schr\"odinger
equation caused by the presence of $P$-deformation.
More precisely, if an initial data is of the form $u_0(x) = \Re [P(\xi) \phi(x)]$ 
then a corresponding solution to \eqref{gKdV} can be approximated in terms of a solution to nonlinear Schr\"odinger equation
\begin{equation}
\left\{
\begin{aligned}
& i\pt_tv-\pt_x^2v=-\mu |v|^{2\alpha} v,
&& t,x\in\rre,\\
&v(0,x)=v_{0}(x),
&& x\in\rre.
\end{aligned}
\right.
\label{NLS}\tag{NLS}
\end{equation}
 in the limit $|\xi|\to\I$.
This interesting phenomena is known in \cite{KKSV,T2}
(see also \cite{BoyChe, CCT, Sch}).

As for linear Airly equation, the linking with
linear Schr\"odinger equation can be explained
by an elemental identity 
\begin{equation}\label{eq:GtA}
	 A(t) P(\xi) = e^{-it\xi^3} P(\xi) T(-3\xi^2 t) e^{3i\xi t\d_x^2} A(t).
\end{equation}
The identity infers that the presence of $P$ on the initial data produces
 Schr\"odinger group $e^{3i\xi t\d_x^2}$.
Furthermore, in fact, the Schr\"odinger evolution takes a main part in the limit $|\xi|\to\I$
because the speed of Schr\"odinger evolution becomes much faster than that of Airy evolution.
The above identity is a kind of Galilean transform, and can be compared with the one 
for Schr\"odinger equations;
\begin{equation}\label{eq:Gt}
	e^{it\d_x^2} P(\xi) = e^{- it|\xi|^2} P(\xi) T(-2t\xi) e^{it\d_x^2}.
\end{equation}
Roughly speaking, as 
a nonlinear evolution generated by a class of nonlinear Schr\"odinger equation, such as \eqref{NLS}, inherits
the Galilean transform \eqref{eq:Gt},
the effect on the nonlinear problem \eqref{gKdV} which is caused by 
the presence of $P$ in initial data is similar to 
that on the Airy equation described as in \eqref{eq:GtA}.


Because of the above linking, existence of a threshold solution is shown under the 
assumption
\begin{equation}\label{asmp:gKdV_NLS}
	d_{+} < 2^{1-\frac1\sigma}\(\frac{3 \sqrt{\pi} \Gamma(\alpha+2)}{2 \Gamma(\alpha+\frac{3}2)}  \)^{\frac1{2\alpha}} d_{\mathrm{NLS}},
\end{equation}
where $d_+$ is the number given in \eqref{eq:d_+}, $\sigma$ is a parameter
chosen to define $\ell(\cdot)$, $\Gamma(x)$ is the Gamma function,
and 
\begin{equation}\label{eq:d_NLS}
	d_{\mathrm{NLS}} = d_{\mathrm{NLS}}(\sigma, M) :=
	\inf \{ \ell(u_0)\ |\  u_0 \in B_M \setminus \mathcal{S}_{\mathrm{NLS}}\} 
\end{equation}
with
\[
	\mathcal{S}_{\mathrm{NLS}} :=
 	\left\{ v_0 \in \hat{L}^{\alpha} \left|
 	\begin{aligned}
	&\text{a solution }v(t) \text{ to }\eqref{NLS}\text{ with }u_{|t=0} = u_0 \\
	&\text{scatters forward and backward in time}
	\end{aligned}
	\right.\right\}.
\]
Here, the notion of scattering of
$\hat{L}^{\alpha}$-solution $v(t)$ to \eqref{NLS} forward in time (resp. backward in time)
is defined as validity of the following two;
(i) maximal existence interval of $v(t)$ is not bounded from above (resp. from below);
(ii) $e^{it\d_x^2} v(t)$ converges in $\hat{L}^{\alpha}$ as $t\to\I$ (resp. $t\to-\I$).
It was pointed out in \cite{KKSV, T2} that, in the mass-critical case $\alpha =2$,
the problem of a threshold solution for 
 \eqref{gKdV} relates to the same problem for \eqref{NLS}.
Although we are working in the mass-subcritical case,
the same linking appears because it is due to the presence of the $P$-deformation.
When $\alpha = 2$, the assumption \eqref{asmp:gKdV_NLS} essentially coincides with 
those in \cite{KKSV, T2}.

The justification of the Schr\"odinger approximation is done essentially in
the same way as in \cite{KKSV}.
A key idea for dealing with nonlinearities of fractional order is to use
a Fourier series expansion
\[
	|\cos \theta|^{2\alpha} \sin \theta = \sum_{k=1}^\I C_k \sin (k\theta).
\]
The constant in assumption \eqref{asmp:gKdV_NLS} given in terms of the first coefficient $C_1$ of the expansion.
For this approximation, we also establish local well-posedness of
\eqref{NLS} in a scale critical $\hat{L}^\alpha$ space, which seems already a new result.

\subsection{Main Results}
In what follows, we consider the focusing case $\mu=-1$ only.
However, the focusing assumption is used only for $d_+(M)<\I$.
Our analysis work also in the defocusing case $\mu=+1$ if we assume $d_+(M)<\I$.

\vskip2mm

\begin{theorem}\label{thm:minimal}
Let $3/2+\sqrt{7/60}<\alpha <2$ and $\sigma \in (\alpha',\frac{3\alpha(5\alpha-8)}{2(3\alpha-4)})$.
Let $M > 0$ so that $B_M \cap S_+^c \neq \emptyset$. 
If the assumption \eqref{asmp:gKdV_NLS} is true
then
there exists a special solution $u_c(t)$ to \eqref{gKdV} with
maximal interval $I_{\mathrm{max}}(u_c) \ni 0$ such that 

\vskip1mm
\noindent
(i) $u_c(0)\not\in \mathcal{S}_{+}$;

\vskip1mm
\noindent
(ii) $u_c$ attains $d_{+}$ in such a sense that
one of the following two properties holds;
\begin{enumerate}
\item[(a)] $u_c(0)\in B_M$ and
$\ell(u_c(0)) = d_{+}$;
\item[(b)] $u_c(0)\in \mathcal{S}_-$ and scatters backward in time to $\displaystyle u_{c,-}$
satisfying $u_{c,-} \in B_M$ and
$\ell(u_{c,-}) = d_{+} $.
\end{enumerate}
\end{theorem}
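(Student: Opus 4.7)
The plan is to run a concentration-compactness argument in the spirit of Kenig-Merle, but taking the quasi-norm $\ell(\cdot)$ rather than $\|\cdot\|_{\hat{L}^\alpha}$ as the minimizing quantity. Start from a minimizing sequence $\{u_{0,n}\}\subset B_M\setminus\mathcal{S}_+$ with $\ell(u_{0,n})\to d_+$; since $\|u_{0,n}\|_{\hat{L}^\alpha}\le M$, the linear profile decomposition developed earlier in the paper (obtained from a refined Stein-Tomas inequality in $\hat{M}^\alpha_{2,\sigma}$) furnishes, after subsequence extraction,
\[
u_{0,n}=\sum_{j=1}^{J}\Gamma_n^j\phi^j+r_n^J,
\]
where $\Gamma_n^j$ is a composition of the four isometries $T(a_n^j)$, $P(\xi_n^j)$, $D(h_n^j)$, $A(t_n^j)$ with pairwise orthogonal parameters, and $e^{-t\partial_x^3}r_n^J$ tends to zero in a suitable Strichartz-type norm as $J,n\to\infty$. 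Summing the local-$L^2$ decoupling over the Morrey collection, which is unaffected by the multiplier-like deformations $T$ and $A$, yields the Pythagorean-type inequality
\[
d_+^\sigma=\lim_{n\to\infty}\ell(u_{0,n})^\sigma \ge \sum_{j=1}^{J}\ell(\phi^j)^\sigma+\limsup_{n\to\infty}\ell(r_n^J)^\sigma,
\]
so in particular $\ell(\phi^j)\le d_+$, strictly so whenever two or more profiles are nontrivial.

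The central step is to exclude the multi-profile case. I would show that every nonlinear profile $U^j$ attached to $\Gamma_n^j\phi^j$ with $\ell(\phi^j)<d_+$ scatters forward; combining these nonlinear profiles through the long-time $\hat{L}^\alpha$ stability theory of \cite{MS} then forces $u_{0,n}\in\mathcal{S}_+$ for all large $n$, a contradiction. The scattering of an individual nonlinear profile is handled according to the behavior of $\xi_n^j$. If $|\xi_n^j|$ stays bounded, one absorbs $P(\xi_n^j)$ into $\phi^j$ and runs the gKdV flow directly; the minimality of $d_+$, combined with $\ell(\phi^j)<d_+$ and the control $\|\phi^j\|_{\hat{L}^\alpha}\le M$ (the latter from lower semicontinuity applied to the decoupling), yields scattering. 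If $|\xi_n^j|\to\infty$, the Galilean-type identity \eqref{eq:GtA} and its nonlinear counterpart, obtained via the Fourier series expansion of $|\cos\theta|^{2\alpha}\sin\theta$ in the spirit of \cite{KKSV, T2}, approximate the gKdV evolution by an \eqref{NLS} evolution whose initial data has $\ell$-norm related to $\ell(\phi^j)$ by precisely the constant appearing in \eqref{asmp:gKdV_NLS}; the hypothesis then places this NLS data strictly below $d_{\mathrm{NLS}}$, giving scattering for \eqref{NLS} and, via the approximation, for the gKdV profile.

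Hence exactly one profile $\phi^1$ survives, $r_n^1$ is asymptotically Strichartz-small, and $\ell(\phi^1)=d_+$ together with $\|\phi^1\|_{\hat{L}^\alpha}\le M$. Define $u_c$ as the maximal $\hat{L}^\alpha$ solution of \eqref{gKdV} associated to $\phi^1$: when $t_n^1$ is bounded we normalize it to zero and obtain case (a) with $u_c(0)$ identified with $\phi^1$ after absorbing the remaining fixed parameters; when $t_n^1\to-\infty$, the wave-operator construction in $\hat{L}^\alpha$ produces a backward-global solution scattering to $\phi^1$, giving case (b) with $u_{c,-}=\phi^1$, while the case $t_n^1\to+\infty$ reduces to case (b) by the time-reversal symmetry of \eqref{gKdV}. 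The non-scattering property (i) follows because the complement $\mathcal{S}_+^c$ is closed under strong $\hat{L}^\alpha$ convergence (by the small-data stability of \cite{MS}) and is preserved by each $\Gamma_n^j$. The main obstacle is the alternative $|\xi_n^j|\to\infty$: one has to justify the Schrödinger approximation on long time intervals in the scale-critical $\hat{L}^\alpha$ space (requiring the local well-posedness for \eqref{NLS} in $\hat{L}^\alpha$ developed in the paper), control the error under the Airy flow, and match the explicit constant produced by the first Fourier coefficient of $|\cos\theta|^{2\alpha}\sin\theta$ with the one appearing in \eqref{asmp:gKdV_NLS}.
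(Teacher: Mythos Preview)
Your overall strategy matches the paper's: minimizing sequence, linear profile decomposition in $\hat L^\alpha$, exclusion of multiple profiles via the NLS approximation (Theorem~\ref{thm:ENK}) together with long-time stability, and then construction of $u_c$ from the single surviving profile. Two points, however, are not quite right as stated.

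First, your decoupling inequality $\sum_j \ell(\phi^j)^\sigma \le d_+^\sigma$ is the one for \emph{complex} sequences (Theorem~\ref{thm:pd1}). For the real-valued minimizing sequence one must use Theorem~\ref{thm:pd_r}, in which profiles with $\xi_n^j\to\infty$ occur as $\Re(\mathcal G_n^j\psi^j)$ and the decoupling carries the weight $c_j^{1-\sigma}$ with $c_j=2$. This yields $\ell(\psi^j)<2^{1-1/\sigma}d_+$ rather than $\ell(\psi^j)<d_+$, and the factor $2^{1-1/\sigma}$ is exactly what matches the hypothesis of Theorem~\ref{thm:ENK} and the constant in assumption~\eqref{asmp:gKdV_NLS}. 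Without tracking this factor you cannot justify that the associated NLS data falls strictly below $d_{\mathrm{NLS}}$.

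Second, and more seriously, your treatment of the case $t_n^1\to+\infty$ is incorrect. You write that it ``reduces to case~(b) by the time-reversal symmetry of \eqref{gKdV}''. But if $t_n^1\to+\infty$, the nonlinear profile $\Psi$ is, by construction, the solution that scatters \emph{forward} to $e^{-t\partial_x^3}\psi^1$; hence $\Psi\in\mathcal S_+$, and long-time stability then forces $u_{0,n}\in\mathcal S_+$ for large $n$, contradicting the choice of the minimizing sequence. Time reversal does not help: it would only produce another forward-scattering solution, not a non-scattering one. The paper handles this by simply ruling the case out via this contradiction, not by reducing it to case~(b).
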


In this article we call $u_{c}$ constructed in Theorem \ref{thm:minimal} by {\it minimal non-scattering solution}.  

\begin{remark}
As mentioned above, $d_+(M)$ gives a scattering criterion;
if $u_0 \in \hat{L}^\alpha$ satisfies $\norm{u_0}_{\hat{L}^\alpha} \le M$ and 
$\ell(u_0)<d_+$ then $u_0 \in \mathcal{S}_+$.
By definition of $d_+$, this is sharp in such a sense that $d_+$ cannot be replaced by 
a larger number.
It is not clear whether we can replace $\ell(u_0)<d_+$ by $\ell(u_0)\le d_+$.
\end{remark}

The assumption 
$B_M \cap S_+^c \neq \emptyset$ is fulfilled for $M \ge c_\alpha \norm{Q}_{\hat{L}^\alpha}$
because $c_\alpha Q \not\in S_+$ by means of \cite[Theorem 1.10]{MS}.
By the same reason, we have the following:
\begin{theorem}\label{thm:notsolitons}
Let $3/2+\sqrt{7/60}<\alpha <2$ and $\sigma \in (\alpha',\frac{3\alpha(5\alpha-8)}{2(3\alpha-4)})$.
Let $M > 0$ so that $B_M \cap S_+^c \neq \emptyset$. 
Then, $d_+ \le c_\alpha \ell(Q)$, where $c_\alpha$ is the constant given in \eqref{eq:c_a}.
\end{theorem}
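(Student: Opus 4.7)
The plan is to exhibit a single, concrete element of $B_M \setminus \mathcal{S}_{+}$ whose $\ell$-value is exactly $c_\alpha \ell(Q)$, namely $c_\alpha Q$ itself. The conclusion $d_+ \le c_\alpha \ell(Q)$ will then follow directly from the definition \eqref{eq:d_+} of $d_+$ as an infimum.

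First, I would recall the input already cited in the remark immediately preceding the statement: by \cite[Theorem 1.10]{MS}, the constant $c_\alpha$ of \eqref{eq:c_a} was chosen precisely so that $E[c_\alpha Q]=0$, and this energy-zero ground-state rescaling satisfies $c_\alpha Q \notin \mathcal{S}_{+}$. This is the ``same reason'' the authors allude to, and it is the only non-trivial external ingredient of the proof. Second, I would verify membership in $B_M$. Since $\|c_\alpha Q\|_{\hat{L}^\alpha} = c_\alpha \|Q\|_{\hat{L}^\alpha}$, the natural range in which the hypothesis $B_M \cap \mathcal{S}_{+}^c \neq \emptyset$ is guaranteed by the preceding remark is $M \ge c_\alpha \|Q\|_{\hat{L}^\alpha}$, and in that range $c_\alpha Q \in B_M$. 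Combining the two steps, $c_\alpha Q \in B_M \setminus \mathcal{S}_{+}$, so \eqref{eq:d_+} yields $d_+ \le \ell(c_\alpha Q)$.

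Third, I would invoke positive homogeneity of $\ell$. From \eqref{def:ell}, and using that $P(\xi)$ is linear (so $P(\xi)(\lambda u)=\lambda P(\xi) u$ for real $\lambda$) and that $\|\cdot\|_{\hat{M}^\alpha_{2,\sigma}}$ is a genuine norm, one reads off $\ell(\lambda u)=\lambda \ell(u)$ for every $\lambda>0$. Applying this with $\lambda=c_\alpha$ and $u=Q$ gives $\ell(c_\alpha Q)=c_\alpha \ell(Q)$, which, chained with the previous display, finishes the bound.

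There is essentially no obstacle: the argument is structurally a one-line instantiation of the infimum by an explicit test function, and everything substantive has been done in \cite[Theorem 1.10]{MS}. The only caveat worth flagging is that the statement is formally phrased for any admissible $M>0$; should the hypothesis $B_M \cap \mathcal{S}_{+}^c \neq \emptyset$ happen to hold for some $M<c_\alpha\|Q\|_{\hat{L}^\alpha}$ (not addressed by the preceding remark), then $c_\alpha Q$ would not be a legitimate test function and this elementary proof would not apply as written. In the intended range $M \ge c_\alpha \|Q\|_{\hat{L}^\alpha}$, which is exactly what the remark covers, the proof reduces to the three steps above.
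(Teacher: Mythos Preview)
Your proposal is correct and is exactly the paper's argument: the theorem is stated immediately after the observation that $c_\alpha Q \notin \mathcal{S}_+$ (via \cite[Theorem 1.10]{MS}), and the paper's entire proof is the phrase ``By the same reason,'' i.e., plug $c_\alpha Q$ into the infimum \eqref{eq:d_+} and use homogeneity of $\ell$. Your caveat about the range $M \ge c_\alpha\|Q\|_{\hat{L}^\alpha}$ is well taken and applies equally to the paper's own formulation; in the intended regime your three steps are precisely what is meant.
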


The second result is existence of minimal non-scattering solution without the assumption \eqref{asmp:gKdV_NLS}.
For fixed $8/5 < \widetilde{\alpha}<\alpha$ and $0<\widetilde{s}<2\alpha+1$,
define $\widetilde{B}_M = \{ f \in \hat{L}^{\alpha}\ |\ \hLebn{f}{\widetilde{\alpha}} + \dSobn{f}{\widetilde{s}} \le M \}$.
It turns out that,
as for a minimizing problem for 
\[
	{d}_{+}^{\prime} = {d}_{+}^{\prime} (\sigma, M) :=
	\inf \{ \ell(u_0) \ |\ u_0 \in \widetilde{B}_M \cap \mathcal{S}_{+}^c \},
\]
a minimizer exists \emph{without} the assumption \eqref{asmp:gKdV_NLS}.
\begin{theorem}\label{thm:minimal2}
Let $3/2+\sqrt{7/60}<\alpha<2$ and $\sigma \in (\alpha',\frac{3\alpha(5\alpha-8)}{2(3\alpha-4)})$.
Let $M > 0$ so that $\widetilde{B}_M \cap S_+^c \neq \emptyset$. 
Then, there exists a special solution $\widetilde{u}_c(t)$ to \eqref{gKdV} 
which attains $\widetilde{d}_{+,\mathrm{gKdV}}$ in a similar way 
to Theorem \ref{thm:minimal}.
\end{theorem}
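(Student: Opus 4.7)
The plan is to run the same concentration-compactness scheme as in the proof of Theorem \ref{thm:minimal}, but to exploit the extra control provided by the $\hat L^{\widetilde\alpha}\cap\dot H^{\widetilde s}$ bound built into $\widetilde B_M$ in order to bypass the Schr\"odinger-comparison step, and thereby avoid the assumption \eqref{asmp:gKdV_NLS}. First I would take a minimizing sequence $u_{0,n}\in \widetilde B_M\cap\mathcal S_+^c$ with $\ell(u_{0,n})\to \widetilde d_+'$, apply the $\hat L^\alpha$-linear profile decomposition proved earlier in the paper, and extract
\[
u_{0,n}=\sum_{j=1}^J \Gamma_n^j\phi^j+r_n^J,\qquad \Gamma_n^j=T(x_n^j)P(\xi_n^j)D(h_n^j)A(t_n^j),
\]
with asymptotically orthogonal parameters and a remainder whose refined Stein--Tomas norm (Theorem \ref{thm:rST}) tends to zero as $J\to\infty$. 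The decoupling inequality for $\ell(\cdot)$, combined with subadditivity and the small-data scattering result of \cite{MS}, then forces the full $\widetilde d_+'$ to be carried by at most one surviving profile.

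The new step, which is the heart of the argument, is to transfer the $\widetilde B_M$ bound to the individual profiles and thereby rule out the ``NLS-like'' profiles with $|\xi_n^j|\to\infty$ that required \eqref{asmp:gKdV_NLS} in Theorem \ref{thm:minimal}. Because $\widetilde\alpha<\alpha$ and $\widetilde s>0>s_\alpha$, the scalings
\[
\hLebn{D(h)f}{\widetilde\alpha}=h^{\frac1\alpha-\frac1{\widetilde\alpha}}\hLebn{f}{\widetilde\alpha},\qquad \dSobn{D(h)f}{\widetilde s}=h^{\widetilde s-s_\alpha}\dSobn{f}{\widetilde s}
\]
have strictly negative and strictly positive exponents respectively, so a Pythagorean-type transfer will force the $h_n^j$ to stay in a compact subset of $(0,\infty)$ for every nontrivial profile. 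For the frequency parameter, a direct Fourier-support argument shows that if $|\xi_n^j|\to\infty$ then $\Gamma_n^j\phi^j$ has Fourier support concentrated near $|\eta|\sim|\xi_n^j|$, whence
\[
\dSobn{\Gamma_n^j\phi^j}{\widetilde s}\sim |\xi_n^j|^{\widetilde s}\Lebn{\Gamma_n^j\phi^j}{2}\longrightarrow\infty,
\]
contradicting the $\dot H^{\widetilde s}$-piece of $\widetilde B_M$ once this bound has been transferred to the profile via a dyadic-in-frequency $L^2$ decoupling (which is standard Pythagorean since it is Hilbertian). Hence every surviving profile has all of $(x_n^j,\xi_n^j,h_n^j,t_n^j)$ with $(h_n^j,\xi_n^j)$ bounded.

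Once the NLS-regime has been excluded, the remaining construction parallels Theorem \ref{thm:minimal}: minimality singles out a single profile $\phi^1$, whose deformations $\Gamma_n^1$ are undone to define the candidate $\widetilde u_c$; the final case split depends on whether $t_n^1$ stays bounded (case (a), $\widetilde u_c(0)$ itself attains $\widetilde d_+'$) or diverges (case (b), $\widetilde u_c$ scatters backward in time and the scattering state attains $\widetilde d_+'$). Lower semicontinuity of $\hLebn{\cdot}{\widetilde\alpha}$ and $\dSobn{\cdot}{\widetilde s}$ under the surviving (now compact) deformations places $\widetilde u_c(0)$, respectively the backward scattering state, back into $\widetilde B_M$, and the nonlinear profile decomposition together with the $\hat L^\alpha$-stability theory for \eqref{gKdV} yields $\widetilde u_c\notin \mathcal S_+$.

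The hard part will be the Pythagorean-type transfer of the $\hat L^{\widetilde\alpha}\cap\dot H^{\widetilde s}$ bound from the sequence $u_{0,n}$ to each individual profile, since neither norm is well-adapted to the full symmetry group: $\hat L^{\widetilde\alpha}$ is not invariant under $D(h)$, and $\dot H^{\widetilde s}$ reads $P(\xi)$ as a genuine frequency shift rather than as an isometry. I expect to handle this by bifurcating the analysis according to whether $\xi_n^j$ is bounded or divergent---applying a Brezis--Lieb-type almost-everywhere decoupling on a subsequence in which Fourier shifts have been undone in the bounded regime, and invoking the Fourier-support/contradiction argument above in the divergent regime---and treating the $\hat L^{\widetilde\alpha}$-transfer by a parallel argument in the physical side.
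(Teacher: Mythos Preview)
Your overall strategy is correct and coincides with the paper's: rerun the profile-decomposition argument of Theorem \ref{thm:minimal} and use the extra $\hat L^{\widetilde\alpha}\cap\dot H^{\widetilde s}$ bound carried by $\widetilde B_M$ to force every nontrivial profile to have bounded scaling and frequency parameters, so that the NLS regime $|\xi_n^j|\to\infty$ never arises and \eqref{asmp:gKdV_NLS} is unnecessary.

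Where you diverge from the paper is in the mechanism for the ``transfer'' step, and here your plan is more complicated than necessary and carries a real pitfall. You propose a Pythagorean-type decoupling of $\hLebn{\cdot}{\widetilde\alpha}$ and $\dSobn{\cdot}{\widetilde s}$ across profiles, secured by a Brezis--Lieb argument. But the deformations $T(y_n)$ and $A(s_n)$ are phase-like on the Fourier side, so $\F[(\mathcal G_n^j)^{-1}u_n]$ does \emph{not} converge almost everywhere in general, and Brezis--Lieb does not apply. This is exactly the obstruction the paper flags around \eqref{eq:decouple} and the reason it introduces $\ell(\cdot)$ in the first place. The $\dot H^{\widetilde s}$ piece is Hilbertian and your frequency argument there is fine, but the $\hat L^{\widetilde\alpha}$ piece is not, so the case $h_n^j\to 0$ is not covered by your sketch.

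The paper bypasses any transfer by a short duality argument (Proposition \ref{prop:pd:addtionalbound}): pair the weak limit $\psi^j$ against a test function $g$ with compactly supported $\hat g$, rewrite $(\psi^j,g)$ as a limit of $(u_n,\,D_{\alpha'}(h_n^j)T(y_n^j)A(s_n^j)P(\xi_n^j)g)$, and estimate the latter by $\hLebn{u_n}{\widetilde\alpha}$ or $\dSobn{u_n}{\widetilde s}$ against the dual norm of the deformed $g$. Using $\hat L^{\widetilde\alpha}\cap\dot H^{\widetilde s}\hookrightarrow L^2$ to trap the scale from both sides, one gets that the $\hat L^{\widetilde\alpha'}+\hat L^{2}$ norm of the deformed $g$ tends to zero whenever $|\log h_n^j|\to\infty$; once $h_n^j$ is bounded, $|\xi_n^j|\to\infty$ forces $\dSobn{P(\xi_n^j)g}{-\widetilde s}\to 0$. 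In each case $\psi^j=0$, contradicting nontriviality. This replaces your ``hard part'' by a few lines and simultaneously yields the bounds $\hLebn{\psi^j}{\widetilde\alpha},\dSobn{\psi^j}{\widetilde s}\le M$ placing the minimizer (or its backward scattering state) back in $\widetilde B_M$.
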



Now let us introduce several consequential results which follow
from the arguments which we establish to prove our main results.
We begin with two scattering results.
The first one is as follows;
\begin{theorem}\label{thm:sds2}
Let $5/3 \le \alpha< 20/9$. For any $M>0$ there exists $\delta=\delta(M)>0$ such that
if $u_0 \in \hat{L}^\alpha$ satisfies $\norm{u_0}_{\hat{L}^\alpha} \le M$ and
\[
	\norm{|\d_x|^{\frac{1}{3\alpha}} e^{-t\d_x^3} u_0 }_{L^{3\alpha}_{t,x}(\R\times \R)} \le \delta
\]
then a corresponding solution $u(t)$ to \eqref{gKdV} exists globally and
scatters for both time direction. 
\end{theorem}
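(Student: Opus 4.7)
The plan is to run a standard Picard/contraction-mapping scheme in a complete metric space built on the Stein--Tomas-type estimate of Theorem~\ref{thm:rST} together with the Strichartz estimates for the Airy propagator developed in the authors' earlier work~\cite{MS}. Write the equation in Duhamel form
\[
\Psi(u)(t) := e^{-t\d_x^3} u_0 - \mu \int_0^t e^{-(t-s)\d_x^3} \d_x(|u|^{2\alpha} u)(s)\,ds,
\]
and introduce the resolution space with norm
\[
\|u\|_X := \|u\|_{L^\infty_t(\R; \hat{L}^\alpha)} + \||\d_x|^{\frac{1}{3\alpha}} u\|_{L^{3\alpha}_{t,x}(\R\times\R)},
\]
possibly augmented by one or two auxiliary Strichartz-admissible norms from \cite{MS} that are needed to absorb the derivative in the nonlinearity. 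The candidate for the fixed point is the closed ball
\[
B_{M,\delta} := \{u \in X : \|u\|_{L^\infty_t \hat{L}^\alpha_x} \le 2C_0 M,\ \||\d_x|^{\frac{1}{3\alpha}} u\|_{L^{3\alpha}_{t,x}} \le 2\delta\},
\]
equipped with the metric induced by the $L^{3\alpha}_{t,x}(|\d_x|^{1/(3\alpha)})$ part of the norm.

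For the linear piece, the identity $\|e^{-t\d_x^3} u_0\|_{L^\infty_t \hat{L}^\alpha_x} = \|u_0\|_{\hat{L}^\alpha} \le M$ and the hypothesis $\||\d_x|^{1/(3\alpha)} e^{-t\d_x^3} u_0\|_{L^{3\alpha}_{t,x}} \le \delta$ already place the free solution inside a half of $B_{M,\delta}$. For the Duhamel term, a dual Strichartz estimate from~\cite{MS} and Theorem~\ref{thm:rST} reduce matters to a multilinear bound of the schematic form
\[
\Big\|\int_0^t e^{-(t-s)\d_x^3} \d_x(|u|^{2\alpha} u)\,ds\Big\|_X
\lesssim \|u\|_{L^\infty_t \hat{L}^\alpha_x}^{\theta} \||\d_x|^{\frac{1}{3\alpha}} u\|_{L^{3\alpha}_{t,x}}^{2\alpha+1-\theta}
\]
with some $\theta \in [0, 2\alpha+1)$ and $2\alpha+1-\theta > 1$, verified in parallel for differences of two elements of $B_{M,\delta}$ (using the elementary H\"older-type pointwise bound for $|u|^{2\alpha}u - |v|^{2\alpha}v$). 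Once such an estimate is in hand, the contraction condition becomes $CM^{\theta}\delta^{2\alpha+1-\theta} \le \delta/2$, which defines the admissible $\delta = \delta(M)$; standard fixed-point theory then yields a unique global solution $u \in X$.

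The role of the numerical range $5/3 \le \alpha < 20/9$ is exactly to make the multilinear estimate close. Roughly, the lower bound is what allows the $\d_x$ in front of the nonlinearity to be paid for by the fractional Kato smoothing $|\d_x|^{1/(3\alpha)}$ distributed over $2\alpha+1-\theta$ factors (together with the $L^\infty_t \hat{L}^\alpha_x$ factors carrying no derivative), while the upper bound keeps the H\"older/Sobolev exponents in the admissible Strichartz region of~\cite{MS}. Producing this fractional Leibniz/chain-rule estimate--a pointwise inequality combined with H\"older in space-time--where the fractional order $2\alpha$ of the nonlinearity is handled in the $\hat{L}^r$ scale is the technical heart of the argument and should be essentially the one supplied in~\cite{MS} (possibly with small modifications), so no fundamentally new ingredient is required.

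Finally, scattering follows automatically: for any $0 < T_1 < T_2$, the tail
\[
e^{T_2 \d_x^3} u(T_2) - e^{T_1 \d_x^3} u(T_1) = -\mu \int_{T_1}^{T_2} e^{s\d_x^3} \d_x(|u|^{2\alpha}u)(s)\,ds
\]
is bounded in $\hat{L}^\alpha$ by applying the same multilinear estimate on $[T_1,T_2]\times\R$; since $\||\d_x|^{1/(3\alpha)} u\|_{L^{3\alpha}_{t,x}([T_1,\infty)\times\R)} \to 0$ as $T_1 \to \infty$ by the global finiteness of this norm, the sequence $e^{t\d_x^3} u(t)$ is Cauchy in $\hat{L}^\alpha$ and the forward scattering state $u_+$ exists. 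The backward direction is symmetric. The main obstacle in the whole scheme is thus the nonlinear estimate on $\d_x(|u|^{2\alpha}u)$: distributing a derivative on a fractional-order nonlinearity while respecting the $\hat{L}^\alpha$ geometry and still landing in a dual Strichartz space is delicate, and it is precisely this estimate that forces the restrictions $\alpha \ge 5/3$ and $\alpha < 20/9$.
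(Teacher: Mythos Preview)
Your scheme has a genuine gap. The multilinear estimate you postulate,
\[
\Big\|\int_0^t e^{-(t-s)\d_x^3}\d_x(|u|^{2\alpha}u)\,ds\Big\|_X \lesssim \|u\|_{L^\infty_t\hat{L}^\alpha}^{\theta}\,\||\d_x|^{1/(3\alpha)}u\|_{L^{3\alpha}_{t,x}}^{2\alpha+1-\theta},
\]
is not available: the $\hat{L}^\alpha$ norm is a Fourier-side quantity and does not control pointwise products, so it cannot replace a physical-space Strichartz norm in a H\"older/Leibniz estimate on $|u|^{2\alpha}u$. The nonlinear bound that actually holds (Lemma~\ref{lem:nlest}) is $\||u|^{2\alpha}u\|_{N}\lesssim \|u\|_{S}^{2\alpha}\|u\|_{L}$, with $S=L^{5\alpha/2}_xL^{5\alpha}_t$. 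If you therefore augment $X$ by the $S$-norm, the free evolution only satisfies $\|e^{-t\d_x^3}u_0\|_{S(\R)}\lesssim M$ (not $\delta$), and the Duhamel contribution to $\|\Psi(u)\|_{L}$ is of order $M^{2\alpha}\delta$. A single global contraction then requires $CM^{2\alpha}\le 1/2$, which fails for large $M$; this is exactly why the earlier result in \cite{MS} needed smallness of \emph{both} $\|e^{-t\d_x^3}u_0\|_S$ and $\|e^{-t\d_x^3}u_0\|_L$ (see the Remark following Theorem~\ref{thm:sds2}).

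The paper avoids this obstruction not by a direct fixed-point argument but by invoking the long-time stability estimate (Proposition~\ref{lsy}) with the free evolution $\tilde u(t)=e^{-t\d_x^3}u_0$ as the approximate solution. The point is that $\tilde u$ has $\|\tilde u\|_{S(\R)}+\|\tilde u\|_{L(\R)}\lesssim M$, while the ``error'' $e=\mu|\tilde u|^{2\alpha}\tilde u$ obeys $\|e\|_{N(\R)}\lesssim \|\tilde u\|_S^{2\alpha}\|\tilde u\|_L\lesssim M^{2\alpha}\delta$. Proposition~\ref{lsy} absorbs the large factor $M^{2\alpha}$ because its proof subdivides $\R$ into $N\lesssim (M/\varepsilon_0)^{q(S)}$ subintervals on which $\tilde u$ is small in $S\cap L$, iterates the short-time stability, and then allows $\delta=\delta(M)$ small enough. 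That time-splitting step is the missing idea in your contraction scheme.
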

The above theorem is a variant of small data scattering, and a consequence of 
a stability type estimate which is so-called long time stability.
Notice that it contains the case that the data is not small in the $\hat{L}^\alpha$ topology.
\begin{remark}
The proof of  \cite[Theorem 1.7]{MS} shows that there exists a constant $\delta'$ 
independent of $\norm{u_0}_{\hat{L}^\alpha}$ such that if 
$$\norm{|\d_x|^{\frac{1}{3\alpha}}e^{-t\d_x^3} u_0 }_{L^{3\alpha}_{t,x}(\R\times \R)}
 + \norm{e^{-t\d_x^3} u_0 }_{L^{\frac{5\alpha}2}_{x}(\R, L^{5\alpha}_t(\R))} \le \delta'$$
then the solution scatters for both time directions.
In Theorem \ref{thm:sds2}, smallness assumption on 
the second term of the left hand side is removed, however the constant $\delta$ may
depends on $\norm{u_0}_{\hat{L}^\alpha}$.
\end{remark}
The second scattering result is the following.
\begin{theorem}[Scattering due to irrelevant deformations]\label{thm:BSS}
Let $5/3 \le \alpha <2$.
Let $\{u_{0,n}\}_n \subset \hat{L}^\alpha$ be a bounded sequence.
Let $u_n(t)$ be a solution to \eqref{gKdV} with $u_n(0)=u_{0,n}$.
If a set
\[
\left\{ \phi \in \hat{L}^{\alpha}\ \Biggl|\  
	\begin{aligned}
	&\phi= \lim_{k\to\I} 
	(D(h_k) A(s_k) T(y_k) P(\xi_k))^{-1} u_{0, n_k} \text{ weakly in }\hat{L}^{\alpha}, \\
	&\exists (h_k,\xi_k, s_k,y_k) \in 2^\Z \times \R\times \R \times \R, \,\exists \text{subsequence }n_k
	\end{aligned}
	\right\}
\]
is equal to $\{0\}$ then there exists $N_0$ such that
$u_n(t)$ is global and scatters for both time direction as long as $n\ge N_0$.
\end{theorem}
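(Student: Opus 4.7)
The strategy is to reduce matters to the long-time stability statement of Theorem \ref{thm:sds2}. Writing $M := \sup_n \norm{u_{0,n}}_{\hat{L}^\alpha}$, which is finite by boundedness of the sequence, it suffices to establish
\begin{equation*}
	\lim_{n\to\I} \norm{|\d_x|^{\frac{1}{3\alpha}} e^{-t\d_x^3} u_{0,n}}_{L^{3\alpha}_{t,x}(\R\times\R)} = 0.
\end{equation*}
Indeed, given this vanishing, Theorem \ref{thm:sds2} applied with this choice of $M$ yields an $N_0$ beyond which each $u_n(t)$ is global and scatters in both time directions, which is exactly the conclusion.

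To obtain the vanishing, I would apply the $\hat{L}^\alpha$-linear profile decomposition, which is one of the main technical results established earlier in this paper, to the bounded sequence $\{u_{0,n}\}$. It produces profiles $\phi^j \in \hat{L}^\alpha$, deformation sequences $G_n^j := D(h_n^j)A(s_n^j)T(y_n^j)P(\xi_n^j)$ with asymptotically orthogonal parameters, and remainders $w_n^J$ satisfying
\begin{equation*}
	u_{0,n} = \sum_{j=1}^{J} G_n^j \phi^j + w_n^J, \qquad
	\limsup_{n\to\I} \norm{|\d_x|^{\frac{1}{3\alpha}} e^{-t\d_x^3} w_n^J}_{L^{3\alpha}_{t,x}} \to 0 \ \text{as}\ J \to \I.
\end{equation*}
Each $\phi^j$ is, by construction, a weak $\hat{L}^\alpha$-limit of $(G_n^j)^{-1}$ applied to the running remainder $r_n^{j-1}$ along a subsequence. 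Asymptotic orthogonality of the parameters forces $(G_n^j)^{-1} G_n^k \phi^k \rightharpoonup 0$ in $\hat{L}^\alpha$ for $k<j$; combined with the weak convergence of $(G_n^j)^{-1} r_n^{j-1}$ to $\phi^j$, this yields $(G_n^j)^{-1} u_{0,n_k} \rightharpoonup \phi^j$ weakly in $\hat{L}^\alpha$ along the same subsequence $n_k$. The hypothesis of Theorem \ref{thm:BSS} states that every such weak limit must be zero, so $\phi^j = 0$ for every $j$. The decomposition therefore reduces to $u_{0,n} = w_n^J$ for all $J$, and letting $J \to \I$ yields the desired vanishing of the Stein--Tomas-type quantity.

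The principal obstacle is the bookkeeping linking the hypothesis---which constrains weak limits of $u_{0,n}$ itself after inverse deformations---to the profiles, which in the construction of the profile decomposition arise as weak limits of the \emph{nested remainders} rather than of $u_{0,n}$ directly. The required asymptotic orthogonality among the four kinds of parameters $(h_n^j, \xi_n^j, s_n^j, y_n^j)$ and the consequent vanishing of the cross terms $(G_n^j)^{-1} G_n^k$ in the weak operator topology on $\hat{L}^\alpha$ are built into the profile decomposition setup, but they must be invoked precisely so that the hypothesis can legitimately be applied to each individual profile $\phi^j$. Once this identification is carried out, Theorem \ref{thm:sds2} immediately closes the argument.
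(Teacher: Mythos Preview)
Your proposal is correct and reaches the same conclusion by the same mechanism as the paper, but with a slight detour. The paper's proof (given as Theorem~\ref{thm:bss2}) is more direct: the hypothesis is precisely $\eta(\{u_{0,n}\}_n)=0$, and the paper invokes the concentration compactness result (Theorem~\ref{thm:cc}) in contrapositive form---if $\limsup_n \|e^{-t\d_x^3}u_{0,n}\|_{L(\R)}$ were positive along some subsequence, Theorem~\ref{thm:cc} would force $\eta>0$---to get the vanishing immediately, then applies Corollary~\ref{cor:sds}. You instead run the full profile decomposition (Theorem~\ref{thm:pd}), observe that each profile lies in $\mathcal{V}(u)=\{0\}$ (this is exactly the statement $\psi^j\in\mathcal{V}(u)$ in Theorem~\ref{thm:pd1}, so your ``bookkeeping'' worry is already handled there), conclude $u_{0,n}=r_n^J$ for every $J$, and use the remainder smallness \eqref{eq:pd:limit}. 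Since that remainder smallness is itself proved by the same concentration compactness contradiction, your route simply packages the paper's argument inside the profile decomposition rather than citing Theorem~\ref{thm:cc} directly; nothing is lost or gained mathematically.
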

This theorem is a consequence of Theorem \ref{thm:sds2}
and a concentration compactness argument.
An example of sequence $\{u_{0,n}\}_n$ that satisfies the assumption of Theorem \ref{thm:BSS}
is $u_{0,n} = e^{in \d_x^4} f$, $f \in \hat{L}^{\alpha}$.
As a corollary, we also see that $\mathcal{S}_+\cap \mathcal{S}_-$
is unbounded in $\hat{L}^\alpha$ topology. 
\begin{corollary}
For any $f \in \hat{L}^\alpha$, there exists $T>0$ such that
$e^{it \d_x^4} f \in \mathcal{S}_+ \cap \mathcal{S}_-$ for $|t| \ge T$. 
In particular, $\mathcal{S}_+ \cap \mathcal{S}_-$ is an unbounded subset of $\hat{L}^\alpha$.
\end{corollary}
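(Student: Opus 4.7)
The plan is to apply Theorem~\ref{thm:BSS} to the sequence $u_{0,n} := e^{it_n\d_x^4} f$. First, suppose for contradiction that no such $T$ exists; then for each $n\in\N$ we may pick $t_n\in\R$ with $|t_n|\ge n$ and $e^{it_n\d_x^4}f\notin\mathcal{S}_+\cap\mathcal{S}_-$. The operator $e^{it\d_x^4}$ is a Fourier multiplier of unit modulus and hence an $\hat L^\alpha$-isometry, so $\{u_{0,n}\}_n$ is bounded in $\hat L^\alpha$. If we can verify the hypothesis of Theorem~\ref{thm:BSS} for this sequence we obtain $N_0$ with $e^{it_n\d_x^4}f\in\mathcal{S}_+\cap\mathcal{S}_-$ for $n\ge N_0$, a contradiction. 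The unboundedness statement then follows immediately, since $e^{it\d_x^4}$ is an isometry of $\hat L^\alpha$ and $f$ may be chosen with arbitrarily large $\hat L^\alpha$-norm.

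The heart of the proof is thus to show that every weak limit in $\hat L^\alpha$ of $(D(h_k)A(s_k)T(y_k)P(\xi_k))^{-1}e^{it_{n_k}\d_x^4}f$ equals $0$, for arbitrary parameters $(h_k,\xi_k,s_k,y_k)\in 2^\Z\times\R^3$ and any subsequence $n_k$. I would work on the Fourier side, where a direct composition computation yields, for $\hat g\in L^\alpha$,
\[
\int_\R \F\bigl[(D(h_k)A(s_k)T(y_k)P(\xi_k))^{-1}e^{it_{n_k}\d_x^4}f\bigr](\eta)\,\overline{\hat g(\eta)}\,d\eta
= h_k^{1-1/\alpha}\!\int_\R e^{i\psi_k(\eta)}\hat f(h_k(\eta-\xi_k))\overline{\hat g(\eta)}\,d\eta,
\]
with phase $\psi_k(\eta) = t_{n_k} h_k^4(\eta-\xi_k)^4 - s_k(\eta-\xi_k)^3 + y_k(\eta-\xi_k)$. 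This functional has operator norm uniformly bounded by $\|\hat f\|_{L^{\alpha'}}\|\hat g\|_{L^\alpha}$, so by density it suffices to treat $\hat f,\hat g\in C_c^\I(\R)$. After a further subsequence, we may assume $h_k$ is eventually constant at some $h_*\in 2^\Z$, or $h_k\to 0$, or $h_k\to\I$, and that $\xi_k\to\xi_*\in\R\cup\{\pm\I\}$.

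The argument then splits into four cases. If $h_k\equiv h_*$ and $\xi_k\to\xi_*\in\R$, the amplitude is a uniformly smooth, compactly supported function of $\eta$, while $\psi_k^{(4)}\equiv 24\,t_{n_k}h_*^4$, so van der Corput's lemma supplies decay of order $|t_{n_k}|^{-1/4}\to 0$. If $h_k\equiv h_*$ but $|\xi_k|\to\I$, the supports of $\hat f(h_*(\cdot-\xi_k))$ and $\hat g$ become disjoint for $k$ large, and the integral vanishes. If $h_k\to\I$, the factor $\hat f(h_k(\cdot-\xi_k))$ is supported in an interval of length $O(h_k^{-1})$, and a sup-bound gives a total of $O(h_k^{-1/\alpha})\to 0$. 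Finally, if $h_k\to 0$, the prefactor $h_k^{1-1/\alpha}\to 0$ (using $\alpha>1$), and dominated convergence against the fixed test function $\hat g$ closes the estimate.

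I expect the oscillatory case to be the genuinely hard step: it is the only case that actually uses $|t_n|\to\I$, and it relies on the biharmonic symbol producing a nonzero fourth derivative of $\psi_k$. The deformations $T,A,P$ and the dilation $D$ can only contribute polynomial phase terms of degree at most three in $\eta-\xi_k$, so the van der Corput lower bound is furnished by the biharmonic flow itself and cannot be cancelled by any choice of $(s_k,y_k,\xi_k,h_k)$. The remaining three cases are more elementary asymptotic estimates depending on the compact support of the chosen test functions.
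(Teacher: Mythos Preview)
Your proposal is correct and follows the same route as the paper: apply Theorem~\ref{thm:BSS} to the bounded sequence $u_{0,n}=e^{it_n\d_x^4}f$ with $|t_n|\to\infty$, after checking that every weak $\hat L^\alpha$-limit modulo the deformations in $G$ vanishes. The paper only asserts this example works without giving the verification; your Fourier-side computation and four-case analysis (van der Corput when $h_k$ and $\xi_k$ stabilize, support separation or size estimates otherwise) supply exactly the missing details, and the key observation that the quartic phase contributes a nonvanishing $\psi_k^{(4)}$ which the cubic-and-lower phases from $A,T,P$ cannot cancel is precisely the point.
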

Unboundedness of each $\mathcal{S}_+$ and $\mathcal{S}_-$ are
seen by considering an orbit $\{ A(t) f\ |\ t\in \R \}$ of $f \in \hat{L}^\alpha$.
However, this argument does not yield that of the intersection of the both.

Finally, we state well-posedness results of nonlinear Schr\"odinger equation \eqref{NLS} in $\hat{L}^\alpha$ and $\hat{M}^\alpha_{2,\sigma}$.
Although the analysis of \eqref{NLS} is not an original purpose of the article,
this is necessary for our analysis because there is a linking
between \eqref{gKdV} and \eqref{NLS} due to the presence of $P$-deformation.

\begin{theorem}[Local well-posedness of \eqref{NLS} in $\hat{L}^\alpha$]\label{thm:LWP_N1}
The equation \eqref{NLS} is locally well-posed in
$\hat{L}^\alpha$ if $4/3<\alpha < 4$.
\end{theorem}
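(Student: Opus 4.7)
The plan is to solve the Duhamel equation
\[
    v(t) = e^{it\partial_x^2}u_0 + i\mu\int_0^t e^{i(t-s)\partial_x^2}\bigl(|v|^{2\alpha}v\bigr)(s)\,ds
\]
by a Banach contraction mapping in a scale-critical resolution space adapted to $\hat{L}^\alpha$. Because $\hat{L}^\alpha$ is invariant under the natural scaling of \eqref{NLS}, the argument parallels the $\hat{L}^\alpha$ well-posedness theory for \eqref{gKdV} developed by the authors in \cite{MS}, with the Airy propagator $e^{-t\partial_x^3}$ replaced by the Schr\"odinger propagator $e^{it\partial_x^2}$; here we enjoy the extra bonus that $e^{it\partial_x^2}$ is genuinely an isometry on $\hat{L}^\alpha$ (it acts as a pure Fourier multiplier of modulus one), and that it intertwines with the $P$-deformation through the Galilean identity \eqref{eq:Gt}.

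The first step is to establish a Stein--Tomas-type Strichartz bound for the Schr\"odinger group in the $\hat{L}^\alpha$ framework, namely
\[
    \bigl\||\partial_x|^{s_\alpha} e^{it\partial_x^2} f\bigr\|_{L^{q_\alpha}_{t,x}(\R\times\R)} \le C\,\|f\|_{\hat{L}^\alpha},
\]
with $(s_\alpha, q_\alpha)$ tied by the scaling identity $3/q_\alpha - s_\alpha = 1/\alpha$, together with its dual inhomogeneous version produced via the $TT^*$ principle and the Christ--Kiselev lemma. In one space dimension, Fourier restriction to the parabola $\{(\xi,\xi^2):\xi\in\R\}$ enjoys exactly the same curvature-based stationary-phase analysis as restriction to the cubic curve $\{(\xi,\xi^3)\}$ used in \cite{MS}, so the derivation is essentially a transcription of that argument.

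With the linear bounds in hand, I would introduce the resolution space
\[
    X_I = \bigl\{ v\in C(I;\hat{L}^\alpha) \,:\, \||\partial_x|^{s_\alpha} v\|_{L^{q_\alpha}_{t,x}(I\times\R)}<\infty \bigr\}
\]
and prove the nonlinear estimate
\[
    \bigl\||\partial_x|^{-s_\alpha}(|v|^{2\alpha}v)\bigr\|_{L^{q_\alpha'}_{t,x}(I\times\R)} \le C\,\|v\|_{X_I}^{2\alpha+1}
\]
by combining the fractional Leibniz and chain rules of Kenig--Ponce--Vega--Christ--Weinstein with a $(2\alpha+1)$-fold H\"older inequality in $(t,x)$, together with a matching Lipschitz-in-$v$ estimate for the difference. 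The standard fixed-point loop on a small ball of $X_I$ then yields local existence, uniqueness, continuous dependence on the data, and a blow-up alternative, which is the content of local well-posedness.

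The main obstacle I anticipate is the exponent calibration: the scaling and admissibility of the Stein--Tomas inequality must be jointly compatible with the H\"older arithmetic that reinserts $|v|^{2\alpha}v$ into the dual resolution norm. These two demands can be reconciled only when the dual exponent $q_\alpha'$ (and the H\"older exponent it induces after the nonlinearity is unwound) stays in the admissible restriction window, and the range of $\alpha$ for which this feasibility holds is exactly $4/3<\alpha<4$: at each endpoint one of the H\"older factors, or the regularity hypothesis for the fractional chain rule applied to a power of non-integer order, collides with the boundary of the admissible Strichartz range.
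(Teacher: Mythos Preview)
Your outline is in the right spirit, but you are over-engineering it and your explanation of the range $4/3<\alpha<4$ misidentifies the mechanism. The paper runs the contraction directly in the derivative-free diagonal space $L^{3\alpha}_{t,x}(I\times\R)$. The key point you miss is that the Schr\"odinger Stein--Tomas inequality carries \emph{no} derivative weight: the parabola $\{(\xi,\xi^2)\}$ has constant nonvanishing curvature, so the restriction estimate reads simply $\|e^{it\partial_x^2}f\|_{L^{3\alpha}_{t,x}}\le C\|f\|_{\hat L^\alpha}$, in contrast with the Airy case, where the cubic degenerates at the origin and produces the factor $|\partial_x|^{1/3\alpha}$. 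In your scaling relation this forces the choice $s_\alpha=0$, $q_\alpha=3\alpha$. With $s_\alpha=0$ the nonlinear estimate collapses to pure H\"older,
\[
\bigl\|\,|v|^{2\alpha}v\,\bigr\|_{L^{3\alpha/(2\alpha+1)}_{t,x}}=\|v\|_{L^{3\alpha}_{t,x}}^{2\alpha+1},
\]
so the fractional Leibniz and chain rules are never invoked (Proposition~\ref{prop:LWP_NLS}). The $C(I;\hat L^\alpha)$ regularity is then recovered a posteriori from the inhomogeneous estimate of Lemma~\ref{Nn}(ii).

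Accordingly, the window $4/3<\alpha<4$ has nothing to do with fractional-chain-rule thresholds. It is exactly the admissibility window for the non-admissible-pair Strichartz estimates: the diagonal homogeneous bound $\|e^{it\partial_x^2}f\|_{L^{3r}_{t,x}}\le C\|f\|_{\hat L^r}$ requires $r>4/3$, and the Kato-type inhomogeneous estimate (Lemma~\ref{Nn}(ii)) that closes the Duhamel term requires $4/3<r<4$. If you insist on a genuine $s_\alpha\neq0$ you cannot stay in diagonal $L^{q}_{t,x}$ norms (in Lemma~\ref{Nn}(i), $p=q$ forces $\tau=0$), so you would be pushed to mixed $L^p_xL^q_t$ norms and a substantially more delicate product estimate---an expense with no return here.
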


\begin{theorem}[Local well-posedness of \eqref{NLS} in $\hat{M}^\alpha_{2,\sigma}$]\label{thm:LWP_N2}
The equation \eqref{NLS} is locally well-posed in
$\hat{M}^\alpha_{2,\sigma}$ if $4/3<\alpha < 2$ and $\alpha'<\sigma\le
\frac{6\alpha}{3\alpha-2}$.
\end{theorem}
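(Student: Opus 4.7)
The plan is to apply a standard Kato--Cazenave contraction scheme to the Duhamel integral
\[
\Phi[v](t) = e^{it\partial_x^2} v_0 + i\mu \int_0^t e^{i(t-s)\partial_x^2} (|v|^{2\alpha}v)(s)\,ds
\]
on a ball in a resolution space $X_T$ built from an $L^p_t L^q_x$ norm whose scaling matches that of \eqref{NLS} under $v(t,x)\mapsto \lambda^{1/\alpha} v(\lambda^2 t, \lambda x)$, together with an $L^\infty_t \hat{M}^{\alpha}_{2,\sigma}$ component. Following the blueprint used for \eqref{gKdV}, the three main inputs are (i) a Stein--Tomas type estimate for the Schr\"odinger group controlling $\||\partial_x|^{\gamma} e^{it\partial_x^2} f\|_{L^p_t L^q_x}$ by $\|f\|_{\hat{M}^{\alpha}_{2,\sigma}}$ for appropriately chosen $(p,q,\gamma)$, (ii) its inhomogeneous dual for the Duhamel term, and (iii) a multilinear estimate for $|v|^{2\alpha} v$ in the predual space.

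For (i), I would adapt the refinement argument underlying Theorem \ref{thm:rST}: start from the $L^6_{t,x}$ Tomas--Stein restriction estimate for the parabola, perform a Whitney decomposition of the Fourier support of $f$ at the dyadic scales $2^{-j}$ and intervals $\tau_k^j$ appearing in the definition of $\hat{M}^{\alpha}_{2,\sigma}$, and combine dispersive smoothing with bilinear orthogonality to resum. The constraint $\alpha' < \sigma \le 6\alpha/(3\alpha-2)$ is exactly what makes the H\"older summation over $(j,k)$ converge to the $\hat{M}^{\alpha}_{2,\sigma}$ norm: the lower bound enters through the embedding $\hat{L}^\alpha \hookrightarrow \hat{M}^{\alpha}_{2,\sigma}$ recorded in Section 2, while the upper bound is dictated by NLS scale-invariance. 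Step (ii) then follows by the Christ--Kiselev lemma.

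For (iii), the fractional-order nonlinearity is handled by a Kenig--Ponce--Vega type fractional chain rule combined with H\"older in space-time, yielding a bound of the form $\|v\|_{X_T}^{2\alpha+1}$ in the norm dual to the one in (i); the hypothesis $4/3<\alpha<2$ ensures that the intermediate Lebesgue exponents produced by the chain rule lie within admissible ranges. Once both linear and nonlinear estimates are in hand, smallness of $\|e^{it\partial_x^2}v_0\|_{L^p_t L^q_x([0,T]\times\R)}$ for small $T$, obtained from density of Schwartz functions in $\hat{M}^{\alpha}_{2,\sigma}$ together with dominated convergence, closes the contraction and produces local existence, uniqueness, and continuous dependence.

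The principal obstacle is the refined Strichartz estimate. Unlike the cubic Airy dispersion used in Theorem \ref{thm:rST}, the Schr\"odinger dispersion interacts with the Galilean symmetry \eqref{eq:Gt}, which leaves $\hat{M}^{\alpha}_{2,\sigma}$ only \emph{quasi}-invariant under $P(\xi)$; hence, in the bilinear transversality step, one must organize the frequency decomposition so that the Morrey structure is preserved after passing to a Galilean frame. A secondary difficulty is implementing the fractional chain rule at non-integer $2\alpha$ in a way compatible with the Morrey norm, which may force a detour through an auxiliary Lebesgue-type space where standard Littlewood--Paley tools apply and then a transfer of the resulting bound back through the linear estimate.
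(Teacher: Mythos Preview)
Your outline would likely work but is substantially more elaborate than what the paper actually does, and the ``principal obstacles'' you flag are self-inflicted.

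The paper's route avoids derivatives entirely in the resolution space: it runs the contraction in the pure diagonal norm $L^{3\alpha}_{t,x}(I\times\R)$ (Proposition~\ref{prop:LWP_NLS}). With this choice the nonlinear estimate is nothing but H\"older,
\[
\||v|^{2\alpha}v\|_{L^{3\alpha/(2\alpha+1)}_{t,x}} \le \|v\|_{L^{3\alpha}_{t,x}}^{2\alpha+1},
\]
so no fractional chain rule is needed at all. The homogeneous input is the derivative-free refinement
\[
\|e^{-it\partial_x^2}f\|_{L^{3\alpha}_{t,x}} \le C\hMn{f}{\alpha}{\frac{3\alpha}{2}}{2(\frac{3\alpha}{2})'}
\]
(Proposition~\ref{prop:rSTNLS}), whose proof is \emph{easier} than Theorem~\ref{thm:rST} because the parabola carries no $|\xi|^{1/3p}$ weight; one squares, applies Hausdorff--Young, and runs the same Whitney decomposition with the denominator $|\xi-\eta|^{2/(3p-2)}$ only (Remark~\ref{rem:rSTNLS}). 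Since this is a direct restriction estimate, no Galilean frame change ever enters and your $P(\xi)$ quasi-invariance worry evaporates. The $\hat{M}^{\alpha}_{2,\sigma}$ space then enters only through the embedding $\hat{M}^{\alpha}_{2,\sigma}\hookrightarrow \hat{M}^{\alpha}_{3\alpha/2,\,6\alpha/(3\alpha-2)}$ valid precisely for $\alpha'<\sigma\le 6\alpha/(3\alpha-2)$ and $\alpha<2$ (Remark~\ref{rem:gMorrey}(iii)). Finally, the inhomogeneous estimate \eqref{k} shows the Duhamel piece lands in $C(I;\hat{L}^\alpha)$, which is strictly smaller than $\hat{M}^{\alpha}_{2,\sigma}$; persistence of the solution in $\hat{M}^{\alpha}_{2,\sigma}$ then comes for free because $e^{-it\partial_x^2}$ is a multiplier-like isometry on that space.

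In short: by choosing $(p,q,\gamma)=(3\alpha,3\alpha,0)$ you collapse both of your anticipated difficulties, and the constraint $4/3<\alpha<2$ is used only to place $2<\alpha'<3\alpha/(3\alpha-2)$ so that the Morrey embedding chain $\hat{L}^\alpha\hookrightarrow\hat{M}^{\alpha}_{2,\sigma}\hookrightarrow\hat{M}^{\alpha}_{3\alpha/2,\,6\alpha/(3\alpha-2)}$ holds.
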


The rest of the article is organized as follows.
Main theorems are proven in Section \ref{sec:proof} after
preliminaries on notations and basic facts (Section \ref{sec:setup})
and stability estimate (Section \ref{sec:stability}).
For the proof, we rely on two important ingredient,
linear profile decomposition (Theorem \ref{thm:pd_r}) and
NLS approximation (Theorem \ref{thm:ENK}).
We prove Theorem \ref{thm:pd_r} in Sections \ref{sec:pd} and \ref{sec:cc}.
Finally, we turn to the proof of Theorem \ref{thm:ENK}
in Sections \ref{sec:NLS} and \ref{sec:ENK}.
On the other hand,
consequential results are shown when we are ready;
Theorem \ref{thm:sds2} is proven in Section \ref{sec:stability},
Theorem \ref{thm:BSS} is in Section \ref{sec:cc},
and Theorems \ref{thm:LWP_N1} and \ref{thm:LWP_N2} are in 
Section \ref{sec:NLS}.

%
%
\section{Notations and basic facts}\label{sec:setup}

In this section, 
we introduce several notations and give lemmas 
which are needed to prove main results.

The following notation will be used throughout this 
paper: $|\pt_x|^s=(-\pt_x^2)^{s/2}$ denotes 
the Riesz potential of order $-s$. 
For $1\le p,q\le\infty$ and $I\subset\rre$, 
let us define a space-time norm
\begin{eqnarray*}
\|f\|_{L_x^pL_t^q(I)}=
\|\|f(\cdot,x)\|_{L_t^q(I)}\|_{L_x^p(\rre)}.
\end{eqnarray*}

\subsection{Deformations}
Let us first collect elementary facts on the deformations which is used thorough out the article.
As in the introduction, we set
\begin{itemize}
\item $(T(y) f)(x) = f(x-y)$, $\quad y\in \R$,
\item $(P(\xi) f)(x) = e^{-ix\xi} f(x)$, $\quad\xi\in\R$,
\item $(D_p(h)f)(x)=h^{1/p} f(h x)$, $\quad h\in 2^\Z$, 
\item $(A(t) f)(x) = e^{-t\d_x^3}f(x)$, $\quad t\in \R$.
\end{itemize}
They are deformations on $\hat{L}^p$ for any $1 \le p \le \I$.
Denote $D(h) = D_\alpha (h)$, where $\alpha$ is the number in \eqref{gKdV}.
Let $S(t)=e^{it\d_x^2}$ be a Schr\"odinger group.
Notice that $S(t)$ is also a deformation on $\hat{L}^p$, $1 \le p \le \I$.
The inverses of $A(t)$, $S(t)$, $T(y)$, and $P(\xi)$ are
$A(-t)$, $S(-t)$, $T(-y)$, and $P(-\xi)$, respectively.
Further, $D_p(h)^{-1} = D_p(h^{-1})$.

We use a notation $\hat{X}:=\F X \F^{-1}$, or equivalently, $\F X= \hat{X} \F$,
for $X=A,S,T,P,D$. More specifically,
$\hat{A}(t)=e^{it\xi^3}$, 
$\hat{S}(t) = e^{-it\xi^2}$,
$\hat{T}(y)=P(y)$,
$\hat{P}(\xi) = T(-\xi)$, and
$\hat{D_\alpha}(h)  = D_{\alpha'}(h^{-1})$.
With this notation, the identity \eqref{eq:GtA} is easily obtained as follows.
\[
	\hat{P}(\xi_0)^{-1}
	\hat{A}(t) \hat{P}(\xi_0)
	= e^{it(\xi-\xi_0)^3}
	= e^{-i\xi_0^3t} \hat{T}(-3\xi_0^2 t)  \hat{S}(3\xi_0t) \hat{A}(t).
\]

Next, we collect commutations of the above deformations.
We have
\[
	[A(t),S(t)] = [A(t),T(y)] = [S(t),T(y)] = 0, \quad
	T(y)P(\xi) = e^{iy\xi}P(\xi) T(y).
\]
Commutation property for $D(h)$ is as follows:
\begin{align*}
	&A(t)D(h) =D(h)A(h^3t), &&
	S(t)D(h) =D(h)S(h^2t),\\
	&T(y)D(h)=D(h)T(hy), &&
	P(\xi) D(h) = D(h)P(h^{-1}\xi).
\end{align*}
Combining above relations, we have the following identity
\begin{multline}\label{eq:orthty_representation}
	(D(\widetilde{h})T(\widetilde{y})A(\widetilde{s})P(\widetilde{\xi}))^{-1} (D(h)T(y)A(s)P(\xi))
	\\= e^{i\gamma} D\( \frac{h}{\widetilde{h}} \) P\(\xi - \frac{\widetilde{h}}{h} \widetilde{\xi}\) 
	A\( s -\( \frac{h}{\widetilde{h}}\)^3\widetilde{s}\)\\ S\(3\( s -\( \frac{h}{\widetilde{h}}\)^3\widetilde{s}\)\xi\) T\(y- \frac{h}{\widetilde{h}} \widetilde{y}-3\( s -\( \frac{h}{\widetilde{h}}\)^3\widetilde{s}\)\xi^2 \),
\end{multline}
where $\gamma$ is a real number given by $h,y,s,\xi,\widetilde{h}, \widetilde{y}, \widetilde{s}, \widetilde{\xi}$.
This identity is useful for linear profile decomposition (see Remark \ref{rem:meaning_of_orthty}).

\subsection{Generalized Morrey space}
For $j\in \Z$, we set $\mathcal{D}_j:=\{ [k2^{-j},(k+1)2^{-j}) \ |\ k \in \Z \}$.
Let $\mathcal{D}:= \cup_{j\in\Z} \mathcal{D}_j$.
For a function $a:\mathcal{D} \to \C$, we denote
$\norm{a}_{\ell^r_{\mathcal{D}}} := (\sum_{I\in \mathcal{D}} |a(I)|^r )^{1/r} $ if $0<r<\I$ and
$\norm{a}_{\ell^\I_{\mathcal{D}}} := \sup_{I\in \mathcal{D}} |a(I)|  $.
\begin{definition}
For $1 \le q\le p \le \I$ and for $r\in [1,\I]$, we introduce a 
\emph{generalized Morrey norm} $\norm{\cdot}_{M^{p}_{q,r}}$ by
\[
	\norm{f}_{M^p_{q,r}} =
	\norm{|I|^{\frac1p-\frac1q} \norm{f}_{L^{q}(I)} }_{\ell^{r}_{\mathcal{D}}} .
\]
Here, the case $p=q$ and $r<\I$ is excluded.
For $1\le p\le q \le \I$ and for $r\in [1,\I]$,
we also introduce $\norm{f}_{\hat{M}^p_{q,r}}:= \| \hat{f}\|_{M^{p'}_{q',r}}$, 
i.e.,
\[
	\norm{f}_{\hat{M}^p_{q,r}} =
	\norm{|I|^{\frac1q-\frac1p} \norm{\hat{f}}_{L^{q'}(I)} }_{\ell^{r}_{\mathcal{D}}} .
\]
Banach spaces $M^p_{q,r}$ and $\hat{M}^p_{q,r}$ are defined as 
sets of tempered distributions of which above norms are finite, respectively.
\end{definition}

\begin{remark}\label{rem:gMorrey}

\vskip1mm
\noindent
(i) $M^{p}_{q,\I}$ is a usual Morrey space. $M^p_{p,\I}=L^p$ with equal norm.

\vskip1mm
\noindent
(ii) For any $1 \le q_1 \le q_2 \le p \le \I$ and $1\le r_1 \le r_2 \le \I$,
it holds that $M^{p}_{q_1,r_1} \hookrightarrow M^{p}_{q_2,r_2}$.

\vskip1mm
\noindent
(iii) For any $1 \le p \le q_2 \le q_1 \le \I$ and $1\le r_1 \le r_2 \le \I$,
it holds that $\hat{M}^{p}_{q_1,r_1} \hookrightarrow \hat{M}^{p}_{q_2,r_2}$

\vskip1mm
\noindent
(iv) $L^p \hookrightarrow M^{p}_{q,r}$ holds as long as 
$1\le q<p<r\le\I$.

\vskip1mm
\noindent
(v) $\hat{L}^p \hookrightarrow \hat{M}^{p}_{q,r}$ holds as long as 
$ 1\le  q'<p'<r \le\I$.
\end{remark}
For the last two assertions, see Proposition \ref{prop:gm_interpolation}.

\begin{lemma}\label{lem:scale}
Let $1\le p\le q \le \I$ and let $r\in (0,\I]$.
There exists a constant $C\ge1$ such that
\[
	C^{-1} \norm{f}_{\hat{M}^p_{q,r}}
	\le 
	\norm{D_p(h) A(s) T(y) P (\xi) f}_{\hat{M}^p_{q,r}}
	\le 
	C\norm{f}_{\hat{M}^p_{q,r}}
\]
for any $f \in \hat{M}^{p}_{q,r}$ and any $(h,\xi, s,y) \in 2^\Z \times \R\times \R\times \R$.
Further, if $\xi=0$ then the above inequality hold with $C=1$.
\end{lemma}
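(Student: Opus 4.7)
The plan is to treat the four deformations in the composition separately, since the norm $\norm{\cdot}_{\hat{M}^p_{q,r}}$ depends only on $|\hat{f}|$ restricted to dyadic intervals. Three of the four factors will turn out to be exact isometries, and the only genuine work is a dyadic covering estimate for the modulation $P(\xi)$.

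First I would dispose of $T(y)$ and $A(s)$: since their Fourier symbols $e^{-iy\xi}$ and $e^{is\xi^3}$ are unimodular, $|\widehat{T(y)f}|=|\widehat{A(s)f}|=|\hat{f}|$ pointwise, so the $L^{q'}(I)$ norms on every dyadic interval are untouched and both act as exact isometries on $\hat{M}^p_{q,r}$. For the dyadic dilation $D_p(h)$, a direct calculation gives $\widehat{D_p(h)f}(\xi)=h^{-1/p'}\hat{f}(\xi/h)$, and change of variable yields $\norm{\widehat{D_p(h)f}}_{L^{q'}(I)}=h^{1/q'-1/p'}\norm{\hat{f}}_{L^{q'}(I/h)}$. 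Combining this with the identities $1/q'-1/p'=1/p-1/q$ and $|I/h|^{1/q-1/p}=h^{1/p-1/q}|I|^{1/q-1/p}$, the $h$-powers cancel, giving
\[
|I|^{1/q-1/p}\norm{\widehat{D_p(h)f}}_{L^{q'}(I)}=|I/h|^{1/q-1/p}\norm{\hat{f}}_{L^{q'}(I/h)}.
\]
Since $h\in 2^\Z$, the map $I\mapsto I/h$ is a bijection of $\mathcal{D}$ that permutes each generation $\mathcal{D}_j$, so the $\ell^r_\mathcal{D}$ sum is preserved and $D_p(h)$ is likewise an exact isometry.

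The main obstacle is the modulation $P(\xi_0)$, whose Fourier action is a translation: $\widehat{P(\xi_0)f}(\xi)=\hat{f}(\xi+\xi_0)$. For $I\in\mathcal{D}_j$ the shifted interval $I+\xi_0$ is generally not dyadic, but it is contained in the union of at most two adjacent intervals $I',I''\in\mathcal{D}_j$ of the same length. I would use additivity of $\norm{\cdot}_{L^{q'}}^{q'}$ over disjoint pieces to write
\[
\norm{\hat{f}}_{L^{q'}(I+\xi_0)}^{q'}\le\norm{\hat{f}}_{L^{q'}(I')}^{q'}+\norm{\hat{f}}_{L^{q'}(I'')}^{q'},
\]
then raise to the $r/q'$-th power using $(a+b)^s\le 2^{\max(s-1,0)}(a^s+b^s)$ and sum over $k$, noting that each element of $\mathcal{D}_j$ is hit at most twice as $I$ varies over $\mathcal{D}_j$. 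Summing in $j$ then gives $\norm{P(\xi_0)f}_{\hat{M}^p_{q,r}}\le C\norm{f}_{\hat{M}^p_{q,r}}$ with $C=C(r,q')$, and applying the same bound to $P(-\xi_0)=P(\xi_0)^{-1}$ yields the reverse inequality.

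Chaining these facts, any composition $D_p(h)A(s)T(y)P(\xi)$ satisfies the stated equivalence with a constant coming entirely from the $P$-factor; when $\xi=0$ this factor disappears and the constant collapses to $1$, matching the last assertion. The overall argument is elementary once the dyadic covering for $P(\xi_0)$ is in place; that covering, together with the power-of-$r$ bookkeeping above, is the only point requiring care.
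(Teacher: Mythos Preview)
Your proof is correct and follows essentially the same approach as the paper: the paper combines all four factors into the single formula $|\F D_p(h)A(s)T(y)P(\xi)f|(x)=h^{-1/p'}|\F f|(x/h+\xi)$ and then performs the same two-interval dyadic covering you use for $P(\xi_0)$, while you factor the composition and handle each piece separately. One small slip: the map $I\mapsto I/h$ does not permute each generation $\mathcal{D}_j$ but rather sends $\mathcal{D}_j$ bijectively onto $\mathcal{D}_{j+j_0}$ when $h=2^{j_0}$; since this is still a bijection of $\mathcal{D}$, your conclusion is unaffected.
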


\vskip2mm
\noindent
\begin{proof}
We only consider $q>1$.
Notice that
\[
	|\F D_p(h) A(s) T(y) P (\xi) f|(x)
	= h^{-\frac1{p'}} |\F f| \( \frac{x}{h} + \xi \).
\]
Therefore, for any $\tau_k^j=[k/2^j,(k+1)/2^j) \in \mathcal{D}_j$ we have
\[
	|\tau_k^j|^{\frac1{p'}-\frac1{q'}} \norm{\F D_p(h) A(s) T(y) P (\xi) f}_{L^{q'}(\tau_k^j)}
	= |\widetilde{\tau}_k^j|^{\frac1{p'}-\frac1{q'}} \norm{\F f}_{L^{q'}(\widetilde{\tau}_k^j)},
\]
where
\[
	\widetilde{\tau}_k^j = \left[\frac{ k }{h2^j} +\xi , \frac{ k + 1}{h2^j} +\xi\right).
\]
Denote $h=2^{j_0}$.
We choose $k_0=k_0(j)$ so that $  k_0 \le 2^{j+j_0} \xi < k_0+1$.
Then,
\[
	\widetilde{\tau}_k^j = \left[\frac{ k + 2^{j+j_0} \xi }{2^{j+j_0}} , \frac{ k +2^{j+j_0} \xi+ 1}{2^{j+j_0}} \right)\subset \tau_{k+k_0}^{j+j_0} \cup \tau_{k+k_0+1}^{j+j_0} 
\]
and $|\widetilde{\tau}_k^j| = |\tau_{k+k_0}^{j+j_0}| = |\tau_{k+k_0+1}^{j+j_0}|$.
Thus,
\begin{align*}
	&|\widetilde{\tau}_k^j|^{\frac1{p'}-\frac1{q'}} \norm{\F f}_{L^{q'}(\widetilde{\tau}_k^j)}\\
	&{}\le |\widetilde{\tau}_k^j|^{\frac1{p'}-\frac1{q'}} \(\norm{\F f}_{L^{q'}({\tau}_{k+k_0}^{j+j_0})}^{q'} + \norm{\F f}_{L^{q'}({\tau}_{k+k_0+1}^{j+j_0})}^{q'} \)^{\frac1{q'}} \\
	&{}\le |\tau_{k+k_0}^{j+j_0}|^{\frac1{p'}-\frac1{q'}} \norm{\F f}_{L^{q'}({\tau}_{k+k_0}^{j+j_0})}
	+ |\tau_{k+k_0+1}^{j+j_0}|^{\frac1{p'}-\frac1{q'}} \norm{\F f}_{L^{q'}({\tau}_{k+k_0+1}^{j+j_0})}. 
\end{align*}
We take $\ell_k^r$ norm and then $\ell_j^r$ norm to obtain
 the second inequality with $C=2$.
It is obvious that if $\xi=0$ then $\widetilde{\tau}_k^j=\tau_k^{j+j_0}$ and
$|\tau_k^j|/h = |\tau_k^{j+j_0}|$ hold and so
we can take $C=1$.
The first inequality follows in the same way.
We repeat the same argument from
$|\F f| ( y ) = |\F D(h) A(s) T(y) P (\xi) f|(hy - h\xi ) $. 
\end{proof}

\subsection{Generalized Strichartz's estimates}

In this subsection we give a generalized Strichartz's estimates for 
the Airy equation. To this end, we introduce several notations.

\begin{definition}
\vskip1mm
\noindent
(i) A pair $(s,r)\in \R \times [1,\I]$ 
is said to be \emph{acceptable} if
$1/r\in [0, 3/4)$ and 
\[
	s \in 
	\begin{cases}
	[-\frac{1}{2r} , \frac2r ] & 0\le \frac1r \le \frac12 ,\\
	(\frac2r - \frac54 , \frac52 - \frac3r) & \frac12 < \frac1r < \frac34.
	\end{cases}
\]

\vskip1mm
\noindent
(ii) A pair $(s,r) \in \R \times [1,\I]$ is said to be 
\emph{conjugate-acceptable} if
$(1-s,r')$ is acceptable, where $\frac1{r'}=1-\frac1r \in [0,1]$.
\end{definition}

For an interval $I \subset \R$ and an acceptable pair $(s,r)$, 
we define a function space $X(I;s,r)$ of space-time functions with the following norm
\[
	\norm{f}_{X(I;s,r)} = \norm{ |\d_x|^{s} f }_{L^{p(s,r)}_x(\R; L^{q(s,r)}_t(I))} ,
\]
where the exponents $p(s,r)$ and $q(s,r)$ are given by
\begin{equation}\label{def:pq}
	\frac2{p(s,r)} + \frac1{q(s,r) }=\frac1r, \quad
	-\frac1{p(s,r)} + \frac2{q(s,r) } = s,
\end{equation}
or equivalently, 
\[
	\begin{pmatrix}
	1/{p(s,r)}\\ 
	1/{q(s,r)}
	\end{pmatrix}
	=\begin{pmatrix}
	-1/5 & 2/5\\ 
	2/5 & 1/5
	\end{pmatrix}
	\begin{pmatrix}
	s\\ 
	1/r
	\end{pmatrix}.
\]
We refer $X(I;s,r)$ to as an \emph{$\hat{L}^r$-admissible space}.

For an interval $I \subset \R$ and a conjugate-acceptable pair 
$(s,r)$,
we define a function space $Y(I;s,r)$ by
\[
	\norm{f}_{Y(I;s,r)} = \norm{ |\d_x|^{s} f }_{L^{\tilde{p}(s,r)}_x(\R; L^{\tilde{q}(s,r)}_t(I))} ,
\]
where the exponents $\tilde{p}(s,r)$ and $\tilde{q}(s,r)$ are given by
\begin{equation}\label{def:tpq}
	\frac2{\tilde{p}(s,r)} + \frac1{\tilde{q}(s,r) }=2+\frac1r, \quad
	-\frac1{\tilde{p}(s,r)} + \frac2{\tilde{q}(s,r) } = s,
\end{equation}
or equivalently, 
\[
	\begin{pmatrix}
	1/{\tilde{p}(s,r)}\\ 
	1/{\tilde{q}(s,r)}
	\end{pmatrix}
	=\begin{pmatrix}
	-1/5 & 2/5\\ 
	2/5 & 1/5
	\end{pmatrix}
	\begin{pmatrix}
	s\\ 
	2+1/r
	\end{pmatrix}
	=
	\begin{pmatrix}
	1/{p(s,r)}\\ 
	1/{q(s,r)}
	\end{pmatrix}
	+
	\begin{pmatrix}
	4/5\\ 
	2/5
	\end{pmatrix}
	.
\]

Let us define some specific $X(I;s,r)$ and $Y(I;s,r)$ type spaces
by choosing specific degrees $s=s(r)$.

\begin{definition}
Set $s(L)=s(L,\alpha):=1/(3\alpha)$, $s(Z)=s(Z,\alpha):= \frac52 - \frac3{\alpha} -\eps$,
and $s(K)=s(K,r):=\frac2{\alpha} -\frac54  + \eps$, where $\eps>0$ is a sufficiently small number.
Define $S(I) := X(I;0,\alpha)$, $L(I):= X(I;s(L),\alpha)$, $Z(I) := X(I;s(Z),\alpha)$, 
and $K(I) := X(I;s(K),\alpha)$.
Also define $N(I):= Y(I;s(L),\alpha)$.
We use the notation $(p(X),q(X)):=(p(s(X),\alpha),q(s(X),\alpha))$ for $X=S, L,K,Z$ and
$(\widetilde{p}(N),\widetilde{q}(N)) = 
(\widetilde{p}(s(L),\alpha),\widetilde{q}(s(L),\alpha))$.
\end{definition}
From the definition, we have $(p(S),q(S))=(\frac52\alpha,5\alpha)$ 
and $(p(L),q(L))=(3\alpha,3\alpha)$.
For details of choice of $s(Z)$ and $s(K)$, see Remark \ref{rem:restriction} below.

\begin{remark}\label{rem:alpha_for_stability}
The $S(I)$ norm is so-called \emph{scattering norm}.
This norm plays an important role on well-posedness theory.
For example, criterion for blowup and scattering are given in terms of the scattering norm 
(See \cite[Theorems 1.8 and 1.9]{MS}).
Notice that the pair $(0,\alpha)$ is admissible only if $\alpha>8/5$.
The $L(I)$ norm is a non-mixed space.
This norm appears in refinement of 
Stein-Tomas type inequality, see Theorem \ref{thm:rST}, below.
A pair $(s_L(\alpha),\alpha)$ is acceptable and conjugate-acceptable if $5/3 \le \alpha < 20/9$.
Remark that there exists an acceptable and conjugate-acceptable pair
under a weaker assumption $10/7<\alpha<10/3$ (see \cite[Remark 4.1]{MS}).
\end{remark}

We have the following generalized version of Strichartz's estimate.

\vskip2mm

\begin{proposition}[Generalized Strichartz's estimates]\label{prop:ho_inho}

${}$\\
(i) (homogeneous estimate) It holds for any acceptable pair $(s,r)$ and interval $I$ that
\begin{equation}\label{ho}
	\norm{e^{-t\d_x^3} f}_{X(I;s,r)} \le C \norm{f}_{\hat{L}^r},
\end{equation}
where the constant $C$ depends only on $s$ and $r$. 

\vskip1mm
\noindent
(ii) (inhomogeneous estimate) Let $4/3<r<4$. Let $(s_1,r)$ be an acceptable pair and $(s_2,r)$
be a conjugate-acceptable pair. Then, it holds for any $t_0 \in I \subset \R$ that
\begin{equation}\label{inho}
	\norm{ \int_{t_0}^t e^{-(t-t')\d_x^3 }\d_x F(t') dt'}_{L^\I_t(I;\hat{L}^r_x) \cap X(I,s_1,r)}
	\le C\norm{F}_{Y(I,s_2,r)},
\end{equation}
where the constant $C$ depends on $s_1,s_2$ and $r$.
\end{proposition}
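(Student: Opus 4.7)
The plan is to prove both estimates by interpolating between a small number of endpoint estimates, and then to use the $TT^*$ and Christ-Kiselev machinery to pass from the homogeneous to the inhomogeneous case. The structure follows Kenig-Ponce-Vega's treatment of the $L^2$-case and the authors' earlier work \cite{MS}; the new content is organizing the bookkeeping of acceptable pairs within the $\hat{L}^r$-framework.

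For part (i), I would begin from the observation that since $\F[e^{-t\d_x^3}f](\xi)=e^{it\xi^3}\hat{f}(\xi)$ has the same modulus as $\hat{f}$, the Airy group is an isometry on every $\hat{L}^r$; this gives the corner $(s,1/r)=(0,0)$ with $p=q=\infty$ for free. The other reference points are the Kenig-Ponce-Vega local smoothing estimate $\norm{|\d_x| e^{-t\d_x^3}f}_{L^\infty_x L^2_t}\le C\norm{f}_{L^2}$ at the vertex $(s,r)=(1,2)$, its companion maximal-function estimate at $(s,r)=(-1/4,2)$ giving the mixed norm $L^4_x L^\infty_t$, and the Hausdorff-Young bound $\norm{e^{-t\d_x^3}f}_{L^\infty_{t,x}}\le C\norm{f}_{\hat{L}^1}$ at $r=1$, $s=0$. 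The acceptable-pair region is exactly the convex hull of these reference vertices, with upper boundary $s=5/2-3/r$ encoding the Tomas-Stein threshold for the non-degenerate cubic curve. Bilinear Riesz-Thorin interpolation in $(s,1/r)$ then fills the interior and reaches every acceptable pair; the open-interval condition for $1/r\in(1/2,3/4)$ reflects that the extreme Fourier-restriction exponents are only approached, not attained, through complex interpolation.

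For part (ii), I would dualize (i) to obtain $\norm{\int_\R e^{t'\d_x^3}\d_x G(t')\,dt'}_{\hat{L}^r}\le C\norm{G}_{Y(\R;s_2,r)}$ for any conjugate-acceptable $(s_2,r)$. Composing with the homogeneous estimate via $TT^*$ yields the \emph{untruncated} bound
\[
	\norm{\int_\R e^{-(t-t')\d_x^3}\d_x F(t')\,dt'}_{X(\R;s_1,r)}\le C\norm{F}_{Y(\R;s_2,r)}.
\]
The retarded estimate \eqref{inho} then follows from the Christ-Kiselev lemma applied to the time variable, which is legitimate because the scaling gap $1/\tilde{q}(s_2,r)-1/q(s_1,r)=2/5>0$ supplies the strict separation Christ-Kiselev requires. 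The constraint $4/3<r<4$ is precisely what guarantees that both an acceptable $(s_1,r)$ and a conjugate-acceptable $(s_2,r)$ can exist for the same $r$; outside this range the two regions in the $(s,1/r)$-plane fail to overlap at the prescribed $r$.

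The main obstacle will be the Christ-Kiselev step in the mixed-norm setting $L^p_x L^q_t$ with space as the outer variable rather than time: one must interpret the time-integration operator as Banach-valued with values in $L^p_x$ and exploit that the outer $L^p_x$-norm is unaffected by truncation in time. This goes through cleanly thanks to $\tilde{q}<q$, but the bookkeeping of constants across interpolation, duality, and the truncation must be handled with care, as must matching the $\d_x$ factor with the derivative shift built into the definition of the conjugate-acceptable exponents $(\widetilde{p},\widetilde{q})$.
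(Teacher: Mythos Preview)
Your overall scheme---interpolate between endpoint estimates for (i), then dualize and apply Christ--Kiselev for (ii)---is exactly the paper's approach (which in turn defers to \cite{MS}). The structure for (ii) is fine, though the Christ--Kiselev gap is $1/\tilde q(s_2,r)-1/q(s_1,r)=2(s_2-s_1)/5+2/5$, not identically $2/5$; one checks it stays positive for the relevant pairs.

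There is, however, a genuine gap in your endpoint list for (i). The ``Hausdorff--Young bound $\|e^{-t\partial_x^3}f\|_{L^\infty_{t,x}}\le C\|f\|_{\hat L^1}$ at $r=1$'' is false: $\|f\|_{\hat L^1}=\|\hat f\|_{L^\infty}$ does not control any pointwise bound on the solution. You have duplicated the $r=\infty$ vertex (your ``isometry corner'' at $(s,1/r)=(0,0)$, where $\|\hat f\|_{L^1}$ does give the trivial $L^\infty_{t,x}$ bound) and mislabeled it. More importantly, the convex hull of the three legitimate vertices you name---$(0,0)$, $(1,1/2)$, $(-1/4,1/2)$---only reaches $1/r\le 1/2$, i.e.\ $r\ge 2$. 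To cover the portion of the acceptable region with $4/3<r<2$ you must use the Stein--Tomas inequality $\||\partial_x|^{1/3r}e^{-t\partial_x^3}f\|_{L^{3r}_{t,x}}\le C\|f\|_{\hat L^r}$ as an \emph{actual interpolation ingredient}, not merely observe that it shapes the boundary. This is precisely the paper's triad: Kato smoothing, Kenig--Ruiz, and Stein--Tomas. Once you replace the spurious $r=1$ vertex by Stein--Tomas, your argument goes through.
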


\begin{proof}
The inequality (\ref{ho}) is obtained by interpolating 
the notable Kato's smoothing effect, the Kenig-Ruiz estimate and 
the Stein-Tomas inequality. See \cite[Proposition 2.1]{MS} for the detail.
Moreover, the inhomogeneous estimate 
(\ref{inho}) follows from the combination of 
the homogeneous inequality (\ref{ho}) and the Christ-Kiselev 
lemma. See \cite[Proposition 2.5]{MS} for the detail. 
\end{proof}

\vskip2mm

To handle $X(I;s,r)$ and $Y(I;s,r)$ spaces,
the following lemma is useful.

\begin{lemma}\label{lem:gHolder}
Let $1< p_i,q_i <\I$ and $s_i \in \R$ for $i=1,2$.
Let $p,q,s$ be 
\[
	\frac1p = \frac{\theta}{p_1}+\frac{1-\theta}{p_2}, \quad
	\frac1q = \frac{\theta}{q_1}+\frac{1-\theta}{q_2}, \quad
	s = \theta s_1+(1-\theta)s_2
\] 
for some $\theta \in (0,1)$.
Then, there exists a positive constant $C$ such that 
the inequality
\[
	\norm{|\d_x|^s f}_{L^p_x L^q_t}
	\le C \norm{|\d_x|^{s_1} f}_{L^{p_1}_x L^{q_1}_t}^\theta \norm{|\d_x|^{s_2} f}_{L^{p_2}_x L^{q_2}_t}^{1-\theta}
\]
holds for any $f$ such that $|\d_x|^{s_1} f \in L^{p_1}_x L^{q_1}_t$ and $|\d_x|^{s_2} f \in L^{p_2}_x L^{q_2}_t$.
\end{lemma}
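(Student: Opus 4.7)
The plan is to deduce the lemma by Stein's complex interpolation theorem, applied to the analytic family of Fourier multipliers
\[
  T_z := |\d_x|^{s(z)}, \qquad s(z) := z\,s_1 + (1-z)\,s_2,
\]
so that $T_\theta = |\d_x|^s$, $T_0 = |\d_x|^{s_2}$, and $T_1 = |\d_x|^{s_1}$. I would first establish the inequality for Schwartz $f$ whose Fourier transform vanishes near the origin (to kill the singularity of $|\xi|^{s(z)}$ at $\xi=0$), and then extend to the class of $f$ in the hypothesis by a density/mollification argument.

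For the endpoint estimates, fix $z = j + i\tau$ with $j\in\{0,1\}$ and $\tau\in\R$, and factor
\[
  |\d_x|^{s(z)} f \;=\; |\d_x|^{i\tau(s_1-s_2)}\bigl(|\d_x|^{s_j} f\bigr).
\]
The operator $|\d_x|^{i\tau(s_1-s_2)}$ is a Fourier multiplier acting in $x$ only, with symbol $|\xi|^{i\tau(s_1-s_2)}$ of unit modulus whose derivatives satisfy the Mikhlin--H\"ormander condition with constants of polynomial order in $|\tau|$. Hence it is bounded on $L^{p_j}_x$ for each $1<p_j<\I$, and by Fubini also on the mixed-norm space $L^{p_j}_x L^{q_j}_t$, with operator norm at most $C(1+|\tau|)^N$ for some $N$ independent of $\tau$. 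This yields the two endpoint bounds
\[
  \|T_z f\|_{L^{p_j}_x L^{q_j}_t} \le C(1+|\tau|)^N \||\d_x|^{s_j} f\|_{L^{p_j}_x L^{q_j}_t}, \qquad \Re z = j.
\]

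To interpolate between them, pair against a holomorphic family of test functions. Given $g\in L^{p'}_x L^{q'}_t$ of unit norm, decompose $g(x,t) = \psi(x,t)\,h(x)$ with $h(x) := \|g(x,\cdot)\|_{L^{q'}_t}$ and $\|\psi(x,\cdot)\|_{L^{q'}_t} \equiv 1$. Letting $1/p(z) := z/p_1 + (1-z)/p_2$ and $1/q(z) := z/q_1 + (1-z)/q_2$, define
\[
  G_z(x,t) := |\psi(x,t)|^{q'/q(z)'}\, h(x)^{p'/p(z)'}\, \operatorname{sgn}\bigl(\overline{g(x,t)}\bigr),
\]
so that $G_\theta = \overline{g}$ and, on the two boundary lines, $\|G_{j+i\tau}\|_{L^{p_j'}_x L^{q_j'}_t}\le 1$. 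Applying the three-lines lemma to the holomorphic function $F(z) := \iint T_z f\cdot G_z\, dx\, dt$, which has admissible polynomial growth in $|\Im z|$ by the endpoint estimates above, gives
\[
  |F(\theta)| \le C \||\d_x|^{s_1} f\|_{L^{p_1}_x L^{q_1}_t}^{\theta}\, \||\d_x|^{s_2} f\|_{L^{p_2}_x L^{q_2}_t}^{1-\theta};
\]
taking the supremum over unit $g$ yields the lemma for Schwartz $f$, and density concludes the proof.

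The main technical obstacle is the careful verification that the test-function family $\{G_z\}$ is admissible in the mixed-norm setting, in the sense that both the $L^{p_j'}_x$ and $L^{q_j'}_t$ exponents interpolate correctly on the boundary lines and that $z\mapsto F(z)$ is holomorphic with the requisite polynomial growth in the strip. Once this standard but slightly intricate construction is set up, the Mikhlin bounds on imaginary powers of $|\d_x|$ supply the endpoint inputs almost for free, since the hypothesis $1<p_i,q_i<\I$ is exactly what is needed for those multiplier estimates.
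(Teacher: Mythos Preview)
Your proposal is correct and follows the standard route via Stein's complex interpolation for the analytic family $T_z=|\d_x|^{zs_1+(1-z)s_2}$. The paper itself does not give a proof here; it simply cites \cite[Lemma~3.3]{MS}, so there is no ``paper's own argument'' to compare against beyond that reference.

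One small point worth tightening: the passage from boundedness of $|\d_x|^{i\tau(s_1-s_2)}$ on $L^{p_j}_x$ to boundedness on $L^{p_j}_x L^{q_j}_t$ is not literally ``Fubini'', because in the norm $\|\cdot\|_{L^p_x L^q_t}$ the $t$-integration is \emph{inside} and the operator acts in $x$. What you are really invoking is the vector-valued Mikhlin--H\"ormander theorem: $|\d_x|^{i\sigma}$ is a Calder\'on--Zygmund operator, hence bounded on $L^{p_j}_x(L^{q_j}_t)$ since $L^{q_j}_t$ is a UMD space for $1<q_j<\I$, with operator norm still of polynomial growth in $|\sigma|$. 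With that correction, your endpoint bounds and the three-lines argument go through as written.
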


\begin{proof}
See \cite[Lemma 3.3]{MS}. 
\end{proof}

\vskip2mm

To evaluate the nonlinear term, we need the following lemma.

\vskip2mm
\begin{lemma}\label{lem:nlest}
Suppose that $8/5<\alpha < 10/3$.
Let $(s,r)$ be a pair which is acceptable and conjugate-acceptable.
Then, the following two assertions hold:

\vskip1mm
\noindent
(i) If $u \in S(I) \cap X(I;s,r)$ then $|u|^{2\alpha}u\in Y(I;s,r)$.
Moreover, there exists  a positive constant $C$ such that the inequality
\begin{align*}
\||u|^{2\alpha}u\|_{Y(I;s, r)}  \le
C\norm{u}_{S(I)}^{2\alpha} \|u\|_{X(I;s,r)}
\end{align*}
holds for any $u \in S(I)  \cap X(I;s,r)$. 

\vskip1mm
\noindent
(ii) There exists  a positive constant $C$ such that the inequality 
\begin{align*}
 & \||u|^{2\alpha}u-|v|^{2\alpha}v\|_{Y(I;s, r)} \\
 &{} \le
 C(\|u\|_{X(I;s,r)} + \|v\|_{X(I;s,r)})
 (\norm{u}_{S(I)} + \norm{v}_{S(I)})^{2\alpha-1} \norm{u-v}_{S(I)} \\
&\quad +
 C(\norm{u}_{S(I)} + \norm{v}_{S(I)})^{2\alpha}
\|u-v\|_{X(I; s ,r)}
\end{align*}
holds for any $u,v \in S(I) \cap X(I;s,r)$. 
\end{lemma}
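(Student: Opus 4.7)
The plan is to reduce both (i) and (ii) to a single ingredient: a suitable fractional chain/Leibniz rule that lets $|\partial_x|^s$ be applied to $|u|^{2\alpha}u$ while paying with one factor of $\||\partial_x|^s u\|$ and $2\alpha$ factors of $\|u\|$ at a non-derivative exponent. The rest is exponent arithmetic. Using the definition of the dual-pair $Y(I;s,r)$, namely $\widetilde{p}^{-1}=p^{-1}+4/5$ and $\widetilde{q}^{-1}=q^{-1}+2/5$, together with $(p(S),q(S))=(5\alpha/2,5\alpha)$, one has the key matching
\[
\frac{1}{\widetilde{p}}=\frac{2\alpha}{p(S)}+\frac{1}{p},\qquad \frac{1}{\widetilde{q}}=\frac{2\alpha}{q(S)}+\frac{1}{q},
\]
so that once the derivative is pushed onto a single factor, H\"older in space-time delivers the product structure.

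For step 2, I would invoke the Kenig--Ponce--Vega / Christ--Weinstein fractional chain rule in the mixed-norm form
\[
\bigl\||\partial_x|^s F(u)\bigr\|_{L^{\widetilde{p}}_x L^{\widetilde{q}}_t}
\le C\,\|F'(u)\|_{L^{p(S)/(2\alpha)}_x L^{q(S)/(2\alpha)}_t}\,\bigl\||\partial_x|^s u\bigr\|_{L^{p}_x L^{q}_t}
\]
applied to $F(z)=|z|^{2\alpha}z$, for which $|F'(z)|\le C|z|^{2\alpha}$. Since $\|\,|u|^{2\alpha}\,\|_{L^{p(S)/(2\alpha)}_x L^{q(S)/(2\alpha)}_t}=\|u\|_{S(I)}^{2\alpha}$, this yields the bound in (i). The mixed-norm chain rule follows by applying the one-dimensional (in $x$) chain rule at each fixed $t$ and then H\"older in $t$, or directly from the standard references; the Lipschitz/H\"older regularity of $F'$ required for the range of $s$ at hand is $\min(1,2\alpha)$-H\"older, which is satisfied under the standing assumption $\alpha>3/2+\sqrt{7/60}$ for all admissible $s$ appearing in the paper (i.e., $s(L)$, $s(Z)$, $s(K)$, as in Remark~\ref{rem:restriction}).

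For the difference estimate in (ii), I would use the integral representation
\[
|u|^{2\alpha}u - |v|^{2\alpha}v = (u-v)\int_0^1 G_1(\theta u+(1-\theta)v)\,d\theta + \overline{(u-v)}\int_0^1 G_2(\theta u+(1-\theta)v)\,d\theta,
\]
where $G_1,G_2$ satisfy $|G_j(z)|+|z||G_j'(z)|\lesssim |z|^{2\alpha}$. Applying $|\partial_x|^s$ and the fractional Leibniz rule of Kato--Ponce distributes the derivative onto either the $(u-v)$ factor or the integral factor; the first case reproduces the structure of (i) with $\|u-v\|_{X(I;s,r)}$ replacing $\|u\|_{X(I;s,r)}$, while the second case leaves $u-v$ at the $S(I)$-scale and applies the fractional chain rule to the pointwise nonlinearity $G_j(\theta u+(1-\theta)v)$, producing the mixed factor $(\|u\|_{S(I)}+\|v\|_{S(I)})^{2\alpha-1}$. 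Combining both pieces with the same H\"older matching as before yields the stated bound. The main technical obstacle is the verification of the fractional chain/Leibniz rule in the mixed-norm space-time setting for the specific range of $s$ (including possibly negative $s=s(K)$ when $\alpha$ is close to $2$), which can be bypassed by duality and interpolation but requires careful bookkeeping; otherwise the proof is routine once the exponent arithmetic is set up, and is essentially the $\hat{L}^r$-analog of \cite[Lemma 3.4]{MS}.
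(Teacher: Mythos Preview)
The paper does not prove this lemma; it simply cites \cite[Proposition~3.4]{MS}. Your outline --- fractional chain/Leibniz rule plus H\"older, with the exponent matching $1/\widetilde{p}=2\alpha/p(S)+1/p$, $1/\widetilde{q}=2\alpha/q(S)+1/q$ --- is indeed the scheme used there, and the arithmetic you wrote is correct.

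Two points need fixing, though. First, your justification of the mixed-norm chain rule (``apply the one-dimensional chain rule at each fixed $t$ and then H\"older in $t$'') does not work as stated: the norm is $L^p_xL^q_t$ with the $x$-integration \emph{outside}, so freezing $t$ does not give you access to the inner norm. The actual argument in \cite{MS} goes through a Littlewood--Paley/paraproduct decomposition in $x$ and vector-valued Calder\'on--Zygmund theory to transfer the Christ--Weinstein and Kato--Ponce rules to $L^p_x(L^q_t)$; this is where the nontrivial work sits, and you should not present it as immediate. Second, you invoke ``the standing assumption $\alpha>3/2+\sqrt{7/60}$'' to control the range of $s$, but the lemma is stated for $8/5<\alpha<10/3$ and is used in the paper (e.g.\ in Lemma~\ref{ssy}, Proposition~\ref{lsy}) under the weaker hypothesis $5/3\le\alpha<20/9$; the restriction $\alpha>3/2+\sqrt{7/60}$ arises only later, in Remark~\ref{rem:restriction}, for a different reason. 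You do correctly flag that $s$ may be negative (e.g.\ $s(K)$ for $\alpha$ near $2$), and in that regime the chain rule as you wrote it is not available; one has to argue by duality or interpolate against a companion estimate with $s>0$, which is part of what \cite{MS} does.
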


\begin{proof} 
See \cite[Proposition 3.4]{MS}. 
\end{proof}

%
%

\section{Stability estimates}\label{sec:stability}

\subsection{Stability for gKdV}
We consider the 
generalized KdV equation 
with the perturbation:
\begin{eqnarray}
\left\{
\begin{array}{l}
\displaystyle{
\pt_t\tilde{u}+\pt_x^3\tilde{u}=
\mu\pt_x(|\tilde{u}|^{2\alpha}\tilde{u})+\d_x e,
\qquad t,x\in\rre,}\\
\displaystyle{\tilde{u}(\hat{t},x)=\tilde{u}_{0}(x),
\qquad\qquad\qquad\qquad \ \  x\in\rre,}
\end{array}
\right.
\label{gK}
\end{eqnarray}
where the perturbation $e$ is small in a suitable sense
and the initial data $\tilde{u}_{0}$ is close to $u_{0}$. 

The estimates in this section are restricted to $5/3 \le \alpha <20/9$
but one can easily extend the results for $8/5< \alpha < 10/3$
by modifying the definitions of $L(I)$ and $N(I)$ spaces.
See Remark \ref{rem:alpha_for_stability} for the meaning of the above restriction on $\alpha$.

\vskip2mm

\begin{lemma}[Short time stability for gKdV]\label{ssy}
Assume $5/3\le\alpha<20/9$ and $\hat{t}\in\rre$. 
Let $I$ be a time interval containing $\hat{t}$ 
and let $\tilde{u}$ be a solution to (\ref{gK}) 
on $I\times\rre$ for some function $e$. 
Then, there exists $\varepsilon_{0}>0$ such that if 
$\tilde{u}$ and $e$ satisfy 
\[
	\|\tilde{u}\|_{S(I)}+\|\tilde{u}\|_{L(I)}
	\le\varepsilon_{0},
\]
and
\[
\|e^{-(t-\hat{t})\pt_{x}^{3}}
(u(\hat{t})-\tilde{u}(\hat{t}))
\|_{S(I)}
+
\|e^{-(t-\hat{t})
\pt_{x}^{3}}(u(\hat{t})-\tilde{u}(\hat{t}))
\|_{L(I)}
+
\|e\|_{N(I)}
\le\varepsilon,
\]
and if $0<\varepsilon<\varepsilon_{0}$ hold, 
then there exists a unique solution 
$u \in S(I) \cap L(I)$ to (\ref{gKdV}) satisfying 
\begin{eqnarray}
\|u-\tilde{u}\|_{S(I)}+
\|u-\tilde{u}\|_{L(I)}
&\le&C\varepsilon,\label{t10}\\
\|
|u|^{2\alpha}u-|\tilde{u}|^{2\alpha}\tilde{u}
\|_{N(I)}&\le&C\varepsilon,\label{t11}
\end{eqnarray}
If further $u(\hat{t})-\tilde{u}(\hat{t}) \in \hat{L}^{\alpha}$ holds then
\begin{equation}\label{t12}
	\|u-\tilde{u}\|_{L_{t}^{\infty}(I;\hat{L}_{x}^{\alpha})}
	\le \|u(\hat{t})-\tilde{u}(\hat{t})\|_{\hat{L}_{x}^{\alpha}}
	+ C\eps.
\end{equation}
\end{lemma}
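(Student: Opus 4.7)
The plan is a standard contraction-mapping argument for the difference $w := u - \tilde u$. Subtracting \eqref{gKdV} from \eqref{gK}, one sees that $w$ must satisfy
\[
\pt_t w + \pt_x^3 w = \mu\pt_x\bigl(|\tilde u + w|^{2\alpha}(\tilde u + w) - |\tilde u|^{2\alpha}\tilde u\bigr) - \pt_x e, \qquad w(\hat t) = u(\hat t)-\tilde u(\hat t),
\]
whose Duhamel form naturally defines a map $\Phi(w)$ on $S(I)\cap L(I)$. I would look for a fixed point of $\Phi$ in the closed ball
\[
B := \{w \in S(I)\cap L(I) : \|w\|_{S(I)} + \|w\|_{L(I)} \le A\varepsilon\}
\]
equipped with the $S(I)\cap L(I)$ metric, for an absolute constant $A$ to be chosen. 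Note that $w(\hat t)$ is not assumed to lie in $\hat L^\alpha$, so the homogeneous Strichartz estimate cannot be used on the first Duhamel term; instead we use the hypothesis directly.

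The core estimate, produced by applying \eqref{inho} of Proposition \ref{prop:ho_inho} twice (once with $(s_1,r)=(0,\alpha)$ for $S(I)$ and once with $(s_1,r)=(s(L),\alpha)$ for $L(I)$, in both cases with $(s_2,r)=(s(L),\alpha)$ so that the right-hand space is $N(I)=Y(I;s(L),\alpha)$), is
\[
\|\Phi(w)\|_{S(I)\cap L(I)} \le \|e^{-(t-\hat t)\pt_x^3} w(\hat t)\|_{S(I)\cap L(I)} + C\|e\|_{N(I)} + C\bigl\||\tilde u+w|^{2\alpha}(\tilde u+w) - |\tilde u|^{2\alpha}\tilde u\bigr\|_{N(I)}.
\]
The range $5/3\le\alpha<20/9$ ensures that both Strichartz pairs used above are admissible (see Remark \ref{rem:alpha_for_stability}). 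The first two terms on the right are bounded by $(1+C)\varepsilon$ by hypothesis. For the nonlinear piece, Lemma \ref{lem:nlest}(ii) applied to $(\tilde u+w, \tilde u)$ yields
\[
\bigl\||\tilde u+w|^{2\alpha}(\tilde u+w) - |\tilde u|^{2\alpha}\tilde u\bigr\|_{N(I)} \le C(\varepsilon_0+A\varepsilon)^{2\alpha}\bigl(\|w\|_{S(I)} + \|w\|_{L(I)}\bigr),
\]
where I used $\|\tilde u\|_{S(I)}+\|\tilde u\|_{L(I)}\le\varepsilon_0$ and the fact that $w\in B$. Choosing $\varepsilon_0$ small enough that $C(\varepsilon_0+A\varepsilon_0)^{2\alpha} \le \tfrac12$ and then $A$ sufficiently large makes $\Phi$ a self-map of $B$; a completely analogous application of Lemma \ref{lem:nlest}(ii) to the difference $\Phi(w_1)-\Phi(w_2)$ gives the contraction property in the same metric. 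The resulting fixed point $w$ defines the desired solution $u = \tilde u + w$ and yields \eqref{t10} directly, while \eqref{t11} follows from the above bound on the nonlinear difference.

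Finally, for \eqref{t12}, I would invoke the same \eqref{inho} of Proposition \ref{prop:ho_inho}, using the fact that it also controls the $L^\infty_t(I;\hat L^\alpha_x)$ norm of the inhomogeneous Duhamel integrals in terms of the $N(I)$ norm of the integrand. Since $e^{-(t-\hat t)\pt_x^3}$ is an isometry of $\hat L^\alpha$, the linear contribution equals $\|u(\hat t)-\tilde u(\hat t)\|_{\hat L^\alpha}$, and the nonlinear and perturbative Duhamel terms are each $O(\varepsilon)$ by the estimates already established. There is no genuine obstacle in the proof: the real work has been done upstream in Proposition \ref{prop:ho_inho} and Lemma \ref{lem:nlest}, and the only point requiring care is to run the fixed-point argument simultaneously in the two norms $S(I)$ (the scattering norm) and $L(I)$ (the $L^{3\alpha}_{t,x}$-type norm used in the refined Stein--Tomas inequality), since both play a role in the nonlinear estimate of Lemma \ref{lem:nlest}(ii) and neither can be controlled from the other.
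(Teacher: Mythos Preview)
Your proof is correct and uses the same core ingredients as the paper: the Duhamel formula for $w=u-\tilde u$, the inhomogeneous estimate \eqref{inho} of Proposition~\ref{prop:ho_inho}, and the nonlinear difference bound of Lemma~\ref{lem:nlest}(ii). The only cosmetic difference is that the paper first invokes the local well-posedness theory and then establishes \eqref{t10}--\eqref{t12} as a priori bounds via a continuity (bootstrap) argument on $F(t)=\|w\|_{S(0,t)}+\|w\|_{L(0,t)}$, whereas you construct $w$ directly by a contraction on the ball $B$; these are standard equivalent formulations of the same argument.
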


\begin{proof}
By the local well-posedness theory, 
it suffices to show (\ref{t10}), (\ref{t11}), 
and (\ref{t12}) 
as a priori estimates. 
Let $w:=u-\tilde{u}$. Then $w$ satisfies 
\begin{eqnarray*}
w(t)
&=&e^{-(t-\hat{t})\pt_{x}^{3}}
w(\hat{t})
+\mu\int_{\hat{t}}^{t}
e^{-(t-t')\pt_{x}^{3}}
\pt_x\{|\tilde{u}+w|^{2\alpha}(\tilde{u}+w)
-|\tilde{u}|^{2\alpha}\tilde{u}\}dt'\\
& &-\int_{\hat{t}}^{t}
e^{-(t-t')\pt_{x}^{3}}\d_x e(t')dt'.
\end{eqnarray*}
For $t\in I$, set
\begin{eqnarray*}
F(t)&:=&\|w\|_{S(0,t)}+
\|w\|_{L(0,t)},\\
G(t)&:=&
\|
|\tilde{u}+w|^{2\alpha}(\tilde{u}+w)
-|\tilde{u}|^{2\alpha}\tilde{u}
\|_{N(0,t)},
\end{eqnarray*}
where we use abbreviation such as $S(0,t)=S([0,t))$ to simplify notation. 
Then the assumptions on $u(\hat{t})$, 
$\tilde{u}(\hat{t})$ and $e$, and Proposition \ref{prop:ho_inho} \eqref{inho} 
lead us to
\begin{eqnarray*}
F(t)
&\le&\|e^{-(t-\hat{t})
\pt_{x}^{3}}(u(\hat{t})-\tilde{u}(\hat{t}))
\|_{S(0,t)}
+
\|e^{-(t-\hat{t})
\pt_{x}^{3}}(u(\hat{t})-\tilde{u}(\hat{t}))
\|_{L(0,t)}\\
& &+CG(t)+C\|e\|_{N(0,t)}\\
&\le&C\varepsilon+CG(t).
\end{eqnarray*}
Lemma \ref{lem:nlest} (ii) yields 
\begin{eqnarray}
G(t)
&\le&
C(\|\tilde{u}+w\|_{L(0,t)}
+\|\tilde{u}\|_{L(0,t)})\label{t15}\\
& &\qquad\qquad\times(\|\tilde{u}+w\|_{S(0,t)}
+\|\tilde{u}\|_{S(0,t)})^{2\alpha-1}\|w\|_{S(0,t)}
\nonumber\\
& &+C
(\|\tilde{u}+w\|_{S(0,t)}
+\|\tilde{u}\|_{S(0,t)})^{2\alpha}
\|w\|_{L(0,t)}
\nonumber\\
&\le&C(\varepsilon_{0}
+F(t))^{2\alpha}F(t).
\nonumber
\end{eqnarray}
Hence 
\[
F(t)
\le C\varepsilon+C\varepsilon^{2\alpha}_{0}
F(t)+CF(t)^{2\alpha+1}.
\]
Since $F(0)=0$, by the continuity argument, 
we have that if $C\varepsilon^{2\alpha}_{0}<1$, 
then 
$F(t)\le C\varepsilon$ for any 
$t\in I$.
Hence we have (\ref{t10}). Combining 
(\ref{t10}) and (\ref{t15}), we have (\ref{t11}). 

Now we suppose that
$w(\hat{t})=u(\hat{t})-\tilde{u}(\hat{t}) \in \hat{L}^{\alpha}$. 
Then, Proposition \ref{prop:ho_inho} \eqref{ho} and \eqref{inho} yield
\begin{eqnarray*}
\lefteqn{\|w\|_{L_{t}^{\infty}(I)
\hat{L}_{x}^{\alpha}}}\\
&\le&
\|w(\hat{t})\|_{\hat{L}_{x}^{\alpha}}
+C\|
|\tilde{u}+w|^{2\alpha}(\tilde{u}+w)
-|\tilde{u}|^{2\alpha}\tilde{u}
\|_{N(I)}+C\|e\|_{N(I)} \\
&\le& \|w(\hat{t})\|_{\hat{L}_{x}^{\alpha}}
+C\varepsilon,
\end{eqnarray*}
which is (\ref{t12}). 
This competes the proof of Lemma \ref{ssy}.
\end{proof}

\vskip2mm

\begin{proposition}[Long time stability for gKdV]\label{lsy}
Assume $5/3\le\alpha<20/9$ and $\hat{t}\in\rre$. 
Let $I\subset \R$ be an interval containing $\hat{t}$. 
Let $\tilde{u}$ be a solution to (\ref{gK}) 
on $I\times\rre$ for some function $e$. 
Assume that $\tilde{u}$ satisfies
\begin{eqnarray*}
\|\tilde{u}\|_{S(I)}
+\|\tilde{u}\|_{L(I)}
&\le&M,
\end{eqnarray*}
for some $M>0$. 
Then there exists 
$\varepsilon_{1}=\varepsilon_{1}(M)>0$ 
such that if
\begin{eqnarray*}
\|e^{-t\d_x^3}(u(\hat{t})-\tilde{u}(\hat{t}))\|_{S(I)}+
\|e^{-t\d_x^3}(u(\hat{t})-\tilde{u}(\hat{t}))\|_{L(I)}+
\|e\|_{N(I)}
\le\varepsilon
\end{eqnarray*}
and $0<\varepsilon<\varepsilon_{1}$, 
then there exists a solution $u$ to (\ref{gKdV}) on 
$I\times\rre$ satisfies 
\begin{align}
\|u-\tilde{u}\|_{S(I)}+
\|u-\tilde{u}\|_{L(I)}
&{}\le C\varepsilon, \label{tt10}\\
\||u|^{2\alpha}u
-|\tilde{u}|^{2\alpha}\tilde{u}
\|_{N(I)}
&{}\le C\varepsilon,\label{tt11}
\end{align}
where the constant $C$ depends only on $M$. 
Further, if $u(\hat{t})-\widetilde{u}(\hat{t}) \in \hat{L}^{\alpha}$
for some $\hat{t}\in I$
then, it also holds that
\begin{equation}\label{tt12}
	\|u-\tilde{u}\|_{L_{t}^{\infty}(I;\hat{L}_{x}^{\alpha})}
	\le 
	\| u(\hat{t}) - \tilde{u}(\hat{t}) \|_{\hat{L}^{\alpha}}+
	C\varepsilon.
\end{equation}
\end{proposition}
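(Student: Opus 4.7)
The plan is to partition the interval $I$ into finitely many subintervals on each of which Lemma \ref{ssy} applies, and then iterate. Since $\norm{\tilde{u}}_{S(I)} + \norm{\tilde{u}}_{L(I)} \le M < \I$, by absolute continuity of the integrals defining the $S$ and $L$ norms we can split $I = \bigcup_{j=0}^{J-1} I_j$ with $I_j = [t_j, t_{j+1})$, $t_0 = \hat{t}$, into $J = J(M,\varepsilon_0)$ consecutive intervals satisfying $\norm{\tilde{u}}_{S(I_j)} + \norm{\tilde{u}}_{L(I_j)} \le \varepsilon_0/2$ on each piece, where $\varepsilon_0$ is the threshold from Lemma \ref{ssy}.

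Next I would set up the induction: assuming that a solution $u$ to \eqref{gKdV} has been constructed on $[\hat{t}, t_j)$ together with the bounds $\norm{u-\tilde u}_{S([\hat t,t_j))} + \norm{u-\tilde u}_{L([\hat t,t_j))} \le C_j \varepsilon$ and $\norm{|u|^{2\alpha}u - |\tilde u|^{2\alpha}\tilde u}_{N([\hat t, t_j))} \le C_j \varepsilon$, I need to verify the hypothesis of Lemma \ref{ssy} on $I_j$ at the new base point $t_j$. To this end, apply $e^{-(t-t_j)\d_x^3}$ to the Duhamel identity written with base point $\hat{t}$:
\begin{equation*}
e^{-(t-t_j)\d_x^3}(u(t_j)-\tilde u(t_j)) = e^{-(t-\hat t)\d_x^3}(u(\hat t)-\tilde u(\hat t)) + \mu\int_{\hat t}^{t_j} e^{-(t-t')\d_x^3}\d_x\bigl(|u|^{2\alpha}u - |\tilde u|^{2\alpha}\tilde u\bigr)dt' - \int_{\hat t}^{t_j} e^{-(t-t')\d_x^3}\d_x e(t')dt'.
\end{equation*}
Taking the $S(I_j) \cap L(I_j)$ norm and using the homogeneous/inhomogeneous generalized Strichartz estimates of Proposition \ref{prop:ho_inho}, I bound the right-hand side by $\varepsilon + C\cdot C_j\varepsilon + C\varepsilon \le C_{j+1}'\varepsilon$. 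Provided $C_{j+1}'\varepsilon < \varepsilon_0$, Lemma \ref{ssy} produces the solution $u$ on $I_j$ together with the quantitative estimates \eqref{t10}–\eqref{t11}, yielding new constants $C_{j+1}$.

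Since the recursive constants $C_j$ depend only on $j$ and the Strichartz constant, one obtains $C_J \le C(M)$ after $J$ steps. Choosing $\varepsilon_1(M) := \varepsilon_0 / (2 C(M))$ ensures that the hypothesis of Lemma \ref{ssy} is satisfied at every step for $\varepsilon < \varepsilon_1$, and summing the per-interval bounds \eqref{t10}–\eqref{t11} over $j = 0, \ldots, J-1$ yields the global estimates \eqref{tt10}–\eqref{tt11}. Finally, if $u(\hat t) - \tilde u(\hat t) \in \hat L^\alpha$, I iterate the $L^\infty_t \hat L^\alpha_x$ bound \eqref{t12}: after step $j$, $\norm{u(t_{j+1})-\tilde u(t_{j+1})}_{\hat L^\alpha} \le \norm{u(t_j)-\tilde u(t_j)}_{\hat L^\alpha} + C\cdot C_j\varepsilon$, so telescoping gives \eqref{tt12} with a constant depending on $M$.

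The main obstacle is the potentially geometric accumulation of constants $C_j$ across the $J(M)$ iterations; the fact that $J$ is determined by the a priori bound on $\tilde u$, while $\varepsilon_1$ must be chosen \emph{after} $J$ is fixed, is what forces the final smallness threshold to depend on $M$. A secondary technical point is verifying that the Strichartz estimate applied to the Duhamel integral over $[\hat t, t_j)$ actually decouples into the previously controlled nonlinear difference term (via Lemma \ref{lem:nlest} (ii)) plus the already-small error $\norm{e}_{N(I)}$, uniformly in $j$; this is where the choice of the $N(I)$ norm and the acceptable/conjugate-acceptable pair $(s(L),\alpha)$ for $5/3 \le \alpha < 20/9$ is essential.
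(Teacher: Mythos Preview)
Your proposal is correct and follows essentially the same route as the paper: partition $I$ into finitely many subintervals on which $\tilde u$ is small in $S\cap L$, iterate Lemma \ref{ssy}, and control the geometric growth of the per-step errors by choosing $\varepsilon_1$ in terms of the number of subintervals. The paper carries out the same scheme with explicit bookkeeping (it bounds the number of subintervals by $1+(2M/\varepsilon_0)^{q(L)}$ via a H\"older argument and tracks the errors as $\eta_j=\beta^{j-1}C_1\varepsilon$), and for \eqref{tt12} it applies the inhomogeneous Strichartz estimate once on all of $I$ rather than telescoping \eqref{t12}, but these are cosmetic differences.
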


\begin{proof}
The proof is the combination of Lemma \ref{ssy} 
and an iterative procedure. Without loss of 
generality, we may assume that $\hat{t}=0$ and 
$\inf I=0$. Now let
$\varepsilon_{0} 
$ 
be 
the constant given in Lemma \ref{ssy}.
We first show the following 
claim: There 
exists a positive integer 
$N\le1+(2M/\varepsilon_{0})^{q(S)}$ 
such that 
\begin{eqnarray*}
I=\bigcup_{j=1}^{N}I_{j},\qquad I_{j}
=[t_{j-1},t_{j}]
\qquad\text{with}
\qquad
\|\tilde{u}\|_{S(I_{j})}+
\|\tilde{u}\|_{L(I_{j})}
\le\varepsilon_{0}
\end{eqnarray*}
for any $1\le j\le N$. 
Suppose $M>\varepsilon_{0}$, otherwise there is 
nothing to prove. Take $t_{1}\in I$ so that 
$t_{0}<t_{1}$ and $\|\tilde{u}\|_{S(I_{1})}+
\|\tilde{u}\|_{L(I_{1})}
=\varepsilon_{0}$. Similarly, as long as 
$\|\tilde{u}\|_{S((t_{j-1},\sup I))}+
\|\tilde{u}\|_{L((t_{j-1},\sup I))}
>\varepsilon_{0}$ we define $t_{j}\in I$ so that 
$t_{j-1}<t_{j}$ and $\|\tilde{u}\|_{S(I_{j})}+
\|\tilde{u}\|_{L(I_{j})}
=\varepsilon_{0}$.
Now we show that $N\le1+(2M/\varepsilon_{0})^{q(S)}$ 
by the contradiction argument. 
Suppose that $1+(2M/\varepsilon_{0})^{q(S)}<N\le\infty$. 
Let $N'$ be an integer defined by $N'=N$ if $N$ is finite 
and $N'$ any integer satisfying $1+(2M/\varepsilon_{0})^{q(S)}<N'$ if $N$ is infinite. 

For $1\le j\le N'$, set
\[
f_{j}(x):=
\|\tilde{u}(\cdot,x)\|_{L_{t}^{q(S)}(I_{j})},
\qquad
g_{j}(x):=
\||\pt_{x}|^{s(L)}\tilde{u}(\cdot,x)
\|_{L_{t}^{q(L)}(I_{j})}.
\]
Then 
\begin{eqnarray}
M&\ge&\|\tilde{u}\|_{S((0,t_{N'}))}=\biggl\|
\biggl(\|\tilde{u}(\cdot,x)
\|_{L_{t}^{q(S)}((0,t_{N'}))}^{q(S)}
\biggl)^{\frac{1}{q(S)}}\biggl\|_{L_{x}^{p(S)}}
\label{v3}\\
&=&\biggl\|
\biggl(\sum_{j=1}^{N'}|f_{j}(x)|^{q(S)}
\biggl)^{\frac{1}{q(S)}}\biggl\|_{L_{x}^{p(S)}}.
\nonumber
\end{eqnarray}
In a similar way, we have
\begin{eqnarray}
M\ge\biggl\|
\biggl(\sum_{j=1}^{N'}|g_{j}(x)|^{q(L)}
\biggl)^{\frac{1}{q(L)}}
\biggl\|_{L_{x}^{p(L)}}.
\label{v4}
\end{eqnarray}
Noting $p(S)<q(S)$ and $p(L)=q(L)$, by 
the H\"{o}lder inequality, (\ref{v3}) and (\ref{v4}), 
we obtain  
\begin{eqnarray*}
{\varepsilon_{0}N'}
&=&
\sum_{j=1}^{N'}
(\|\tilde{u}\|_{S(I_{j})}+\|\tilde{u}\|_{L(I_{j})})
\\
&\le&
(N')^{1-\frac{1}{p(S)}}
\biggl(\sum_{j=1}^{N'}\|\tilde{u}\|_{S(I_{j})}^{p(S)}
\biggl)^{\frac{1}{p(S)}}+
(N')^{1-\frac{1}{p(L)}}
\biggl(\sum_{j=1}^{N'}
\|\tilde{u}\|_{L(I_{j})}^{p(L)}
\biggl)^{\frac{1}{p(L)}}
\\
&=&
(N')^{1-\frac{1}{p(S)}}
\biggl\|
\biggl(\sum_{j=1}^{N'}|f_{j}(x)|^{p(S)}
\biggl)^{\frac{1}{p(S)}}
\biggl\|_{L_{x}^{p(S)}}\\
& &+
(N')^{1-\frac{1}{p(L)}}
\biggl\|
\biggl(\sum_{j=1}^{N'}|g_{j}(x)|^{p(L)}
\biggl)^{\frac{1}{p(L)}}
\biggl\|_{L_{x}^{p(L)}}
\\
&\le&
(N')^{1-\frac{1}{q(S)}}
\biggl\|
\biggl(\sum_{j=1}^{N'}|f_{j}(x)|^{q(S)}
\biggl)^{\frac{1}{q(S)}}
\biggl\|_{L_{x}^{p(S)}}\\
& &+
(N')^{1-\frac{1}{q(L)}}
\biggl\|
\biggl(\sum_{j=1}^{N'}|g_{j}(x)|^{q(L)}
\biggl)^{\frac{1}{q(L)}}
\biggl\|_{L_{x}^{p(L)}}
\\
&\le&((N')^{1-\frac{1}{q(S)}}
+(N')^{1-\frac{1}{q(L)}})M.
\end{eqnarray*}
Since $q(L)>q(S)$, we obtain
$N'\le1+(2M/\varepsilon_{0})^{q(L)}$. 
This contradicts the definition of $N'$, 
which proves the claim. 

From Lemma \ref{ssy}, we have that there 
exists a positive constant $C_{0}$ such that 
if a positive constant $\eta_{j}$ satisfies 
\begin{equation}\label{g1}
\|e^{-(t-t_{j-1})\pt_{x}^{3}}
w(t_{j-1})\|_{S(I_{j})}
+
\|e^{-(t-t_{j-1})\pt_{x}^{3}}
w(t_{j-1})\|_{L(I_{j})}
\le\eta_{j},
\end{equation}
and
\begin{eqnarray}
\eta_{j}\le\varepsilon_{0},\label{g11}
\end{eqnarray}
then we have
\begin{eqnarray}
\|w\|_{S(I_{j})}+\|w\|_{L(I_{j})}
&\le& C_{0}\eta_{j},\label{g15}\\
\||u|^{2\alpha}u
-|\tilde{u}|^{2\alpha}\tilde{u}
\|_{N(I_{j})}&\le&C_{0}\eta_{j}.
\label{g2}
\end{eqnarray}

On the other hand, since $w(t_{j-1})$ 
satisfies the integral equation 
\begin{eqnarray*}
\lefteqn{e^{-(t-t_{j-1})\pt_{x}^{3}}
w(t_{j-1})}\\
&=&e^{-t\pt_{x}^{3}}w(0)
+\mu\int_{0}^{t_{j-1}}
e^{-(t-t')\pt_{x}^{3}}
\pt_{x}(|u|^{2\alpha}u
-|\tilde{u}|^{2\alpha}\tilde{u})dt'\\
& &-\int_{0}^{t_{j-1}}
e^{-(t-t')\pt_{x}^{3}} \d_x e(t')dt',
\end{eqnarray*}
Proposition \ref{inho} and the assumption yield
\begin{eqnarray}
\lefteqn{\|e^{-(t-t_{j-1})\pt_{x}^{3}}
w(t_{j-1})\|_{S(I_{j})}+
\|e^{-(t-t_{j-1})\pt_{x}^{3}}
w(t_{j-1})\|_{L(I_{j})}}
\label{g5}\\
&\le&
\|e^{-t\pt_{x}^{3}}
w(0)\|_{S(I_{j})}
+
\|e^{-t\pt_{x}^{3}}
w(0)\|_{L(I_{j})}
\nonumber\\
& &+C
\||u|^{2\alpha}u
-|\tilde{u}|^{2\alpha}\tilde{u}\|_{N(0,t_{j-1})}
+\|e\|_{N(0,t_{j-1})}
\nonumber\\
&\le&C_{1}\varepsilon+C_{1}
\sum_{k=1}^{j-1}
\|
|u|^{2\alpha}u
-|\tilde{u}|^{2\alpha}\tilde{u}\|_{N(I_{j})}.
\nonumber
\end{eqnarray}
Let $\beta\ge C_{0}C_{1}+1$ and 
let
$\eta_{j}:=\beta^{j-1}C_{1}\varepsilon$
for $1\le j\le N$.
Then we easily see that 
$\eta_{1}<\eta_{2}<\cdots<\eta_{N}=\beta^{N-1}C_{1}\varepsilon.$
Here, we take  
$$
\varepsilon_{1} 
:=C_{1}^{-1}\beta^{-N+1}\varepsilon_{0}.
$$
Then, we easily see that $\eta_{j}$ satisfies (\ref{g11}) for all $1\le j \le N$
if $\varepsilon\le\varepsilon_{1}$. 
Let us show that if $\varepsilon\le\varepsilon_{1}$, 
then 
 (\ref{g1}) also holds for 
$1\le j\le N$, by an induction argument on $j$,
which yields (\ref{g15}) and (\ref{g2}) 
hold for $1\le j \le N$.

For $j=1$, 
 (\ref{g1}) is 
fulfilled by the assumption. 
Assume for induction that 
 (\ref{g1}) hold 
for $1\le j\le J$ ($1\le J \le N-1$). 
Since (\ref{g2}) holds for $1\le j\le J$, from 
(\ref{g5}), we have 
\begin{eqnarray*}
\lefteqn{\|e^{-(t-t_{J})\pt_{x}^{3}}
(u(t_{J})-\tilde{u}(t_{J}))\|_{S(I_{J+1})}}\\
& &+
\|e^{-(t-t_{J})\pt_{x}^{3}}
(u(t_{J})-\tilde{u}(t_{J}))\|_{L(I_{J+1})}
\\
&\le&C_{1}\varepsilon
+C_{0}C_{1}\sum_{k=1}^{J}\eta_{k}
\le\eta_{1}+(\beta-1)\sum_{k=1}^{J}\eta_{k}\\
&=&\beta^{-J}\eta_{J+1}
+(\beta-1)\sum_{k=1}^{J}
\beta^{k-J-1}\eta_{J+1}
=\eta_{J+1}.
\end{eqnarray*}
Therefore 
(\ref{g1}) holds for $j=J+1$, and hence for all $1\le j \le N$ by induction.

From (\ref{g15}), we have
\[
\|w\|_{S(I)} + \|w\|_{L(I)}
\le \sum_{j=1}^{N} ( \|w\|_{S(I_{j})} + \|w\|_{L(I_{j})} )
\le C_{0}\sum_{j=1}^{N}\eta_{j}
\le\beta^{N}\varepsilon
\]
if $\eps \le \eps_1$.
This proves (\ref{tt10}).
In a similar way, \eqref{g2} implies \eqref{tt11}.
Finally if $w(0) \in \hat{L}^{\alpha}$, we use \eqref{tt11} to obtain
\begin{align*}
	\norm{w}_{L^\I(I,\hat{L}^{\alpha})}
	\le{}& \norm{w(0)}_{\hat{L}^{\alpha}}
	+ C \norm{|u|^{2\alpha}u -|\tilde{u}|^{2\alpha} \tilde{u} }_{N(I)}
	+ C \norm{e}_{N(I)} \\
	\le{}& \norm{w(0)}_{\hat{L}^{\alpha}} + C\eps
\end{align*}
This completes the proof of 
Proposition \ref{lsy}. 
\end{proof}

\subsection{A version of small data scattering}
As a simple consequence of Proposition \ref{lsy}, 
we have the following result, which is Theorem \ref{thm:sds2}.
\begin{corollary}\label{cor:sds}
Let $5/3 \le \alpha< 20/9$. For any $M>0$ there exists $\delta=\delta(M)>0$ such that
if $u_0 \in \hat{L}^\alpha$ satisfies $\norm{u_0}_{\hat{L}^\alpha} \le M$ and
\[
	\eps := \norm{e^{-t\d_x^3} u_0 }_{L(\R)} \le \delta
\]
then a corresponding solution $u(t)$ to \eqref{gKdV} exists globally and
scatters for both time directions. Further, it holds that
\[
	\norm{u}_{S(\R)} + \norm{u}_{L(\R)} \le M + CM^{2\alpha}\eps
\]
for some constant $C$.
\end{corollary}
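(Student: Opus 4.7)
The plan is to apply the long-time stability estimate (Proposition \ref{lsy}) with the approximate solution chosen to be the \emph{free Airy evolution} of the data. Set
\[
	\widetilde{u}(t) := e^{-t\d_x^3} u_0, \qquad e(t,x) := -\mu\, |\widetilde{u}|^{2\alpha}\widetilde{u}.
\]
Since $\d_t \widetilde{u} + \d_x^3 \widetilde{u} = 0$, the pair $(\widetilde{u},e)$ satisfies the perturbed equation \eqref{gK} on $I = \R$ with $\widetilde{t} = 0$ and $\widetilde{u}(0)=u_0$. Thus the free-evolution difference at the initial time vanishes identically: $\|e^{-t\d_x^3}(u(0)-\widetilde{u}(0))\|_{S(\R)} + \|e^{-t\d_x^3}(u(0)-\widetilde{u}(0))\|_{L(\R)} = 0$.

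Next, I bound $\widetilde{u}$ in the norms required by Proposition \ref{lsy}. The homogeneous Strichartz estimate (Proposition \ref{prop:ho_inho}(i)) with the acceptable pair $(0,\alpha)$ gives $\|\widetilde{u}\|_{S(\R)} \le C\|u_0\|_{\hat L^\alpha} \le CM$, while by hypothesis $\|\widetilde{u}\|_{L(\R)} = \eps$. Hence with $M' := CM + \eps$ one has $\|\widetilde{u}\|_{S(\R)} + \|\widetilde{u}\|_{L(\R)} \le M'$. The nonlinear estimate Lemma \ref{lem:nlest}(i) (with $(s,r)=(s(L),\alpha)$, which is acceptable and conjugate-acceptable for $5/3 \le \alpha < 20/9$ by Remark \ref{rem:alpha_for_stability}) then yields
\[
	\|e\|_{N(\R)} = \||\widetilde{u}|^{2\alpha}\widetilde{u}\|_{N(\R)}
	\le C\|\widetilde{u}\|_{S(\R)}^{2\alpha}\|\widetilde{u}\|_{L(\R)}
	\le C M^{2\alpha}\eps.
\]

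Now let $\eps_1(M')$ be the constant produced by Proposition \ref{lsy} for the threshold $M'$, and set
\[
	\delta(M) := \min\left\{\,1,\ \frac{\eps_1(2CM)}{C M^{2\alpha}}\,\right\}.
\]
For $\eps \le \delta(M)$ the perturbation bound $\|e\|_{N(\R)} \le \eps_1(M')$ holds, so Proposition \ref{lsy} produces a solution $u$ to \eqref{gKdV} on all of $\R$ satisfying
\[
	\|u-\widetilde{u}\|_{S(\R)} + \|u-\widetilde{u}\|_{L(\R)} \le C\eps,
\]
and therefore $\|u\|_{S(\R)} + \|u\|_{L(\R)} \le M + CM^{2\alpha}\eps$ after absorbing constants.

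Finally, scattering follows from the finiteness of the scattering norm $\|u\|_{S(\R)} + \|u\|_{L(\R)} < \infty$ by a standard Cauchy argument: applying Lemma \ref{lem:nlest}(i) on a tail $(T,\infty)$ gives $\||u|^{2\alpha}u\|_{N(T,\infty)} \to 0$ as $T \to \infty$, and then the inhomogeneous Strichartz estimate \eqref{inho} shows that $e^{t\d_x^3}u(t)$ is Cauchy in $\hat L^\alpha$. The same argument on $(-\infty,-T)$ gives backward scattering. The main conceptual point, and the only mildly subtle one, is that although $\|u_0\|_{\hat L^\alpha}$ is allowed to be large, the relevant smallness for controlling the Duhamel term is the $L(\R)$-norm of the free evolution, which is exactly what Lemma \ref{lem:nlest} is tailored to exploit; the long-time stability machinery of Proposition \ref{lsy} then propagates this smallness across the entire real line, with the threshold $\delta$ degrading in $M$ through the iteration count in that proposition.
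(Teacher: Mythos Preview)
Your proof is correct and follows the same approach as the paper: take $\widetilde{u}$ to be the free Airy evolution, check that $u(0)-\widetilde{u}(0)=0$, bound $\|\widetilde{u}\|_{S(\R)}+\|\widetilde{u}\|_{L(\R)}\le CM$ via the homogeneous Strichartz estimate, control $\|e\|_{N(\R)}\le CM^{2\alpha}\eps$ via Lemma~\ref{lem:nlest}(i), and apply Proposition~\ref{lsy}. Your write-up is slightly more detailed than the paper's (you spell out the choice of $\delta$ and the Cauchy-in-$\hat L^\alpha$ argument for scattering, whereas the paper just invokes the scattering criterion from \cite{MS}), but the substance is identical.
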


\begin{proof}
We just apply Proposition \ref{lsy} with $\widetilde{u}(t,x) = e^{-t\d_x^3} u_0$, $I=\R$, and $\hat{t}=0$.
Remark that
\[
	\norm{\widetilde{u}}_{S(\R)} + \norm{\widetilde{u}}_{L(\R)} 
	\le C \norm{u_0}_{\hat{L}^\alpha} \le CM
\]
follows from \eqref{ho} and by assumption.
Further, $u(0)-\widetilde{u}(0) \equiv0$ and
\[
	\norm{e}_{N(\R)}
	= \norm{|\widetilde{u}|^{2\alpha} \widetilde{u} }_{N(\R)}
	\le C\norm{\widetilde{u}}_{S(\R)}^{2\alpha} \norm{\widetilde{u}}_{L(\R)}
	\le C M^{2\alpha} \eps \le \eps_1
\]
for sufficiently small $\delta=\delta(M)$, where $\eps_1$ is the constant given in
Proposition \ref{lsy}.
Therefore, the assumption of Proposition \ref{lsy} is satisfied.
\end{proof}

%
%

\section{Proof of main theorems}\label{sec:proof}

\subsection{Two tools}
For the proof of Theorem \ref{thm:minimal}, we introduce the following two tools.

The first one is a linear profile decomposition for $\hat{L}^{\alpha}$-bounded sequences.
Let us define a set of deformations as follows
\begin{equation}\label{eq:symG}
	G := \{ D(h) A(s) T(y) P(\xi) \ |\ \Gamma = (h, \xi, s, y) \in 2^\Z \times
\R\times \R \times \R \}.
\end{equation}
We often identify $\mathcal{G} \in G$ with
a corresponding parameter $\Gamma \in 2^\Z \times\R\times \R \times \R$
if there is no fear of confusion.
Let us now introduce a notion of orthogonality between two families of deformations.

\begin{definition}\label{def:orthty}
We say two families of deformations $\{\mathcal{G}_n\} \subset G$ and 
$\{ \widetilde{\mathcal{G}}_n \} \subset G$ are \emph{orthogonal} if
corresponding parameters $\Gamma_n ,\widetilde{\Gamma}_n 
\in 2^\Z \times \R\times \R \times \R$ satisfies 
\begin{multline}\label{eq:def_orthty}
	\lim_{n\to\I} \bigg(\abs{\log \frac{h_n}{\widetilde{h}_n}}+\abs{ \xi_n - \frac{\widetilde{h}_n}{h_n} \widetilde{\xi}_n } 
	+ \abs{ s_n -\( \frac{h_n}{\widetilde{h}_n}\)^3\widetilde{s}_n}(1+|\xi_n|) \\
	+ \abs{y_n- \frac{h_n}{\widetilde{h}_n} \widetilde{y}_n-3\( s_n -\( \frac{h_n}{\widetilde{h}_n}\)^3\widetilde{s}_n\)(\xi_n)^2 } \bigg)= +\I.
\end{multline}
\end{definition}

\begin{remark}\label{rem:meaning_of_orthty}
 It follows from  \eqref{eq:orthty_representation} that
\begin{multline*}
	(\widetilde{\mathcal{G}}_n)^{-1} \mathcal{G}_n
	= e^{i\gamma_n} D\( \frac{h_n}{\widetilde{h}_n} \) P\(\xi_n - \frac{\widetilde{h}_n}{h_n} \widetilde{\xi}_n\) \\
	A\( s_n -\( \frac{h_n}{\widetilde{h}_n}\)^3\widetilde{s}_n\) S\(3\( s_n -\( \frac{h_n}{\widetilde{h}_n}\)^3\widetilde{s}_n\)\xi_n\) \\ T\(y_n- \frac{h_n}{\widetilde{h}_n} \widetilde{y}_n-3\( s_n -\( \frac{h_n}{\widetilde{h}_n}\)^3\widetilde{s}_n\)\xi_n^2 \),
\end{multline*}
where $\Gamma_n$ and $\widetilde{\Gamma}_n$ are parameters associated
with $\mathcal{G}_n$ and $\widetilde{\mathcal{G}}_n$, respectively,
and $\gamma_n$ is a real constant given by $\Gamma_n$ and $\widetilde{\Gamma}_n$.
Intuitively,  the orthogonality given in Definition \ref{def:orthty} implies at least one of the deformations in the right hand side produces bad behavior.
\end{remark}

\begin{theorem}[Linear profile decomposition for ``real valued'' 
functions]\label{thm:pd_r}
Let $4/3<\alpha<2$ and $\alpha'<\sigma < \frac{6\alpha}{3\alpha-2}$.
Let $u=\{u_n\}_n$ be a sequence of real-valued functions in $B_M$.
Then, there exist $\psi^j \in B_M $, $r_n^j \in B_{(2j+1)M}$ and 
pairwise orthogonal families of deformations $\{\mathcal{G}^j_n\}_n \subset G$ 
($j=1,2,\dots$) parametrized by $\{ \Gamma_n^j = (h_n^j, \xi_n^j, s_n^j , y_n^j) \}_n$
such that, extracting a subsequence in $n$,
\begin{equation}\label{eq:pf:decomp}
	u_n = \sum_{j=1}^l \Re (\mathcal{G}^j_n \psi^j) + r_n^l
\end{equation}
for all $l\ge1$ and
\begin{equation}\label{eq:pf:smallness}
	\limsup_{n\to\I} \norm{ e^{-t\d_x^3} r_n^l }_{L(\R)} \to 0
\end{equation}
as $l\to\I$. For all $j \ge1$, 
\[
	\text{either} \quad \xi^j_n=0,\,\ \forall n\ge 0 \quad \text{or} \quad \xi_n^j \to \I \text{ as }n\to\I.
\]
Moreover, a decoupling inequality
\begin{equation}\label{eq:pf:Pythagorean}
	\limsup_{n\to\I} \ell(u_n) \ge \(\sum_{j=1}^J c_j^{1-\sigma}\ell(\psi^j)^\sigma \)^{1/\sigma}
	+ \limsup_{n\to\I} \ell( r_n^J ) 
\end{equation}
holds for all $J\ge1$, where
\[
	c_j =
	\begin{cases}
	1& \text{ if }\xi_n^j \equiv 0 ,\\
	2& \text{ if }\xi_n^j \to \I \text{ as }n\to\I.
	\end{cases}
\]
Furthermore, it holds that
\begin{equation}\label{eq:pf:bddness1}
	c_j\norm{\psi^j}_{\hat{L}^{\alpha}} \le \limsup_{n\to\I} \norm{u_n}_{\hat{L}^{\alpha}}
\end{equation}
for any $j$.
\end{theorem}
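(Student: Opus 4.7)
The plan is an iterative concentration-compactness extraction in the spirit of Keraani, adapted to the $\hat{L}^\alpha$ and generalized-Morrey framework. The driving input is the refined Stein-Tomas inequality (Theorem \ref{thm:rST}): interpolating it with the ordinary Stein-Tomas bound $\norm{e^{-t\d_x^3}f}_{L(\R)}\lesssim \norm{f}_{\hat{L}^\alpha}$ yields
\[
\norm{e^{-t\d_x^3}f}_{L(\R)} \le C\,\ell(f)^{\theta}\norm{f}_{\hat{L}^\alpha}^{1-\theta}
\]
for some $\theta\in(0,1)$, so once one can force $\ell(r_n^l)\to 0$ as $l\to\I$ while keeping the $\hat{L}^\alpha$-norms of the remainders bounded, the smallness \eqref{eq:pf:smallness} follows at once.

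The extraction step runs as follows. Starting from $r_n^0:=u_n$, suppose $\limsup_n \ell(r_n^{l-1})=a_l>0$. By definition of the generalized Morrey norm, some dyadic interval $\tau^{j_n}_{k_n}$ carries a definite portion of $a_l$: this identifies a scale $h_n=2^{-j_n}$ and a Fourier location $\xi_n^l$. The rescaled, modulated sequence $(D(h_n)P(\xi_n^l))^{-1}r_n^{l-1}$ then has Fourier mass concentrated on a fixed bounded set; further absorption of physical drift and Airy-time drift via $T(y_n^l)$ and $A(s_n^l)$ produces a family $\mathcal{G}_n^l\in G$ such that, along a subsequence, $(\mathcal{G}_n^l)^{-1}r_n^{l-1}$ converges weakly in $\hat{L}^\alpha$ (using reflexivity for $1<\alpha<\I$) to a nonzero $\psi^l$ with $\ell(\psi^l)\gtrsim a_l$. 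Passing to a further subsequence I may assume $\xi_n^l$ is either identically $0$ or diverges, by absorbing any bounded $\xi$ into the profile itself. Orthogonality of distinct families follows by the standard contradiction: a failure of \eqref{eq:def_orthty} would, via \eqref{eq:orthty_representation}, make $(\mathcal{G}_n^k)^{-1}\mathcal{G}_n^j\psi^j$ converge weakly to a nonzero element, contradicting that $\psi^k$ is the weak limit \emph{after} removing the $j$th profile. Reality of $u_n$ forces $\hat u_n(-\xi)=\overline{\hat u_n(\xi)}$, so concentration at $\xi_n^l\to+\I$ is automatically paired with a conjugate concentration at $-\xi_n^l\to-\I$; packaging the two halves together produces the factor $\Re(\mathcal{G}_n^l\psi^l)$ and the constant $c_l=2$.

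I expect the decoupling inequality \eqref{eq:pf:Pythagorean} to be the main obstacle, which is precisely why $\ell$ rather than $\norm{\cdot}_{\hat{L}^\alpha}$ appears on the left. In $\hat{L}^\alpha$ itself a Pythagorean identity fails: the multiplier-like deformations $T$ and $A$ become pure phases on the Fourier side, so Fourier transforms of $\mathcal{G}_n^j\psi^j$ do not converge pointwise a.e.\ and the Brezis-Lieb lemma is unavailable. The generalized Morrey quasi-norm sidesteps this because $T$ and $A$ leave $|\hat f|$ pointwise invariant and hence preserve every local $L^2$ norm on a dyadic interval; only the $P$-modulations shift the dyadic partition, and this shift is controlled by at most one interval width after taking the infimum in \eqref{def:ell}. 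Orthogonality of the $(h,\xi)$-parameters then places the Fourier mass of distinct profiles on essentially disjoint dyadic intervals, and summing local $L^2$ contributions with the $\ell^\sigma_{j,k}$-weight gives the inequality with exponent $\sigma$. The uniform $\hat L^\alpha$-bound on $r_n^l$ (at most $(2l+1)M$ by the triangle inequality after $l$ successive real-part subtractions), together with the $\ell^\sigma$-summability of $\ell(\psi^j)^\sigma$ coming from \eqref{eq:pf:Pythagorean}, forces $\ell(\psi^j)\to 0$, so \eqref{eq:pf:smallness} follows from the refined Strichartz bound above. The bound \eqref{eq:pf:bddness1} is a consequence of weak lower semicontinuity of $\norm{\cdot}_{\hat{L}^\alpha}$, again with the extra factor $c_j$ accounting for the conjugate-frequency pair.
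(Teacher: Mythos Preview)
Your overall architecture is right, but the extraction step hides a genuine gap. You assume that $a_l=\limsup_n\ell(r_n^{l-1})>0$ lets you locate $(h_n,\xi_n)$ from a mass-carrying dyadic interval and then ``absorb drift'' via $(s_n,y_n)$ to obtain a nonzero weak limit $\psi^l$ with $\ell(\psi^l)\gtrsim a_l$. This fails: take $r_n=e^{in\partial_x^4}g$ with $\hat g\in C_c^\infty$ (the paper's own example after Theorem~\ref{thm:BSS}). Then $|\widehat{r_n}|=|\hat g|$, so $\ell(r_n)=\ell(g)>0$ for all $n$, yet for \emph{every} choice of $(h_n,\xi_n,s_n,y_n)$ one has $(\mathcal G_n)^{-1}r_n\rightharpoonup 0$, since on the Fourier side one is trying to cancel $e^{in\xi^4}$ with a phase of degree at most three. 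A positive $\ell$-value measures size, not a compactness defect. The paper separates these two notions: the abstract extraction (Theorem~\ref{thm:pd1}) is driven by $\eta(r)=\sup\{\ell(\phi):\phi\text{ a weak limit of }(\mathcal G_n)^{-1}r_n\}$, which tautologically produces nonzero profiles with $\ell(\psi^l)\ge\tfrac12\eta(r^{l-1})$, and the resulting $\eta(r^l)\to 0$ is then upgraded to Strichartz smallness via a separate concentration-compactness theorem (Theorem~\ref{thm:cc}). That theorem is the real work: it needs a \emph{Strichartz} lower bound (not an $\ell$ lower bound) to locate $(s_n,y_n)$, passing through an $L^\infty_{t,x}$ pointwise extraction (Lemma~\ref{lem:pd:step2}) that the Morrey norm alone cannot supply.

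Two smaller points. First, your interpolation $\norm{e^{-t\partial_x^3}r_n^l}_{L}\lesssim\ell(r_n^l)^\theta\norm{r_n^l}_{\hat L^\alpha}^{1-\theta}$ is needlessly dangerous because $\norm{r_n^l}_{\hat L^\alpha}$ grows like $l$; the last line of \eqref{eq:ST3} already gives $\norm{e^{-t\partial_x^3}f}_L\lesssim\norm{f}_{\hat M^\alpha_{2,\sigma}}$ directly, so if you did have $\limsup_n\ell(r_n^l)\to 0$ you would be done without interpolation. Second, your decoupling sketch (``orthogonality of $(h,\xi)$ places Fourier mass on disjoint intervals'') covers only half the cases; when $(h,\xi)$ agree and orthogonality comes through $(s,y)$, the Fourier supports overlap. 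The paper's Lemma~\ref{lem:decouple} handles that case by showing the cross terms $\langle\F\mathcal G_n\psi,\F r_n\rangle_{\tau}$ tend to zero uniformly over the relevant $\tau$ via weak convergence and an Ascoli--Arzel\`a equicontinuity argument.
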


The second tool to prove Theorem \ref{thm:minimal} 
is uniform boundedness of solutions with highly oscillating initial data.
The assumption \eqref{asmp:gKdV_NLS} is necessary for this boundedness.

\begin{theorem}\label{thm:ENK}
Let $12/7<\alpha<2$. 
Assume \eqref{asmp:gKdV_NLS}.
Let 
$\phi\in\hat{L}_{x}^{\alpha}(\rre)$ 
be a complex valued function such that
\[
	\ell(\phi) < 2^{1-\frac1\sigma} d_{+}.
\] 
Let $\{\xi_{n}\}_{n\ge1}\subset(0,\infty)$ 
with $\xi_{n}\to\infty$ and let 
$\{t_{n}\}_{n\ge1}\subset\rre$ be such that 
$-3t_{n}\xi_{n}$ converges to some $T_{0}
\subset[-\infty,\infty]$. Then for $n$ 
sufficiently large, a corresponding $\hat{L}^\alpha$-solution $u_{n}$ to (\ref{gKdV}) with the 
initial condition 
\begin{eqnarray}
u_{n}(t_{n},x)=A(t_{n})
Re[P(\xi_{n})\phi(x)]
\label{tttg}
\end{eqnarray}
exists globally in time.
Moreover, the solution $u_{n}$ satisfies a uniform
space-time bound
$\|u_{n}\|_{S(\rre)}+\|u_{n}\|_{L(\rre)}\le C$,
where $C$ is a positive constant depending only 
on $\phi$. 
\end{theorem}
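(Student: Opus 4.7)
The plan is to construct, for each sufficiently large $n$, a global approximate solution $\widetilde{u}_n$ to \eqref{gKdV} whose nonlinear behavior is captured by a single global scattering solution $v$ of \eqref{NLS}, and then upgrade this to the genuine solution $u_n$ via the long-time stability Proposition \ref{lsy}. The mechanism is the Galilean-type identity \eqref{eq:GtA}, which, in the limit $\xi_n \to \infty$, converts the Airy evolution of $P(\xi_n)\phi$ into a Schr\"odinger evolution of $\phi$ (up to a rapidly oscillating phase and a physical translation); its nonlinear analogue replaces that Schr\"odinger flow with an NLS flow.

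Concretely, I would first combine the hypothesis $\ell(\phi) < 2^{1-1/\sigma} d_+$ with assumption \eqref{asmp:gKdV_NLS} to place $\phi$ strictly below the NLS scattering threshold $d_{\mathrm{NLS}}$, after suitably rescaling the NLS nonlinearity by the first Fourier coefficient $C_1$ of $|\cos\theta|^{2\alpha}\sin\theta = \sum_{k \ge 1} C_k \sin(k\theta)$ (which accounts for the explicit constant in \eqref{asmp:gKdV_NLS}). This yields a global $\hat{L}^\alpha$-solution $v(\tau, x)$ to the \eqref{NLS} equation with coefficient $-\mu C_1$ and uniform $\hat{L}^\alpha$-Strichartz bounds; the NLS datum is adapted to $T_0$, namely $v(T_0) = \phi$ when $T_0 \in \R$ and $\phi$ is prescribed as the scattering datum of $v$ at $\tau = T_0$ when $T_0 = \pm\infty$. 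Motivated by \eqref{eq:GtA}, one then sets
\[
\widetilde{u}_n(t,x) := \Re\bigl[\, e^{-i t \xi_n^3}\, P(\xi_n)\, T(-3\xi_n^2 t)\, v\bigl(3\xi_n t + T_0^{(n)}, \cdot\bigr) \,\bigr](x),
\]
with $T_0^{(n)} \to T_0$ chosen so that $\widetilde{u}_n(t_n) - u_n(t_n) \to 0$ in $\hat{L}^\alpha$ as $n \to \infty$. The uniform $S(\R) \cap L(\R)$ bounds for $\widetilde{u}_n$ are then inherited from those of $v$.

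Plugging $\widetilde{u}_n$ into \eqref{gKdV} and using the NLS equation for $v$, the error $e_n$ splits into a non-resonant part and a small residue. Writing $\widetilde{u}_n = \Re[e^{i\Phi_n} w_n]$ with $\Phi_n(t,x) = x\xi_n - t\xi_n^3$ and $w_n$ a slower-varying complex amplitude, the Fourier expansion above produces
\[
|\widetilde{u}_n|^{2\alpha}\widetilde{u}_n = \Re\bigl[e^{i\Phi_n}\, C_1\, |w_n|^{2\alpha} w_n\bigr] + \sum_{k \ne 1} \Re\bigl[e^{i k \Phi_n}\, G_k(w_n)\bigr],
\]
in which the $k = 1$ term is exactly cancelled by the NLS nonlinearity (this is why the coefficient $-\mu C_1$ is chosen), while the $k \ne 1$ terms oscillate off the Airy dispersion curve, since $k \xi_n^3 \ne (k\xi_n)^3$ for $k \ne 0, \pm 1$. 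The residue comes from approximating the Airy flow of $\phi$ by $\phi$ itself on the $\xi_n^{-1}$-timescale. The main obstacle is controlling both of these contributions in the $N(\R)$-norm: for the non-resonant terms, non-stationary-phase integration by parts (used inside the inhomogeneous Strichartz bound of Proposition \ref{prop:ho_inho}(ii)) extracts a gain $O(\xi_n^{-1})$ per term, while the fractional nonlinearity is handled by truncating the Fourier series in $k$ and using the decay of the coefficients $C_k$; for the residue, one invokes continuity of the NLS flow in $\hat{L}^\alpha$ (Theorem \ref{thm:LWP_N1}) together with the fact that $A(t - t_n)\phi \to \phi$ on the $\xi_n^{-1}$-timescale.

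Once $\|e_n\|_{N(\R)} \to 0$ and $\|e^{-(t-t_n)\d_x^3}(u_n(t_n) - \widetilde{u}_n(t_n))\|_{S(\R) \cap L(\R)} \to 0$ are both established, Proposition \ref{lsy} applied with $\hat{t} = t_n$ yields global existence of $u_n$ together with the uniform space-time bound $\|u_n\|_{S(\R)} + \|u_n\|_{L(\R)} \le C(\phi)$ for all sufficiently large $n$, which is the conclusion of the theorem.
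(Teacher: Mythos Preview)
Your overall architecture is right and matches the paper's: NLS approximation via \eqref{eq:GtA}, the Fourier expansion $|\cos\theta|^{2\alpha}\sin\theta = \sum_k C_k \sin(k\theta)$ with the $k=1$ term cancelled by the NLS nonlinearity, non-resonance of the higher harmonics, and closure via Proposition~\ref{lsy}. But two structural ingredients are missing, and without them the error control you describe would fail.

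First, you take $\widetilde{u}_n$ to be the NLS-based formula for \emph{all} $t\in\R$ and ask for $\|e_n\|_{N(\R)}\to 0$. When you plug this $\widetilde{u}_n$ into \eqref{gKdV}, the error contains a term $\Re[e^{-ix\xi_n-it\xi_n^3}(\partial_x^3 v)(-3\xi_n t,\cdot)]$ coming from the cubic dispersion acting on the slow amplitude. This term carries no oscillatory gain (it sits on the $k=1$ mode), and there is no way to make it small in $N(\R)$: in $L^1_t\hat L^\alpha_x$ you get a factor of the length of the time interval, which is infinite. The paper avoids this by splitting time into a middle window $|t|\le T/(3\xi_n)$, where the NLS formula is used and this term is $O(T\xi_n^{-1/4})$ after regularization, and outer regions $|t|>T/(3\xi_n)$, where $\widetilde{u}_n$ is taken to be the \emph{linear} Airy flow of $\widetilde{u}_n(\pm T/(3\xi_n))$. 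On the outer regions the only error is the full nonlinearity $|\widetilde u_n|^{2\alpha}\widetilde u_n$, and this is small in $N$ because $\|\widetilde u_n\|_{L(|t|>T/(3\xi_n))}\to 0$ as $T\to\infty$ uniformly in $n$ (Lemmas~\ref{osi}--\ref{gal}), using scattering of $v$.

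Second, you work directly with the NLS solution $v$ launched from $\phi\in\hat L^\alpha$, but then quantities like $\partial_x^3 v$ or $\partial_x^2(|v|^{2\alpha+1-k}v^k)$ need not belong to any usable space. The paper first replaces $\phi$ by $P_{|\xi|\le \xi_n^{1/4}}\phi$ and lets $v_n$ be the corresponding NLS solution; persistence of regularity (Lemma~\ref{pri}) then gives quantitative bounds $\||\partial_x|^s v_n\|\lesssim \xi_n^{s/4}$, which are exactly what make the middle-interval errors $R_n^1,R_n^2,R_n^4$ go to zero. Your non-stationary-phase step for the $k\ge 2$ harmonics is correct in spirit, but in the paper it is implemented as an explicit time-antiderivative correction $e_{n,1}$ (gaining $\xi_n^{-2}$), which throws derivatives back onto $v_n$; without the regularization those derivative terms are uncontrolled. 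So the two missing pieces---time-splitting and frequency regularization of the NLS data---are not cosmetic; they are what makes the $N$-norm error actually tend to zero.
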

We postpone the proof of Theorems \ref{thm:pd_r} and \ref{thm:ENK}
to Sections \ref{sec:pd} and \ref{sec:ENK}, respectively.

\subsection{Proof of Theorem \ref{thm:minimal}}

\subsection*{Step 1}
Take a minimizing sequence $\{u_n\}_n$ as follows;
\begin{eqnarray}
	u_n \in B_M \setminus \mathcal{S}_{+} , \quad
	\ell(u_n) \le d_{+} + \frac1n.\label{mins}
\end{eqnarray}
We apply the linear profile decomposition theorem (Theorem \ref{thm:pd_r}) 
to the sequence $\{u_n\}_n$.
Then, up to subsequence, we obtain a decomposition
\begin{eqnarray}
	u_n = \sum_{j=1}^l \Re (\mathcal{G}^j_{n} \psi^j) + r_n^l
	\label{pdec}
\end{eqnarray}
for $n,l\ge1$.
By extracting subsequence and changing notations if necessary, we may assume that
for each $j$ and $\{x_n^j\}_{n,j} = \{\log h_n^j\}_{n,j}, \{t_n^j\}_{n,j}$, $\{y_n^j\}_{n,j},
\{ 3 \xi_n^j t_n^j \}$,
either $x_n^j \equiv 0$, $x_n^j \to \I$ as $n\to\I$, or 
$x_n^j \to -\I$ as $n\to\I$ holds.
\subsection*{Step 2}
In this step and the next step,
we shall show that $\psi^j \equiv 0$ except for at most one $j$.

Suppose not. Then, by means of \eqref{eq:pf:Pythagorean}, we have
$c_j^{\frac1\sigma-1} \ell (\psi^j) < d_{+}$ for all $j$.
Let us define $V_n^j(t,x)$ as follows:
\begin{itemize}
\item When $\xi_n \equiv 0$, we let
$V_n^j(t) = D(h_n^j) T(y_n^j) \Psi^j ((h_n^j)^3t + t_n^{j})$, where
$\Psi^j (t)$ is a nonlinear profile associated with
$(\Re \psi^j,t_n^j)$, that is,
\begin{itemize}
\item if $t_n^j\equiv0$ then $\Psi^j (t)$ is a solution to \eqref{gKdV} with $\Psi^j(0) = \Re \psi^j$;
\item if $t_n^j \to \I$ as $n\to \I$ then $\Psi^j (t)$ is a solution to \eqref{gKdV}
that scatters forward in time to $e^{-t\d_x^3} \Re \psi^j$;
\item if $t_n^j \to -\I$ as $n\to \I$ then $\Psi^j (t)$ is a solution to \eqref{gKdV}
that scatters backward in time to $e^{-t\d_x^3} \Re \psi^j$;
\end{itemize}
\item When $\xi_n \to  \I$, we let
$V_n^j(t) = D(h_n^j) T(y_n^j) \Psi_n^j ((h_n^j)^3t + t_n^{j})$, where
$\Psi_n^j$ is a solution to \eqref{gKdV} with the initial condition
\[
	\Psi_n^j(t_n^{j}) = A(t_n^j) \Re (P(\xi_n^j)\psi^j ).
\]
\end{itemize}
Let us show the following two lemmas.
\begin{lemma}[uniform bound on the approximate solution]\label{lem:pf_bdd1}
There exists $M>0$ such that 
\[
	\norm{ V_n^j }_{K(\rre_{+})} +
	\norm{ V_n^j }_{Z(\rre_{+})}
	\le M
\]
holds for any $j,n \ge 1$. 
\end{lemma}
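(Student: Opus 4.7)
The plan is, for each $j$, to bound $V_n^j$ uniformly in $n$ by a case analysis on whether $\xi_n^j$ is trivial or tends to infinity, and then to upgrade to a bound uniform in $j$ by exploiting the decay of $\ell(\psi^j)$ forced by the Pythagorean inequality \eqref{eq:pf:Pythagorean}.

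As a preliminary reduction, I would record that for any $\hat{L}^\alpha$-admissible pair $(s,r)$, the norm $\|\cdot\|_{X(I;s,r)}$ is invariant under the dilation $D(h_n^j)$, the spatial translation $T(y_n^j)$, and the time shift by $t_n^j$, provided $I$ is rescaled accordingly; hence it suffices to estimate the nonlinear profile in $K+Z$ on the push-forward of $\R_+$. In Case A ($\xi_n^j \equiv 0$), the reality of $u_n$ together with the form of $\mathcal{G}_n^j$ forces $\psi^j$ to be real, so no extra term arises from the $\Re$-projection. The Pythagorean inequality under the Step~2 supposition yields the strict bound $\ell(\psi^j) < d_+$; since $\ell$ is invariant under the linear Airy flow (via the pointwise identity $|\widehat{P(\eta)A(t)u}|=|\widehat{A(t)P(\eta)u}|$), this transfers to $\ell(\Psi^j(T))$ at a suitable initial time $T$ (namely $T=0$ if $t_n^j\equiv 0$, or a very late/early time if $t_n^j\to\pm\I$, using the wave-operator construction). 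Combined with the $\hat{L}^\alpha$ bound from \eqref{eq:pf:bddness1}, the definition of $d_+$ forces $\Psi^j(T) \in \mathcal{S}_+$, so $\Psi^j \in S(\R)\cap L(\R)$; Proposition~\ref{prop:ho_inho} and Lemma~\ref{lem:nlest} then upgrade this control to $K(\R)\cap Z(\R)$ with a constant depending only on $\psi^j$.

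In Case B ($\xi_n^j \to \I$), the Pythagorean inequality gives $\ell(\psi^j) < 2^{1-1/\sigma}d_+$, which is exactly the hypothesis of Theorem~\ref{thm:ENK} applied to $\phi=\psi^j$. That theorem produces global $\Psi_n^j$ together with the uniform-in-$n$ bound $\|V_n^j\|_{S(\R)} + \|V_n^j\|_{L(\R)} \le C(\psi^j)$, again promoted to bounds in $K$ and $Z$ via Strichartz and Lemma~\ref{lem:nlest}.

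For uniformity in $j$, the Pythagorean inequality yields $\sum_j c_j^{1-\sigma}\ell(\psi^j)^\sigma \le d_+^\sigma$, so $\ell(\psi^j)\to 0$ as $j\to\I$. Choosing $J_0$ so large that $\ell(\psi^j)$ lies below the small-data threshold for all $j>J_0$, one obtains $\|V_n^j\|_{K(\R_+)}+\|V_n^j\|_{Z(\R_+)} \lesssim \ell(\psi^j)$ uniformly via Corollary~\ref{cor:sds} in Case~A and via the small-data limit of Theorem~\ref{thm:ENK} in Case~B. The finitely many $j\le J_0$ are handled by the case-wise bounds, each yielding a finite constant; taking the maximum together with the small-data bound furnishes the uniform constant. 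The main obstacle is Case~B, which rests decisively on Theorem~\ref{thm:ENK} and on the sharp constant $2^{1-1/\sigma}$ appearing in assumption~\eqref{asmp:gKdV_NLS}.
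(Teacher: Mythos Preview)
Your core argument matches the paper's: split into the case $\xi_n^j\equiv 0$ (where $\ell(\psi^j)<d_+$ together with $\psi^j\in B_M$ from \eqref{eq:pf:bddness1} forces forward scattering of the nonlinear profile $\Psi^j$, and one then reads off a bound on $K((t_n^j,\infty))\cap Z((t_n^j,\infty))$ from the scattering criterion) and the case $\xi_n^j\to\infty$ (which is exactly Theorem~\ref{thm:ENK}); the promotion from $S\cap L$ to $K\cap Z$ via Proposition~\ref{prop:ho_inho} and Lemma~\ref{lem:nlest} is also correct. The paper's own proof stops at this point, establishing for each fixed $j$ a bound uniform in $n$ but with constant depending on $\psi^j$; despite the wording of the statement, it does not prove uniformity in $j$ here, deferring that to Lemma~\ref{lem:pf:small1} and Proposition~\ref{prop:pf:lps2}.

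Your additional paragraph on uniformity in $j$ therefore goes beyond the paper's proof of this lemma. It is correct in outline and renders the literal statement (a single $M$ for all $j,n\ge 1$) honest. One minor imprecision: the small-data bound you quote, $\|V_n^j\|_{K\cap Z}\lesssim \ell(\psi^j)$, is sharper than what Corollary~\ref{cor:sds} actually states (that result bounds $\|u\|_{S\cap L}$ by $\|u_0\|_{\hat L^\alpha}+CM^{2\alpha}\varepsilon$, i.e.\ in terms of the $\hat L^\alpha$-norm, not $\ell$). This does not matter for your purpose, since $\|\psi^j\|_{\hat L^\alpha}\le M$ uniformly in $j$ by \eqref{eq:pf:bddness1}, so the desired uniform bound still follows.
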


\begin{proof}
The case $\xi_n^j \to \I$ follows from Theorem \ref{thm:ENK}.
Hence, here we assume that $\xi_n^j \equiv 0$.
Note that $c_j=1$.
Since the deformations $D(h_n^j)$ and $T(y_n^j)$ leave the left hand side invariant,
it suffices to show that
\[
	\norm{  \Psi^j }_{K((t_n^j,\I))} +
	\norm{  \Psi^j }_{Z((t_n^j,\I))}
\]
is bounded uniformly in $n$.
Since $\ell (\psi^j) < d_{+}$ by assumption, 
$\Psi^j$ scatters forward in time.
Hence, if $t_n^j\equiv 0$ or if $t_n^j\to\I$ as $n\to\I$ then 
\[
	\norm{  \Psi^j }_{K((t_n^j,\I))} +
	\norm{ \Psi^j }_{Z((t_n^j,\I))}
	\le 
	\norm{  \Psi^j }_{K((0,\I))} +
	\norm{  \Psi^j }_{Z((0,\I))}
	<\I
\]
by scattering criterion.
If $t_n \to-\I$ as $n\to\I$ then $\Psi^j$ scatters for both time directions and so
\[
	\norm{ \Psi^j }_{K((t_n^j,\I))} +
	\norm{ \Psi^j }_{Z((t_n^j,\I))}
	\le 
	\norm{ \Psi^j }_{K(\R)} +
	\norm{ \Psi^j }_{Z(\R)}
	<\I.
\]
Hence, we obtain Lemma \ref{lem:pf_bdd1}. 
\end{proof}

\vskip2mm

Next lemma is concerned with the decoupling of 
the nonlinear profile. 

\begin{lemma}\label{lem:pf:orth}
For any $j \neq k$, we have
\[
	\lim_{n\to\I} \norm{ V_n^j V_n^k 
	}_{L^{\frac{p(S)}2}_x L^{\frac{q(S)}2}_t(\rre)}
	= 0,
\]
where $(p(S),q(S))=(\frac52\alpha,5\alpha)$.
\end{lemma}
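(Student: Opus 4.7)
The plan is a density-plus-support-disjointness argument, in the spirit of standard bilinear orthogonality estimates for dispersive equations. First I would approximate the nonlinear profiles $\Psi^j$ and $\Psi^k$ (or $\Psi_n^j$, $\Psi_n^k$ in the high-frequency case) by smooth functions $\tilde{\Psi}^j$, $\tilde{\Psi}^k$ that are compactly supported in space-time, at the cost of an error arbitrarily small in $S(\mathbb{R})\cap L(\mathbb{R})$. When $\xi_n^j\equiv 0$, this is immediate from the finiteness of the Strichartz norms of $\Psi^j$. When $\xi_n^j\to\infty$, I would invoke the NLS approximation underlying Theorem \ref{thm:ENK}: $\Psi_n^j$ is, up to vanishing error, a fixed NLS solution modulated by $P(\xi_n^j)$ and composed with the Schr\"odinger flow $e^{-3i\xi_n^j t\partial_x^2}$, and that fixed NLS solution can then be approximated by a smooth compactly supported function.

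Next, I would perform the change of variables $(t,x)\mapsto((h_n^j)^3 t+t_n^j,\,h_n^j x-y_n^j)$ to put $V_n^j$ in a fixed ``$j$-frame''. Since $(p(S)/2,q(S)/2)$ is the scaling-critical pair for this change (the dilation factor is exactly absorbed by the change of measure), the claim reduces to
\[
\lim_{n\to\infty}\bigl\| \tilde{\Psi}^j \cdot \bigl((\mathcal{G}_n^j)^{-1}\mathcal{G}_n^k\bigr)\tilde{\Psi}^k \bigr\|_{L^{p(S)/2}_x L^{q(S)/2}_t(\mathbb{R})} = 0.
\]
By the identity \eqref{eq:orthty_representation} recalled in Remark \ref{rem:meaning_of_orthty}, the composition $(\mathcal{G}_n^j)^{-1}\mathcal{G}_n^k$ unfolds as a dilation by $h_n^j/h_n^k$, a modulation by $\xi_n^j-(h_n^k/h_n^j)\xi_n^k$, combined Airy and Schr\"odinger flows through time $s_n^j-(h_n^j/h_n^k)^3 s_n^k$, and a spatial translation by $y_n^j-(h_n^j/h_n^k)y_n^k-3(s_n^j-(h_n^j/h_n^k)^3 s_n^k)(\xi_n^j)^2$.

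By Definition \ref{def:orthty}, at least one of these parameters diverges along a subsequence. If the scale ratio $h_n^j/h_n^k$ diverges, then a direct scaling computation against the fixed compact support of $\tilde{\Psi}^j$ forces the norm of the product to vanish. If the time shift or the spatial translation diverges, then for $n$ large the space-time support of the transformed $\tilde{\Psi}^k$ leaves that of $\tilde{\Psi}^j$ and the product is identically zero. The modulation-only case $|\xi_n^j-(h_n^k/h_n^j)\xi_n^k|\to\infty$ (with every other parameter bounded) is the most delicate, since $P(\xi)$ preserves both support and modulus; the key is that this parameter is coupled to the Airy flow via \eqref{eq:GtA}, which converts a frequency shift into an effective spatial translation proportional to $|\xi_n^j-\cdots|$ times the time parameter, reinstating support disjointness after a further frequency-localized approximation of $\tilde{\Psi}^k$.

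The main obstacle is that the Airy and Schr\"odinger flows are non-local, so they do not preserve spatial support of initial data; this is precisely why the approximation must be taken of the full space-time nonlinear profile, not of its initial datum. Once $\tilde{\Psi}^j$ and $\tilde{\Psi}^k$ are compactly supported in space-time, the only remaining non-compact-support effect is the phase modulation $P(\cdot)$, which is handled as above via its interaction with the Airy flow. Letting the approximation error go to zero after support disjointness has been established delivers the conclusion.
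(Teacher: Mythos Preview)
Your overall strategy---reduce by density to compactly supported space-time functions and then exploit support separation forced by the orthogonality of the deformation families---matches the paper's. The paper implements this by first replacing $V_n^j$ with an explicit auxiliary $\tilde V_n^j$ (Lemma~\ref{331}): when $\xi_n^j\to\infty$ this is precisely the NLS-approximated profile $D(h_n^j)T(y_n^j)\Psi^j\bigl(-3\xi_n^j t',\,x'+3(\xi_n^j)^2 t'\bigr)$, and then by density $\Psi^j$ is taken to be ${\bf 1}_{\{|t|\le C,\,|x|\le C\}}$, so that the support of $\tilde V_n^j$ is an explicit parallelogram $R_n^j\subset\R^2_{t,x}$.

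There is, however, a genuine gap in your treatment of the modulation-only case. You invoke \eqref{eq:GtA} to ``convert a frequency shift into an effective spatial translation'', but \eqref{eq:GtA} is an identity for the \emph{linear} Airy group $A(t)$ acting on spatial data; once the profiles have been replaced by fixed compactly supported space-time functions there is no Airy flow left on which to apply it, and a ``further frequency-localized approximation'' cannot help since $P(\xi)$ preserves modulus pointwise. Relatedly, writing the reduction as $\bigl\|\tilde\Psi^j\cdot\bigl((\mathcal G_n^j)^{-1}\mathcal G_n^k\bigr)\tilde\Psi^k\bigr\|$ is a category error: the operators in $G$ act on functions of $x$, not of $(t,x)$, and the $P$, $A$, $S$ factors appearing in the decomposition \eqref{eq:orthty_representation} have no direct space-time meaning after the density step.

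The mechanism the paper actually uses here is purely geometric and comes from the NLS approximation you already invoked in your first step: that approximation builds the frequency parameter $\xi_n^j$ into the \emph{slope} of the parallelogram $R_n^j$ via the Galilean shift $x'+3(\xi_n^j)^2 t'$. When $|\xi_n^j-(h_n^k/h_n^j)\xi_n^k|\to\infty$ (with $h_n^j\sim h_n^k$), the parallelograms $R_n^j$ and $R_n^k$ have diverging slopes, and the paper bounds $\mathrm{Area}(R_n^j\cap R_n^k)$ by the Jacobian of the change of variables $(t,x)\mapsto(v,w)$ with $v=h_n^j x-y_n^j+3(\xi_n^j)^2[(h_n^j)^3 t+t_n^j]$ and $w$ analogously for $k$, obtaining decay of order $\bigl|\xi_n^j-(h_n^k/h_n^j)\xi_n^k\bigr|^{-2}$. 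Combined with an elementary bound on the $x$-projection $|I_n^{j,k}|$, this closes the H\"older estimate. This geometric intersection argument is what your sketch is missing; once you recognise that the frequency enters only through the parallelogram slope, no further appeal to \eqref{eq:GtA} or frequency localisation is needed.
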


\vskip2mm

To prove Lemma \ref{lem:pf:orth}, it suffices to show the following lemma. 

\begin{lemma}\label{331} Set 
\begin{equation*}
\tilde{V}_{n}^{j}:=\left\{
\begin{aligned}
&D(h_{n}^{j})T(y_{n}^{j})
\Psi^{j}((h_{n}^{j})^{3}t+t_{n}^{j},x)
\qquad(\text{if}\ \xi_{n}^{j}\equiv0),
\\
&D(h_{n}^{j})T(y_{n}^{j})
\Psi^{j}(-3(\xi_{n}^{j})[(h_{n}^{j})^{3}t+t_{n}^{j}],
x+3(\xi_{n}^{j})^{2}[(h_{n}^{j})^{3}t+t_{n}^{j}])\\
&\qquad\qquad\qquad\qquad\qquad\qquad\ \ 
\qquad(\text{if}\ \xi_{n}^{j}\to\infty\ \text{as}\ n\to\infty).
\end{aligned}
\right.
\end{equation*}
Then for any $j\neq k$, we have
\begin{eqnarray}
	\lim_{n\to\I} \norm{ \tilde{V}_n^j \tilde{V}_n^k }_{L^{\frac{p(S)}2}_x L^{\frac{q(S)}2}_t(\rre)}
	= 0.
	\label{st}
\end{eqnarray}
\end{lemma}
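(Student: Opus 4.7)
The plan is a standard reduction to compactly supported space-time profiles followed by a case-by-case disjointness-of-supports argument driven by the orthogonality condition \eqref{eq:def_orthty}. Since the $L^{p(S)/2}_x L^{q(S)/2}_t$ norm is bilinear and controlled via H\"older's inequality by the product of the $L^{p(S)}_x L^{q(S)}_t = S(\rre)$ norms, and $\Psi^{j}$, $\Psi^{k}$ belong to this space on $\rre$ by Lemma \ref{lem:pf_bdd1}, I would begin with a density reduction: for any $\eta > 0$, approximate $\Psi^{j}$ and $\Psi^{k}$ in $L^{p(S)}_x L^{q(S)}_t(\rre \times \rre)$ within $\eta$ by $C_c^\I$ functions $\Psi^{j}_\eta, \Psi^{k}_\eta$ supported in $\{(s,x) : |s|, |x| \le R(\eta)\}$. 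By the triangle and H\"older inequalities, it is enough to prove \eqref{st} for the corresponding approximations $\tilde{V}^{j}_{n,\eta}$ and $\tilde{V}^{k}_{n,\eta}$, defined by substituting $\Psi^{j}_\eta$ and $\Psi^{k}_\eta$ into the formulas in the lemma.

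After changing variables, the spacetime support of $\tilde{V}^{j}_{n,\eta}$ is contained in
\[
E_n^j := \left\{ (t,x) \in \rre \times \rre : \bigl|(h_n^j)^3 t + t_n^j\bigr| \le R,\ \bigl| x - y_n^j + 3(\xi_n^j)^2 [(h_n^j)^3 t + t_n^j] \bigr| \le R h_n^j \right\},
\]
with the understanding that the $3(\xi_n^j)^2(\cdots)$ term is absent when $\xi_n^j \equiv 0$. Since the approximations are uniformly bounded in $L^\I$, it suffices to show that $|E_n^j \cap E_n^k| \to 0$ as $n\to\I$. This I would prove by case analysis on which term of \eqref{eq:def_orthty} drives the sum to infinity: if $|\log(h_n^j / h_n^k)| \to \I$ the two time-slab widths are incompatible, forcing the coarser profile's slab to lie outside the finer one's; if the scales are comparable but the effective time parameter $|s_n^j - (h_n^j/h_n^k)^3 s_n^k|(1+|\xi_n^j|)$ diverges, then the centers of the time-slabs separate (with the factor $1+|\xi_n^j|$ absorbing the Galilean spatial correction that could otherwise keep them aligned); and the case of spatial divergence is handled analogously on the $x$-slabs.

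The main technical obstacle will be the mixed-frequency regime in which, say, $\xi_n^j \equiv 0$ while $\xi_n^k \to \I$, together with the doubly oscillatory regime where $\xi_n^j, \xi_n^k \to \I$ at possibly different rates. In these cases the Galilean-type spatial shift $3(\xi_n)^2[(h_n)^3 t + t_n]$ in one profile has vastly different magnitude from that in the other, and one must verify that the specific weighting in \eqref{eq:def_orthty}, with the $(1+|\xi_n|)$ and $(\xi_n)^2$ factors produced by the composition identity \eqref{eq:orthty_representation}, is precisely what forces at least one of the four deformation parameters to diverge once the others are aligned. Once support disjointness is established for the approximations, letting $n\to\I$ and then $\eta\to 0$ completes the proof of \eqref{st}.
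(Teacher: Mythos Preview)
Your overall strategy---density reduction to compactly supported profiles followed by a case analysis on the orthogonality condition---matches the paper's. However, the execution has a genuine gap: the approximations $\tilde{V}^{j}_{n,\eta}$ are \emph{not} uniformly bounded in $L^\infty$. The dilation $D(h_n^j)$ contributes an amplitude factor $(h_n^j)^{1/\alpha}$, so in the regime $|\log(h_n^j/h_n^k)|\to\infty$ one of the two amplitudes may blow up, and the intersection measure $|E_n^j\cap E_n^k|$ need not tend to zero either (for instance, if $h_n^k\to 0$ with $h_n^j$ bounded, $E_n^k$ eventually contains $E_n^j$). The paper handles this ``scale'' case not by disjointness but by an asymmetric H\"older inequality, bounding one factor in $L^\infty$ and the other in $L^{p(S)/2}_xL^{q(S)/2}_t$, so that the net scaling exponent is $(h_n^j/h_n^k)^{5/4}\to 0$. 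Your support description also drops the $\xi_n^j$ factor in the time constraint (when $\xi_n^j\to\infty$ the time slab has width $\sim(3\xi_n^j(h_n^j)^3)^{-1}$, not $(h_n^j)^{-3}$), which is essential for this balance.

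A second case your outline does not cover is the ``frequency'' case $|\xi_n^j-(h_n^k/h_n^j)\xi_n^k|\to\infty$ with scales comparable. Here the spacetime supports $R_n^j$, $R_n^k$ are parallelograms sheared at different slopes $3(\xi_n^j)^2$ and $3(\xi_n^k)^2$; they are neither disjoint nor nested, and the paper estimates $|R_n^j\cap R_n^k|$ via the Jacobian of the affine map $(t,x)\mapsto(v,w)$ with $v=h_n^j x-y_n^j+3(\xi_n^j)^2[(h_n^j)^3t+t_n^j]$ and $w$ the analogous quantity for $k$, yielding a factor $|(h_n^j\xi_n^j)^2-(h_n^k\xi_n^k)^2|^{-1}$ that, combined with the amplitude and the $x$-projection bound, gives decay like $|\xi_n^j-(h_n^k/h_n^j)\xi_n^k|^{-2}$. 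Only the remaining two cases (divergence of the effective time shift, then of the effective space shift) are genuine support-disjointness arguments of the kind you sketch.
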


\begin{proof} 
The proof is now standard. 
Let us first prove (\ref{st}) when $\xi_{n}^{j}\to\infty$ and $\xi_{n}^{k}\to\infty$ as $n\to\I$.
By density, it suffices to handle the case $\Psi^j(t,x) \equiv  \Psi^k(t,x) \equiv {\bf 1}_{\{|t|\le C ,\, |x|\le C\}}(t,x)$, where $C>0$.
Note that  
\begin{eqnarray*}
\lefteqn{D(h_{n}^{j})T(y_{n}^{j})
\Psi^{j}(-3(\xi_{n}^{j})[(h_{n}^{j})^{3}t+t_{n}^{j}],x+3(\xi_{n}^{j})^{2}[(h_{n}^{j})^{3}t+t_{n}^{j}])}\\
& &
=(h_{n}^{j})^{\frac{1}{\alpha}}
\Psi^{j}(-3(\xi_{n}^{j})[(h_{n}^{j})^{3}t+t_{n}^{j}],(h_{n}^{j})x-y_{n}^{j}
+3(\xi_{n}^{j})^{2}[(h_{n}^{j})^{3}t+t_{n}^{j}]).
\end{eqnarray*}

\vskip1mm
\noindent
{\bf Case 1: $\lim_{n\to\I}|\log\frac{h_{n}^{j}}{h_{n}^{k}}|=\infty$.} 
We may assume $\lim_{n\to\I}\frac{h_{n}^{k}}{h_{n}^{j}}=\infty$.
The H\"{o}lder inequality yields  
\begin{eqnarray*}
\norm{ \tilde{V}_n^j \tilde{V}_n^k }_{L^{\frac{p(S)}2}_x L^{\frac{q(S)}2}_t
(\rre)}^{\frac{5\alpha}{4}}
&\le&(h_{n}^{j})^{\frac54}(h_{n}^{k})^{\frac54}
\|{\bf 1}_{R_{n}^{j}}\|_{L_{x}^{\infty}L_{t}^{\infty}
}^{\frac{5\alpha}{4}}
\|{\bf 1}_{R_{n}^{k}}\|_{L^{\frac{5\alpha}4}_x L^{\frac{5\alpha}2}_t}^{\frac{5\alpha}{4}}\\
&\le&C(h_{n}^{j})^{\frac54}(h_{n}^{k})^{\frac54}
|I_{n}^{k}|^{\frac12}|\text{Area}R_{n}^{k}|^{\frac12},
\end{eqnarray*}
where 
\begin{eqnarray*}
I_{n}^{k}&=&\{x\in\rre|\ 
\|{\bf 1}_{R_{n}^{k}}(\cdot,x)\|_{L_{t}^{1}}\neq0\},\\
R_{n}^{k}&=&\{(t,x)\in\rre^{2}|\ 3(\xi_{n}^{k})|(h_{n}^{k})^{3}t+t_{n}^{k}|\le C, \\
& &\qquad\qquad\quad
|(h_{n}^{k})x-y_{n}^{k}+3(\xi_{n}^{k})^{2}[(h_{n}^{k})^{3}t+t_{n}^{k}]|\le C\}.
\end{eqnarray*}
We easily see that
\begin{eqnarray*}
|I_{n}^{k}|\le C\frac{\Jbr{\xi_{n}^{k}}}{h_{n}^{k}},\qquad
|\text{Area} R_{n}^{k}|\le\frac{C}{(h_{n}^{k})^{4}\xi_{n}^{k}}.
\end{eqnarray*}
Since $\xi_n^k\to\I$ as $n\to\I$, we have 
\begin{eqnarray*}
\norm{ \tilde{V}_n^j \tilde{V}_n^k 
}_{L^{\frac{p(S)}2}_x L^{\frac{q(S)}2}_t(\rre)}^{\frac{5\alpha}{4}}\le
 C\left(\frac{h_{n}^{j}}{h_{n}^{k}}\right)^{\frac54}\to0\qquad\text{as}\ n\to\infty.
\end{eqnarray*}

\vskip1mm
\noindent
{\bf Case 2: $|\log\frac{h_{n}^{j}}{h_{n}^{k}}|<\infty$ and 
$\lim_{n\to\I}|\xi_{n}^{j}-\frac{h_{n}^{k}}{h_{n}^{j}}\xi_{n}^{k}|=\infty$.} 
The H\"{o}lder inequality yields  
\begin{eqnarray}
\ \ \ \ \ \norm{ \tilde{V}_n^j \tilde{V}_n^k }_{L^{\frac{p(S)}2}_x L^{\frac{q(S)}2}_t(\rre)}^{\frac{5\alpha}{4}}
\le C(h_{n}^{j})^{\frac54}(h_{n}^{k})^{\frac54}
|I_{n}^{j,k}|^{\frac12}
|\text{Area}(R_{n}^{j}\cap R_{n}^{k})|^{\frac12},
\label{rr1}
\end{eqnarray}
where 
$I_{n}^{j,k}=\{x\in\rre|\ 
\|{\bf 1}_{R_{n}^{j}}{\bf 1}_{R_{n}^{k}}\|_{L_{t}^{1}}\neq0\}.$ 
Since $ R_{n}^{j}\subset\{(t,x)\in\rre^{2}|\ 
|(h_{n}^{j})x-y_{n}^{j}+3(\xi_{n}^{j})^{2}[(h_{n}^{j})^{3}t+t_{n}^{j}]|\le C\},$
changing variables $(t,x)\mapsto(v,w):$
\begin{eqnarray*}
v:&=&(h_{n}^{j})x-y_{n}^{j}+3(\xi_{n}^{j})^{2}[(h_{n}^{j})^{3}t+t_{n}^{j}],\\
w:&=&(h_{n}^{k})x-y_{n}^{k}+3(\xi_{n}^{k})^{2}[(h_{n}^{k})^{3}t+t_{n}^{k}],
\end{eqnarray*}
we have
\begin{eqnarray}
\text{Area}(R_{n}^{j}\cap R_{n}^{k})
&\le&\int_{|v|\le C}\int_{|w|\le C}
\left|\frac{\pt(t,x)}{\pt(v,w)}\right|dvdw
\label{rr12}\\
&\le&\frac{C}{h_{n}^{j}h_{n}^{k}|(\xi_{n}^{j})^{2}(h_{n}^{j})^{2}
-(\xi_{n}^{k})^{2}(h_{n}^{k})^{2}|
}.
\nonumber
\end{eqnarray}
Furthermore, we easily see 
\begin{eqnarray}
|I_{n}^{j,k}|\le C\min\left\{
\frac{\xi_{n}^{j}}{h_{n}^{j}},
\frac{\xi_{n}^{k}}{h_{n}^{k}}\right\}.
\label{rr13}
\end{eqnarray}
Plugging (\ref{rr12}) and (\ref{rr13}) into (\ref{rr1}), we have
\begin{eqnarray*}
\norm{ \tilde{V}_n^j \tilde{V}_n^k 
}_{L^{\frac{p(S)}2}_x L^{\frac{q(S)}2}_t(\rre)}^{5\alpha}
\le \frac{C}{\left|
\xi_{n}^{j}-\frac{h_{n}^{k}}{h_{n}^{j}}\xi_{n}^{k}\right|^{2}}
\to0\qquad\text{as}\ n\to\infty.
\end{eqnarray*}
where we have used that $h_{n}^{j}\sim h_{n}^{k}$.

\vskip1mm
\noindent
{\bf Case 3: $|\log\frac{h_{n}^{j}}{h_{n}^{k}}|+|\xi_{n}^{j}-\frac{h_{n}^{k}}{h_{n}^{j}}\xi_{n}^{k}|<\infty$ and 
$\lim_{n\to\I}|t_{n}^{j}-(\frac{h_{n}^{j}}{h_{n}^{k}})^{3}t_{n}^{k}|(1+|\xi_{n}^{j}|)=\infty$.} 
Let $T_{n}^{j}=\{t\in\rre|\exists x\in\rre\ \text{such\ that}\ (t,x)\in R_{n}^{j}\}$. 
We see that for $n$ sufficiently large, $T_{n}^{j}\cap T_{n}^{k}=\phi$. Indeed, we have  
\begin{eqnarray*}
T_{n}^{j}\cap T_{n}^{k}=\phi
&\Leftrightarrow&
\left|t_{n}^{j}-\left(\frac{h_{n}^{j}}{h_{n}^{k}}\right)^{3}t_{n}^{k}\right||\xi_{n}^{j}|>C 
\end{eqnarray*}
for some large positive constant. 
Since the right hand side of the above inequality goes to infinity as $n\to\infty$, we have 
that for $n$ sufficiently large, $T_{n}^{j}\cap T_{n}^{k}=\phi$. 
Therefore the H\"{o}lder inequality yields (\ref{st}). 

\vskip1mm
\noindent
{\bf Case 4: $|\log\frac{h_{n}^{j}}{h_{n}^{k}}|+|\xi_{n}^{j}-\frac{h_{n}^{k}}{h_{n}^{j}}\xi_{n}^{k}|
+|t_{n}^{j}-(\frac{h_{n}^{j}}{h_{n}^{k}})^{3}t_{n}^{k}|(1+|\xi_{n}^{j}|)<\infty$ and 
$\lim_{n\to\I}|y_{n}^{j}-\frac{h_{n}^{j}}{h_{n}^{k}}y_{n}^{k}
-3[t_{n}^{j}-(\frac{h_{n}^{j}}{h_{n}^{k}})^{3}t_{n}^{k}](\xi_{n}^{j})^{2}|=\infty$.} 
Let $X_{n}^{j}(t)=\{x\in\rre|(t,x)\in R_{n}^{j}\}$. 
We see that if $t\in T_{n}^{j}\cap T_{n}^{k}$ for $n$ sufficiently large, then 
$X_{n}^{j}(t)\cap X_{n}^{k}(t)=\phi$. Indeed, we have 
\begin{eqnarray*}
X_{n}^{j}(t)\cap X_{n}^{k}(t)=\phi 
&\Leftrightarrow&
\left|y_{n}^{j}-\frac{h_{n}^{j}}{h_{n}^{k}}y_{n}^{k}
-3
\left\{t_{n}^{j}
-\frac{(h_{n}^{j})^{3}}{(h_{n}^{k})^{3}}t_{n}^{k}
\right\}(\xi_{n}^{j})^{2}     
\right|>C
\end{eqnarray*}
for some large positive constant. 
Since the right hand side of the above inequality goes to infinity as $n\to\infty$, we have 
that for $n$ sufficiently large, $X_{n}^{j}(t)\cap X_{n}^{k}(t)=\phi$. 
Therefore the H\"{o}lder inequality yields (\ref{st}). 

By arguments similar to those in cases $1,3$, and $4$, we obtain (\ref{st})
when $\xi_{n}^{j}\equiv0$ and $\xi_{n}^{k}\to\infty$ as $n\to\I$ . 
Similarly, arguing as in cases $1$ and $2$, we obtain (\ref{st}) 
when $\xi_{n}^{j}\equiv\xi_{n}^{k}\equiv0$ as $n\to\I$. 
This completes the proof of Lemma \ref{331}. 
\end{proof}

\vskip2mm

\begin{lemma}\label{lem:pf:bdd}
Let $F(z) = |z|^{2\alpha} z$.
For any $\mathcal{J} \subset \Z_+$,
\[
	\norm{ F\( \sum_{j \in \mathcal{J}} V_n^j  \) -\sum_{j\in \mathcal{J}} F(V_n^j)  }_{L^{\frac{p(S)}{2\alpha+1}}_x L^{\frac{q(S)}{2\alpha+1}}_t (\rre_{+})}
	=o(1)
\]
as $n\to\I$. Similarly,
\[
	\norm{F\( \sum_{j \in \mathcal{J}} V_n^j  \) -\sum_{j\in \mathcal{J}} F(V_n^j)}_{N (\rre_{+})}
	=o(1)
\]
as $n\to\I$.
\end{lemma}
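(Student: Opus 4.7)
The plan is to start from a pointwise algebraic estimate for $F(z) = |z|^{2\alpha}z$. Since $2\alpha \ge 1$ holds throughout the range considered, the function $F$ is $C^2$ away from zero and the two-term inequality
\[
	|F(u+v) - F(u) - F(v)| \lesssim |u||v|(|u|+|v|)^{2\alpha-1}
\]
follows by writing $F(u+v)-F(u) = \int_0^1 F'(u+\tau v) v\,d\tau$ and estimating $F'(u+\tau v)-F'(\tau v)$ by the mean value inequality applied to the $C^1$ function $F'$. Iterating telescopically on the sum, one obtains
\[
	\left| F\Big(\sum_{j\in\mathcal{J}} V_n^j\Big) - \sum_{j\in \mathcal{J}} F(V_n^j) \right| \lesssim \sum_{\substack{j,k\in \mathcal{J}\\ j\neq k}} |V_n^j||V_n^k|\Big(\sum_{\ell\in \mathcal{J}} |V_n^\ell|\Big)^{2\alpha-1}.
\]

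For the first norm, I would apply H\"older in the mixed Lebesgue space:
\[
	\Big\| \sum_{j\neq k} |V_n^j||V_n^k|\Big(\sum_\ell |V_n^\ell|\Big)^{2\alpha-1}\Big\|_{L^{p(S)/(2\alpha+1)}_x L^{q(S)/(2\alpha+1)}_t} \lesssim \sum_{j\neq k} \|V_n^j V_n^k\|_{L^{p(S)/2}_x L^{q(S)/2}_t} \Big\|\sum_\ell V_n^\ell\Big\|_{S(\R_+)}^{2\alpha-1}.
\]
Each mixed product tends to zero by Lemma \ref{lem:pf:orth}, while the $S$-norm of the full sum is controlled uniformly in $n$ (and in the size of $\mathcal{J}$) thanks to Lemma \ref{lem:pf_bdd1} combined with an almost-orthogonality decoupling of the form $\|\sum_j V_n^j\|_{S(\R_+)}^{p(S)} \le \sum_j \|V_n^j\|_{S(\R_+)}^{p(S)} + o(1)$, which follows by expanding the $p(S)$-th power and invoking Lemma \ref{lem:pf:orth} on the resulting cross terms. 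For infinite $\mathcal{J}$, a tail truncation argument based on $\sum_j \|V_n^j\|_{S}^{p(S)}<\infty$ reduces to the finite case.

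For the $N(\R_+)$ norm the additional obstacle is the fractional derivative $|\partial_x|^{s(L)}$. My plan is to employ a telescopic decomposition $F(W_n^J)-\sum_{j\le J}F(V_n^j) = \sum_{m=1}^{J-1}\bigl[F(W_n^m + V_n^{m+1})-F(W_n^m)-F(V_n^{m+1})\bigr]$ with $W_n^m := \sum_{j\le m}V_n^j$, and to estimate each summand through the difference inequality of Lemma \ref{lem:nlest}(ii) applied with $u=W_n^m+V_n^{m+1}$ and $v=W_n^m$. The main obstacle will be upgrading the resulting bound from \emph{uniform boundedness} to \emph{smallness}: Lemma \ref{lem:nlest}(ii) only produces $S$- and $X$-norms of $u-v = V_n^{m+1}$ which are not small. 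To overcome this, I would refine the analysis by using the integral representation $F(u+v)-F(u)-F(v) = \int_0^1[F'(u+\tau v)-F'(\tau v)]v\,d\tau$, combined with a fractional chain/Leibniz rule in a suitable mixed space, so that each contribution can be grouped as a product carrying at least one pair $V_n^j V_n^k$ with $j\neq k$. The orthogonality in Lemma \ref{lem:pf:orth} then gives the $o(1)$ factor, with the remaining powers estimated by Lemma \ref{lem:pf_bdd1} and the $S$-decoupling above. Finally, the infinite-$\mathcal{J}$ case is again closed by a tail estimate using smallness of $\|V_n^j\|$ for large $j$.
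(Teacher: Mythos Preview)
Your treatment of the first estimate is correct and essentially matches the paper: the pointwise bound
\[
\Bigl|F\Bigl(\sum_j V_n^j\Bigr)-\sum_j F(V_n^j)\Bigr|\lesssim \sum_{j\neq k}|V_n^j||V_n^k|\Bigl(\sum_\ell |V_n^\ell|\Bigr)^{2\alpha-1}
\]
followed by H\"older and Lemma~\ref{lem:pf:orth} does the job (for finite $\mathcal{J}$ the $S$-norm of the sum is bounded simply by the triangle inequality and Lemma~\ref{lem:pf_bdd1}, so the decoupling step you add is not needed here).

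For the $N(\R_+)$ estimate, however, you miss the key observation and propose a route that is both harder and not clearly closeable. The point you overlook is that the first norm in the statement is exactly $Y(\R_+;0,\alpha)$: it differs from $N(\R_+)=Y(\R_+;s(L),\alpha)$ only in the derivative index. The paper therefore chooses a conjugate-acceptable pair $(s(N'),\alpha)$ with $0<s(N')-s(L)\ll 1$, sets $N'(\R_+):=Y(\R_+;s(N'),\alpha)$, and applies the interpolation Lemma~\ref{lem:gHolder} between the first estimate (which already gives $o(1)$) and $N'$. This reduces the problem to showing mere \emph{boundedness} in $N'$, which is immediate from the triangle inequality and the estimate $\|F(u)\|_{N'}\lesssim \|u\|_{L\cap S}^{2\alpha+1}$ (valid for $s(N')$ close to $s(L)$), together with Lemma~\ref{lem:pf_bdd1}.

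By contrast, your plan to extract smallness directly in $N$ via telescoping, Lemma~\ref{lem:nlest}(ii), and a fractional Leibniz rule runs into a real obstacle: Lemma~\ref{lem:pf:orth} gives $o(1)$ only for the derivative-free product norm $\|V_n^jV_n^k\|_{L^{p(S)/2}_xL^{q(S)/2}_t}$, and a Leibniz expansion in $N$ would require smallness of cross terms carrying $|\partial_x|^{s(L)}$, which is not available. The interpolation trick sidesteps this entirely by pushing the derivative onto a factor where only boundedness is demanded.
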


\begin{proof}
The former estimate is a consequence of Lemma \ref{lem:pf:orth}.
Indeed, we see that
\begin{multline*}
	\abs{F\( \sum_{j \in \mathcal{J}} V_n^j  \) -\sum_{j\in \mathcal{J}} F(V_n^j) }
	\le C \abs{\sum_{j_1,j_2 \in \mathcal{J}, j_1 \neq j_2} V_n^{j_1} V_n^{j_2}} \\
	\abs{\sum_{j_3,j_4 \in \mathcal{J}, j_3 \neq j_4} V_n^{j_3} V_n^{j_4}}
	\abs{\sum_{j_5,j_6 \in \mathcal{J}, j_5 \neq j_6} V_n^{j_5} V_n^{j_6}}^{\frac{2\alpha-3}2}.
\end{multline*}
Therefore, the H\"older inequality and Lemma \ref{lem:pf:orth} give us the desired estimate.
Take a conjugate-acceptable pair $(s(N'),\alpha)$ so that $0 < s(N') - s(N) \ll 1$,
and set $N'(I):=Y(I;s(N'),\alpha)$.
By means of the interpolation estimate (Lemma \ref{lem:gHolder}), the latter estimate follows if we 
show that
\[
	\norm{\(F\( \sum_{j \in \mathcal{J}} V_n^j  \) -\sum_{j\in \mathcal{J}} F(V_n^j)  \)}_{N'(\rre_{+})}
\]
is bounded uniformly in $n$.
When $s(N')$ is chosen sufficiently close to $s(N)$, we have
\[
	\norm{F(u)}_{\rre_{+})} \le C \norm{u}_{L(\rre_{+}) \cap S(\rre_{+})}^{2\alpha+1}
\]
just as in the proof of Lemma \ref{lem:nlest}.
Therefore,
\begin{align*}
	&\norm{ \(F\( \sum_{j \in \mathcal{J}} V_n^j  \) -\sum_{j\in \mathcal{J}} F(V_n^j)  \)}_{N' (\rre_{+})} \\
	&{} \le \norm{ F\( \sum_{j \in \mathcal{J}} V_n^j  \) }_{N'(\rre_{+})}
	+ \sum_{j\in \mathcal{J}} \norm{ F(V_n^j) }_{N' (\rre_{+})} \\
	&{} \le C \norm{\sum_{j \in \mathcal{J}} V_n^j}_{L(\rre_{+}) \cap S(\rre_{+})}^{2\alpha+1}
	+ C \sum_{j \in \mathcal{J}}  \norm{V_n^j}_{L(\rre_{+}) \cap S(\rre_{+})}^{2\alpha+1} \\
	&{} \le C \sum_{j \in \mathcal{J}}  \norm{V_n^j}_{L (\rre_{+})\cap S(\rre_{+})}^{2\alpha+1}.
\end{align*}
The right hand side is bounded uniformly in $n$, thanks to Lemma \ref{lem:pf:bdd},
which completes the proof. 
\end{proof}

\subsection*{Step 3}
Here, we define an approximate solution
\begin{eqnarray}
	\tilde{u}_n^J (t,x)= \sum_{j=1}^J V_n^j(t,x) + e^{-t\d_x^3} r_n^J,
	\label{aps}
\end{eqnarray}
where $V_{n}^{j}$ is given in Step 2. 
To apply long time stability, we now check that $\tilde{u}_n^J$ satisfies the assumption.

\begin{proposition}[Asymptotic agreement at the initial time]\label{prop:pf:lps1} 
Let $\tilde{u}_{n}^{J}$ and $u_{n}$ be given by 
(\ref{aps}) and (\ref{mins}), respectively. 
Then it holds for any $J\ge 1$ that
\[
	\hLebn{\tilde{u}_n^J(0) - u_n}{\alpha} \to 0
\]
as $n\to\I$.
\end{proposition}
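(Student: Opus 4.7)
The plan is to exploit the explicit structure of the decomposition at $t=0$. By construction, evaluating at $t=0$ gives $e^{-t\d_x^3}r_n^J|_{t=0}=r_n^J$, so
\[
	\tilde u_n^J(0) = \sum_{j=1}^J V_n^j(0) + r_n^J, \qquad u_n = \sum_{j=1}^J \Re(\mathcal G_n^j\psi^j) + r_n^J,
\]
and the remainder terms cancel. Since $J$ is fixed, it suffices to show that for each $j\in\{1,\dots,J\}$,
\[
	\hLebn{V_n^j(0) - \Re(\mathcal G_n^j\psi^j)}{\alpha} \Tend{n}{\I} 0.
\]
Throughout, I will use that $D(h_n^j)$, $T(y_n^j)$, and $A(t_n^j)$ are $\hat L^\alpha$-isometries that are $\R$-linear and therefore commute with the operation of taking real parts, while $P(\xi_n^j)$ is a complex phase multiplier that does not. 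The argument splits according to the two alternatives for $\xi_n^j$.

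When $\xi_n^j\equiv 0$, we have $\mathcal G_n^j = D(h_n^j)A(t_n^j)T(y_n^j)$, so using $[A,T]=0$ and pulling $\Re$ through $D,A,T$,
\[
	\Re(\mathcal G_n^j\psi^j) = D(h_n^j)T(y_n^j) A(t_n^j)\Re\psi^j.
\]
Peeling off the isometry $D(h_n^j)T(y_n^j)$, the problem reduces to showing that
$\hLebn{\Psi^j(t_n^j) - A(t_n^j)\Re\psi^j}{\alpha}\to 0$. If $t_n^j\equiv 0$ this quantity is identically zero since $\Psi^j(0)=\Re\psi^j$ by the definition of the nonlinear profile. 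If $t_n^j\to\I$ (resp.\ $t_n^j\to-\I$), the definition of $\Psi^j$ as the $\eqref{gKdV}$-solution that scatters forward (resp.\ backward) to the free Airy evolution $A(t)\Re\psi^j$ yields exactly this convergence.

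When $\xi_n^j\to\I$, no limiting step is required: the initial data for $\Psi_n^j$ in Step 2 was chosen precisely so that the identity is exact. Indeed,
\[
	V_n^j(0) = D(h_n^j)T(y_n^j)\Psi_n^j(t_n^j) = D(h_n^j)T(y_n^j) A(t_n^j)\Re(P(\xi_n^j)\psi^j),
\]
and commuting $[A(t_n^j),T(y_n^j)]=0$ and then pulling $\Re$ successively through $T(y_n^j)$, $A(t_n^j)$, and $D(h_n^j)$ gives $V_n^j(0) = \Re(\mathcal G_n^j\psi^j)$ identically. Summing the finitely many vanishing terms completes the proof. There is no serious analytical obstacle here; the only conceptual point is the $\R$-linearity bookkeeping that forces the Case 2 profiles to be defined through the initial condition \eqref{tttg} at $t=t_n^j/(h_n^j)^3$ (so that the $\Re$ sits inside the operators and the $P(\xi_n^j)$-factor is absorbed exactly) rather than at $t=0$.
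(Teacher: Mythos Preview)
Your proof is correct and follows the same approach as the paper's proof, which simply asserts that $V_n^j(0)-\Re(\mathcal G_n^j\psi^j)\to 0$ in $\hat L^\alpha$ ``is an immediate consequence of the way $V_n^j$ are constructed.'' You have correctly unpacked this: cancelling $r_n^J$, peeling off the isometries $D(h_n^j)T(y_n^j)$, and using the scattering/initial definition of the nonlinear profiles $\Psi^j$ (for $\xi_n^j\equiv 0$) or the exact matching of the initial data for $\Psi_n^j$ (for $\xi_n^j\to\infty$), together with the fact that $D,T,A$ commute with $\Re$ while $P$ does not.
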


\begin{proof}
This follows from
\[
	V_n^j(0) - \mathcal{G}^j_{n} \psi^j \to 0 \IN \hat{L}^{\alpha}
\]
for each $j$,
which is an immediate consequence of the way $V_n^j$ are constructed. 
\end{proof}

\vskip2mm

\begin{proposition}[Uniform bound on the approximate solution]\label{prop:pf:lps2}
There exists $M>0$ such that 
\[
	\norm{ \tilde{u}_n^J }_{K(\rre_{+})} +
	\norm{ \tilde{u}_n^J }_{Z(\rre_{+})}
	\le M
\]
holds for any $J \ge 1$ and $n \ge N(J)$.
\end{proposition}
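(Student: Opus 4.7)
The approach is to control $\tilde u_n^J = \sum_{j=1}^J V_n^j + e^{-t\d_x^3} r_n^J$ by separating the profiles into a finite ``head'' of essential profiles and an asymptotically small ``tail,'' handling the remainder via refined Strichartz, and gluing everything using the asymptotic orthogonality of Lemma \ref{lem:pf:orth}. The Pythagorean inequality \eqref{eq:pf:Pythagorean} applied to the minimizing sequence $\{u_n\}$ (with $\ell(u_n) \le d_+ + 1/n$) yields
\[
\sum_{j=1}^\infty c_j^{1-\sigma} \ell(\psi^j)^\sigma \le (d_+ + 1)^\sigma,
\]
so in particular $\ell(\psi^j) \to 0$. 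Hence, fixing a small-data threshold $\eps_0$ (determined by Corollary \ref{cor:sds}), there exists $J_0$ such that $\ell(\psi^j) \le \eps_0$ for all $j > J_0$.

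For the head indices $j \le J_0$, Lemma \ref{lem:pf_bdd1} provides a bound $M_j$ on $\|V_n^j\|_{K(\R_+)} + \|V_n^j\|_{Z(\R_+)}$ independent of $n$, and $\sum_{j\le J_0} M_j$ is a finite constant. For tail indices $j > J_0$, I would run a small-data argument: when $\xi_n^j \equiv 0$, Corollary \ref{cor:sds} together with the refined Stein--Tomas estimate $\|e^{-t\d_x^3} f\|_{L(\R)} \lesssim \ell(f)$ yields $\|V_n^j\|_{K(\R_+)\cap Z(\R_+)} \lesssim \ell(\psi^j)$; when $\xi_n^j \to \infty$, Theorem \ref{thm:ENK} provides the analogous global bound with smallness inherited from $\ell(\psi^j)$. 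Because $\sum_j \ell(\psi^j)^\sigma$ is finite, a suitable power sum of tail norms is summable to a constant independent of $J$.

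Next I would assemble the pieces by the asymptotic orthogonality in Lemma \ref{lem:pf:orth}. Using a pointwise inequality of the form $\bigl|\sum_j V_n^j\bigr|^p \le \sum_j |V_n^j|^p + C\sum_{j\neq k} |V_n^j|^{p-1}|V_n^k|$ (for $p\ge 2$) combined with H\"older in space-time and Lemma \ref{lem:pf:orth}, one obtains, after applying $|\d_x|^{s(X)}$ via Littlewood--Paley/Leibniz-type considerations,
\[
\bigl\|\textstyle\sum_{j=1}^J V_n^j\bigr\|_{X(\R_+)}^{p_X} \le \sum_{j=1}^J \|V_n^j\|_{X(\R_+)}^{p_X} + o_{n\to\infty}(1)
\]
for $X \in \{K,Z\}$ with the appropriate $p_X$. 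The head/tail split then yields a uniform bound on the sum of profiles. The remainder $e^{-t\d_x^3} r_n^J$ is controlled by Strichartz applied to the $\hat M^\alpha_{2,\sigma}$-quasinorm of $r_n^J$, which is bounded uniformly in $J$ by \eqref{eq:pf:Pythagorean}, while its $L(\R)$ norm is $o_{J\to\infty}(1)$ by \eqref{eq:pf:smallness}.

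The main obstacle I foresee is matching the orthogonality bound, which Lemma \ref{lem:pf:orth} provides in $L^{p(S)/2}_x L^{q(S)/2}_t$, with the structure of the $K$ and $Z$ norms which carry fractional derivatives and distinct mixed Lebesgue exponents; in particular, making the pointwise expansion above interact cleanly with $|\d_x|^{s(X)}$ and verifying that the exponent $\sigma$ in the Pythagorean inequality is compatible with the summability exponent required to conclude the tail sum is finite. A secondary subtlety is the quantitative form $\|V_n^j\|_{K\cap Z} \lesssim \ell(\psi^j)$ in the highly oscillating regime, which forces a careful tracking of constants through the Schr\"odinger approximation of Theorem \ref{thm:ENK}.
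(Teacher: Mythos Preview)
There is a genuine gap in your approach. The linchpin of your tail estimate is the claim
\[
\|V_n^j\|_{K(\R_+)\cap Z(\R_+)} \lesssim \ell(\psi^j)\qquad (j>J_0),
\]
but Corollary \ref{cor:sds} does not deliver this. What it gives is $\|u\|_{S\cap L} \le M + CM^{2\alpha}\eps$ with $M=\|\psi^j\|_{\hat L^\alpha}$ and $\eps\lesssim\ell(\psi^j)$; the dominant term is $M$, which by \eqref{eq:pf:bddness1} is only bounded by $\limsup_n\|u_n\|_{\hat L^\alpha}$ and does not decay in $j$. The same obstruction appears in the oscillating case: Theorem \ref{thm:ENK} yields a bound $C(\psi^j)$ with no quantitative control by $\ell(\psi^j)$ (you flag this only as a ``secondary subtlety,'' but it already bites when $\xi_n^j\equiv 0$). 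Consequently, even granting a $K,Z$-norm decoupling---which, as you correctly anticipate, is not available since Lemma \ref{lem:pf:orth} only treats the derivative-free $S$-pairing---the sum $\sum_j\|V_n^j\|^{p_X}$ would grow with $J$ and the argument collapses.

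The paper avoids this by never estimating tail profiles individually. It treats the tail sum $W_n^k=\sum_{j=J_0+1}^{J_0+k}V_n^j$ as a single solution of a perturbed gKdV,
\[
W_n^k = e^{-t\d_x^3}W_n^k(0)+\mu\int_0^t e^{-(t-s)\d_x^3}\d_x\bigl(|W_n^k|^{2\alpha}W_n^k+E_n^k\bigr)\,ds,
\quad -E_n^k=|W_n^k|^{2\alpha}W_n^k-\sum_j|V_n^j|^{2\alpha}V_n^j,
\]
and closes a small-data bootstrap on $W_n^k$ directly in $L\cap S$. The two inputs are Lemma \ref{lem:pf:small1}, which establishes smallness of $e^{-t\d_x^3}W_n^k(0)$ in $L\cap S$ by applying the refined Stein--Tomas estimate to the \emph{sum} (via the $\hat M^\alpha_{2,\sigma}$-decoupling, and for $S$ an interpolation against $Z$ using the summability $\theta\, p(S)>\sigma$ of Remark \ref{rem:restriction}), and Lemma \ref{lem:pf:bdd}, which shows $\|E_n^k\|_{N}=o(1)$ through the $S$-norm orthogonality of Lemma \ref{lem:pf:orth}. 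Thus orthogonality is used only where it is already available---in the $S$-pairing and in the $\hat M^\alpha_{2,\sigma}$-Pythagorean inequality---and no decoupling in $K$ or $Z$ is required.
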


\vskip2mm

Recall that each $V_n^j$ ($j\ge1$) is 
bounded in $K(0,\I) \cap  Z(0,\I) $ uniformly in $n$ 
(Lemma \ref{lem:pf_bdd1}).
Further, $e^{-t\d_x^3} r_n^J$ is also bounded uniformly in $J,n \ge 1$.
Hence, we shall show that there exists $J_0$ such that
\[
	 \norm{ \sum_{j = J_0+1}^{J_0 + k} V_n^j }_{K(\rre_{+})} +
	\norm{ \sum_{j = J_0+1}^{J_0 + k}  V_n^j }_{Z(\rre_{+})}
 \le C
\]
for any $k\ge 1$ and $n\ge N(k)$.
To this end, we need the following.

\begin{lemma}\label{lem:pf:small1}
For any $\eps>0$,
there exists $J_0=J_0(\eps)$ such that
\[
	 \norm{ \sum_{j = J_0+1}^{J_0 + k}e^{-t\d_x^3}V_n^j(0) }_{L(\rre_{+})} +
	\norm{ \sum_{j = J_0+1}^{J_0 + k} e^{-t\d_x^3} V_n^j(0) }_{S(\rre_{+})}
 \le \eps
\]
for any $k\ge 1$ and $n\ge N(k)$.
\end{lemma}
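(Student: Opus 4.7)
The plan is to exploit the identity
\[
r_n^{J_0} - r_n^{J_0+k} = \sum_{j=J_0+1}^{J_0+k} \Re(\mathcal{G}^j_n \psi^j),
\]
which follows directly from \eqref{eq:pf:decomp}, together with the asymptotic agreement $V_n^j(0) = \Re(\mathcal{G}^j_n \psi^j) + o_n(1)$ in $\hat{L}^\alpha$ for each fixed $j$. This asymptotic is immediate when $\xi_n^j \to \infty$ (by the very definition of $\Psi_n^j$) and when $\xi_n^j \equiv 0$, $s_n^j \equiv 0$, and follows from the scattering of the nonlinear profile $\Psi^j$ in the remaining case $\xi_n^j \equiv 0$, $|s_n^j|\to\infty$; this is the same observation that underlies Proposition \ref{prop:pf:lps1}. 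Combining these two ingredients yields the decomposition
\[
\sum_{j=J_0+1}^{J_0+k} e^{-t\d_x^3} V_n^j(0) = e^{-t\d_x^3}\bigl(r_n^{J_0} - r_n^{J_0+k}\bigr) + \sum_{j=J_0+1}^{J_0+k} e^{-t\d_x^3}\bigl[V_n^j(0) - \Re(\mathcal{G}^j_n \psi^j)\bigr],
\]
and the task is to control each piece in $L(\R_+)\cap S(\R_+)$.

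For the $L(\R_+)$-norm, I first invoke \eqref{eq:pf:smallness} to pick $J_0 = J_0(\eps)$ such that $\limsup_{n\to\infty} \|e^{-t\d_x^3} r_n^l\|_{L(\R)} < \eps/4$ for every $l \ge J_0$. Then for each $k\ge1$ I take $N(k)$ large enough to guarantee, for $n\ge N(k)$, that both $\|e^{-t\d_x^3} r_n^{J_0}\|_{L(\R)}$ and $\|e^{-t\d_x^3} r_n^{J_0+k}\|_{L(\R)}$ are smaller than $\eps/4$, and in addition the finite sum $\sum_{j=J_0+1}^{J_0+k} \|V_n^j(0) - \Re(\mathcal{G}^j_n \psi^j)\|_{\hat{L}^\alpha}$ is smaller than $\eps/(4C)$, where $C$ is the Strichartz constant in \eqref{ho} (the latter is possible because for fixed $k$ it is a sum of finitely many $o_n(1)$ terms). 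The triangle inequality and \eqref{ho} then give the desired $L(\R_+)$-bound.

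The $S(\R_+)$-norm is more delicate, and here lies the main obstacle: \eqref{eq:pf:smallness} only supplies smallness in the $L$-norm, so I need to transfer it to $S$ by interpolation. Since $\alpha > 8/5$, I can choose an acceptable pair $(s_2, \alpha)$ with $s_2 < 0$, for instance $s_2 = 2/\alpha - 5/4 + \delta$ for small $\delta > 0$. Lemma \ref{lem:gHolder} then yields
\[
\norm{f}_{S(\R)} \le C \norm{f}_{L(\R)}^\theta \bigl\| |\d_x|^{s_2} f\bigr\|_{L^{p(s_2,\alpha)}_x L^{q(s_2,\alpha)}_t}^{1-\theta}
\]
with $\theta = -s_2/(s_1 - s_2) \in (0,1)$ and $s_1 = 1/(3\alpha)$. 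Composing with \eqref{ho} for the admissible pair $(s_2,\alpha)$ produces
\[
\norm{e^{-t\d_x^3} g}_{S(\R)} \le C \norm{e^{-t\d_x^3} g}_{L(\R)}^\theta \norm{g}_{\hat{L}^\alpha}^{1-\theta}.
\]
Applying this with $g = \sum_{j=J_0+1}^{J_0+k} V_n^j(0)$, the quantity $\norm{g}_{\hat{L}^\alpha}$ is controlled by a constant depending on $J_0$, $k$ and $M$ (since $\norm{V_n^j(0)}_{\hat{L}^\alpha} \le M + o_n(1)$), while the $L(\R)$-norm of $e^{-t\d_x^3} g$ can be made as small as desired by enlarging $N(k)$ further. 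Therefore the $S(\R_+)$-norm is also bounded by $\eps$. The potential growth of the $\hat{L}^\alpha$-bound in $k$ is harmless because $N(k)$ is allowed to depend on $k$.
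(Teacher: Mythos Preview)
Your argument for the $L(\R_+)$-norm is correct and in fact a bit slicker than the paper's (which goes through the refined Stein--Tomas estimate and the $\hat{M}^\alpha_{2,\sigma}$-decoupling). The flaw is in the $S(\R_+)$-step.

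You assert that ``the $L(\R)$-norm of $e^{-t\d_x^3} g$ can be made as small as desired by enlarging $N(k)$ further.'' This is false. Once $J_0$ is fixed, the triangle inequality only gives
\[
\limsup_{n\to\infty}\norm{e^{-t\d_x^3} g}_{L(\R)} \le \eta_{J_0}+\eta_{J_0+k},\qquad \eta_l:=\limsup_{n\to\infty}\norm{e^{-t\d_x^3} r_n^l}_{L(\R)},
\]
and this quantity, while at most $\eps/2$ by your choice of $J_0$, is \emph{not} driven to zero by taking $n$ large. Meanwhile the only available $\hat{L}^\alpha$ bound on $g$ is of order $kM$ (or, via $g=r_n^{J_0}-r_n^{J_0+k}+o_n(1)$ and $r_n^j\in B_{(2j+1)M}$, of order $(J_0+k)M$). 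Your interpolation therefore yields
\[
\norm{e^{-t\d_x^3} g}_{S(\R_+)}\le C\,\eta_{J_0}^{\,\theta}\,(kM)^{1-\theta},
\]
which diverges as $k\to\infty$ for fixed $J_0$. Since the lemma requires $J_0$ to depend only on $\eps$ and the bound to hold for \emph{all} $k\ge1$, this closes nothing.

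The paper's approach avoids this by first decoupling the $S$-norm into single-profile contributions via orthogonality,
\[
\norm{\sum_{j=J_0+1}^{J_0+k} e^{-t\d_x^3}\mathcal{G}^j_n\psi^j}_{S(\R_+)}^{p(S)}\le \sum_{j=J_0+1}^{J_0+k}\norm{e^{-t\d_x^3}\mathcal{G}^j_n\psi^j}_{S(\R_+)}^{p(S)}+o(1),
\]
and only then interpolating each term between $L$ and $Z$. This gives $\norm{e^{-t\d_x^3}\mathcal{G}^j_n\psi^j}_{S}^{p(S)}\le C\hMn{\psi^j}{\alpha}{2}{\sigma}^{\theta p(S)}$ with a uniform-in-$j$ constant, and the tail is summable because $\theta p(S)>\sigma$ (cf.\ Remark~\ref{rem:restriction}) and $\sum_j\hMn{\psi^j}{\alpha}{2}{\sigma}^{\sigma}<\infty$ by \eqref{eq:pf:Pythagorean}. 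That summability, not a global interpolation inequality, is the missing idea.
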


\begin{proof}
By Proposition \ref{prop:pf:lps1}, it suffices to prove the estimate for $e^{-t \d_x^3} \mathcal{G}^j_{n} \psi^j$ instead of $e^{-t\d_x^3} V_n^j(0)$.
By Theorem \ref{thm:rST} in Section 6, we see that
\begin{align*}
	\norm{ \sum_{j = J_0+1}^{J_0 + k} e^{-t\d_x^3}
	\mathcal{G}^j_{n} \psi^j}_{L(\rre_{+})}
	\le{}& C\hMn{ \sum_{j = J_0+1}^{J_0 + k} \mathcal{G}^j_{n} \psi^j }{\alpha}{2}{\sigma} \\
	\le{}& C \( \sum_{j = J_0+1}^{J_0 + k} \hMn{ \psi^j }{\alpha}{2}{\sigma}^\sigma \)^{1/\sigma} + o(1).
\end{align*}
Therefore, for any $\eps>0$, we can choose $J_0(\eps)$ so that 
\[
	\norm{ \sum_{j = J_0+1}^{J_0 + k}e^{-t\d_x^3}\mathcal{G}^j_{n}
	 \psi^j}_{L(\rre_{+})}
	\le \frac{\eps}2
\]
for any $k \ge 1$ and $n\ge n(k)$.

On the other hand,
\[
	\norm{ \sum_{j = J_0+1}^{J_0 + k} e^{-t\d_x^3}\mathcal{G}^j_{n}
	 \psi^j}_{S(\rre_{+})}^{p(S)}
	\le \sum_{j = J_0+1}^{J_0 + k} \norm{  e^{-t\d_x^3}\mathcal{G}^j_{n} \psi^j}_{S(\rre_{+})}^{p(S)}
	+o(1)
\]
as $n\to\I$. Since
\begin{align*}
	\norm{  e^{-t\d_x^3}\mathcal{G}^j_{n} \psi^j}_{S(\rre_{+})}
	&{}\le C \norm{  e^{-t\d_x^3}\mathcal{G}^j_{n} \psi^j}_{L(\rre_{+})}^{\theta}
	\norm{ e^{-t\d_x^3}\mathcal{G}^j_{n} \psi^j}_{Z(\rre_{+})}^{1-\theta}, \\
	&{}\le C \hMn{ \mathcal{G}^j_{n} \psi^j}{\alpha}{2}{\sigma}^{\theta}
	\hLebn{\psi^i}{\alpha}^{1-\theta}\\
	&{}\le C \hMn{ \mathcal{G}^j_{n} \psi^j}{\alpha}{2}{\sigma}^{\theta},
\end{align*}
where $\theta = -s(Z)/(s(L)-s(Z))$, one verifies that
\[
	\sum_{j = J_0+1}^{J_0 + k} \norm{  e^{-t\d_x^3}\mathcal{G}^j_{n} \psi^j}_{S(\rre_{+})}^{p(S)}
	\le C \sum_{j = J_0+1}^{\I} \hMn{ \mathcal{G}^j_{n} \psi^j}{\alpha}{2}{\sigma}^{\theta p(S)}.
\]
The right hand side is bounded since $ \theta p(S) > \sigma$.
Hence, we can choose $J_1(\eps)$ so that 
\[
	\sum_{j = J_1+1}^{J_1 + k} 
	\norm{  e^{-t\d_x^3}\mathcal{G}^j_{n} \psi^j}_{S(\rre_{+})}
	\le \frac{\eps}2
\]
for any $k \ge 1$ and $n\ge n(k)$. 
\end{proof}

\vskip2mm

\begin{remark}\label{rem:restriction}
Our assumption $\alpha > 3/2+ \sqrt{7/60}$ comes from the condition $\theta p(S) > \sigma$
in this lemma.
By letting $s(Z) \downarrow \frac2\alpha - \frac54$, we have $\theta p(S) \to \frac{3\alpha (5\alpha -8)}{2(3\alpha -4)}$.
This upper bound of $\sigma$,
which restrict us to the above range of $\alpha$ with lower bound $\sigma > \alpha '$,
is used only in this lemma, and all other arguments work with a weaker assumption $\sigma < 6\alpha /(3\alpha -2)$.
Here, we also remark on the choice of the space $Z(I)$.
For any fixed $\alpha > 3/2 + \sqrt{7/60}$, we are able to choose $\sigma$ so that
$\alpha' < \sigma < \frac{3\alpha (5\alpha -8)}{2(3\alpha -4)}$.
Then, we fix the space $Z(I)$ so that $\theta = -s(Z)/(s(L)-s(Z))$
satisfies $\theta p(S) > \sigma$.
\end{remark}

\vskip2mm
We now prove Proposition \ref{prop:pf:lps2}. 
\begin{proof}
The integral equation that $W_n^k:=\sum_{j=J_0+1}^{J_0+k} V_n^j$ satisfies is
\[
	W_n^k = e^{-t\d_x^3} W_n^k(0)+\mu\int_0^t  e^{-(t-s)\d_x^3} \d_x (|W_n^k|^{2\alpha} W_n^k +  E_n^k) ds
	,
\]
where
\[
	- E_n^k = |W_n^k|^{2\alpha} W_n^k - \sum_{j=J_0+1}^{J_0+k} |V_n^j|^{2\alpha} V_n^j .
\]
Therefore,
\begin{align*}
	\norm{W_n^k}_{L(\R_+) \cap S(\R_+)}
	\le{}& \norm{e^{-t\d_x^3} W_n^k(0)}_{L(\R_+) \cap S(\R_+)}
	+ C \norm{W_n^k}_{L(\R_+) \cap S(\R_+)}^{2\alpha+1} \\
	&{}+ C \norm{ E_n}_{N(\R_+)}.
\end{align*}
Fix $\eps>0$.
Thanks to Lemma \ref{lem:pf:small1}, one can choose $J_0$ so that
\[
	\norm{e^{-t\d_x^3} W_n^k(0)}_{L(\R_+) \cap S(\R_+)}
	\le \eps
\] 
for any $k\ge 1$ and $n\ge N(k)$.
Further, for this $J_0$, we have
\[
	\norm{ E_n}_{N(\R_+)} \le \eps
\]
for any $k \ge 1$ and $n\ge N(k)$. 
\end{proof}

\vskip2mm

\begin{proposition}[Approximate solution to the equation]\label{prop_123}
Let $\tilde{u}_n^J$ be defined by (\ref{aps}). Then 
$\tilde{u}_n^J$ is an approximate solution to \eqref{gKdV}
in such a sense that
\[
	\lim_{J\to\I} 
	\limsup_{n\to\I} \norm{|\d_x|^{-1}[(\d_t + \d_{xxx}) \tilde{u}_n^J -\mu \d_x (|\tilde{u}_n^J|^{2\alpha}\tilde{u}_n^J )]}_{N(\R_+)} = 0.
\]
\end{proposition}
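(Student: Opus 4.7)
The plan is to observe that each $V_n^j$ is, by construction, an exact $\hat{L}^{\alpha}$-solution of \eqref{gKdV}: when $\xi_n^j\equiv 0$ this is because $\Psi^j$ solves \eqref{gKdV} while the outer deformations $D(h_n^j)T(y_n^j)$ together with the time shift by $t_n^j$ are exact symmetries of the equation; when $\xi_n^j\to\infty$ it is built into the definition of $\Psi_n^j$. Since the linear piece $e^{-t\pt_x^3}r_n^J$ is annihilated by $\pt_t+\pt_x^3$, the error collapses to
\[
(\pt_t+\pt_x^3)\tilde u_n^J-\mu\pt_x(|\tilde u_n^J|^{2\alpha}\tilde u_n^J)=\mu\pt_x\Bigl[\sum_{j=1}^J F(V_n^j)-F(\tilde u_n^J)\Bigr],
\]
with $F(z):=|z|^{2\alpha}z$. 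Because $|\pt_x|^{-1}\pt_x$ is the Fourier multiplier $i\sign(\xi)$, which commutes with $|\pt_x|^{s(L)}$ and is bounded on the underlying mixed $L^{\widetilde p(N)}_xL^{\widetilde q(N)}_t$ space, the conclusion reduces to showing
\[
\lim_{J\to\infty}\limsup_{n\to\infty}\Bigl\|\sum_{j=1}^J F(V_n^j)-F(\tilde u_n^J)\Bigr\|_{N(\R_+)}=0.
\]

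To establish this I would set $U_n^J:=\sum_{j=1}^J V_n^j$ and $w_n^J:=e^{-t\pt_x^3}r_n^J$ and split
\[
\sum_{j=1}^J F(V_n^j)-F(U_n^J+w_n^J)=\Bigl[\sum_{j=1}^J F(V_n^j)-F(U_n^J)\Bigr]+\bigl[F(U_n^J)-F(U_n^J+w_n^J)\bigr].
\]
The first bracket is exactly the quantity for which Lemma \ref{lem:pf:bdd} yields $o(1)$ in $N(\R_+)$ as $n\to\infty$ for each fixed $J$. For the second, I would invoke Lemma \ref{lem:nlest}(ii) with $u=\tilde u_n^J$ and $v=U_n^J$, which yields a bound of the form
\[
C\bigl(\|\tilde u_n^J\|_{L(\R_+)\cap S(\R_+)}+\|U_n^J\|_{L(\R_+)\cap S(\R_+)}\bigr)^{2\alpha}\bigl(\|w_n^J\|_{L(\R_+)}+\|w_n^J\|_{S(\R_+)}\bigr).
\]
Proposition \ref{prop:pf:lps2} supplies a uniform-in-$n$ bound on $\|\tilde u_n^J\|_{K(\R_+)\cap Z(\R_+)}+\|U_n^J\|_{K(\R_+)\cap Z(\R_+)}$ for $n\ge N(J)$; since in our admissible range $s(K)<0<s(L)<s(Z)$ and both $s(L)$ and the $S$-index $0$ lie in $[s(K),s(Z)]$, Lemma \ref{lem:gHolder} promotes this to the required bound in $L(\R_+)\cap S(\R_+)$.

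It remains to make $\|w_n^J\|_{L(\R_+)}+\|w_n^J\|_{S(\R_+)}$ small in the iterated limit. The $L$-component is exactly \eqref{eq:pf:smallness}. For the $S$-component, since the target index $0$ lies strictly between $s(K)<0$ and $s(L)>0$, Lemma \ref{lem:gHolder} furnishes
\[
\|w_n^J\|_{S(\R_+)}\le C\|w_n^J\|_{K(\R_+)}^{\theta}\|w_n^J\|_{L(\R_+)}^{1-\theta},\qquad \theta=\frac{s(L)}{s(L)-s(K)}\in(0,1),
\]
while the homogeneous Strichartz estimate \eqref{ho} applied to $r_n^J$, which satisfies $\|r_n^J\|_{\hat{L}^{\alpha}}\le(2J+1)M$ by Theorem \ref{thm:pd_r}, controls $\|w_n^J\|_{K(\R_+)}$ uniformly in $n$ (with constant depending on $J$). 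Letting $n\to\infty$ first and then $J\to\infty$ closes the argument.

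The main delicate point I anticipate is precisely this last interpolation. In the admissible range $\alpha>3/2+\sqrt{7/60}$ both $s(L)$ and $s(Z)$ are strictly positive, so the $S$-norm at derivative $0$ cannot be directly interpolated between $L$ and $Z$ with an exponent in $(0,1)$; the argument is forced to route through the negative-regularity endpoint space $K$, whose uniform boundedness on $w_n^J$ depends crucially on the $\hat{L}^{\alpha}$-control of the remainder $r_n^J$ built into Theorem \ref{thm:pd_r}.
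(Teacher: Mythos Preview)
Your outline matches the paper's: the same error identity, the same splitting into the profile--interaction piece (handled by Lemma~\ref{lem:pf:bdd}) and the remainder piece (handled by Lemma~\ref{lem:nlest}(ii) together with Proposition~\ref{prop:pf:lps2}), and the same reduction to $\|w_n^J\|_{L\cap S}\to 0$ in the iterated limit.

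There is one genuine gap in your final step. Your control of $\|w_n^J\|_{K(\R_+)}$ via Strichartz and $\|r_n^J\|_{\hat L^\alpha}\le (2J{+}1)M$ is \emph{$J$-dependent}, so the interpolation
\[
\|w_n^J\|_{S(\R_+)}\le C\,\|w_n^J\|_{K(\R_+)}^{\theta}\,\|w_n^J\|_{L(\R_+)}^{1-\theta}
\]
does not close: \eqref{eq:pf:smallness} gives no rate on $\limsup_n\|w_n^J\|_{L}$, so the product $[(2J{+}1)M]^{\theta}\cdot[\limsup_n\|w_n^J\|_L]^{1-\theta}$ need not vanish as $J\to\infty$. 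The fix the paper uses is to get a $J$-\emph{independent} auxiliary bound: write $w_n^J=\tilde u_n^J-\sum_{j\le J}V_n^j$ and note that both terms are bounded in $K(\R_+)\cap Z(\R_+)$ uniformly in $J$ for $n\ge N(J)$. For $\tilde u_n^J$ this is exactly Proposition~\ref{prop:pf:lps2}; for $\sum_{j\le J}V_n^j$ the proof of that proposition actually establishes the uniform bound (the first $J_0$ terms by Lemma~\ref{lem:pf_bdd1}, the tail $\sum_{J_0<j\le J}$ uniformly in the upper limit). With this $J$-uniform bound the interpolation gives $\limsup_n\|w_n^J\|_{S}\to 0$, which is precisely what the paper writes as $\le C M^{1-\theta}\limsup_n\|w_n^J\|_L^{\theta}$.

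A side remark: the paper interpolates $S$ between $L$ and $Z$ rather than between $K$ and $L$. With the definitions as displayed ($s(K)<0<s(L)<s(Z)$ in the relevant range of $\alpha$), your choice is the one for which Lemma~\ref{lem:gHolder} actually applies with $\theta\in(0,1)$; the paper's written choice requires $s(Z)<0$, which is consistent with Remark~\ref{rem:restriction} (``letting $s(Z)\downarrow \tfrac{2}{\alpha}-\tfrac{5}{4}$'') but not with the displayed definition of $s(Z)$---apparently a typographical swap of $s(Z)$ and $s(K)$ in the paper. Your routing through $K$ is therefore the correct reading of the stated definitions.
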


\begin{proof} 
First note that the identity
\begin{align*}
	&(\d_t + \d_{xxx}) \tilde{u}_n^J - \mu\d_x (|\tilde{u}_n^J|^{2\alpha}\tilde{u}_n^J )\\
	&{}= \mu\sum_{j=1}^J \d_x (|V_n^j|^{2\alpha} V_n^j) - \mu\d_x (|\tilde{u}_n^J|^{2\alpha}\tilde{u}_n^J ) \\
	&{}= \mu\d_x \left\{ \sum_{j=1}^J (|V_n^j|^{2\alpha} V_n^j) -  \(\abs{\sum_{j=1}^J V_n^j}^{2\alpha} \sum_{j=1}^J V_n^j \) \right\} \\
	&{}\quad + \mu\d_x \left\{  \(\abs{\sum_{j=1}^J V_n^j}^{2\alpha} \sum_{j=1}^J V_n^j \) 
	-  (|\tilde{u}_n^J|^{2\alpha}\tilde{u}_n^J) \right\}\\
	&{}=: \d_x I_1+ \d_x I_2.
\end{align*}
Lemma \ref{lem:pf:bdd} implies $\lim_{n\to\I} \norm{I_1}_{N(\R_+)} = 0 $.
Therefore, we only have to handle $I_2$.
From Lemma \ref{lem:nlest} (ii) 
and Proposition \ref{prop:pf:lps2}, we have
\begin{eqnarray*}
	\lefteqn{\norm{I_2}_{N(\R_+)}}\\
	&\le&C(\|u_{n}\|_{L (\R_+) \cap S(\R_+)}
	+\|e^{-t\pt_{x}^{3}} r_n^J\|_{L (\R_+) \cap S(\R_+)})^{2\alpha}
	\norm{e^{-t\pt_{x}^{3}} r_n^J }_{L (\R_+) \cap S(\R_+)}\\
	&\le&C\norm{e^{-t\pt_{x}^{3}} r_n^J }_{L (\R_+) \cap S(\R_+)}
\end{eqnarray*}
for any $n \ge N(J)$. By \eqref{eq:pf:smallness},
\[
	\lim_{J\to\I} \limsup_{n\to\I} \norm{e^{-t\pt_{x}^{3}}r_n^J }_{L } =0
\]
and
\begin{align*}
	\limsup_{n\to\I} \norm{e^{-t\pt_{x}^{3}}r_n^J }_{S(\rre_{+})} 
	\le{}& C \limsup_{n\to\I} \norm{e^{-t\pt_{x}^{3}}r_n^J 
	}_{L(\rre_{+})}^{\theta} 
	\norm{e^{-t\pt_{x}^{3}}r_n^j}_{Z(\rre_{+})}^{1-\theta} \\
	\le{}& C M^{1-\theta}  \limsup_{n\to\I} \norm{e^{-t\pt_{x}^{3}}r_n^J 
	}_{L(\rre_{+})}^{\theta} \to 0
\end{align*}
as $J\to\I$. This yields $\lim_{n\to\I} \norm{I_2}_{N(\R_+)} = 0 $. 
Hence we have the desired estimate. 
\end{proof}

\vskip2mm

Now, we apply long time stability to see that $\norm{u_n}_{S(\R_+)} < \I$
for sufficiently large $n$.
This implies that $u_n \in \mathcal{S}_+$, which contradicts with the definition of 
$\{u_n\}_n$.

\subsection*{Step 4}
We now see that there exists $j_0$ such that
$c_j^{\frac1\sigma -1} \ell(\psi^{j_0}) = d_{+,\mathrm{gKdV}}$.
Then, one sees from the definition of $\{u_n\}_n$ and \eqref{eq:pf:Pythagorean} 
that $\psi^{j}\equiv 0$ for $j\neq j_0$.
For simplicity, we drop index $j_0$ and write
\[
	u_n = \mathcal{G}_{n} \psi + r_n,\qquad
	\tilde{u}_n (t) = V_n(t) + e^{-t\d_x^3} r_n
\]
in what follows.
Further, we have $\lim_{n\to\I}\hMn{r_n}{\alpha}{2}{\sigma}=0$ 
and so
\[
	\lim_{n\to\I} \norm{e^{-t\d_x^3} r_n }_{K \cap Z} = 0.
\]
When $|\xi_n| \to \I$, as in the previous step,
we see from assumption \eqref{asmp:gKdV_NLS}
and Theorem \ref{thm:ENK} that
$u_n \in S_{+}$ for large $n$, a contradiction.
Hence, $\xi_n\equiv 0$. Recall that
\[
	V_n =
	 D(h_n) T(y_n) \Psi ((h_n)^3t + t_n)
\]
where
$\Psi (t)$ is a nonlinear profile associated with $(\psi,t_n)$.
Let us now show that $u_{c}:=\Psi$ is the solution which has the desired property.
We have $\Psi (t_n) \not\in \mathcal{S}_{+}$, otherwise
$u_n \in \mathcal{S}_+$ for large $n$ by long time stability.

The case $t_n \to \I$ ($n\to\I$) is excluded since this implies $\Psi (t_n)\in \mathcal{S}_{+}$.
If $t_n \equiv 0$ then $\Psi(0) = \psi$ and so
 $\ell(\Psi(0))  = d_{+} $.
Finally, if $t_n \to -\I$ as $n\to\I$ then $\lim_{t\to-\I} e^{t\d_x^3} \Psi(t) = \psi$ and
 putting $u_{c,-}:=\lim_{t\to-\I} e^{t\d_x^3}\Psi(t)$, we have 
$\ell(u_{c,-}) = d_{+} $. This completes the proof of Theorem \ref{thm:minimal}. 
$\qed$

\subsection{Proof of Theorem \ref{thm:minimal2}}
The proof is essentially the same as for Theorem \ref{thm:minimal}.
We first take a minimizing sequence associated with $\tilde{d}_{+}$.
Then, we apply Theorem \ref{thm:pd_r}.
The difference is that uniform boundedness in $\hat{L}^{\tilde{\alpha}} \cap H^s$
gives us $h_n^j \equiv 1$ and $\xi_n^j \equiv 0$ (See Proposition \ref{prop:pd:addtionalbound}).
Thus, the assumption \eqref{asmp:gKdV_NLS} is not necessary  any longer
since it is necessary just to exclude the case $\xi_n^j \to \I$ via Theorem \ref{thm:ENK}.
Recall that $\tilde{B}_M \subset B_{M}$.
Hence, the rest of the proof is the same. 
This completes the proof of Theorem \ref{thm:minimal2}. 

%
%

\section{Linear profile decomposition}\label{sec:pd}

In this section and the next section, we prove the linear profile decomposition 
(Theorem \ref{thm:pd_r}).
To clarify the proof, we first prove a decomposition of 
sequence of \emph{complex-valued}  functions (Theorem \ref{thm:pd}, below).
The desired decomposition for real-valued functions then follows as a corollary.

As in \cite{CK}, the proof splits into two parts.
The first part, treated in this section as Theorem \ref{thm:pd1}, is the procedure
of finding profiles 
and obtaining pairwise orthogonality between profiles
and remainder term.
By employing a successive notion of smallness of remainder term, used in \cite{BV,CK,Ker}, 
this part can be shown in an abstract way.
Remark that, in fact, even a set of deformations need not to be specified for this decomposition procedure.
For given boundedness and a set of deformations, a corresponding notion of orthogonality
is selected and a corresponding decomposition is obtained.
The set of the deformations 
\begin{equation*}
	G := \{ D(h) A(s) T(y) P(\xi) \ |\ \Gamma = (h, \xi, s, y) \in 2^\Z \times
\R\times \R \times \R \}
\end{equation*}
used in Theorem \ref{thm:pd_r} 
 comes not from the decomposition procedure
but from a concentration compactness, which is 
the second part of the proof and to be proven in the next section.

\begin{remark}
The above $G$ plays a role of \emph{a group of dislocations} in the sense of \cite{ST}.
Remark that, however, $G$ is not a group.
\end{remark}

As explained in the introduction, 
a decoupling equality \eqref{eq:decouple} fails by the presence of 
multiplier-like deformations $A$ and $T$,
and so the main point of our decomposition is to establish a decoupling
\emph{inequality} with respect to $\ell(\cdot)$ (Lemma \ref{lem:decouple}).

Let us now state the main result of this section.
For a bounded sequence $P=\{P_n\}_n \subset \hat{L}^{\alpha}$,
we introduce
a \emph{set of weak limits modulo deformations}
\[
	\mathcal{V}(P)
	:=\left\{ \phi \in \hat{L}^{\alpha}\ \Biggl|\  
	\begin{aligned}
	&\phi= \lim_{k\to\I} \mathcal{G}_{n_k}^{-1} P_{n_k} \text{ weakly in }\hat{L}^{\alpha}, \\
	&\exists \mathcal{G}_n \in G, \,\exists \text{subsequence }n_k
	\end{aligned}
	\right\}.
\]
and define
\[
	\eta (P):= \sup_{\phi \in \mathcal{V}(P)}  \ell(\phi).
\]
By definition, $\eta(P)=0$ implies that we may not find any weak limit from 
a sequence $\{P_n\}_n$ even modulo the orbit by deformations $G$.
Conversely, if $\eta(P)>0$ we can find a non-zero weak limit modulo $G$.
The main result of this section is decomposition with a smallness
of remainder with respect to $\eta$.
\begin{theorem}\label{thm:pd1}
Let $4/3<\alpha<2$ and $\alpha'<\sigma \le \frac{6\alpha}{3\alpha-2}$.
Let $u=\{u_n\}_n$ be a bounded sequence of $\C$-valued functions in $\hat{L}^{\alpha}$.
Then, there exist $\psi^j \in \mathcal{V}(u) $ and 
pairwise orthogonal families $\{\mathcal{G}_n^j\}_{n} \subset
G$ ($j=1,2,\dots$) such that
\begin{equation}\label{eq:pd:key_decomp}
	u_n = \sum_{j=1}^l \mathcal{G}^j_n \psi^j + r_n^l
\end{equation}
for all $l\ge1$ with
\begin{equation}\label{eq:pd:key_smallness}
	\eta (r^l) \to 0
\end{equation}
as $l\to\I$. Further, a decoupling inequality
\begin{equation}\label{eq:pd:key_Pythagorean}
	\limsup_{n\to\I} \ell(u_n)^\sigma \ge \sum_{j=1}^J \ell(\psi^j)^\sigma	
	+\limsup_{n\to\I} \ell(r_n^J)^\sigma
\end{equation}
holds for all $J\ge1$.
Further, it holds that
\begin{equation}\label{eq:pd:key_bddness1}
	\norm{\psi^j}_{\hat{L}^{\alpha}} \le \limsup_{n\to\I} \norm{u_n}_{\hat{L}^{\alpha}}
\end{equation}
and
\begin{equation}\label{eq:pd:key_bddness2}
	\limsup_{n\to\I} \norm{ r_n^j }_{\hat{L}^\alpha}
	\le (j+1)\limsup_{n\to\I} \norm{ u_n }_{\hat{L}^\alpha}
\end{equation}for any $j$.
\end{theorem}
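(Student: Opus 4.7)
The plan is a Bahouri--Gérard style iterative extraction. Set $r_n^0:=u_n$, and suppose inductively that $\psi^1,\dots,\psi^{l-1}\in \hat{L}^\alpha$ and families $\{\mathcal{G}_n^j\}_n\subset G$ ($j<l$) have been chosen with $r_n^{l-1}:=u_n-\sum_{j<l}\mathcal{G}_n^j\psi^j$. If $\eta(r^{l-1})=0$ I stop; otherwise, invoking the definition of $\eta$, I select $\{\mathcal{G}_n^l\}_n\subset G$ and $\psi^l\in \mathcal{V}(r^{l-1})$ with $\ell(\psi^l)\geq\tfrac12\eta(r^{l-1})$ such that, after a subsequence, $(\mathcal{G}_n^l)^{-1}r_n^{l-1}\rightharpoonup \psi^l$ weakly in $\hat{L}^\alpha$. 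Using the $G$-invariance of $\ell$ (which follows from Lemma \ref{lem:scale} together with the infimum in \eqref{def:ell}), I may additionally arrange the $P$-component $\xi_n^l$ of $\mathcal{G}_n^l$ so that $\|(\mathcal{G}_n^l)^{-1}r_n^{l-1}\|_{\hat{M}^\alpha_{2,\sigma}}\leq \ell(r_n^{l-1})+o(1)$. Setting $r_n^l:=r_n^{l-1}-\mathcal{G}_n^l\psi^l$ and extracting diagonally in $l$ produces a single subsequence along which \eqref{eq:pd:key_decomp} holds for every $l$.

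I next prove pairwise orthogonality of $\{\mathcal{G}_n^j\}$ and $\{\mathcal{G}_n^k\}$ for $j<k$ by induction on $k$. If the orthogonality failed for some such pair, then by the explicit composition \eqref{eq:orthty_representation} combined with the negation of Definition \ref{def:orthty}, a subsequence of $(\mathcal{G}_n^k)^{-1}\mathcal{G}_n^j$ would converge in the weak operator topology on $\hat{L}^\alpha$ to a nontrivial limit operator $\mathcal{G}^\infty$. Expanding $(\mathcal{G}_n^k)^{-1}r_n^{k-1}$ in the $\psi^i$'s and using both the induction hypothesis and the recursive identities $(\mathcal{G}_n^i)^{-1}r_n^i\rightharpoonup 0$, one finds that the weak limit contains the nontrivial contribution $\mathcal{G}^\infty\psi^j\neq 0$, contradicting $(\mathcal{G}_n^k)^{-1}r_n^{k-1}\rightharpoonup\psi^k$ in view of the maximality $\ell(\psi^j)\geq\tfrac12\eta(r^{j-1})>0$. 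Once pairwise orthogonality is in hand, the same expansion gives $(\mathcal{G}_n^j)^{-1}u_n\rightharpoonup \psi^j$, placing $\psi^j\in\mathcal{V}(u)$ and yielding \eqref{eq:pd:key_bddness1}; the estimate \eqref{eq:pd:key_bddness2} is then a direct application of the triangle inequality to \eqref{eq:pd:key_decomp}.

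The main obstacle is the sharp Pythagorean inequality \eqref{eq:pd:key_Pythagorean}, which must circumvent the failure of the direct $\hat{L}^\alpha$ decoupling noted in the introduction. I establish a one-step decoupling at the level of the generalized Morrey norm: whenever $w_n\rightharpoonup w$ weakly in $\hat{L}^\alpha$,
\[
\limsup_{n\to\infty}\|w_n\|_{\hat{M}^\alpha_{2,\sigma}}^\sigma\geq \|w\|_{\hat{M}^\alpha_{2,\sigma}}^\sigma+\limsup_{n\to\infty}\|w_n-w\|_{\hat{M}^\alpha_{2,\sigma}}^\sigma.
\]
Indeed, weak convergence in $\hat{L}^\alpha$ transfers to weak convergence of $\hat{w}_n$ in $L^{\alpha'}(\R)$, and since $\alpha'>2$, to weak convergence in $L^2(\tau)$ for every bounded interval $\tau$. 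The Hilbert space identity
\[
\|\hat{w}_n\|_{L^2(\tau)}^2=\|\hat{w}\|_{L^2(\tau)}^2+\|\hat{w}_n-\hat{w}\|_{L^2(\tau)}^2+o(1)
\]
therefore holds for each fixed dyadic $\tau$. Since $\sigma\geq 2$, the superadditivity $(a+b)^{\sigma/2}\geq a^{\sigma/2}+b^{\sigma/2}$ upgrades this to the $\sigma$-th power; summation over $\tau\in\mathcal{D}$, justified by Fatou applied to the nonnegative sequences of $\sigma$-th powers, gives the displayed inequality. Applied to $w_n:=(\mathcal{G}_n^l)^{-1}r_n^{l-1}$, together with the $\xi$-adjustment $\|w_n\|_{\hat{M}^\alpha_{2,\sigma}}\leq\ell(r_n^{l-1})+o(1)$ from the first paragraph and the elementary bound $\|\cdot\|_{\hat{M}^\alpha_{2,\sigma}}\geq\ell(\cdot)$, it yields the one-step $\ell$-decoupling; iterating in $l$ gives \eqref{eq:pd:key_Pythagorean}.

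Finally, \eqref{eq:pd:key_Pythagorean} combined with the uniform bound $\limsup_n\ell(u_n)\leq C\limsup_n\|u_n\|_{\hat{L}^\alpha}<\infty$ (from $\hat{L}^\alpha\hookrightarrow\hat{M}^\alpha_{2,\sigma}$) forces $\sum_{j\geq 1}\ell(\psi^j)^\sigma<\infty$; hence $\ell(\psi^j)\to 0$, and the extraction $\ell(\psi^l)\geq\tfrac12\eta(r^{l-1})$ gives $\eta(r^l)\to 0$, proving \eqref{eq:pd:key_smallness}. The essential difficulty is thus concentrated in the Pythagorean step, which succeeds because the generalized Morrey norm localizes the Fourier-side $L^2$ structure and permits the use of Hilbert-space orthogonality that is simply unavailable for the bare $\hat{L}^\alpha$ norm.
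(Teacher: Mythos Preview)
Your overall iterative scheme and the use of Lemma~\ref{lem:orthty} for orthogonality are in line with the paper's proof. The genuine gap lies in your justification of the one-step decoupling inequality for the $\hat{M}^\alpha_{2,\sigma}$ norm.

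You assert that from the per-interval Hilbert identity $\|\hat{w}_n\|_{L^2(\tau)}^2=\|\hat{w}\|_{L^2(\tau)}^2+\|\hat{w}_n-\hat{w}\|_{L^2(\tau)}^2+o_\tau(1)$, raising to the power $\sigma/2$ via superadditivity and then ``summing over $\tau\in\mathcal{D}$, justified by Fatou'' yields the displayed decoupling. This does not work. The $o_\tau(1)$ error is the cross term $2\Re\langle\hat{w},\hat{w}_n-\hat{w}\rangle_\tau$, which can be negative, so superadditivity $(a+b)^{\sigma/2}\ge a^{\sigma/2}+b^{\sigma/2}$ does not apply before passing to the limit in $n$; and that limit is not uniform in $\tau$. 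After taking $\sigma/2$-th powers the resulting per-interval error need not be summable over the infinite set $\mathcal{D}$, and Fatou's lemma applies only to nonnegative sequences, not to differences of the form $\|\hat{w}_n\|_{L^2(\tau)}^\sigma-\|\hat{w}_n-\hat{w}\|_{L^2(\tau)}^\sigma$. The paper's Lemma~\ref{lem:decouple} addresses exactly this point: it introduces a slack factor $\gamma>1$ through the elementary inequality $(a-b)^{\sigma/2}\ge(\tfrac{m}{m+1})^{(\sigma-2)/2}a^{\sigma/2}-m^{(\sigma-2)/2}b^{\sigma/2}$, which isolates the total cross-term contribution
\[
R_n=\sum_{\tau\in\mathcal{D}}|\tau|^{\sigma(\frac12-\frac1\alpha)}\bigl|\langle\F\mathcal{G}_n\psi,\F r_n\rangle_\tau\bigr|^{\sigma/2},
\]
and then proves $R_n\to0$ by a finite-set truncation in $\mathcal{D}$ combined with an Ascoli--Arzel\`a argument exploiting the weak convergence $(\mathcal{G}_n)^{-1}r_n\rightharpoonup0$. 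Only after iterating with $\gamma$ fixed is the limit $\gamma\to1$ taken at the very end.

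A secondary issue is your ``$\xi$-adjustment'' of the $P$-component of $\mathcal{G}_n^l$ so that $\|(\mathcal{G}_n^l)^{-1}r_n^{l-1}\|_{\hat{M}^\alpha_{2,\sigma}}\le\ell(r_n^{l-1})+o(1)$. Nothing guarantees that the optimizing shift stays bounded in $n$, and an unbounded shift can destroy the nonzero weak limit $(\mathcal{G}_n^l)^{-1}r_n^{l-1}\rightharpoonup\psi^l$. The paper sidesteps this by proving the decoupling for $\|P(\xi_0)\,\cdot\,\|_{\hat{M}^\alpha_{2,\sigma}}$ with a free parameter $\xi_0$ applied to the \emph{untransformed} sequence $u_n$, iterating with $\xi_0$ fixed, and only afterwards taking the infimum over $\xi_0$ to recover $\ell(u_n)$ on the left while bounding each term on the right below by $\ell(\psi^j)$ and $\ell(r_n^J)$.
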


\begin{remark} 
(i) The important thing in decoupling inequality \eqref{eq:pd:key_Pythagorean}
is that the coefficients of the right hand is equal to one.
This is why we work not with $\norm{\cdot}_{\hat{M}^\alpha_{2,\sigma}}$ but with $\ell(\cdot)$.

\vskip1mm
\noindent
(ii) Unlikely to Theorem \ref{thm:pd_r},
the parameters $\xi_n^j$ can be negative in this theorem.
As a result, the notion of orthogonality is weaker.
See Remark \ref{rem:realorthty} for details.
\end{remark}

\subsection{A characterization of orthogonality}
To begin with, we give a characterization of orthogonality of two families of 
deformations given in Definition \ref{def:orthty}.
The orthogonality is realized 
as a condition which gives us the following two properties.
\vskip2mm

\begin{lemma}[Characterization of orthogonality]\label{lem:orthty}
Let $\mathcal{G}_n, \widetilde{\mathcal{G}}_n \in G$ be two families of deformations.
The following three statements are equivalent.

\vskip1mm
\noindent
(i) $\mathcal{G}_n$ and $\widetilde{\mathcal{G}}_n$ are orthogonal.

\vskip1mm
\noindent
(ii) It holds that
\[
	(\widetilde{\mathcal{G}}_n)^{-1} 
	\mathcal{G}_{n} \psi \rightharpoonup 0 \wIN
	\hat{L}^\alpha
\]
as $n\to\I$ for any $\psi \in \hat{L}^{\alpha}$.

\vskip1mm
\noindent
(iii) 
For any subsequence of $n_k$ there exists a sequence $u_k \in \hat{L}^\alpha$ such that, up to subsequence (of $k$),
\[
	(\mathcal{G}_{n_k})^{-1} u_k \rightharpoonup \psi \neq 0,\qquad
	(\widetilde{\mathcal{G}}_{n_k})^{-1} u_k\rightharpoonup 0
\]
weakly in $\hat{L}^\alpha$ as $k\to\I$ .
\end{lemma}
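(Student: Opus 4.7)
The argument hinges on the composition formula \eqref{eq:orthty_representation}, which realizes
\[
(\widetilde{\mathcal{G}}_n)^{-1}\mathcal{G}_n = e^{i\gamma_n} D(H_n) P(\Xi_n) A(S_n) S(\sigma_n) T(Y_n)
\]
with $H_n = h_n/\widetilde{h}_n$, $\Xi_n = \xi_n - H_n^{-1}\widetilde{\xi}_n$, $S_n = s_n - H_n^3 \widetilde{s}_n$, $\sigma_n = 3 S_n \xi_n$, and $Y_n = y_n - H_n \widetilde{y}_n - 3 S_n \xi_n^2$. Since $|S_n|(1+|\xi_n|) = |S_n| + |\sigma_n|/3$, the orthogonality condition \eqref{eq:def_orthty} is equivalent to $|\log H_n|+|\Xi_n|+|S_n|+|\sigma_n|+|Y_n|\to\infty$.

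First I would dispatch (ii) $\Rightarrow$ (iii) by taking any $\psi \in \hat{L}^\alpha\setminus\{0\}$ and setting $u_k := \mathcal{G}_{n_k}\psi$; then $(\mathcal{G}_{n_k})^{-1}u_k = \psi\neq 0$ while $(\widetilde{\mathcal{G}}_{n_k})^{-1}u_k\rightharpoonup 0$ by (ii). Next, (iii) $\Rightarrow$ (i) is proven contrapositively: if orthogonality fails, some subsequence keeps the five parameters bounded. Since $h_n,\widetilde{h}_n\in 2^\Z$ and $|\log H_n|$ is bounded, $H_n$ takes only finitely many values, so after extraction $H_{n_k}\equiv H_\infty$; by compactness a further subsequence makes $\Xi_{n_k}, S_{n_k}, \sigma_{n_k}, Y_{n_k}, e^{i\gamma_{n_k}}$ all convergent to finite limits. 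Strong continuity of the five one-parameter groups on the reflexive spaces $\hat{L}^\alpha$ and $\hat{L}^{\alpha'}$ then yields strong operator convergence $(\widetilde{\mathcal{G}}_{n_k})^{-1}\mathcal{G}_{n_k}\to\mathcal{H}$ together with strong convergence of the adjoints, $\mathcal{H}$ being an isometry. For any $u_k$ with $(\mathcal{G}_{n_k})^{-1}u_k\rightharpoonup \psi$ weakly, pairing against $\phi\in\hat{L}^{\alpha'}$ gives $\langle(\widetilde{\mathcal{G}}_{n_k})^{-1}u_k, \phi\rangle = \langle (\mathcal{G}_{n_k})^{-1}u_k, ((\widetilde{\mathcal{G}}_{n_k})^{-1}\mathcal{G}_{n_k})^*\phi\rangle \to \langle\mathcal{H}\psi,\phi\rangle$; if $\psi\neq 0$ then $\mathcal{H}\psi\neq 0$, contradicting (iii).

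The heart of the lemma is (i) $\Rightarrow$ (ii), which I would argue by contradiction. If $(\widetilde{\mathcal{G}}_n)^{-1}\mathcal{G}_n\psi\not\rightharpoonup 0$ for some $\psi$, then by density and the uniform isometry bound we may choose Schwartz $\psi,\phi$ and a subsequence with $\langle(\widetilde{\mathcal{G}}_{n_k})^{-1}\mathcal{G}_{n_k}\psi,\phi\rangle\not\to 0$. Extract further so that each of $\log H_{n_k},\Xi_{n_k},S_{n_k},\sigma_{n_k},Y_{n_k}$ converges in $\overline{\R}$ with at least one limit infinite. A case analysis then drives the pairing to $0$: $|Y_{n_k}|\to\infty$ translates the Schwartz profile to infinity in physical space; $|\Xi_{n_k}|\to\infty$ yields rapid modulation killed by Riemann--Lebesgue; $|\log H_{n_k}|\to\infty$ spreads or concentrates mass outside the effective support of the paired Schwartz factor; $|S_{n_k}|\to\infty$ and $|\sigma_{n_k}|\to\infty$ each invoke the $L^\infty$ dispersive decay of the Airy or Schr\"odinger propagator on Schwartz data.

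The main obstacle is ensuring that the bounded factors in the composition do not obstruct the decay produced by the divergent one. The remedy is a further round of subsequence extraction: once the divergent parameter among the five is fixed, we extract so the remaining bounded parameters converge to finite limits, invoke strong continuity to replace those bounded factors by their limits, and absorb them via adjoints into new Schwartz test vectors, reducing the pairing to a genuine single-parameter divergence. The subtlest subcase is $|\sigma_{n_k}|\to\infty$ with $|S_{n_k}|$ bounded, where $S(\sigma_{n_k})$ arises only as an artifact of the Galilean identity \eqref{eq:GtA} and is interleaved with the neighbouring $A(S_{n_k})$ and $P(\Xi_{n_k})$; here one relies on strong convergence $A(S_{n_k})\to A(S_\infty)$ and $P(\Xi_{n_k})\to P(\Xi_\infty)$ on Schwartz data before invoking Schr\"odinger dispersive decay to close the argument.
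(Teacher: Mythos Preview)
Your treatment of (ii)$\Rightarrow$(iii) and (iii)$\Rightarrow$(i) is correct and essentially matches the paper's. For (i)$\Rightarrow$(ii) you take a different route than the paper: the paper works entirely on the Fourier side, testing against characteristic functions ${\bf 1}_K, {\bf 1}_L$ and analyzing the oscillatory integral $\int_L e^{i\Phi_n(\xi)}\,d\xi$ with the combined cubic phase $\Phi_n(\xi)=S_n\xi^3-\sigma_n\xi^2-Y_n\xi$ via integration by parts; you instead argue on the physical side via dispersive $L^\infty$ decay of the individual propagators. Both are legitimate strategies.

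However, your case analysis has a genuine gap. Your remedy ``fix the divergent parameter, extract so the remaining \emph{bounded} parameters converge, and freeze them'' presupposes that only one of the five parameters is unbounded. Nothing in the orthogonality condition forces this: for instance $|S_n|\to\infty$ and $|\sigma_n|\to\infty$ can occur simultaneously (take $h_n=\widetilde h_n$, $\xi_n=\widetilde\xi_n\to\infty$, $s_n-\widetilde s_n\to\infty$). In that situation the naive Airy estimate $\|A(S_n)f\|_{L^\infty}\le C|S_n|^{-1/3}\|f\|_{L^1}$ is useless, because the input $f=S(\sigma_n)T(Y_n)\psi$ is not uniformly in $L^1$: Schr\"odinger evolution spreads Schwartz data so that $\|S(\sigma_n)\psi\|_{L^1}\sim|\sigma_n|^{1/2}$. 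Symmetrically, passing $A(S_n)$ to the test side fails for the same reason.

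The fix is to recognize that the \emph{combined} operator $A(S_n)S(\sigma_n)T(Y_n)$ still enjoys the $|S_n|^{-1/3}$ decay, because its convolution kernel is an Airy-type integral whose phase has third derivative $6S_n$ regardless of $\sigma_n,Y_n$; van der Corput (or completing the cube to reduce to a pure Airy integral) then gives $\|A(S_n)S(\sigma_n)T(Y_n)\psi\|_{L^\infty}\le C|S_n|^{-1/3}\|\psi\|_{L^1}$. With this observation your hierarchical scheme closes: handle $|\log H_n|\to\infty$ first (works unconditionally), then $|\Xi_n|\to\infty$ (Fourier supports separate regardless of $S_n,\sigma_n,Y_n$), then $|S_n|\to\infty$ via the robust Airy bound just described, then $|\sigma_n|\to\infty$ with $S_n$ bounded via genuine Schr\"odinger decay, and finally $|Y_n|\to\infty$ with everything else frozen. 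The paper's Fourier-side integration-by-parts argument achieves the same end by treating the full phase $\Phi_n$ at once, which is arguably cleaner since it avoids having to note the robustness of the Airy kernel bound.
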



\begin{proof}
``(ii)$\Rightarrow$(iii)'' is immediate by taking $u_k = (\mathcal{G}_{n_k}) \psi$ for some $\psi\neq0$.

We prove ``(i)$\Rightarrow$(ii)''.
Remark that the stated weak convergence is equivalent to
\[
	\F(\widetilde{\mathcal{G}}_n)^{-1}
	\mathcal{G}_n \psi = (\hat{\widetilde{\mathcal{G}}}_{n})^{-1}
	\hat{\mathcal{G}}_{n} \F \psi \rightharpoonup 0 \wIN
	{L}^{\alpha'}
\]
as $n\to\I$. Set 
\begin{align*}
	h'_n &{} = \frac{h_n}{\widetilde{h}_n},&
	\xi'_n &{} = \xi_n - \frac{\widetilde{h}_n}{h_n} \widetilde{\xi}_n,\\
	s'_n &{} = s_n -\( \frac{h_n}{\widetilde{h}_n}\)^3\widetilde{s}_n,&
	y'_n &{} = y_n- \frac{h_n}{\widetilde{h}_n} \widetilde{y}_n-3\( s_n -\( \frac{h_n}{\widetilde{h}_n}\)^3\widetilde{s}_n\)\xi_n^2  
\end{align*}
By density argument and \eqref{eq:orthty_representation}, it suffices to show that
\[
	\int {\bf 1}_K(\xi) [\hat{D}(h'_n) \hat{P}(\xi'_n)  \hat{A}(s'_n) \hat{T}(y'_n) \hat{S}(3\xi_n s'_n) 
	{\bf 1}_L](\xi) d\xi \to0
\]
as $n\to\I$ for any compact intervals $K,L$.
When $|\log h'_n|\to\I$ as $n\to\I$, we have
\begin{multline*}
	\abs{\int {\bf 1}_K(\xi) [\hat{D}(h'_n) \hat{P}(\xi'_n)  \hat{A}(s'_n) \hat{T}(y'_n) \hat{S}(3\xi_n s'_n) 
	{\bf 1}_L](\xi) d\xi }\\
	\le \min \( (h'_n)^{-1+\frac1r} |K|, (h'_n)^{\frac1r} |L| \) \to 0
\end{multline*}
as $n\to\I$. 
When $\limsup_{n\to\I }|\log h'_n|  <\I$ and $|\xi'_n|\to\I $ as $n\to\I$,
$K$ and support of $\hat{D}(h'_n) \hat{P}(\xi'_n) {\bf 1}_L$
are disjoint for large $n$ and so we have the desired estimate.

We hence assume that $\limsup_{n\to\I } \(|\log h'_n| + |\xi'_n| \) < \I$.
Taking subsequence, we may suppose that $h'_n\to h' \in (0,\I)$ and $\xi'_n \to \xi' \in \R$
as $n\to\I$.
Then, for any $f\in \hat{L}^{\alpha}$, $D(h'_n)P(\xi'_n)f$ converges to $D(h')P(\xi')f$
strongly in $\hat{L}^{\alpha}$.
Since $D(h')P(\xi')$ is invertible, we need to show that
${A}(s'_n) T(y'_n) {S}(3\xi_n \tau'_n) \psi \rightharpoonup 0$ weakly in $\hat{L}^\alpha$
as $n\to\I$. 
To do so, it suffices to show 
\[
	\int [\hat{A}(s'_n) \hat{T}(y'_n) \hat{S}(3\xi_n s'_n) 
	{\bf 1}_L](\xi) d\xi \to0
\]
as $n\to\I$ for any compact interval $L$.
Put $\Phi_n(\xi) := s'_n\xi^3 -3 s'_n \xi_n \xi^2 - y'_n \xi$.
Then, the right hand side is written as
\[
	\int_{L} e^{i\Phi_n(\xi)}\, d\xi.
\]
We first consider the case $|\xi_n| \to \I$. 
By orthogonality assumption, we have
$|s'_n\xi_n| + |y'_n| \to \I$ as $n\to\I$. Hence,
we see by an elementary computation
that $\inf_{L} |\Phi_n'|\to\I$ and 
$\sup_{L} |\Phi''_n|/|\Phi_n'|^2 \to 0 $ as $n\to\I$.
Notice that there exists a special case such that
$s'_n \to 0 $, $|s'_n\xi_n| + |y'_n| \to \I$,
and $(6 \eta s'_n \xi_n + y'_n) \to 0$ as $n\to\I$ for some constant $\eta \in \R$.
In such a case, we may assume that $\eta \not\in L$ by density.
Now, the desired smallness follows by integration by parts;
\begin{align*}
	\abs{\int_{L} e^{i\Phi_n(\xi)}\, d\xi}
	&{}\le \abs{\left[ \frac{e^{i\Phi_n(\xi)}}{i\Phi'_n(\xi)}\right]_{\inf L}^{\sup L} 
	+ \int_L \frac{\Phi''_n(\xi)}{i(\Phi'_n(\xi))^2}e^{i\Phi_n(\xi)} \, d\xi} \\
	&{}\le {L}\( \frac{1}{\inf_{L} |\Phi_n'|} + \sup_{L} \frac{|\Phi''_n|}{|\Phi_n'|} \)
	\to 0
\end{align*}
as $n\to\I$.
The proof for the case $\sup_n |\xi_n| <\I$ is similar.
In this case, we have $|s'_n|+|y'_n|\to\I$ as $n\to\I$ by orthogonality condition.
Since
\[
	\Phi_n'(\xi) = s'_n(3\xi^2 -6 \xi_n \xi) - y'_n,\quad
	\Phi_n''(\xi) = 6s'_n(\xi - \xi_n ),
\]
by removing from $L$ the constant $\eta$ such that
$\Phi_n'(\eta)\to0$ as $n\to\I$ if exists, we obtain the smallness.

Let us proceed to the proof of ``(iii)$\Rightarrow$(i)''.
Assume for contradiction that $h'_n$, $\xi'_n$, $s'_n$, $3s'_n \xi_n $, and $y'_n$ are 
uniformly bounded. Then, there exists a subsequence $n_k$ such that these parameters converge as $k\to\I$.
Denote the limits by $h'$, $\xi'$, $s'$, $\tau'$, and $y'$, respectively.
By refining subsequence if necessary, we may suppose that $e^{i\gamma_{n_k}}$ also converges.
In this case, for any $f \in \hat{L}^{\alpha}$ we have
\begin{equation}\label{eq:pf_orthty1}
	(\widetilde{\mathcal{G}}_{n_k})^{-1}
	\mathcal{G}_{n_k} f \to e^{i\gamma} D(h') P(\xi')
	A(s') S(\tau') T(y') f
\end{equation}
as $k\to \I$ strongly in $\hat{L}^\alpha$. 
Now, suppose that there exists a subsequence of $k$, which we denote again by $k$, such that
$(\mathcal{G}_{n_k})^{-1} u_k$ and $({G}_{n_k})^{-1} u_k$ converge weakly in $\hat{L}^\alpha$ to $\psi$ and $0$, respectively,
as $k\to\I$.
Since $(\mathcal{G}_{n_k})^{-1} u_k$ converges
to $\psi$ weakly in $\hat{L}^\alpha$, we see from \eqref{eq:pf_orthty1} that
\[
	(\widetilde{\mathcal{G}}_{n_k})^{-1} u_k = ((\widetilde{\mathcal{G}}_{n_k})^{-1} \mathcal{G}_n) (\mathcal{G}_n)^{-1} u_n \rightharpoonup 
	e^{i\gamma} D(h') P(\xi')
	A(s') S(\tau') T(y') \psi
\]
weakly in $\hat{L}^\alpha$. On the other hand, $(\widetilde{\mathcal{G}}_{n_k})^{-1} u_k  \rightharpoonup 0$ weakly in $\hat{L}^\alpha$ by assumption.
Thanks to uniqueness of weak limit, we see that
\[
	e^{i\gamma} D(h') P(\xi')
	A(s') S(\tau') T(y') \psi = 0.
\]
Thus, we obtain $\psi \equiv 0$, a contradiction. 
\end{proof}

\subsection{Decoupling inequality}
We next prove a decoupling inequality for $\ell$.
The idea of the proof is to sum up the local (in the Fourier side) $L^2$ decoupling
with respect to intervals.
\begin{lemma}[Decoupling inequality]\label{lem:decouple} 
Let $4/3<\alpha<2$ and $\alpha'<\sigma \le \frac{6\alpha}{3\alpha-2}$. 
Let $\{u_n\}_n$ be a bounded sequence in $ \hat{L}^{\alpha} 
$.
Suppose that $\mathcal{G}_n^{-1} u_n $ converges to $\psi$ weakly in $\hat{L}^{\alpha}$
as $n\to\I$ with some $\{\mathcal{G}_n\}_n \subset G$.
Set $r_n := u_n - \mathcal{G}_n \psi$.
Then, for any $\gamma>1$ and $\xi_0 \in \R$, it holds that
\begin{equation}\label{eq:pd:key_decouple}
	\gamma \norm{P(\xi_0) u_n}_{\hat{M}^\alpha_{2,\sigma}}^{\sigma}
	\ge \norm{P(\xi_0) \mathcal{G}_{n} \psi}_{\hat{M}^\alpha_{2,\sigma}}^{\sigma}
	 + \norm{P(\xi_0) r_n}_{\hat{M}^\alpha_{2,\sigma}}^{\sigma}
	+o_\gamma(1)
\end{equation}
as $n\to\I$.
\end{lemma}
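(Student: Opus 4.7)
The strategy is to establish a pointwise (in dyadic interval) $L^2$ Pythagorean decomposition, then sum up the local inequalities with the correct weights to recover the $\hat{M}^\alpha_{2,\sigma}$ norm. To set up, let $v_n := \mathcal{G}_n^{-1} u_n$, so that by hypothesis $v_n - \psi \rightharpoonup 0$ in $\hat{L}^\alpha$, that is $\hat{v}_n - \hat{\psi} \rightharpoonup 0$ in $L^{\alpha'}$. Since $\alpha'>2$, on any compact $K \subset \R$ the continuous embedding $L^{\alpha'}(K) \hookrightarrow L^2(K)$ forces $\hat{v}_n - \hat{\psi} \rightharpoonup 0$ weakly in $L^2_{\mathrm{loc}}$. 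Write $\hat{\mathcal{G}}_n = \hat D(h_n) \hat A(s_n)\hat T(y_n) \hat P(\xi_n)$, which acts as a dilation and translation together with a unimodular phase. Hence, on each $I=\tau_k^j$, using that $\hat{r}_n = \hat{\mathcal{G}}_n(\hat v_n - \hat\psi)$ and that phases cancel in the product of a function and its conjugate, a change of variable shows
\[
\int_I \overline{\F[P(\xi_0)\mathcal{G}_n\psi]}\, \F[P(\xi_0)r_n]\, d\xi = h_n^{1-2/\alpha'} \int_{\tilde I_{n,j,k}} \overline{\hat\psi(\eta)}\,(\hat v_n - \hat\psi)(\eta)\, d\eta,
\]
where $\tilde I_{n,j,k}$ is a translated-rescaled image of $I - \xi_0$ having length $|I|/h_n$.

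Setting $a_n(I):=|I|^{1/2-1/\alpha}\|\F[P(\xi_0)u_n]\|_{L^2(I)}$ and defining $b_n(I),c_n(I)$ analogously with $\mathcal{G}_n\psi$ and $r_n$ in place of $u_n$, the local $L^2$ identity reads $a_n(I)^2 = b_n(I)^2 + c_n(I)^2 + \varepsilon_n(I)$ where $\varepsilon_n(I)$ is the cross term above times $2|I|^{1-2/\alpha}$. The first step is to show that for any \emph{fixed} compact window $K$ of Fourier variable $\eta$,
\[
\sup_{I : \tilde I_{n,j,k} \subset K}\;\Bigl|\int_{\tilde I_{n,j,k}} \overline{\hat\psi}\,(\hat v_n - \hat\psi)\,d\eta\Bigr| \longrightarrow 0
\]
as $n\to\infty$, which follows from weak $L^2_{\mathrm{loc}}$ convergence applied against the (uniformly $L^2$-bounded) test family $\hat\psi\,\mathbf{1}_{\tilde I_{n,j,k}}$.

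For the second step, we convert the local identity to the desired $\ell^\sigma$ decoupling. Since $\sigma > \alpha' > 2$, the elementary inequality $(X+Y)^{\sigma/2}\ge X^{\sigma/2}+Y^{\sigma/2}$ gives $(b_n^2+c_n^2)^{\sigma/2}\ge b_n^\sigma + c_n^\sigma$. The boundedness of $u_n,\mathcal{G}_n\psi, r_n$ in $\hat L^\alpha$ (and Remark~2.3 (v) together with Lemma~2.4) yields boundedness in $\hat M^\alpha_{2,\sigma}$, so for any $\delta>0$ we may pick a finite family $\mathcal{F}\subset\mathcal{D}$ such that $\sum_{I\notin\mathcal F}(b_n(I)^\sigma+c_n(I)^\sigma)<\delta$ uniformly in $n$, and such that every $\tilde I_{n,j,k}$ for $I\in\mathcal F$ lies in a fixed compact $K$. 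On $\mathcal F$ the cross terms $\varepsilon_n(I)$ vanish in the limit by the first step, so $a_n(I)^\sigma = (b_n(I)^2+c_n(I)^2)^{\sigma/2}+o(1) \ge b_n(I)^\sigma + c_n(I)^\sigma + o(1)$. On $\mathcal F^c$ we bound $b_n(I)^\sigma+c_n(I)^\sigma$ by $\delta$ and absorb it using the prefactor $\gamma-1$ on the left-hand side (combined with the uniform lower bound on $\sum_I a_n(I)^\sigma$ afforded by the reverse triangle inequality with $a_n\ge|b_n - c_n|$-type estimates). Taking $\delta\to 0$ with $\gamma>1$ fixed produces \eqref{eq:pd:key_decouple} with remainder $o_\gamma(1)$.

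The main obstacle is the $n$-dependence of $\tilde I_{n,j,k}$: one cannot apply weak convergence on a single fixed test interval. The resolution is that, after the change of variable, $\hat\psi$ is a \emph{fixed} element of $L^{\alpha'}$ and therefore has small $L^{\alpha'}$-mass outside any sufficiently large compact $K$. This allows one to first truncate to a region where $\tilde I_{n,j,k}\subset K$ (using the $\hat M^\alpha_{2,\sigma}$-summability of $\hat\psi$ off $K$), absorbing the off-$K$ contribution into the $\gamma$-slack, and then invoke weak $L^2(K)$ convergence, where now the test functions $\hat\psi\,\mathbf 1_{\tilde I_{n,j,k}}$ live in a uniformly $L^2$-bounded, equi-small-tail family—the precise quantitative statement that makes the cross terms $o(1)$ uniformly over $\mathcal F$.
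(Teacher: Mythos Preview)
There is a genuine gap in the second step. Your claim that one can choose a \emph{fixed} finite family $\mathcal{F}\subset\mathcal{D}$ with
\[
\sum_{I\notin\mathcal{F}}\bigl(b_n(I)^\sigma+c_n(I)^\sigma\bigr)<\delta\quad\text{uniformly in }n
\]
is not justified and in general fails. For $c_n$, we only know $\|r_n\|_{\hat M^\alpha_{2,\sigma}}$ is \emph{bounded}; there is no tightness statement saying the $\ell^\sigma$-mass of $(c_n(I))_{I\in\mathcal D}$ concentrates on a finite family independent of $n$. For $b_n$, the situation is no better: since $\mathcal{G}_n$ contains a possibly unbounded dilation and Fourier translation, the mass of $\F[P(\xi_0)\mathcal{G}_n\psi]$ may sit on dyadic intervals that drift with $n$. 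The subsequent ``absorb into the $(\gamma-1)$-slack'' step is also unsound: $(\gamma-1)\sum_I a_n(I)^\sigma$ has no uniform lower bound (it may be arbitrarily small), so it cannot swallow a fixed $\delta>0$.

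The paper's proof avoids all of this by never attempting to control the tails of $b_n$ or $c_n$ separately. Instead it applies the elementary inequality
\[
(X-Y)^{\sigma/2}\ \ge\ \Bigl(\tfrac{m}{m+1}\Bigr)^{(\sigma-2)/2}X^{\sigma/2}-m^{(\sigma-2)/2}Y^{\sigma/2}\qquad(X\ge Y\ge0)
\]
with $X=b_n(I)^2+c_n(I)^2$ and $Y=2|\langle\F\mathcal{G}_n\psi,\F r_n\rangle_I|$, so that after summing and using $(b^2+c^2)^{\sigma/2}\ge b^\sigma+c^\sigma$ the entire burden lands on the cross term
\[
R_n=\sum_{I\in\mathcal D}|I|^{\sigma(\frac12-\frac1\alpha)}\bigl|\langle\F\mathcal{G}_n\psi,\F r_n\rangle_I\bigr|^{\sigma/2}.
\]
After the change of variable (which turns $I$ into a dyadic interval $\widetilde\tau_k^l$ shifted by an $n$-dependent fraction of its own length) this becomes $\sum |\widetilde\tau|^{\sigma(\frac12-\frac1\alpha)}|\langle\hat\psi,\hat w_n\rangle_{\widetilde\tau}|^{\sigma/2}$ with $w_n=\mathcal{G}_n^{-1}r_n\rightharpoonup0$. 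The tail is then cut according to the summability of \emph{$\psi$ alone}: on $\mathcal D\setminus D'$ one applies Cauchy--Schwarz twice (pointwise in $I$, then in $\ell^2$ over $I$) to get
\[
\sum_{\mathcal D\setminus D'}\ \lesssim\ \Bigl(\text{small $\hat M^\alpha_{2,\sigma}$-tail of }\psi\Bigr)^{1/2}\cdot\|w_n\|_{\hat M^\alpha_{2,\sigma}}^{\sigma/2},
\]
and the second factor is merely bounded---this is exactly where your argument breaks and the paper's works. On the finite set $D'$ the remaining $n$-dependence of the intervals $\widetilde\tau_k^l$ is handled by an equicontinuity/Ascoli--Arzel\`a argument on the function $x\mapsto\langle\hat\psi,\hat w_n\rangle_{[x,x+2^{-l}]}$, which your sketch (``weak $L^2(K)$ convergence against $\hat\psi\mathbf 1_{\tilde I}$'') also glosses over.
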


\begin{proof}
We only consider the case $\xi_0 =0$.
The other cases handled in the same way because
the presence of $P(\xi_0)$ causes merely a universal translation in the Fourier side.
It is also clear from the proof that the small error term 
can be taken independently of $\xi_0$.

Denote $\mathcal{D}:=\{ \tau^l_k:=[k/2^l,(k+1)/2^l) \ |\ k,l\in \Z \}$.
For each $\tau^l_k \in \mathcal{D}$, we have the decoupling in $L^2$;
\begin{align*}
	\norm{\F u_n}_{L^2(\tau^l_k)}^2 = \norm{\F \mathcal{G}_{n}\psi}_{L^2(\tau^l_k)}^2 + \norm{\F r_n}_{L^2(\tau^l_k)}^2 + 2\Re
	\Jbr{\F \mathcal{G}_{n}\psi, \F r_n}_{\tau^l_k}.
\end{align*}
Let $\gamma=\frac{m+1}{m}, m>0$. 
By an elementary inequality
\[
	(a-b)^{\frac{\sigma}{2}} \ge \(\frac{m}{m+1}\)^{\frac{\sigma-2}{2}} a^{\frac{\sigma}2} - m^{\frac{\sigma-2}{2}} b^{\frac{\sigma}{2}}
\]
for any $a\ge b\ge0$ and $m>0$ and by embedding
$\ell^2_{\mathcal{D}} \hookrightarrow \ell^{\sigma}_{\mathcal{D}}$, it follows that
\begin{align*}
	&	\sum_{k,l \in\Z} |\tau^l_k|^{\sigma(\frac12-\frac1\alpha)} \norm{\F u_n}_{L^2(\tau_k^l)}^\sigma \\
	={}& \sum_{k,l \in\Z} |\tau^l_k|^{\sigma(\frac12-\frac1\alpha)} \( \norm{\F \mathcal{G}_n\psi}_{L^2(\tau_k^l)}^2 + \norm{\F r_n}_{L^2(\tau_k^l)}^2 + 2\Re
	\Jbr{\F \mathcal{G}_n\psi, \F r_n}_{\tau^l_k} \)^{\frac{\sigma}2}  \\
	\ge{}& \sum_{k,l \in\Z} |\tau^l_k|^{\sigma(\frac12-\frac1\alpha)} \( \norm{\F \mathcal{G}_n\psi}_{L^2(\tau_k^l)}^2 + \norm{\F r_n}_{L^2(\tau_k^l)}^2 - 2
	\abs{\Jbr{\F \mathcal{G}_n\psi, \F r_n}_{\tau^l_k}} \)^{\frac{\sigma}{2}} \\
	\ge{}& \(\frac{m}{m+1}\)^{\frac{\sigma-2}{2}} \sum_{k,l \in\Z} |\tau^l_k|^{\sigma(\frac12-\frac1\alpha)} \( \norm{\F \mathcal{G}_n\psi}_{L^2(\tau_k^l)}^2 + \norm{\F r_n}_{L^2(\tau_k^l)}^2 \)^{\frac{\sigma}{2}}\\
	&{}- 2^{\frac{\sigma}{2}}m^{\frac{\sigma-2}{2}} \sum_{k,l \in\Z} |\tau^l_k|^{\sigma(\frac12-\frac1\alpha)} \abs{\Jbr{\F \mathcal{G}_n\psi, \F r_n}_{\tau^l_k}}^{\frac{\sigma}{2}} \\
	\ge{}& \(\frac{m}{m+1}\)^{\frac{\sigma-2}{2}}\( \sum_{k,l \in\Z} |\tau^l_k|^{\sigma(\frac12-\frac1\alpha)} \norm{\F \mathcal{G}_n\psi}_{L^2(\tau_k^l)}^\sigma + \sum_{k,l \in\Z} |\tau^l_k|^{\sigma(\frac12-\frac1\alpha)} \norm{\F r_n}_{L^2(\tau_k^l)}^{\sigma} \) \\
	&{}- 2^{\frac{\sigma}{2}}m^{\frac{\sigma-2}{2}} \sum_{k,l \in\Z} |\tau^l_k|^{\sigma(\frac12-\frac1\alpha)} \abs{\Jbr{\F \mathcal{G}_n\psi, \F r_n}_{\tau^l_k}}^{\frac{\sigma}{2}}
\end{align*}
To obtain \eqref{eq:pd:key_decouple}, it therefore suffices to show that
\begin{equation}\label{eq:pd:key_pf_remainder1}
	R_n:=\sum_{k,l \in\Z} |\tau^l_k|^{\sigma(\frac12-\frac1\alpha)} \abs{\Jbr{\F \mathcal{G}_n\psi, \F r_n}_{\tau^l_k}}^{\frac{\sigma}{2}} \to 0
\end{equation}
as $n\to\infty$. A computation shows that
\[
	 |I|^{\sigma(\frac12-\frac1\alpha)} \abs{\Jbr{\F \mathcal{G}_n\psi, \F r_n}_I}^{\frac{\sigma}{2}}
	 = |J_n|^{\sigma(\frac12-\frac1\alpha)} \abs{\Jbr{\F\psi, \F (\mathcal{G}_n)^{-1} r_n}_{J_n}}^{\frac{\sigma}{2}}
\]
for any $I \subset \R$, where $J_n=I/h_n + \xi_n$ with the parameters $h_n,\xi_n$ associated 
with $\mathcal{G}_n$. 
By changing notation if necessary, one sees that
\[
	R_n= \sum_{k,l \in\Z} |\widetilde{\tau}^l_k|^{\sigma(\frac12-\frac1\alpha)} \abs{\Jbr{\F\psi^1, \F (\mathcal{G}^1_n)^{-1} r_n^1}_{\widetilde{\tau}^l_k}}^{\frac{\sigma}{2}},
\]
where $\widetilde{\tau}^l_k = \tau_k^l + 2^{-l}\sigma_n$ with some $0\le \sigma_n<1$.
Fix $\eps>0$. Since
	$\norm{\psi}_{\hat{M}^\alpha_{2,\sigma}}\le C \norm{\psi}_{\hat{L}^\alpha}<\I,$
there exist $k_0(\eps)$ and $l_0(\eps)$ such that
$D:=\{ |l| \le l_0, \, |k|\le k_0\} \subset \Z^2$ satisfies
\[
	\sum_{(k,l) \in \Z^2 \setminus D} |{\tau}^l_k|^{\sigma(\frac12-\frac1\alpha)} \norm{\F \psi}_{L^2(\tau_k^l)}^\sigma
	\le \eps.
\]
It is obvious that $\widetilde{\tau}^l_k \subset \tau^{l}_k \cup \tau^{l}_{k+1}$  and $|\widetilde{\tau}^l_k|=|\tau^l_k|=|\tau^l_{k+1}|$ for each $l,k$.
Hence, denoting $D':= \{ |l| \le l_0, \,  |k|\le k_0+1\} \subset \Z^2$, 
we have
\begin{eqnarray*}
	\lefteqn{\sum_{(k,l) \in \Z^2 \setminus D'} 
	|\widetilde{\tau}^l_k|^{\sigma(\frac12-\frac1\alpha)} \norm{\F \psi^{1}}_{L^2(\widetilde{\tau}_k^l)}^\sigma}\\
	&\le& \sum_{(k,l) \in \Z^2 \setminus D'} |\widetilde{\tau}^l_k|^{\sigma(\frac12-\frac1\alpha)} \( \norm{\F \psi^{1}}_{L^2({\tau}_k^l)}^2 + \norm{\F \psi^{1}}_{L^2({\tau}_{k+1}^l)}^2\)^{\sigma/2}\\
	&\le& 2^{\frac\sigma2}\sum_{(k,l) \in \Z^2 \setminus D} |{\tau}^l_k|^{\sigma(\frac12-\frac1\alpha)} \norm{\F \psi^{1}}_{L^2(\tau_k^l)}^\sigma \le C \eps.
\end{eqnarray*}
Then, by Schwartz' inequality,
\begin{eqnarray*}
\lefteqn{\sum_{(k,l) \in\Z^2 \setminus D'} 
	|\widetilde{\tau}^l_k|^{\sigma(\frac12-\frac1\alpha)} 
	\abs{\Jbr{\F\psi^{1}, \F (\mathcal{G}_n)^{-1} r_n^{1}
	}_{\widetilde{\tau}^l_k}}^{\frac{\sigma}{2}} }\\
	&\le& \sum_{(k,l) \in\Z^2 \setminus D'} |\widetilde{\tau}^l_k|^{\sigma(\frac12-\frac1\alpha)} \norm{\F\psi^{1}}_{L^2(\widetilde{\tau}^l_k)}^{\frac{\sigma}{2}}  \norm{\F (\mathcal{G}_n)^{-1} r_n^{1}}_{L^2(\widetilde{\tau}^l_k)}^{\frac{\sigma}{2}} \\
	&\le& \(\sum_{(k,l) \in\Z^2 \setminus D'} |\widetilde{\tau}^l_k|^{\sigma(\frac12-\frac1\alpha)} \norm{\F\psi^{1}}_{L^2(\widetilde{\tau}^l_k)}^\sigma \)^{\frac12}\\
	& &\qquad\times
	\(\sum_{(k,l) \in\Z^2 \setminus D'} |\widetilde{\tau}^l_k|^{\sigma(\frac12-\frac1\alpha)}  \norm{\F (\mathcal{G}_n)^{-1} r_n^{1}}_{L^2(\widetilde{\tau}^l_k)}^\sigma\)^{\frac12} \\
	&\le& C \eps^{\frac12}
	\norm{(\mathcal{G}_n)^{-1} r_n^{1}}_{\hat{M}^\alpha_{2,\sigma}
	}^{\frac{\sigma}{2}} 
	\le  C \eps^{\frac12} \norm{(\mathcal{G}_n)^{-1} r_n^{1}
	}_{\hat{L}^\alpha}^{\frac{\sigma}{2}}\\
	&=&C \eps^{\frac12} \norm{r_n^{1}}_{\hat{L}^\alpha}^{\frac{\sigma}{2}}.
\end{eqnarray*}
Remark that
\[
	\limsup_{n\to \I} \norm{r_n}_{\hat{L}^\alpha}
	\le \limsup_{n\to\I}\norm{u_n}_{\hat{L}^\alpha} + \norm{\psi}_{\hat{L}^\alpha}
	\le 2 \limsup_{n\to\I}\norm{u_n}_{\hat{L}^\alpha} \le C.
\]
Hence, the proof of \eqref{eq:pd:key_pf_remainder1} is reduced to showing
\begin{equation}\label{eq:pd:key_pf_remainder2}
	\sum_{(k,l) \in D'} |\widetilde{\tau}^l_k|^{\sigma(\frac12-\frac1\alpha)} \abs{\Jbr{\F\psi^{1}, \F (\mathcal{G}_n)^{-1} r_n^{1}}_{\widetilde{\tau}^l_k}}^{\frac{\sigma}{2}} \to 0
\end{equation}
as $n\to\I$. For $l \in [-l_0,l_0]$, consider a function
\[
	f_n^l (x) := \abs{\Jbr{\F\psi^{1}, \F (\mathcal{G}_n)^{-1} r_n^{1}}_{[x,x+2^{-l}]}}
\]
with domain $x \in [-k_0/2^l, (k_0+1)/2^l]$. Then, there exists a  constant $C=C(k_0,l_0)=C(\eps)$ such that
\begin{eqnarray*}
\lefteqn{\sum_{(k,l) \in D'} |\widetilde{\tau}^l_k|^{\sigma(\frac12-\frac1\alpha)} \abs{\Jbr{\F\psi^{1}, \F (\mathcal{G}_n)^{-1} r_n^{1}}_{\widetilde{\tau}^l_k}}^{\frac{\sigma}{2}}}\\
	&\le& C(\eps) \max_{l\in[-l_0,l_0]} \(\sup_{x\in [-k_0/2^l, (k_0+1)/2^l]} f^l_n(x)\).
\end{eqnarray*}
Therefore, we obtain \eqref{eq:pd:key_pf_remainder2} if we show the uniform convergence
\begin{equation}\label{eq:pd:key_pf_remainder3}
	\sup_{x\in [-k_0/2^l, (k_0+1)/2^l]} f^l_n(x) \to 0
\end{equation}
as $n\to\I$. 
Since $(\mathcal{G}_n)^{-1}r_n$ converges to zero weakly in $\hat{L}^\alpha$ as $n\to\I$ by definition, $\lim_{n\to\I} f_n(x) = 0$
follows for each $x$.
Further, by the H\"older inequality,
\begin{eqnarray*}
\lefteqn{|f^l_n(x+\delta) -f^l_n(x)|}\\
	&\le& C \(\sup_n \norm{(\mathcal{G}_n)^{-1} r_n^{1}}_{\hat{L}^\alpha}\) \norm{\F \psi^{1}}_{L^{\alpha}
	([x,x+\delta]\cup[x+2^{-l},x+2^{-l}+\delta])}
\end{eqnarray*}
for small $\delta>0$.
The right hand side is independent of $n$ and tends to zero as $\delta \downarrow 0$.
Therefore, $\{f_n^l\}_n$ is equicontinuous.
By a similar argument, $\sup_{x\in [-k_0/2^l, (k_0+1)/2^l]} f^l_n(x)$ is bounded uniformly in $n$.
Therefore, the Ascoli-Arzela theorem gives us the desired convergence \eqref{eq:pd:key_pf_remainder3}.
This completes the proof of Lemma \ref{lem:decouple}.
\end{proof}

\subsection{Decomposition procedure}
The main technical issue of Theorem \ref{thm:pd1} is essentially settled
with the above preliminaries and so
now the theorem follows by a standard argument (see \cite{CK} and references therein).
We give a complete proof for
self-containedness and in order to give
a complete proof for the decoupling inequality \eqref{eq:pd:key_Pythagorean}.

\vskip2mm
\noindent
{\it Proof of Theorem \ref{thm:pd1}.}
We may suppose $\eta(u)>0$, otherwise the result holds with $\phi^j \equiv 0$ and
$r_n^j = u_n$ for all $j\ge1$.
Then, we can choose $\psi^1\in \mathcal{V}(u)$ so that $\ell(\psi^1) \ge \frac12 \eta(u)$
by definition of $\eta$.
Then, by definition of $\mathcal{V}(u)$, one finds $\mathcal{G}^1_n \in G$ such that
\[
	(\mathcal{G}^1_n)^{-1} u_n \rightharpoonup \psi^1 \wIN \hat{L}^\alpha
\]
as $n\to\I$ up to subsequence.
By lower semicontinuity of weak limit,
we obtain \eqref{eq:pd:key_bddness1} for $j=1$.
Define $r^1_n := u_n - \mathcal{G}^1_n \psi^1 $.
Then, it is obvious that
\begin{equation}\label{eq:pd:key_pf_wlim1}
	(\mathcal{G}_{n}^1)^{-1} r^1_n \rightharpoonup  \psi^1 -\psi^1 =0 \wIN \hat{L}^{\alpha}
\end{equation}
as $n\to\I$.
The boundedness \eqref{eq:pd:key_bddness2} for $j=1$ is also obvious 
by \eqref{eq:pd:key_bddness1}.
By Lemma \ref{lem:decouple},
\begin{equation}\label{eq:pd:key_pf_Pythagoreanp}
	\gamma \norm{P(\xi_0) u_n}_{\hat{M}^\alpha_{2,\sigma}}^{\sigma}
	\ge \norm{P(\xi_0)\mathcal{G}^1_{n} \psi^1}_{\hat{M}^\alpha_{2,\sigma}}^{\sigma}
	 + \norm{P(\xi_0) r_n^1}_{\hat{M}^\alpha_{2,\sigma}}^{\sigma}
	+o_\gamma (1)
\end{equation}
as $n\to\I$ for any constant $\gamma>1$ and $\xi_0\in\R$.
Since $\gamma>1$ and $\xi_0$ are arbitrary, the decoupling inequality \eqref{eq:pd:key_Pythagorean} holds for $J=1$.

If $\eta(r^1)=0$ then the proof is completed by taking $\psi^j\equiv0$ for $j\ge2$.
Otherwise, we can choose $\psi^2 \in \mathcal{V}(r^1)$ so that $\ell(\psi^2) \ge \frac12 \eta(r^1)$.
Then, as in the previous step, one can take
$\mathcal{G}_n^2 \in G$ so that
\[
	(\mathcal{G}^2_n)^{-1} r^1_n \rightharpoonup \psi^2 \wIN \hat{L}^{\alpha}
\]
as $n\to\I$ up to subsequence.
In particular, $\psi^2\not\equiv0$.
Together with \eqref{eq:pd:key_pf_wlim1}, Lemma \ref{lem:orthty} gives us that
two families $\mathcal{G}^1_n$ and $\mathcal{G}^2_n$ are orthogonal. 
Then,
\[
	(\mathcal{G}^2_n)^{-1} u_n
	= (\mathcal{G}^2_n)^{-1} \mathcal{G}^1_n \psi^1
	+ (\mathcal{G}^2_n)^{-1} r^1_n \rightharpoonup 0+\psi^2
	\wIN \hat{L}^\alpha
\]
as $n\to\I$. Hence, we obtain $\psi^2 \in \mathcal{V}(u)$ and so
\eqref{eq:pd:key_bddness1} for $j=2$.
Set $r_n^2:=r_n^1-\mathcal{G}^2_n \psi^2$.
Then, \eqref{eq:pd:key_bddness2} for $j=2$ follows from 
\[
	\limsup_{n\to\I} \norm{r^2_n}_{\hat{L}^\alpha}
	\le \limsup_{n\to\I} \norm{r^1_n}_{\hat{L}^\alpha}
	+ \norm{\psi^2}_{\hat{L}^\alpha} \le 3 \limsup_{n\to\I} \norm{u_n}_{\hat{L}^\alpha}.
\]
Further, one deduces from Lemma \ref{lem:decouple} that
\[
	\gamma \norm{P(\xi_0) r_n^1}_{\hat{M}^\alpha_{2,\sigma}}^\sigma \\
	\ge  \norm{P(\xi_0) \mathcal{G}^2_n \psi^2}_{\hat{M}^\alpha_{2,\sigma}}^\sigma
+ \norm{P(\xi_0) r_n^2}_{\hat{M}^\alpha_{2,\sigma}}^\sigma +o_\gamma (1)
\]
as $n\to\I$ for any $\gamma>1$ and $\xi_0 \in \R$. This implies 
\eqref{eq:pd:key_Pythagorean}
for $J=2$ with the help of \eqref{eq:pd:key_pf_Pythagoreanp}.

Repeat this argument and construct $\psi^j \in \mathcal{V}(r^{j-1})$ and  $\mathcal{G}^j_n\in G$, inductively.
If we have $\eta(r^{j_0})=0$ for some $j_0$, then we define $\psi^j \equiv 0$ for 
$j\ge j_0+1$.
In what follows, we may suppose that $\eta(r^{j})>0$ for all $j\ge1$.
In each step, 
$r_n^j$ is defined by the formula $r_n^j = r_n^{j-1} - \mathcal{G}^j_n \psi^j$.
The property \eqref{eq:pd:key_decomp} is obvious by construction. 

Let us now prove that pairwise orthogonality.
To this end, we demonstrate that 
$\mathcal{G}^j_n$ is orthogonal to  $\mathcal{G}^k_n$ for $1\le k \le j-1$.
Since $(\mathcal{G}^j_n)^{-1} r_n^j \rightharpoonup \psi^j$
and $(\mathcal{G}^{j-1}_n)^{-1} r_n^j \rightharpoonup 0$ 
in $\hat{L}^{\alpha}$ as $n\to\I$,
Lemma \ref{lem:orthty} implies that $\mathcal{G}^j_n$ and $\mathcal{G}^{j-1}_n$ are orthogonal.
If $\mathcal{G}^j_n$ is orthogonal to  $\mathcal{G}^k_n$ for $k_0\le k \le j-1$ then
Lemma \ref{lem:orthty} yields
\[
	(\mathcal{G}^j_n)^{-1} r_n^{k_0-1} = \sum_{k=k_0}^{j-1} (\mathcal{G}^j_n)^{-1}\mathcal{G}^k_n \psi^{k} + (\mathcal{G}^j_n)^{-1} r_n^{j-1}
	\rightharpoonup \psi^j
\]
as $n\to\I$. On the other hand, $(\mathcal{G}^{k_0-1}_n)^{-1} r_n^{k_0-1} \rightharpoonup 0$ as $n\to\I$.
We therefore see from Lemma \ref{lem:orthty} 
that $\mathcal{G}^j_n$ and $\mathcal{G}^{k_0-1}_n$ are orthogonal.
Hence, $\mathcal{G}^j_n$ is orthogonal to $\mathcal{G}^k_n$ for $1\le k \le j-1$.
Then, by \eqref{eq:pd:key_decomp} and
by Lemma \ref{lem:orthty}, we have $\psi^j \in \mathcal{V}(u)$, from which
boundedness \eqref{eq:pd:key_bddness1}  and \eqref{eq:pd:key_bddness2} follow.

To conclude the proof,
we shall show \eqref{eq:pd:key_smallness} and \eqref{eq:pd:key_Pythagorean}.
Notice that the inductive construction gives us
\begin{equation}\label{eq:pd:key_pf_complete1}
	\ell (\psi^{j+1}) \ge \frac12 \eta(r^j)	
\end{equation}
for $j\ge1$ and
\begin{eqnarray}
	\lefteqn{\gamma \norm{P(\xi_0) r_n^j}_{\hat{M}^\alpha_{2,\sigma}}^\sigma}
	\label{eq:pd:key_pf_complete2}\\
	&\ge& \norm{P(\xi_0) \mathcal{G}^{j+1}_n \psi^{j+1}}_{\hat{M}^\alpha_{2,\sigma}}^\sigma
	 + \norm{P(\xi_0) r_n^{j+1}}_{\hat{M}^\alpha_{2,\sigma}}^\sigma
	+o_{\gamma,j} (1).\nonumber
\end{eqnarray}
as $n\to\I$ for (fixed) $j\ge1$ and any $\gamma>1$ and $\xi_0 \in \R$.
Combining \eqref{eq:pd:key_pf_Pythagoreanp} and 
\eqref{eq:pd:key_pf_complete2} for $1 \le j \le J$, we have
\begin{align*}
	\gamma^J 
	\norm{P(\xi_0) u_n}_{\hat{M}^\alpha_{2,\sigma}}^\sigma
	&{}\ge \sum_{j=1}^J \gamma^{J-j} 
	\norm{P(\xi_0) \mathcal{G}^j_n \psi^{j}}_{\hat{M}^\alpha_{2,\sigma}}^\sigma + \norm{P(\xi_0) r_n^{J}}_{\hat{M}^\alpha_{2,\sigma}}^\sigma +o_{\gamma,J} (1) \\
	&{}\ge \sum_{j=1}^J \gamma^{J-j} \ell(\psi^j)^\sigma + \ell(r_n^J)^\sigma +o_{\gamma,J} (1) .
\end{align*}
Take first infimum with respect to $\xi_0$ and then limit supremum in $n$ to obtain
\[
	\limsup_{n\to\I}\ell(u_n)^\sigma \ge \sum_{j=1}^J \gamma^{-j} \ell(\psi^j)^\sigma
	+\gamma^{-J} \limsup_{n\to\I}\ell(r_n^J)^\sigma.
\]
Since $\gamma>1$ is arbitrary, we obtain \eqref{eq:pd:key_Pythagorean}.
Finally, \eqref{eq:pd:key_Pythagorean} and \eqref{eq:pd:key_pf_complete1} imply \eqref{eq:pd:key_smallness}.
This completes the proof of Theorem \ref{thm:pd1}.

%
%
\section{Concentration compactness}\label{sec:cc}

The second part of the proof of Theorem \ref{thm:pd_r}
is concentration compactness.
Intuitively, the meaning of the concentration compactness is as follows.
Let us consider a bonded sequence $\{ u_n \}_n \subset X$.
Here, $X$ is a Banach space.
In addition to the boundedness with respect to $X$, we make some additional assumption
on the sequence.
If the additional assumption is so strong that it removes almost all possible 
deformations
for $\{ u_n \}_n$ with few exceptions, say $G$, then
we can find a \emph{non-zero} weak limit modulo $G$.
In our case, $X=\hat{M}^\alpha_{2,\sigma}$
and we use
\[
	\norm{e^{-t\d_x^3} u_n}_{L(\R)} \ge m 
\]
as an additional assumption, where $m$ is some positive constant.
It will turn out that this assumption removes almost all deformations.
The exception is $G$ given in \eqref{eq:symG}.
This is the reason why we use the set $G$ of deformations in Theorems \ref{thm:pd_r}
or \ref{thm:pd1}.
The precise statement is as follows.
\begin{theorem}[Concentration compactness]\label{thm:cc}
Let $4/3<\alpha<2$ and $\alpha'<\sigma<\frac{6\alpha}{3\alpha-2}$.
Let a bounded sequence $\{u_n\} \subset \hat{L}^{\alpha}$ satisfy
\begin{eqnarray}
	\norm{u_n}_{\hat{M}^\alpha_{2,\sigma}}
	\le M\label{upM}
\end{eqnarray}
and
\begin{eqnarray}
	\norm{ e^{-t\d_x^3} u_n}_{L(\R)} \ge m
	\label{belowm}
\end{eqnarray}
for some positive constants $m,M$. Then, there exist $\mathcal{G}_n \in G$
and $\psi \in \hat{L}^{\alpha}$ such that, up to subsequence,
$\mathcal{G}_{n}^{-1} u_n \rightharpoonup \psi$ weakly in $\hat{L}^\alpha$
as $n\to\I$ and $\norm{u_n}_{\hat{M}^\alpha_{2,\sigma}} \ge \beta(m,M)$,
where $\beta(m,M)$ is a positive constant depending only on $m,M$. 
In particular,
$\eta(u) \ge C\beta(m,M)$ holds for some constant $C$.
\end{theorem}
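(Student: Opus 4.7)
The plan is to extract a dyadic frequency interval on which $\hat{u}_n$ concentrates in $L^2$-mass, rescale it to $[0,1)$ using $D$ and $P$, centralize a spacetime concentration of the Airy evolution using $A$ and $T$, and then pass to a nontrivial weak limit via a Paley--Wiener compactness argument. The entry point is a sharpening of Theorem \ref{thm:rST}: interpolating the Stein--Tomas-type inequality between the Morrey indices $\sigma$ and $\infty$ (using the embedding $\hat{M}^\alpha_{2,\sigma}\hookrightarrow \hat{M}^\alpha_{2,\infty}$ recorded in Remark \ref{rem:gMorrey}), one obtains
\[
\norm{e^{-t\d_x^3} f}_{L(\R)} \le C \norm{f}_{\hat{M}^\alpha_{2,\sigma}}^{1-\theta} \norm{f}_{\hat{M}^\alpha_{2,\infty}}^{\theta}
\]
for some $\theta\in(0,1)$. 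Combined with the hypotheses this forces $\norm{u_n}_{\hat{M}^\alpha_{2,\infty}} \ge \beta_0(m,M)>0$, so there is a dyadic interval $I_n=\tau_{k_n}^{j_n}$ with $|I_n|^{1/2-1/\alpha}\norm{\hat{u}_n}_{L^2(I_n)}\ge\beta_0/2$. Setting $h_n:=2^{-j_n}$ and $\xi_n:=-k_n$, a short Fourier-side computation using $\hat{D}(h)=D_{\alpha'}(h^{-1})$ and $\hat{P}(\xi)=T(-\xi)$ shows that the Fourier transform of $(D(h_n)P(\xi_n))^{-1}u_n$ carries $L^2([0,1))$-mass at least $\beta_0/2$.

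Next I would pigeonhole the lower bound on the Airy evolution to locate a unit cube $Q_n\subset\R\times\R$ on which the integrand $|\d_x|^{1/(3\alpha)}e^{-t\d_x^3}(D(h_n)P(\xi_n))^{-1}u_n$ has $L^{3\alpha}$-mass at least $c(m,M)>0$, and choose $s_n,y_n$ so that $A(s_n)T(y_n)$ brings $Q_n$ to the origin. Setting $\mathcal{G}_n:=D(h_n)A(s_n)T(y_n)P(\xi_n)\in G$ and $f_n:=\mathcal{G}_n^{-1}u_n$, the reflexivity of $\hat{L}^\alpha$ produces along a subsequence a weak limit $f_n\rightharpoonup\psi$, which by construction belongs to $\mathcal{V}(u)$.

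To upgrade this to the quantitative bound $\ell(\psi)\gtrsim\beta(m,M)$ I would use Paley--Wiener compactness. The frequency-truncated $v_n:=\F^{-1}(\chi_{[0,1)}\hat{f_n})$ has Fourier support in $[0,1)$, so $w_n(t,x):=e^{-t\d_x^3}v_n(x)$ together with all its derivatives is uniformly bounded on $\R\times\R$: its spatial Fourier transform $e^{it\xi^3}\chi_{[0,1)}(\xi)\hat{f_n}(\xi)$ is supported in $[0,1)$ with $L^1$-norm controlled by $\norm{u_n}_{\hat{L}^\alpha}$. An Arzel\`a--Ascoli argument on $[-1,1]^2$ gives uniform convergence $w_n\to w$ along a subsequence, with $w=e^{-t\d_x^3}\F^{-1}(\chi_{[0,1)}\hat{\psi})$. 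The $L^{3\alpha}([-1,1]^2)$-lower bound on $w_n$ passes to $w$, hence $w\not\equiv0$ and $\psi\ne0$; feeding $\F^{-1}(\chi_{[0,1)}\hat{\psi})$ into Theorem \ref{thm:rST} converts the spacetime concentration into $\norm{\psi}_{\hat{M}^\alpha_{2,\sigma}}\gtrsim\beta(m,M)$, equivalently $\ell(\psi)\gtrsim\beta(m,M)$, and the final claim $\eta(u)\ge C\beta$ is then immediate.

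The main obstacle I foresee is the spacetime-localization step. Because of the Galilean-type identity \eqref{eq:GtA}, the $L(\R)$-norm is not directly invariant under $P(\xi)$: an extra Schr\"odinger factor $e^{3i\xi t\d_x^2}$ appears when one commutes $A$ past $P$. This factor must be tracked carefully through the pigeonhole so that (a) the final deformation still lies in $G$, which does not contain the Schr\"odinger group, and (b) the $L^{3\alpha}$-concentration on a unit cube survives the accompanying change of variables. The rest of the argument is standard once this bookkeeping is done.
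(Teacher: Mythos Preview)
Your outline has a genuine gap at the step where you try to link the \emph{frequency} concentration (the dyadic interval $I_n$) to the \emph{spacetime} concentration (the unit cube $Q_n$). You select $I_n$ solely from the $\hat{M}^\alpha_{2,\infty}$ lower bound; this says nothing about whether the $L(\R)$-mass of $e^{-t\partial_x^3}u_n$ is actually carried by frequencies in $I_n$. Consequently, two separate issues arise. First, the ``pigeonhole to a unit cube'' is not justified: you are summing $L^{3\alpha}$-mass over infinitely many unit cubes, and without additional control (for instance an $L^p$ bound with $p<3\alpha$ on a function with compact Fourier support) there is no reason any single cube must carry mass $\ge c(m,M)$. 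Second, even if such a $Q_n$ exists for the full $(D(h_n)P(\xi_n))^{-1}u_n$, the frequency-truncated $v_n=\mathcal{F}^{-1}(\chi_{[0,1)}\widehat{f_n})$ need not inherit any spacetime concentration on $[-1,1]^2$; the mass on $Q_n$ could come entirely from frequencies outside $[0,1)$. Your closing paragraph identifies the $P(\xi)$ bookkeeping as the obstacle, but the real difficulty is deeper than tracking the Schr\"odinger factor in \eqref{eq:GtA}.

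The paper closes this gap by working not with one interval but with finitely many. Lemma \ref{lem:pd:step1} decomposes $u_n=\sum_{j=1}^J f_n^j+q_n$ into $J=J(\varepsilon)$ frequency pieces, each with compact Fourier support \emph{and} a pointwise Fourier bound \eqref{eq:pd:step1_ptwisebdd}, plus a remainder $q_n$ small in $\hat{M}^\alpha_{3\alpha/2,6\alpha/(3\alpha-2)}$ (hence in $L(\R)$). The pointwise bound is exactly what makes the $L^\infty_{t,x}$ interpolation argument in Lemma \ref{lem:pd:step2} work: once Fourier support and amplitude are controlled, one can bound $L(\R)$ by $L^{3\tilde\alpha}_{t,x}$ (uniformly bounded) times $L^\infty_{t,x}$, and the latter pins down $(s_n,y_n)$. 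Each piece $f_n^j$ then admits its own profile decomposition. Finally, the crucial link back to the original $L(\R)$ lower bound is Lemma \ref{lem:realorthty2}: space-time nonresonance of the resulting finitely many profiles gives
\[
\|e^{-t\partial_x^3}u_n\|_{L(\R)}^{3\alpha}\le 2^{3\alpha}\sum_{(j,a)}\|e^{-t\partial_x^3}\mathcal{G}_n^{j,a}\phi^{j,a}\|_{L(\R)}^{3\alpha}+C\varepsilon+o(1),
\]
so at least one profile must have size $\gtrsim\beta(m,M)$. In short, the decomposition into finitely many Fourier-localized pieces together with the spacetime decoupling lemma is what replaces your unjustified pigeonhole, and cannot be bypassed.
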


Plugging Theorem \ref{thm:cc} to Theorem \ref{thm:pd1},
we obtain desired decomposition result.
\begin{theorem}[Decomposition of ``complex-valued'' 
functions]\label{thm:pd}
Let $4/3<\alpha<2$ and $\sigma\in(\alpha',\frac{6\alpha}{3\alpha-2})$.
Let $u=\{u_n\}_n$ be a bounded sequence of $\C$-valued functions in $\hat{L}^{\alpha}$.
Then, there exist $\{\psi^j\}_j, \{r_n^j\}_{n,j} \subset \hat{L}^\alpha$ and 
pairwise orthogonal families $\{\mathcal{G}_n^j\}_n \subset G$ $(j=1,2,\dots)$
such that, up to subsequence,
\begin{equation*}
	u_n = \sum_{j=1}^l \mathcal{G}_n^j \psi^j + r_n^l
\end{equation*}
for all $l\ge1$ with
\begin{equation}\label{eq:pd:limit}
	\limsup_{n\to\I} \norm{ e^{-t\d_x^3} r_n^l }_{L(\R)} \to 0
\end{equation}
as $l\to\I$. Further,  the decouple inequality
\begin{equation*}
	\limsup_{n\to\I} \ell(u_n)^\sigma \ge \sum_{j=1}^\I \ell(\psi^j)^\sigma  +
	\limsup_{n\to\I} \ell(r_n^J)^\sigma
\end{equation*}
holds for any $J \ge 1$.
Moreover, it holds that
\begin{equation*}
	\norm{\psi^j}_{\hat{L}^{\alpha}} \le \limsup_{n\to\I} \norm{u_n}_{\hat{L}^{\alpha}}
\end{equation*}
for any $j$.
\end{theorem}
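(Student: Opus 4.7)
The plan is to derive Theorem \ref{thm:pd} directly from Theorem \ref{thm:pd1} by upgrading the abstract smallness $\eta(r^l)\to 0$ to the quantitative dispersive smallness \eqref{eq:pd:limit} via the concentration compactness result, Theorem \ref{thm:cc}. All other conclusions of Theorem \ref{thm:pd}, namely the decomposition itself, the pairwise orthogonality of $\{\mathcal{G}_n^j\}_n$, the decoupling inequality, and the bound $\|\psi^j\|_{\hat{L}^\alpha}\le \limsup_n \|u_n\|_{\hat{L}^\alpha}$, are literally the corresponding statements in Theorem \ref{thm:pd1} applied to the $\C$-valued sequence $\{u_n\}_n$, so nothing new is needed there.

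The core of the argument is to prove \eqref{eq:pd:limit} by contradiction. Suppose it fails: then there exist $m>0$, a subsequence $l_k\to\I$, and for each $k$ a subsequence $\{n_j(k)\}_j$ along which $\|e^{-t\d_x^3}r_{n_j}^{l_k}\|_{L(\R)}\ge m/2$. On the other hand, from Lemma \ref{lem:scale} we have the equivalence $\ell(f)\sim \|f\|_{\hat{M}^\alpha_{2,\sigma}}$, and applying the decoupling inequality \eqref{eq:pd:key_Pythagorean} one obtains
\[
\limsup_{n\to\I}\|r_n^{l_k}\|_{\hat{M}^\alpha_{2,\sigma}}\le C\limsup_{n\to\I}\ell(r_n^{l_k})\le C\limsup_{n\to\I}\ell(u_n)\le C'\limsup_{n\to\I}\|u_n\|_{\hat{L}^\alpha},
\]
which furnishes a bound $\|r_{n_j}^{l_k}\|_{\hat{M}^\alpha_{2,\sigma}}\le M_0$ that is \emph{uniform} in both $j$ and $k$.

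With these two bounds in hand I apply Theorem \ref{thm:cc} to the sequence $\{r_{n_j}^{l_k}\}_j$ for each fixed $k$. After extracting a further subsequence it produces $\widetilde{\mathcal{G}}_j\in G$ and $\phi_k\in\hat{L}^\alpha$ such that $\widetilde{\mathcal{G}}_j^{-1}r_{n_j}^{l_k}\rightharpoonup \phi_k$ weakly in $\hat{L}^\alpha$, together with the lower bound $\eta(r^{l_k})\ge C\beta(m/2,M_0)$. Since both $m/2$ and $M_0$ are independent of $k$, this lower bound is a strictly positive constant independent of $k$, contradicting $\lim_{l\to\I}\eta(r^l)=0$ from \eqref{eq:pd:key_smallness}. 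This closes the proof.

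The one delicate point, which is also the only real obstacle, is ensuring the uniformity of the constant $M$ used in the application of Theorem \ref{thm:cc}. One cannot rely on the bound \eqref{eq:pd:key_bddness2} in $\hat{L}^\alpha$ since its right-hand side grows like $l+1$; instead the uniformity must come from the $\ell$-decoupling \eqref{eq:pd:key_Pythagorean} combined with $\ell\sim \|\cdot\|_{\hat{M}^\alpha_{2,\sigma}}$, which is precisely why the concentration compactness theorem was formulated in the generalized Morrey norm rather than in $\hat{L}^\alpha$.
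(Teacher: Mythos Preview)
Your proof is correct and follows essentially the same approach as the paper: apply Theorem \ref{thm:pd1} to obtain all conclusions except \eqref{eq:pd:limit}, then argue by contradiction using Theorem \ref{thm:cc} to upgrade $\eta(r^l)\to 0$ into dispersive smallness. Your explicit treatment of the uniform $\hat{M}^\alpha_{2,\sigma}$-bound on $r_n^{l_k}$ via the decoupling inequality \eqref{eq:pd:key_Pythagorean} is exactly the point the paper highlights in the remark immediately following its proof; the paper's own proof is terser but relies on the same mechanism.
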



Before proceeding to the proof of Theorem \ref{thm:cc},
we demonstrate how
we derive Theorem \ref{thm:pd} from Theorems \ref{thm:pd1} and \ref{thm:cc}.

\begin{proof}
By means of Theorem \ref{thm:pd1},
it suffices to show \eqref{eq:pd:limit} as $l\to\infty$
Assume for contradiction that a sequence $r_n^l$ given in Theorem \ref{thm:pd1} satisfies
\[
	\limsup_{l\to\I} \limsup_{n\to\I} 
	\norm{e^{-t\d_x^3} r_n^l}_{L(\R)} >0.
\]
Then, we can choose $m>0$ and a subsequence $l_k$ with $l_k\to\I$ as $k\to\I$ such that
the assumption of Theorem \ref{thm:cc} is fulfilled for each $k$.
Then, Theorem \ref{thm:cc} implies $\eta(r^{l_k})\ge C \beta>0$,
which contradicts to \eqref{eq:pd:key_smallness}.
\end{proof}

\begin{remark}
We would emphasize that, in Theorem \ref{thm:cc},
$\{u_n\}_n$ should be a bounded sequence of $\hat{L}^{\alpha}$ functions
but the constant $\beta$ is chosen independently of the value of 
$\limsup_{n\to\I} \norm{u_n}_{\hat{L}^{\alpha}}$.
This respect is crucial to obtain Theorem \ref{thm:pd}
 because an $\hat{L}^{\alpha}$-bound on
$r_n^J$ given in Theorem \ref{thm:pd1} is no more than
\eqref{eq:pd:key_bddness2}.
\end{remark}
By a  similar argument, we obtain Theorem \ref{thm:BSS}.
We recall the theorem in terms of $\eta$.

\begin{theorem}[Scattering due to irrelevant deformations]\label{thm:bss2}
Let $\{u_{0,n}\}_n \subset \hat{L}^\alpha$ be a bounded sequence.
Let $u_n(t)$ be a solution to \eqref{gKdV} with $u_n(0)=u_{0,n}$.
If $\eta(\{u_{0,n}\}_n)=0$ then there exists $N_0$ such that
$u_n(t)$ is global and scatters for both time direction as long as $n\ge N_0$.
Furthermore,
\[
	\norm{u_n}_{S(\R)} +
	\norm{u_n}_{L(\R)} \le 2\limsup_{n\to\I} \norm{u_{0,n}}_{\hat{L}^\alpha}
\]
for $n\ge N_0$.
\end{theorem}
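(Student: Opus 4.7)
The plan is to combine the concentration compactness result (Theorem \ref{thm:cc}) with the version of small data scattering in the $L(\R)$ topology (Corollary \ref{cor:sds}). The assumption $\eta(\{u_{0,n}\}) = 0$ rules out any non-trivial scattering profile, which via Theorem \ref{thm:cc} forces the linear evolution of the data to be small in $L(\R)$; once that is known, Corollary \ref{cor:sds} delivers global existence, scattering, and the quantitative bound.

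First I would note that by Remark \ref{rem:gMorrey}(v), the embedding $\hat{L}^\alpha \hookrightarrow \hat{M}^\alpha_{2,\sigma}$ holds in the assumed range (since $2 = q' < \alpha' = p' < \sigma$), so the sequence $\{u_{0,n}\}$ is bounded in $\hat{M}^\alpha_{2,\sigma}$ by some constant $M$. Next I would show
\[
	\lim_{n\to\infty} \norm{e^{-t\d_x^3} u_{0,n}}_{L(\R)} = 0.
\]
Suppose for contradiction that there is $m > 0$ and a subsequence $\{u_{0,n_k}\}$ with $\norm{e^{-t\d_x^3} u_{0,n_k}}_{L(\R)} \ge m$ for all $k$. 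Applying Theorem \ref{thm:cc} to this subsequence, one obtains deformations $\mathcal{G}_k \in G$ and a weak limit $\psi \neq 0$ of $\mathcal{G}_k^{-1} u_{0,n_k}$ in $\hat{L}^\alpha$, with $\ell(\psi) \ge C\beta(m, M) > 0$. But from the very definition of $\mathcal{V}$, any weak limit extracted along a subsequence of $\{u_{0,n_k}\}$ is automatically a weak limit extracted along a subsequence of $\{u_{0,n}\}$, so $\psi \in \mathcal{V}(\{u_{0,n}\})$ and hence $\eta(\{u_{0,n}\}) \ge \ell(\psi) > 0$, contradicting the hypothesis.

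Finally, set $M_0 := \limsup_{n\to\infty} \norm{u_{0,n}}_{\hat{L}^\alpha}$ and fix any $M > M_0$. Let $\delta = \delta(M)$ be the constant produced by Corollary \ref{cor:sds}. By the previous step together with the eventual bound $\norm{u_{0,n}}_{\hat{L}^\alpha} \le M$, there is $N_0$ such that for all $n \ge N_0$,
\[
	\norm{u_{0,n}}_{\hat{L}^\alpha} \le M
	\quad \text{and} \quad
	\eps_n := \norm{e^{-t\d_x^3} u_{0,n}}_{L(\R)} \le \delta.
\]
Corollary \ref{cor:sds} then yields global existence and scattering in both time directions for each such $u_n$, together with
\[
	\norm{u_n}_{S(\R)} + \norm{u_n}_{L(\R)} \le \norm{u_{0,n}}_{\hat{L}^\alpha} + C \norm{u_{0,n}}_{\hat{L}^\alpha}^{2\alpha} \eps_n.
\]
Since $\norm{u_{0,n}}_{\hat{L}^\alpha} \to M_0$ (up to an arbitrarily small slack given by our freedom to shrink $M$ toward $M_0$) and $\eps_n \to 0$, the right-hand side is at most $2M_0$ for all sufficiently large $n$, which is the required bound.

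The only genuinely delicate point is the monotonicity $\eta(\text{subsequence}) \le \eta(\text{full sequence})$ used to propagate $\eta = 0$ to any subsequence; this is immediate from the definition of $\mathcal{V}$ as \emph{all} weak limits reachable by \emph{some} subsequence and some choice of deformations, so passing to a subsequence can only restrict, never enlarge, $\mathcal{V}$. Everything else is a direct application of the two cited results.
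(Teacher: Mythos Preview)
Your proof is correct and follows essentially the same approach as the paper: use Theorem~\ref{thm:cc} via a contradiction argument to show that $\eta(\{u_{0,n}\})=0$ forces $\norm{e^{-t\d_x^3}u_{0,n}}_{L(\R)}\to 0$, then invoke Corollary~\ref{cor:sds} for scattering and the quantitative bound. The paper's proof is only two lines (``just as in the proof of Theorem~\ref{thm:pd}\dots''), and your write-up simply unpacks those steps explicitly.
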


\begin{proof}
Just as in the proof of Theorem \ref{thm:pd}, we deduce from $\eta(\{u_{0,n}\}_n)=0$ that
\begin{eqnarray*}
	\lim_{n\to\I} \norm{|\d_x|^{\frac1{3\alpha}} e^{-t\d_x^3} u_{0,n}}_{L^{3\alpha}_{t,x}(\R\times \R)}
	=0
\end{eqnarray*}
thanks to Theorem \ref{thm:cc}.
Then, the result follows from Corollary \ref{cor:sds}.
\end{proof}

\subsection{Refined Stein-Tomas estimate}
One of the key for the proof Theorem \ref{thm:cc} is the following refinement
of \eqref{ho}.
\begin{theorem}\label{thm:rST}
Let $4/3<\alpha<2$ and let $\sigma$ satisfy
$\sigma\in(0,\frac{6\alpha}{3\alpha-2})$. 
Then, there exists a positive constant $C=C(\alpha)$ such that
\begin{equation}\label{eq:ST3}
\begin{aligned}
	\norm{ e^{-t\d_x^3} f }_{ L(\R) } 
	\le{}& C \norm{f}_{\hat{M}^{\alpha}_{\frac{3\alpha}{2},\frac{6\alpha}{3\alpha-2}}}\\
	\le{}& 
	C \norm{f}_{\hat{M}^{\alpha}_{\frac{3\alpha}{2},\I}}^{1-\sigma(\frac12-\frac1{3\alpha})} \norm{f}_{\hat{M}^{\alpha}_{\frac{3\alpha}{2},\sigma}}^{\sigma(\frac12-\frac1{3\alpha})}
\\
	\le{}& C \norm{f}_{\hat{M}^{\alpha}_{\frac{3\alpha}{2},\I}}^{1-\sigma(\frac12-\frac1{3\alpha})} \norm{f}_{\hat{M}^{\alpha}_{2,\sigma}}^{\sigma(\frac12 -\frac1{3\alpha})} \\
	\le{}& C \norm{f}_{\hat{M}^{\alpha}_{2,\sigma}} 
\end{aligned}
\end{equation}
for any 
$f \in \hat{M}^{\alpha}_{2,\sigma}$.
\end{theorem}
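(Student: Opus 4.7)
The plan is to establish the four inequalities in \eqref{eq:ST3} in sequence. Only the first is a genuine refined Stein--Tomas-type estimate for the Airy group and carries essentially all of the analytic content; the remaining three are elementary consequences of the generalized Morrey inclusions in Remark~\ref{rem:gMorrey} together with real interpolation in the dyadic index.

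For the first inequality, $\|e^{-t\d_x^3}f\|_{L(\R)}\le C\|f\|_{\hat{M}^\alpha_{3\alpha/2,\,6\alpha/(3\alpha-2)}}$, I would carry out a Bourgain-style bilinear refinement of the Airy Stein--Tomas estimate from Proposition~\ref{prop:ho_inho}(i), following the template of \cite{B1,MVV1,MVV2} and Shao's adaptation to the Airy group \cite{Shao}. Decomposing $\hat{f}=\sum_{I\in\mathcal{D}}\mathbf{1}_I\hat{f}=:\sum_I\hat{f_I}$, I would exploit the identity
\[
\|e^{-t\d_x^3}f\|_{L(\R)}^{2}=\bigl\|\,\bigl(|\d_x|^{1/(3\alpha)}e^{-t\d_x^3}f\bigr)^{2}\,\bigr\|_{L^{3\alpha/2}_{t,x}}
\]
and perform a Whitney decomposition of the frequency plane $(\xi_1,\xi_2)$ so that the non-diagonal contribution is reduced to pairs $(I,J)$ of dyadic intervals of common length $2^{-j}$ at distance comparable to $2^{-j}$. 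On such a separated pair, the Jacobian of $(\xi_1,\xi_2)\mapsto(\xi_1+\xi_2,\xi_1^3+\xi_2^3)$ has size $\sim |I||J|\,\mathrm{dist}(I,J)$, which yields a sharp bilinear $L^{3\alpha/2}_{t,x}$-estimate of $u_Iu_J$ in terms of $\|\hat{f_I}\|_{L^{(3\alpha/2)'}(I)}\|\hat{f_J}\|_{L^{(3\alpha/2)'}(J)}$. Summing these bilinear estimates over the Whitney family via H\"older's inequality in $\ell^{r_*}_{\mathcal{D}}$ with $r_*=6\alpha/(3\alpha-2)$---the exponent determined by $1/r_*=1/2-1/(3\alpha)$, which is precisely the balance forced by the bilinear $L^{3\alpha/2}$-step---then produces the stated Morrey bound at $(q,r)=(3\alpha/2,r_*)$, while diagonal contributions are absorbed into their parent dyadic interval by almost-orthogonality.

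Granted the first inequality, the remaining three are short. The second is log-convexity of $\ell^r_{\mathcal{D}}$-norms: writing $1/r_*=(1-\theta)/\infty+\theta/\sigma$ gives $\theta=\sigma(1/2-1/(3\alpha))\in[0,1]$ by the hypothesis $\sigma\le 6\alpha/(3\alpha-2)$, whence the interpolation inequality. The third is a pointwise H\"older inequality on each dyadic $I$: since $(3\alpha/2)'\le 2$ for $\alpha\ge 4/3$, one has $\|\hat{f}\|_{L^{(3\alpha/2)'}(I)}\le|I|^{(3\alpha-4)/(6\alpha)}\|\hat{f}\|_{L^2(I)}$, and combining this with the weight $|I|^{2/(3\alpha)-1/\alpha}$ produces exactly $|I|^{1/2-1/\alpha}\|\hat{f}\|_{L^2(I)}$, so that taking $\ell^\sigma_{\mathcal{D}}$ yields $\|f\|_{\hat{M}^\alpha_{3\alpha/2,\sigma}}\le\|f\|_{\hat{M}^\alpha_{2,\sigma}}$. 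The fourth follows by the same embedding combined with the trivial $\ell^\sigma\hookrightarrow\ell^\infty$ applied to the first factor. The principal obstacle lies in the bilinear step of the first inequality: securing the Jacobian lower bound uniformly in $(I,J)$ over all dyadic scales and positions (including intervals near $\xi=0$ or straddling a sign change, where the one-dimensional Airy phase $\xi^3$ degenerates) and carefully balancing the $|I|$-powers against the Lebesgue exponents so that the Whitney summation produces precisely the target pair $(q,r_*)=(3\alpha/2,\,6\alpha/(3\alpha-2))$; this delicate book-keeping, rooted in the Moyua--Vargas--Vega and Shao framework, is the technical heart of the refinement.
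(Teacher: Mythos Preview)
Your approach is essentially the one the paper uses: square the $L(\R)$-norm, apply Hausdorff--Young after the change of variables $(\xi,\eta)\mapsto(\xi-\eta,\xi^3-\eta^3)$, and then control the resulting bilinear integral by a Whitney decomposition and summation in $\ell^{6\alpha/(3\alpha-2)}_{\mathcal{D}}$; the remaining three inequalities are indeed the elementary embedding/interpolation steps you describe, and your exponent bookkeeping there is correct.

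There is, however, one genuine inaccuracy in your bilinear step that is precisely the ``principal obstacle'' you flag but do not resolve. The Jacobian of $(\xi_1,\xi_2)\mapsto(\xi_1+\xi_2,\xi_1^3+\xi_2^3)$ is $3(\xi_2^2-\xi_1^2)=3(\xi_2-\xi_1)(\xi_2+\xi_1)$, not $\sim|I||J|\,\mathrm{dist}(I,J)$; it degenerates on \emph{both} diagonals $\{\xi_1=\xi_2\}$ and $\{\xi_1=-\xi_2\}$. A standard Whitney decomposition (separation $\sim|I|$ from the diagonal) controls only $|\xi_1-\xi_2|$ and gives no lower bound on $|\xi_1+\xi_2|$, so the kernel $|\xi\eta|^{1/(3p-2)}/(|\xi-\eta|^{2/(3p-2)}|\xi+\eta|^{2/(3p-2)})$ is not bounded on standard Whitney pairs. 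The paper's fix (Appendix~\ref{sec:rST}) is to use a modified Whitney relation $I\sim_{\mathcal{W}}I'$ requiring that $I$ be non-adjacent to \emph{both} $I'$ and $-I'$ while the parents are adjacent to one of them; this guarantees $\min(\mathrm{dist}(I,I'),\mathrm{dist}(I,-I'))\sim|I|$ and hence $|\xi\eta|/(|\xi+\eta|^2|\xi-\eta|^2)\le |I|^{-2}$ on each pair. With that single adjustment your outline goes through verbatim, and after absorbing $I'\cup(-I')$ into a fixed enlargement of $I$ the summation collapses to exactly $\sum_{I\in\mathcal{D}}|I|^{-2/(3p-2)}\|\hat f\|_{L^{(3p/2)'}(I)}^{2(3p/2)'}=\|f\|_{\hat{M}^{p}_{3p/2,\,2(3p/2)'}}^{2(3p/2)'}$.
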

This kind of refinement of the Airy Strichartz's inequality,
is known in the case $\alpha=2$ (see \cite{KPV3,Shao}).
We prove the first inequality of \eqref{eq:ST3} in Appendix \ref{sec:rST}.
The others follow from embeddings in Remark \ref{rem:gMorrey} in Section 2.
\subsection{A stronger orthogonality}
We have settled the notion of orthogonality of two families of deformations
in Definition \ref{def:orthty}. 
As seen in Lemma \ref{lem:orthty}, the orthogonality is rather a property
associated with the functions of space $\hat{L}^\alpha$.
The notion is not sufficient to yield weakness of interaction between
Airy evolutions of them.
A simple but essential example is 
\[
	u_n = P(n) f_+ + P(-n)f_- ,
\]
where $f_\pm \in \mathcal{S}$ is a real-valued function good enough. 
Remark that $\{P(n)\}_n$ and $\{P(-n)\}_n$ are orthogonal.
As for the Schr\"odinger evolution of $u_n$,
we have the following decoupling with respect to a space-time norm
\[
	\norm{S(t) u_n}_{L^{p}_{t,x}(\R^2)}^p
	= \norm{S(t) f_+}_{L^{p}_{t,x}(\R^2)}^p + \norm{S(t) f_-}_{L^{p}_{t,x}(\R^2)}^p + o(1).
\]
for $4 < p <\I$ as long as $\norm{S(t) f_\pm}_{L^{p}_{t,x}(\R^2)}$ are finite.
Indeed, the decoupling follows from
\[
	|S(\cdot)P(\pm n)f_\pm| (t,x) = |S(\cdot)f|(t,x \pm 2tn),
\]
which is a consequence of the Galilean transform \eqref{eq:Gt}.
In contrast, the Airy evolution of $u_n$ may not satisfy such kind of
space-time decoupling because
\[
	|A(\cdot) P(-n) f_-|(t,x) = |\overline{A(\cdot) P(n) f_-}|(t,x) = |A(\cdot) P(n) f_-|(t,x) .
\]
Thus, interaction betwenn $A(\cdot) P(-n) f_-$ and
$|A(\cdot) P(-n) f_+|$ is not always small.

Hence, we introduce a stronger notion of orthogonality which yields
such a decoupling with respect to a space-time norm of corresponding Airy evolutions.
\begin{definition}\label{def:orthty2}
Let $\{\mathcal{G}_n\}_n , \{\widetilde{\mathcal{G}}_n\}_n \subset G$ be
two families of deformations and let
$\Gamma_n ,\widetilde{\Gamma}_n 
\in 2^\Z \times \R\times \R \times \R$ be parameters associated with $\mathcal{G}_n$ and
$\widetilde{\mathcal{G}}_n$, respectively.
We say $\{\mathcal{G}_n\}_n$ and $\{\widetilde{\mathcal{G}}_n\}_n$ are 
\emph{space-time nonresonant} if 
\begin{multline}\label{eq:def_orthty2}
	\limsup_{n\to\I} \bigg(\abs{\log \frac{h_n}{\widetilde{h}_n}}+\abs{ |\xi_n| - \frac{\widetilde{h}_n}{h_n} |\widetilde{\xi}_n| } 
	+ \abs{ s_n -\( \frac{h_n}{\widetilde{h}_n}\)^3\widetilde{s}_n}(1+|\xi_n|) \\
	+ \abs{y_n- \frac{h_n}{\widetilde{h}_n} \widetilde{y}_n-3\( s_n -\( \frac{h_n}{\widetilde{h}_n}\)^3\widetilde{s}_n\)(\xi_n)^2 } \bigg)= +\I.
\end{multline}
\end{definition}

\begin{remark}\label{rem:realorthty}
(i) Obviously, if $\xi_n^j, \xi_n^k \ge 0$ then
the orthogonality and the space-time nonresonant property 
of $\mathcal{G}^j_n$ and $\mathcal{G}^k_n$ are equivalent.
Hence, we can replace the word ``orthogonal'' with ``space-time nonresonant''
in Theorem \ref{thm:pd_r}.

\vskip1mm
\noindent
(ii) Let $\{\mathcal{G}_n^j \}_{n}  \subset G$ ($j=1,2,\dots$) be pairwise orthogonal families.
For each $\{\mathcal{G}_n^j\}_n$, the family $\{\mathcal{G}_n^k\}_n$ that is orthogonal but not space-time nonresonant
is at most one.
Indeed, if $\{\mathcal{G}_n^j\}_n$ and $\{\mathcal{G}_n^k\}_n$ are orthogonal and not space-time nonresonant, then
\begin{multline*} 
	\limsup_{n\to\I} \bigg(\abs{\log \frac{h_n^j}{{h}_n^k}}+\abs{ \xi_n^j + \frac{{h}_n^k}{h_n^j} {\xi}_n^k } 
	+ \abs{ s_n^j -\( \frac{h_n^j}{{h}_n^k}\)^3{s}_n^k}(1+|\xi_n^j|) \\
	+ \abs{y_n^j - \frac{h_n^j}{{h}_n^k} {y}_n^k-3\( s_n^j -\( \frac{h_n^j}{{h}_n^k}\)^3{s}_n^k\)(\xi_n^j)^2 } \bigg)< +\I
\end{multline*}
and 
\[
	\limsup_{n\to\I} \abs{ \xi_n^j - \frac{{h}_n^k}{h_n^j} {\xi}_n^k } =+\I.
\]
To simplify the formulation, we may assume that $h_n^j=h_n^k$.
If both $\{\mathcal{G}_n^{k_1}\}_n$ and $\{\mathcal{G}_n^{k_2}\}_n$ ($k_1\neq k_2$) satisfy the above,
then $h_n^{k_1}=h_n^{k_2}$ and
\begin{eqnarray*} 
	\lefteqn{\limsup_{n\to\I} \bigg( 
	\abs{ s_n^{k_1} -{s}_n^{k_2}}(1+|\xi_n^{k_1}|) }\\
	& &\qquad\quad+ \abs{y_n^{k_1} - {y}_n^{k_2}-3\( s_n^{k_1} -{s}_n^{k_2}\)(\xi_n^{k_1})^2 } \bigg)< +\I
\end{eqnarray*}
and
\[
	|\xi_n^{k_1} - \xi_n^{k_2}| \le |\xi_n^{k_1} + \xi_n^j| + |\xi_n^{k_2} + \xi_n^j| \le C <\I.
\]
These inequalities imply $\{\mathcal{G}_n^{k_1}\}_n$ and $\{\mathcal{G}_n^{k_2}\}_n$ are not orthogonal,
a contradiction.
A similar argument shows that orthogonality and space-time nonresonant property 
of $\{\mathcal{G}^j_n\}_n$ and $\{\mathcal{G}^k_n\}$ are equivalent
as long as $\{\xi_n^j\}_n$ or $\{\xi_n^k\}_n$ is bounded.
\end{remark}

The following is the main conclusion of space-time nonresonant property of two families.

\begin{lemma}\label{lem:realorthty}
Suppose that $\{\mathcal{G}_n\}_n , \{\widetilde{\mathcal{G}}_n \}_n
\subset G$ are space-time nonresonant families.
Then, it holds for any $\psi,\widetilde{\psi} \in \hat{L}^{\alpha}$ that
\[
	\norm{[|\d_x|^{\frac1{3\alpha}} e^{-t\d_x^3} \mathcal{G}_{n} \psi]
	\overline{[|\d_x|^{\frac1{3\alpha}} e^{-t\d_x^3} \widetilde{\mathcal{G}}_{n} \widetilde{\psi}]}}_{L^{\frac{3\alpha}2}_{t,x}}
	\to 0
\]
as $n\to\I$.
\end{lemma}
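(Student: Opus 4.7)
The plan is to reduce the product $U_n \overline{V_n}$ (with $U_n := |\d_x|^{\frac{1}{3\alpha}} e^{-t\d_x^3}\mathcal{G}_n\psi$ and $V_n$ analogous) to a normalized form via a space-time change of variables, and then dispatch the claim by a case analysis on which term in \eqref{eq:def_orthty2} diverges, in the spirit of Lemma \ref{331}.

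First, by density together with the uniform Strichartz bound $\norm{|\d_x|^{1/(3\alpha)}e^{-t\d_x^3}f}_{L^{3\alpha}_{t,x}} \lesssim \norm{f}_{\hat{M}^{\alpha}_{2,\sigma}}$ from Theorem \ref{thm:rST}, I reduce to $\psi,\widetilde\psi \in \Sch$ whose Fourier transforms are supported in a fixed compact subset of $\R\setminus\{0\}$. The crucial observation for handling the complex conjugate is that $\overline{V_n} = |\d_x|^{1/(3\alpha)}e^{-t\d_x^3}\widetilde{\mathcal{G}}_n'\overline{\widetilde\psi}$, where $\widetilde{\mathcal{G}}_n'$ is obtained from $\widetilde{\mathcal{G}}_n$ by replacing $\widetilde\xi_n$ with $-\widetilde\xi_n$. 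Since $\bigl||\xi_n| - (\widetilde h_n/h_n)|\widetilde\xi_n|\bigr|$ is bounded above by both $|\xi_n - (\widetilde h_n/h_n)\widetilde\xi_n|$ and $|\xi_n + (\widetilde h_n/h_n)\widetilde\xi_n|$, the space-time nonresonance of $\{\mathcal{G}_n\}, \{\widetilde{\mathcal{G}}_n\}$ implies orthogonality in the sense of Definition \ref{def:orthty} of $\{\mathcal{G}_n\}$ with both $\{\widetilde{\mathcal{G}}_n\}$ and $\{\widetilde{\mathcal{G}}_n'\}$.

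Next, using the Galilean identity \eqref{eq:GtA} together with the commutation relations between $D, A, T, P$, I express
\[
	|U_n(t,x)| = h_n^{\frac{1}{\alpha}+\frac{1}{3\alpha}} \bigl|[|\d_x|^{1/(3\alpha)} S(3\xi_n \tau_n(t)) A(\tau_n(t))\psi](X_n(t,x))\bigr|,
\]
with $\tau_n(t) = h_n^3 t + s_n$ and $X_n(t,x) = h_n x - y_n + 3\xi_n^2 \tau_n(t)$, and analogously for $\overline{V_n}$. Because $\hat\psi$ has compact support away from zero, the function $|\d_x|^{1/(3\alpha)} S(3\xi_n \tau) A(\tau)\psi$ is spatially localized in $X$ at scale $O(1+|\tau\xi_n|)$ for each $\tau$, with group velocity of order $|\xi_n|^2$ (since the Airy group velocity at frequency $\eta$ is $-3\eta^2$). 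The $L^{3\alpha/2}_{t,x}$ norm is invariant under the joint substitution $(t,x)\mapsto(\tau_n(t),X_n(t,x))$ up to an explicit scale factor, so the problem reduces to controlling the product of two such localized packets whose offsets are prescribed by the parameters of $\widetilde{\mathcal{G}}_n'(\mathcal{G}_n)^{-1}$; by \eqref{eq:orthty_representation} these offsets are precisely the four quantities appearing in \eqref{eq:def_orthty2}.

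Finally, I perform a four-case analysis in the style of Lemma \ref{331}: scale divergence $|\log(h_n/\widetilde h_n)|\to\I$ (handled by H\"older and Lemma \ref{lem:scale}); frequency divergence $\bigl||\xi_n|-(\widetilde h_n/h_n)|\widetilde\xi_n|\bigr|\to\I$ (distinct group velocities force disjoint $(t,x)$-supports); time-support divergence $|s_n-(h_n/\widetilde h_n)^3\widetilde s_n|(1+|\xi_n|)\to\I$ (disjoint $\tau$-supports, the factor $1+|\xi_n|$ reflecting the packet width in $\tau$); and spatial divergence of the fourth term (disjoint $X$-supports at each common $\tau$). In each case, H\"older combined with the compact Fourier support yields $\norm{U_n\overline{V_n}}_{L^{3\alpha/2}_{t,x}}\to 0$. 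The main obstacle is the frequency case: since the Airy group velocity $-3\eta^2$ depends only on $|\eta|$, taking the complex conjugate of $V_n$ does not reverse its group velocity, so two packets centered at $h_n\xi_n$ and $\pm\widetilde h_n \widetilde\xi_n$ separate in space-time only when $\bigl||\xi_n|-(\widetilde h_n/h_n)|\widetilde\xi_n|\bigr|$ is large. This is precisely the geometric content forcing absolute values into Definition \ref{def:orthty2}, and is the substantive new information beyond the weak $\hat L^\alpha$-orthogonality of Lemma \ref{lem:orthty}.
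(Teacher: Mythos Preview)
Your structural plan---density, the conjugate identity $\overline{V_n}=|\d_x|^{1/(3\alpha)}e^{-t\d_x^3}\widetilde{\mathcal G}_n'\,\overline{\widetilde\psi}$ with $\widetilde\xi_n\mapsto-\widetilde\xi_n$, then a four-case split on \eqref{eq:def_orthty2}---is the right skeleton, and your explanation of why $\bigl||\xi_n|-\tfrac{\widetilde h_n}{h_n}|\widetilde\xi_n|\bigr|$ rather than the signed difference governs the interaction is exactly the point. But the execution has two gaps. First, your formula for $|U_n|$ is not correct: $|\d_x|^{1/(3\alpha)}$ does not commute with $P(\xi_n)$, and passing $P(\xi_n)$ through produces the multiplier $|\cdot-\xi_n|^{1/(3\alpha)}$, which for large $|\xi_n|$ becomes the factor $|1-\xi/\xi_n|^{1/(3\alpha)}$ that must be carried along and removed only in the limit. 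Second, and more seriously, ``spatially localized in $X$ at scale $O(1+|\tau\xi_n|)$'' followed by ``disjoint supports'' is not a proof. The linear Airy evolution of a Schwartz datum is never compactly supported in $(t,x)$, so the rectangle calculus of Lemma~\ref{331}---which works there only because the nonlinear profiles are approximated in $S\cap L$ by compactly supported space-time functions---is unavailable here. In particular, ``H\"older and Lemma~\ref{lem:scale}'' for the scale case yields only a uniform bound, not smallness, and ``distinct group velocities force disjoint supports'' in the frequency case is a heuristic that ignores dispersion.

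The paper's route is different in substance. For the scale and frequency cases it applies Hausdorff--Young to $U_n\overline{V_n}$, reducing to a Fourier-side integral with kernel $|\xi\eta|^{\frac{1}{3\alpha-2}}|\xi+\eta|^{-\frac{2}{3\alpha-2}}|\xi-\eta|^{-\frac{2}{3\alpha-2}}$; scale divergence is read off by choosing asymmetric Lebesgue exponents $q_1,q_2$ with $\tfrac{1}{q_1}+\tfrac{1}{q_2}=\tfrac{2}{\alpha'}$, and frequency divergence from the explicit bound
\[
\sup_{(\xi,\eta)\in K\times K}\frac{|\xi-\xi_n||\eta-\widetilde\xi_n|}{|\xi+\eta-(\xi_n+\widetilde\xi_n)|^2|\xi-\eta-(\xi_n-\widetilde\xi_n)|^2}\le \frac{C_K}{\bigl||\xi_n|-|\widetilde\xi_n|\bigr|^{2}},
\]
which is where the absolute values enter rigorously. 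For the remaining two cases the paper does \emph{not} claim compact support either: when $\xi_n$ stays bounded it uses only that $|\d_x|^{1/(3\alpha)}A(\cdot)\psi\in L^{3\alpha}_{t,x}$, so large $(s,y)$-translates decouple by dominated convergence; when $|\xi_n|\to\infty$ it first passes to the Schr\"odinger limit via a Shao-type pointwise majorant $C_\psi(1+|t|)^{-1/4}(1+|x|)^{-1/4}\in L^{3\alpha}_{t,x}$ and dominated convergence, and then runs the translate argument for $S(\cdot)\psi$.
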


\begin{proof}
By a density argument, we assume that $\psi$ and $\widetilde{\psi}$ are Schwartz
function with compact Fourier support. Let $K$ be a compact set
containing the Fourier support of $\psi$ and $\widetilde{\psi}$.
Let $\Gamma_n=(h_n,\xi_n,y_n,s_n) ,\widetilde{\Gamma}_n=(\widetilde{h}_n,
\widetilde{\xi}_n,\widetilde{y}_n,\widetilde{s}_n)
\in 2^\Z \times \R\times \R \times \R$ be  parameters
associated with
$\{\mathcal{G}_n\}_n , \{\widetilde{\mathcal{G}}_n \}_n
\subset G$, respectively.

We first consider the case 
\[
	\limsup_{n\to\I} \bigg(\abs{\log \frac{h_n}{\widetilde{h}_n}}+ \abs{ |\xi_n| -\frac{\widetilde{h}_n}{h_n} |\widetilde{\xi}_n| }  \bigg)= +\I.
\]
Let us begin with the case when $\abs{\log \frac{h_n}{\widetilde{h}_n}}\to\I$ as $n\to\I$.
The integrand equals
\[
	\iint |\xi|^{\frac1{3\alpha}}|\eta|^{\frac1{3\alpha}} e^{ix(\xi-\eta)+ it(\xi^3-\eta^3)} \F [\mathcal{G}_{n} \psi](\xi) \overline{\F [\widetilde{\mathcal{G}}_{n} \widetilde{\psi}](\eta)} \, d\xi d\eta.
\]
Then, by Hausforff-Young inequality, our goal is to show
\begin{eqnarray}
	\ \ \ \ \iint_{K\times K} \frac{ |\xi|^{\frac1{3\alpha-2}} |\eta|^{\frac1{3\alpha-2}} |\F [\mathcal{G}_{n} \psi](\xi)|^{\frac{3\alpha}{3\alpha-2}}
	|\F [\widetilde{\mathcal{G}}_{n} \widetilde{\psi}](\eta)|^{\frac{3\alpha}{3\alpha-2}} }{|\xi+\eta|^{\frac2{3\alpha-2}}|\xi-\eta|^{\frac2{3\alpha-2}}} \, d\xi d\eta \to0\label{conv}
\end{eqnarray}
as $n\to\I$. As in \cite[Lemma 2.2]{MS}, it holds that
\[
		\iint_{K\times K} \frac{ |\xi|^{\frac1{3\alpha-2}} |\eta|^{\frac1{3\alpha-2}} |f(\xi)|^{\frac{3\alpha}{3\alpha-2}}
	|g(\eta)|^{\frac{3\alpha}{3\alpha-2}} }{|\xi+\eta|^{\frac2{3\alpha-2}}|\xi-\eta|^{\frac2{3\alpha-2}}} \, d\xi d\eta
	\le C \norm{f}_{L^{q_1}}^{\frac{3\alpha}{3\alpha-2}} \norm{g}_{L^{q_2}}^{\frac{3\alpha}{3\alpha-2}},
\]
where $q_1, q_2 \in (\frac{3\alpha}{3\alpha-2}, \frac{3\alpha}{3\alpha-4})$ satisfies
\[
	\frac1{q_1} + \frac1{q_2} = \frac{2}{\alpha'}.
\]	
Remark that
\begin{equation}\label{eq:ro2_pf1}
	|\F [\mathcal{G}_{n} \psi](\xi)|
	= (\hat{D}(h_n) \hat{P}(\xi_n) |\F \psi|) (\xi)
	= h_n^{-\frac1{\alpha'}} |\F \psi|\(\frac{\xi}{h_n} + \xi_n \).
\end{equation}
Similar formula holds for $|\F [\widetilde{\mathcal{G}}_{n} \widetilde\psi](\eta)|$.
Therefore,
\[
	\norm{\F [\mathcal{G}_{n} \psi]}_{L^{q_1}}
	\norm{\F [\widetilde{\mathcal{G}}_{n} \widetilde\psi]}_{L^{q_2}}
	= \(\frac{h_n}{\widetilde{h_n}}\)^{\frac1{q_1}-\frac{1}{\alpha'}}
	\norm{\F \psi}_{L^{q_1}}\norm{\F \widetilde\psi}_{L^{q_2}}.
\]
We have the desired smallness by taking ${q_1}<\alpha'$ if
$h_n/\widetilde{h}_n \to 0$ as $n\to\I$ and ${q_1}>\alpha'$ otherwise.

We proceed to the case when $\limsup_{n\to\I} \abs{\log \frac{h_n}{\widetilde{h}_n}}<\I$.
Changing notations and taking subsequence if necessary, we may suppose $\widetilde{h}_n=h_n$.
Recall that we need to show (\ref{conv}) as $n\to\infty$.
By \eqref{eq:ro2_pf1} and by change of variable,
\begin{eqnarray*}
\lefteqn{(\text{LHS\ of}\ (\ref{conv}))}\\
	&=& (h_{n})^{\frac{4(\alpha-1)}{\alpha(3\alpha-2)}}
	\iint_{K\times K} \frac{ |\xi-\xi_n|^{\frac1{3\alpha-2}} |\eta-\widetilde\xi_n|^{\frac1{3\alpha-2}} |\F \psi(\xi)|^{\frac{3\alpha}{3\alpha-2}}
	|\F \widetilde{\psi}(\eta)|^{\frac{3\alpha}{3\alpha-2}} }{|\xi+\eta-(\xi_n +\widetilde\xi_n)|^{\frac2{3\alpha-2}}|\xi -\eta-(\xi_n -\widetilde\xi_n)|^{\frac2{3\alpha-2}}} \, d\xi d\eta\\
	&\le &  (h_{n})^{\frac{4(\alpha-1)}{\alpha(3\alpha-2)}}
	\( \sup_{(\xi,\eta) \in K\times K}\frac{ |\xi-\xi_n| |\eta-\widetilde\xi_n| }{ |\xi+\eta-(\xi_n +\widetilde\xi_n)|^2 |\xi -\eta-(\xi_n -\widetilde\xi_n)|^2} \)^{\frac1{3\alpha-2}} \\
	& &\qquad \times \iint_{K\times K}|\F \psi(\xi)|^{\frac{3\alpha}{3\alpha-2}}
	|\F \widetilde{\psi}(\eta)|^{\frac{3\alpha}{3\alpha-2}} \, d\xi d\eta.
\end{eqnarray*}
Notice that $||\xi_n|-|\widetilde\xi_n|| \to \I$ as $n\to\I$ by  orthogonality assumption.
We have
\[
	\max(|\xi|,|\widetilde\xi_n|)
	\le \frac12 (|\xi_n + \widetilde{\xi}_n| + |\xi_n - \widetilde{\xi}_n|).
\]
It therefore holds that
\[
	\frac{(1+ |\xi_n|)(1+ | \widetilde\xi_n|)}{|\xi_n-\widetilde\xi_n|^2|\xi_n+\widetilde\xi_n|^2}\le
	\frac1{(\min (|\xi_n-\widetilde\xi_n|,|\xi_n+\widetilde\xi_n|))^2}
	=\frac1{\abs{|\xi_n|-|\widetilde\xi_n|}^2}
\]
Hence, for large $n$,
\begin{eqnarray*}
\lefteqn{\sup_{(\xi,\eta) \in K\times K} \frac{ |\xi-\xi_n| |\eta-\widetilde\xi_n| }{ |\xi+\eta-(\xi_n+\widetilde\xi_n)|^2 |\xi -\eta-(\xi_n -\widetilde\xi_n)|^2}} \\
	&\le& \sup_{(\xi,\eta) \in K\times K} \frac{  2(C_K+|\xi_n|)(C_K+|\widetilde\xi_n|) }{ |\xi_n + \widetilde\xi_n|^2 |\xi_n -\widetilde\xi_n|^2}
	\le \frac{C_K}{\abs{|\xi_n|-|\widetilde\xi_n|}^2}.
\end{eqnarray*}
Therefore, we obtain the desired smallness (\ref{conv}).

Next, we assume that 
$h_n = \widetilde{h_n}$, $\xi_n = \widetilde{\xi}_n$, and
\[
	\limsup_{n\to\I} \bigg( \abs{ s_n - \widetilde{s}_n}(1+|\xi_n|)
	+ \abs{y_n- \widetilde{y}_n-3( s_n - \widetilde{s}_n)(\xi_n)^2 } \bigg)= +\I.
\]
First, we further assume that $|\xi_n|\le C$. 
Then, we may let $\xi_n\equiv 0$ by extracting subsequence and changing notations.
In this case, the orthogonality implies $|s_n-\widetilde{s}_n| + |y_n-\widetilde{y}_n| \to \I$
as $n\to\I$. Since
\[
	|\d_x|^{\frac1{3\alpha}}A(t) \mathcal{G}_{n} \psi = 
	(h_n)^{\frac{4}{3\alpha}} [ |\d_x|^{\frac1{3\alpha}} A( \cdot )  \psi ] ((h_n)^3 t + s_n , h_n x - y_n)
\]
and a similar formula for $|\d_x|^{\frac1{3\alpha}}A(t) \widetilde{\mathcal{G}}_{n} \widetilde{\psi}$ hold, we see from change of variable that
\begin{multline*}
	\norm{[|\d_x|^{\frac1{3\alpha}} e^{-t\d_x^3} \mathcal{G}_{n} \psi]
	\overline{[|\d_x|^{\frac1{3\alpha}} e^{-t\d_x^3} \widetilde{\mathcal{G}}_{n} \widetilde{\psi}]}}_{L^{\frac{3\alpha}2}_{t,x}}\\
	= \norm{[|\d_x|^{\frac1{3\alpha}} A(\cdot)\psi] (t,x)
	\overline{[|\d_x|^{\frac1{3\alpha}}A(\cdot) \widetilde{\psi}](t-(s_n-\widetilde{s_n}),x+(y_n-\widetilde{y_n}))}}_{L^{\frac{3\alpha}2}_{t,x}}.
\end{multline*}
It is obvious from  $|\d_x|^{\frac1{3\alpha}} A(t)\psi, |\d_x|^{\frac1{3\alpha}} A(t)\widetilde{\psi} \in L^{3\alpha}_{t,x}(\R^2)$
that the left hand side tends to zero as $|s_n-\widetilde{s}_n| + |y_n-\widetilde{y}_n| \to \I$.
We finally consider the case $|\xi_n|\to\I$. Then, 
\begin{align*}
	|\d_x|^\frac1{3\alpha} A(t) \mathcal{G}_{n} \psi 
	={}& \frac{(h_n)^{\frac{4}{3\alpha}}}{\sqrt{2\pi}} \int |\xi|^{\frac1{3\alpha}}e^{i(h_nx-y_n)\xi+i((h_n)^3 t + s_n) \xi^3} \hat\psi(\xi+\xi_n) \, d\xi \\
	={}& \frac{(h_n)^{\frac{4}{3\alpha}} 
	|\xi_n|^\frac1{3\alpha}}{\sqrt{2\pi}} e^{-i((h_n)^3 t + s_n) \xi_n^3
	-i(h_nx-y_n)\xi_n }\\
	&{}\times \int \abs{1-\frac{\xi}{\xi_n}}^{\frac1{3\alpha}} e^{i 
	\frac{t'}{3\xi_n}\xi^{3}}
	e^{-it'\xi^2 + ix'\xi}  \hat\psi(\xi) \,d\xi,  
\end{align*}
where $t'=3((h_n)^3t+s_{n})\xi_n $ and $x'=h_{n}x-y_{n}+
3((h_n)^3 t + s_n) \xi_{n}^2$.
Hence, by change of variables,
\begin{eqnarray*}
\lefteqn{\norm{ |\d_x|^{\frac1{3\alpha}} A(t) \mathcal{G}_{n} \psi  }_{L^{3\alpha}_{t,x}(\R^2)}}\\
	&=&
	3^{-\frac1{3\alpha}}\norm{  \frac{1}{\sqrt{2\pi}} \int \abs{1-\frac{\xi}{\xi_n}}^{\frac1{3\alpha}}  e^{i 
	\frac{t'}{3\xi_n}\xi^{3}}
	e^{-i t' \xi^2 + ix'\xi}  \hat\psi(\xi) \,d\xi }_{L^{3\alpha}_{t',x'}(\R^2)}.
\end{eqnarray*}
Since
\[
	\frac{1}{\sqrt{2\pi}} \int \abs{1-\frac{\xi}{\xi_n}}^{\frac1{3\alpha}} 
	 e^{i 
	\frac{t}{3\xi_n}\xi^{3}}
	e^{-i t \xi^2 + ix\xi}  \hat\psi(\xi) \,d\xi
	\to (S(\cdot)\psi)(t,x)
\]
as $n\to\I$ for any fixed $(t,x) \in \R^2$ and since
\[
	\abs{	\frac{1}{\sqrt{2\pi}} \int \abs{1-\frac{\xi}{\xi_n}}^{\frac1{3\alpha}} e^{i \frac{t\xi^3}{3\xi_n}}
	e^{-i t \xi^2 + ix\xi}  \hat\psi(\xi) \,d\xi} \le C_{\psi} (1+|t|)^{-\frac14} (1+|x|)^{-\frac14}
	\in L^{3\alpha}_{t,x}
\]
as in Shao \cite{Shao}, the dominated convergence theorem gives us
\[
	\frac{1}{\sqrt{2\pi}} \int \abs{1-\frac{\xi}{\xi_n}}^{\frac1{3\alpha}} e^{i \frac{\xi^3}{3\xi_n}}
	e^{-i t \xi^2 + ix\xi}  \hat\psi(\xi) \,d\xi
		\to (S(\cdot)\psi)(t,x)
\]
in $L^{3\alpha}_{t,x}(\R^2)$ as $n\to\I$.
Therefore, by H\"older's and Strichartz's estimates,
\begin{align*}
	&\norm{[|\d_x|^{\frac1{3\alpha}} e^{-t\d_x^3} \mathcal{G}_{n} \psi]
	\overline{[|\d_x|^{\frac1{3\alpha}} e^{-t\d_x^3} \widetilde{\mathcal{G}}_{n} \widetilde{\psi}]}}_{L^{\frac{3\alpha}2}_{t,x}} \\
	&{} \le \norm{[S(\cdot)\psi](t',x')
	\overline{[S(\cdot)\widetilde{\psi}](t'+3\xi_n (s_n-\widetilde{s}_n),x'-
	(y_n-\widetilde{y}_n -3\xi_n^2 (s_n-\widetilde{s_n}) )) }  }_{L^{\frac{3\alpha}2}_{t',x'}} \\
	&{}\quad +o(1)\\
	&{}= o(1)
\end{align*}
with  the help of orthogonality condition, which completes the proof.
\end{proof}

\vskip2mm

\begin{lemma}\label{lem:realorthty2}
Let $4/3<\alpha<2$ and $J\ge1$.
Let $\psi^j \in \hat{L}^{\alpha}$ ($1\le j \le J$).
Let $\{\mathcal{G}^j_n \}_n \subset G$ ($1\le j \le J$)
be mutually space-time nonresonant families.
Then,
\[
	\norm{\sum_{j=1}^J e^{-t\d_x^3} \mathcal{G}^j_{n} \psi^j}_{L(\R)}^{3\alpha}
	\le \sum_{j=1}^J \norm{e^{-t\d_x^3} \mathcal{G}^j_{n} \psi^j}_{L(\R)}^{3\alpha}
	+o(1)
\]
as $n\to\I$.
\end{lemma}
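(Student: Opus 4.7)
The plan is to reduce the claim to the pairwise decoupling in Lemma \ref{lem:realorthty} via a pointwise elementary inequality. Set $v_n^j := |\d_x|^{1/(3\alpha)} e^{-t\d_x^3} \mathcal{G}^j_n \psi^j$. Since $(p(L),q(L))=(3\alpha,3\alpha)$, the $L(\R)$ norm coincides with $\norm{v_n^j}_{L^{3\alpha}_{t,x}(\R\times\R)}$, so it suffices to show
\[
	\norm{\sum_{j=1}^J v_n^j}_{L^{3\alpha}_{t,x}(\R\times\R)}^{3\alpha} \le \sum_{j=1}^J \norm{v_n^j}_{L^{3\alpha}_{t,x}(\R\times\R)}^{3\alpha} + o(1).
\]
Writing $p=3\alpha$, the assumption $\alpha>4/3$ gives $p\ge 2$, which is what I will exploit.

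First I would record the pointwise bound
\[
	\abs{\sum_{j=1}^J a_j}^p \le \sum_{j=1}^J |a_j|^p + (p-1)\(\sum_{l=1}^J |a_l|\)^{p-2} \sum_{\substack{1\le j,k\le J \\ j\ne k}} |a_j||a_k|
\]
valid for $p\ge 2$ and complex $a_1,\dots,a_J$. Setting $S=\sum_l |a_l|$, the triangle inequality gives $|\sum a_j|^p\le S^p$, and the identity $S^p=\sum_j|a_j|S^{p-1}$ together with $S^{p-1}-|a_j|^{p-1}\le (p-1)S^{p-2}(S-|a_j|)$ (which holds since $p-1\ge 1$) yields the bound. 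Applying this pointwise at $a_j=v_n^j(t,x)$, integrating over $(t,x)\in\R^2$, and using H\"older's inequality with conjugate exponents $(3\alpha/2,\,3\alpha/(3\alpha-2))$ on each cross term produces
\[
	\norm{\sum_{j=1}^J v_n^j}_{L^{3\alpha}_{t,x}}^{3\alpha} - \sum_{j=1}^J \norm{v_n^j}_{L^{3\alpha}_{t,x}}^{3\alpha} \le C_p \(\sum_{l=1}^J \norm{v_n^l}_{L^{3\alpha}_{t,x}}\)^{3\alpha-2} \sum_{\substack{1\le j,k\le J\\ j\ne k}} \norm{v_n^j v_n^k}_{L^{3\alpha/2}_{t,x}}.
\]

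It then remains to show the right-hand side is $o(1)$ as $n\to\I$. By Proposition \ref{prop:ho_inho}(i) and the fact that each $\mathcal{G}^l_n\in G$ is an $\hat L^\alpha$-isometry,
\[
	\norm{v_n^l}_{L^{3\alpha}_{t,x}} = \norm{e^{-t\d_x^3}\mathcal{G}^l_n\psi^l}_{L(\R)} \le C\hLebn{\mathcal{G}^l_n\psi^l}{\alpha} = C\hLebn{\psi^l}{\alpha},
\]
so the first factor on the right is bounded independently of $n$. For each fixed $j\ne k$, the space-time nonresonance of $\{\mathcal{G}^j_n\}$ and $\{\mathcal{G}^k_n\}$ lets me invoke Lemma \ref{lem:realorthty} to conclude $\norm{v_n^j v_n^k}_{L^{3\alpha/2}_{t,x}} = \norm{v_n^j\overline{v_n^k}}_{L^{3\alpha/2}_{t,x}}\to 0$. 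Because the sum runs over only finitely many pairs, the entire error is $o(1)$, proving the lemma. The only substantive input is Lemma \ref{lem:realorthty}; the mildly delicate point is the handling of the non-integer exponent $p=3\alpha$, which the pointwise calculus bound above sidesteps entirely, so I do not expect a serious obstacle.
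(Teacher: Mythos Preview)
Your proof is correct. Both your argument and the paper's reduce the claim to Lemma~\ref{lem:realorthty} together with the uniform Strichartz bound, but the technical route differs. The paper, following~\cite{BV}, handles the non-integer exponent $3\alpha\in(4,6)$ by splitting into cases ($3\alpha\in(4,5]$ versus $3\alpha\in(5,6)$) and writing $\abs{\sum U_n^j}^{3\alpha}$ as a product of factors with integer and fractional exponents, then expanding into a multi-index sum and showing each off-diagonal term is $o(1)$ via H\"older and Lemma~\ref{lem:realorthty}. Your single convexity bound $\abs{\sum a_j}^p\le\sum|a_j|^p+(p-1)(\sum|a_l|)^{p-2}\sum_{j\ne k}|a_j||a_k|$ isolates the cross terms in one stroke and applies uniformly for all $p\ge 2$, avoiding both the case distinction and the multi-index bookkeeping. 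The gain is purely one of economy; neither argument uses any idea beyond Lemma~\ref{lem:realorthty} and H\"older, but yours is shorter and more transparent.
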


\begin{proof}
Remark that $3\alpha \in (4,6)$ is not necessarily an integer, and so we argue as in \cite{BV}. 
We consider the case $5<3\alpha<6$, the other case is simpler. 
Set $s:=3\alpha-5 \in (0,1)$ and $M:= \max_{j} \norm{\psi^j}_{\hat{L}^\alpha}$.
For simplicity, we denote $U^j_n:= |\d_x|^{\frac{1}{3\alpha}}
e^{-t\d_x^3} \mathcal{G}^j_{n} \psi^j$.
We have
\begin{align*}
	\norm{\sum_{j=1}^J U^j_n}_{L^{3\alpha}_{t,x}(\R^2)}^{3\alpha}
	&{} = \iint_{\R^2} \abs{\sum_{j=1}^J U^j_n }^2
	\abs{\sum_{k=1}^J U^k_n}^2
	\abs{\sum_{l=1}^J U^l_n}
	\abs{\sum_{m=1}^J U^m_n }^s
	\, dtdx \\
	&{}\le \sum_{j_1,j_2,k_1,k_2,l,m \in [1,J]} \iint_{\R^2} 
	|U^{j_1}_n \overline{U^{j_2}_n}|
	|U^{k_1}_n \overline{U^{k_2}_n}|
	|U^l_n| |U^m_n|^s
	\, dtdx.
\end{align*}
Hence, it suffices to show that
if $j_1=j_2=k_1=k_2=l=m$ fails then
\begin{equation}\label{eq:pf_realorthty2}
	A_n:= \iint_{\R^2} 
	|U^{j_1}_n | | U^{j_2}_n|
	|U^{k_1}_n | |U^{k_2}_n|
	|U^l_n| |U^m_n|^s
	\, dtdx=o(1)
\end{equation}
as $n\to\I$. If $j_1\neq j_2$ then \eqref{eq:ST3} and Lemma \ref{lem:realorthty} yield
\begin{align*}
	A_n &{}\le \norm{U^{j_1}_n \overline{U^{j_2}_n}}_{L^{\frac{3\alpha}2}_{t,x}}
	\norm{U^{k_1}_n}_{L^{3\alpha}_{t,x}} \norm{U^{k_2}_n}_{L^{3\alpha}_{t,x}} \norm{U^{l}_n}_{L^{3\alpha}_{t,x}} 
	\norm{U^{m}_n}_{L^{3\alpha}_{t,x}}^s \\
	&{}\le CM^{3\alpha-2} \norm{U^{j_1}_n \overline{U^{j_2}_n}}_{L^{\frac{3\alpha}2}_{t,x}} =o(1)
\end{align*}
as $n\to\I$. The same argument shows \eqref{eq:pf_realorthty2} holds if $j_1=j_2= k_1=k_2
=l$ fails. If $j_1=j_2= k_1=k_2=l\neq m$ then
\begin{align*}
	A_n\le \norm{U^{l}_n \overline{U^{m}_n}}_{L^{\frac{3\alpha}2}_{t,x}}^s
	\norm{U^{l}_n}_{L^{3\alpha}_{t,x}}^{3\alpha-2s}
	\le CM^{3\alpha-2s} \norm{U^{l}_n \overline{U^{m}_n}}_{L^{\frac{3\alpha}2}_{t,x}}^s =o(1)
\end{align*}
as $n\to\I$ as above. 
\end{proof}

\vskip2mm

\subsection{Proof of Theorem \ref{thm:cc}}
The proof is consists of three steps.
The argument is very close to that in the mass-critical case $\alpha=2$
such as \cite{MV,CK,Shao}.

\vskip1mm
\noindent
{\bf Step 1 -- Decomposition into a sum of scale pieces. } 
Let us begin the proof of Theorem \ref{thm:cc}
with a decomposition of bounded sequence into some pieces 
of which Fourier transforms 
have mutually disjoint compact supports and are bounded.

\begin{lemma}\label{lem:pd:step1}
Let $4/3<\alpha<2$ and $\sigma\in (\alpha',\frac{6\alpha}{3\alpha-2})$.
Suppose that a bounded sequence 
$\{ u_n\}_n \subset \hat{L}^{\alpha} $ satisfy $\norm{u_n}_{\hat{M}^{\alpha}_{2,\sigma}}\le M$.
Then, for any $\epsilon>0$,
there exist 
a subsequence of $\{u_n\}$ which denoted still by $\{u_n\}$, 
a number $J$,
$\{ I_n^j=[h_n^j \xi_n^j,h_n^j(\xi_n^j+1)] \}_{n} \subset \mathcal{D}$ ($1\le j \le J$),
$\{ f_n^j\}_{n} \subset \hat{L}^{\alpha}$ ($1 \le j \le J$), 
 and $ q_n \in \hat{L}^{\alpha}$
such that 
\[
	\abs{\log \frac{h_n^j}{h_n^k}}
	+ \abs{ \xi_n^j - \frac{{h}_n^k}{h_n^j} {\xi}_n^k }  \to \I
\]
as $n\to \I$ for $1\le j<k\le J$,
and $u_n$ is decomposed into
\begin{eqnarray}
	u_n = \sum_{j=1}^J f_n^j + q_n
	\label{ufq}
\end{eqnarray}
for all $n \ge 1$.
Moreover, it holds that
\[
	\norm{u_n}_{\hat{M}^{\alpha}_{2,\sigma}}^\sigma
 	\ge \sum_{j=1}^J \norm{f_n^j}_{\hat{M}^{\alpha}_{2,\sigma}}^\sigma
 	+ \norm{q_n}_{\hat{M}^{\alpha}_{2,\sigma}}^\sigma
\]
for all $n\ge 1$ and
\begin{eqnarray}
	\limsup_{n\to\I} 
	\norm{q_n}_{\hat{M}^\alpha_{\frac{3\alpha}2,\frac{6\alpha}{3\alpha-2}}} \le \epsilon. 
	\label{qe}
\end{eqnarray}
Further, 
there exists a bounded and compactly supported function $F_j$ such that
 $\widehat{f_n^j}$ satisfies 
\begin{equation}\label{eq:pd:step1_ptwisebdd}
	|I_n^j|^{\frac1{\alpha'}} \abs{\F f_n^j ( h_n^j (\xi +\xi_n^j)) } \le F_j (\xi)
\end{equation}
for any $n\ge1$.
\end{lemma}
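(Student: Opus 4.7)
The plan is an iterative greedy extraction of the dyadic Fourier blocks carrying the bulk of the $\hat{M}^\alpha_{3\alpha/2,\infty}$-mass, followed by a subsequence-and-merging argument to arrange the orthogonality of parameters and produce the pointwise envelope. Throughout set $p':=3\alpha/(3\alpha-2)$; the hypotheses $\alpha>4/3$ and $\sigma>\alpha'$ force $p'<2$, $\sigma>2$ and $\sigma>p'$.

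\emph{Extraction and decoupling.} Put $r_n^0:=u_n$ and inductively, as long as $\|r_n^{j-1}\|_{\hat{M}^\alpha_{3\alpha/2,\infty}}>\varepsilon_0$ for a threshold $\varepsilon_0=\varepsilon_0(\varepsilon,M)$ to be chosen, select a dyadic interval $I_n^j=[h_n^j\xi_n^j,h_n^j(\xi_n^j+1))\in\mathcal{D}$ realising
\[
|I_n^j|^{-\frac{1}{3\alpha}}\|\F r_n^{j-1}\|_{L^{p'}(I_n^j)}\ge\tfrac12\|r_n^{j-1}\|_{\hat{M}^\alpha_{3\alpha/2,\infty}},
\]
and set $f_n^j:=\F^{-1}(\chi_{I_n^j}\F r_n^{j-1})$, $r_n^j:=r_n^{j-1}-f_n^j$. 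Since dyadic intervals are either nested or disjoint, for every $I\in\mathcal{D}$ the Fourier supports of $\F f_n^j$ and $\F r_n^j$ split $I$ into two disjoint halves; hence the $L^2$-integrals decompose \emph{exactly} on each such $I$, and the superadditivity $(a+b)^{\sigma/2}\ge a^{\sigma/2}+b^{\sigma/2}$ (valid because $\sigma\ge 2$) yields, upon summing in $\ell^\sigma$,
\[
\|r_n^{j-1}\|_{\hat{M}^\alpha_{2,\sigma}}^\sigma\ge\|f_n^j\|_{\hat{M}^\alpha_{2,\sigma}}^\sigma+\|r_n^j\|_{\hat{M}^\alpha_{2,\sigma}}^\sigma,
\]
iterating into the decoupling asserted in the lemma. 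The analogous inequality in $\hat{M}^\alpha_{3\alpha/2,\sigma}$ follows identically from $L^{p'}$-splitting together with $\sigma\ge p'$. H\"older on each dyadic interval gives $\|\cdot\|_{\hat{M}^\alpha_{3\alpha/2,\sigma}}\le\|\cdot\|_{\hat{M}^\alpha_{2,\sigma}}\le M$, and the greedy choice forces $\|f_n^j\|_{\hat{M}^\alpha_{3\alpha/2,\sigma}}\gtrsim\varepsilon_0$ at every step, so the iteration terminates after at most $J\le C(M/\varepsilon_0)^\sigma$ steps, uniformly in $n$.

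\emph{Remainder smallness and orthogonality.} Put $q_n:=r_n^J$. The stopping criterion yields $\|q_n\|_{\hat{M}^\alpha_{3\alpha/2,\infty}}\le\varepsilon_0$, while the decoupling gives $\|q_n\|_{\hat{M}^\alpha_{3\alpha/2,\sigma}}\le M$; the $\ell^r$-interpolation
\[
\|q_n\|_{\hat{M}^\alpha_{3\alpha/2,\frac{6\alpha}{3\alpha-2}}}^{\frac{6\alpha}{3\alpha-2}}\le\|q_n\|_{\hat{M}^\alpha_{3\alpha/2,\infty}}^{\frac{6\alpha}{3\alpha-2}-\sigma}\|q_n\|_{\hat{M}^\alpha_{3\alpha/2,\sigma}}^{\sigma},
\]
(which uses $\sigma<\frac{6\alpha}{3\alpha-2}$) then drops below $\varepsilon^{\frac{6\alpha}{3\alpha-2}}$ once $\varepsilon_0$ is small enough in terms of $\varepsilon$ and $M$, delivering \eqref{qe}. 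For the orthogonality of the parameters, pass to a subsequence along which $h_n^j/h_n^k\in 2^{\Z}$ converges in $[0,\infty]$ for every pair $(j,k)$: if the limit is $0$ or $\infty$ the $\log$-term already diverges, while otherwise dyadicity forces $h_n^j=h_n^k$ eventually, so $I_n^j$ and $I_n^k$ are disjoint by construction and $|\xi_n^j-\xi_n^k|\ge 1$; if moreover the integer difference $\xi_n^j-\xi_n^k$ stays bounded as $n\to\infty$, I merge the two pieces into a single one supported in the smallest common coarser dyadic interval. Finitely many such mergings only decrease $J$ and leave all the previous properties intact.

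\emph{Pointwise envelope.} This is the main obstacle. The rescaled profile $g_n^j(\xi):=|I_n^j|^{1/\alpha'}\F f_n^j(h_n^j(\xi+\xi_n^j))$ is supported in $[0,1]$ with $\|g_n^j\|_{L^{\alpha'}([0,1])}\le M$, but an $L^{\alpha'}$-bound alone gives no uniform pointwise majorant. To obtain the bounded, compactly supported envelope $F_j$ required in \eqref{eq:pd:step1_ptwisebdd}, I further split each extracted block along dyadic Fourier level sets
\[
A_n^{j,k}:=\bigl\{\xi\in I_n^j:\ 2^k\le|I_n^j|^{1/\alpha'}|\F u_n(\xi)|<2^{k+1}\bigr\},
\]
cover each $A_n^{j,k}$ by dyadic sub-intervals of $I_n^j$ (possibly of the same scale as $I_n^j$, in which case the orthogonality survives because the $k$-index produces distinct $L^\infty$-levels which separate the profiles), and relabel the resulting sub-pieces as new $f_n^{\cdot}$'s; by Chebyshev $|A_n^{j,k}|\le CM^{\alpha'}2^{-k\alpha'}|I_n^j|^{-1}$, so only a bounded range of levels $k$ produces non-negligible contribution and the tail can be absorbed into $q_n$. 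On each surviving sub-piece $|g_n^j|$ is bounded pointwise by $2^{k+1}\chi_{[0,1]}$, which yields the envelope $F_j$; a final diagonal extraction in $n$ stabilises the finitely many envelopes produced.
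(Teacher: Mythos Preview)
Your extraction, decoupling, and termination arguments are essentially sound and parallel the paper's, but the ``pointwise envelope'' step contains a genuine gap that the rest of the argument cannot absorb. After extracting full dyadic blocks $f_n^j=\F^{-1}(\chi_{I_n^j}\F r_n^{j-1})$, the rescaled profiles $g_n^j$ are only controlled in $L^{\alpha'}([0,1])$, and your proposed cure---splitting along level sets $A_n^{j,k}$, covering by dyadic sub-intervals, and relabelling as new $f_n^\cdot$---does not produce pieces satisfying the lemma's conclusions. Different $k$-levels over the \emph{same} interval $I_n^j$ share the same parameters $(h_n^j,\xi_n^j)$, so the asymptotic orthogonality $|\log(h_n^j/h_n^k)|+|\xi_n^j-(h_n^k/h_n^j)\xi_n^k|\to\infty$ required by the statement simply fails for them; ``distinct $L^\infty$-levels'' is not the orthogonality the lemma asks for. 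Moreover, absorbing the high-level tails into $q_n$ after the fact would require a separate bound on their $\hat{M}^{\alpha}_{3\alpha/2,6\alpha/(3\alpha-2)}$-norm, which you have not established (Chebyshev on a single interval does not control the full $\ell^\sigma$-sum over all dyadic scales).

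The paper avoids this entirely by building the level-set truncation \emph{into} the extraction step: at each stage one defines $\widehat{v_n^j}:=\hat{q}_n^{j-1}\,\mathbf{1}_{I_n^j\cap\{|\hat{q}_n^{j-1}|\le C_\epsilon|I_n^j|^{-1/\alpha'}\}}$, with the threshold $C_\epsilon$ chosen (via the $\hat{M}^\alpha_{2,\sigma}$-bound on the local $L^2$-mass) so that at least half of the $L^{p'}(I_n^j)$-mass survives the cut. Each $v_n^j$ then satisfies $|I_n^j|^{1/\alpha'}|\widehat{v_n^j}(h_n^j(\xi+\xi_n^j))|\le C_\epsilon\mathbf{1}_{[0,1]}(\xi)$ by construction, the high-level residues remain inside $q_n^j$ where the stopping criterion handles them automatically, and the lower bound $\|v_n^j\|_{\hat{M}^\alpha_{2,\sigma}}\gtrsim \epsilon^{1/(1-\theta)}$ still forces finite termination. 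The final merging of pieces with bounded parameter ratios then preserves the pointwise bound trivially, since finitely many translated and dilated copies of $C_\epsilon\mathbf{1}_{[0,1]}$ with bounded scale ratios sum to a bounded compactly supported $F_j$. Incorporating the truncation at the extraction stage, rather than as a repair afterwards, is the missing idea.
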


\vskip2mm

\begin{remark}
In the above decomposition, not only the number $J$ but also
$f_n^j$, $q_n$, and $F^j$ depend on $\epsilon$.
\end{remark}

\begin{proof}
If $\limsup_{n\to\I} \norm{u_n}_{\hat{M}^\alpha_{{3\alpha}/2,6\alpha/(3\alpha-2)}} \le \eps$
 then 
there is nothing to prove. Otherwise, we can extract a subsequence so that
$\norm{u_n}_{\hat{M}^\alpha_{{3\alpha}/2,6\alpha/(3\alpha-2)}} > \epsilon$
for all $n$.
By means of \eqref{eq:ST3} and assumption, one sees that
\[
	\epsilon <  C 
	\norm{u_n}_{\hat{M}^\alpha_{2,\sigma}}^{\theta}
	\norm{u_n}_{\hat{M}^\alpha_{\frac{3\alpha}2,\I}}^{1-\theta} 
	\le  CM^\theta \norm{u_n}_{\hat{M}^\alpha_{\frac{3\alpha}2,\I}}^{1-\theta}.
\]
Hence, by definition of $\dot{M}^{\alpha}_{3\alpha/2,\I}$ norm,
there exists an interval $I_n^1 :=[h_n^1\xi_n^1,h_n^1(\xi_n^1+1)] \in \mathcal{D}$ such that
\begin{eqnarray}
	\int_{I_n^1} |\hat{u}_{n}|^{\frac{3\alpha}{3\alpha-2}} \,d\xi 
	&\ge& C(M)  \epsilon^{\frac{3\alpha}{(1-\theta)(3\alpha-2)}} |I_n^1|^{\frac1{3\alpha-2}}\label{up1}\\
	&=:& C_1  \epsilon^{\frac{3\alpha}{(1-\theta)(3\alpha-2)}} |I_n^1|^{\frac1{3\alpha-2}},
	\nonumber
\end{eqnarray}
where $C_1$ is a constant depending only on $\alpha$, $\sigma$, and $M$.
On the other hand, for any $A>0$, we have
\begin{eqnarray}
	\int_{I_n^1 \cap \{ |\hat{u}_{n}| \ge A\}} |\hat{u}_{n}|^{\frac{3\alpha}{3\alpha-2}} d\xi
	&\le& A^{-\frac{3\alpha-4}{3\alpha-2}} \norm{\hat{u}_{n}}_{L^2(I_n^1)}^{2} 
	\label{up2}\\
	&\le& A^{-\frac{3\alpha-4}{3\alpha-2}}|I_n^1|^{\frac{2-\alpha}{\alpha}}
	\norm{u_n}_{ \hat{M}^{\alpha}_{2,\sigma}}^2
	\nonumber\\
	&\le& M^2 A^{-\frac{3\alpha-4}{3\alpha-2}}
	|I_n^1|^{\frac{2-\alpha}{\alpha}}.
	\nonumber
\end{eqnarray}
We choose $A=(\frac{2M^2}{C_1})^{\frac{3\alpha-2}{3\alpha-4}} \epsilon^{-\frac{3\alpha}{(1-\theta)(3\alpha-4)}} |I_n^1|^{-\frac{1}{\alpha'}}
=: C_\epsilon |I_n^1|^{-\frac{1}{\alpha'}}$ so that
\[
	M^2 A^{-\frac{3\alpha-4}{3\alpha-2}}|I_n^1|^{\frac{2-\alpha}{\alpha}} = \frac{C_1}2 \epsilon^{\frac{3\alpha}{(1-\theta)(3\alpha-2)}} |I_n^1|^{\frac1{3\alpha-2}} .
\]
From (\ref{up1}) and (\ref{up2}), we have
\begin{eqnarray}
	\lefteqn{\int_{I_n^1 \cap \{ |\hat{u}_{n} | \le C_\epsilon |I_n^1|^{-1/\alpha'} \}} |\hat{u}_{n}|^{\frac{3\alpha}{3\alpha-2}} d\xi }
	\label{do1}\\
	&\ge& \int_{I_n^1} |\hat{u}_{n}|^{\frac{3\alpha}{3\alpha-2}} d\xi
	-\int_{I_n^1 \cap \{ |\hat{u}_{n}| \ge A \}} |\hat{u}_{n}|^{\frac{3\alpha}{3\alpha-2}} d\xi
	\nonumber\\
	&\ge& \frac{C_1}2  \epsilon^{\frac{3\alpha}{(1-\theta)(3\alpha-2)}} |I_n^1|^{\frac1{3\alpha-2}}.
	\nonumber
\end{eqnarray}
H\"older's inequality implies that
\begin{eqnarray}
	\lefteqn{\int_{I_n^1 \cap \{ |\hat{u}_{n} | \le C_\epsilon |I_n^1|^{-1/\alpha'} \} } |\hat{u}_{n}|^{\frac{3\alpha}{3\alpha-2}} d\xi}
	\label{do2}\\
	&\le& \(\int_{I_n^1 \cap \{ |\hat{u}_{n} | \le C_\epsilon |I_n^1|^{-1/\alpha'} \} } |\hat{u}_{n}|^2 d\xi \)^{\frac{3\alpha}{2(3\alpha-2)}} |I_n^1|^{\frac{3\alpha-4}{2(3\alpha-2)}}.
	\nonumber
\end{eqnarray}
Combining the inequalities (\ref{do1}) and (\ref{do2}), 
we reach to the estimate
\begin{eqnarray}
	\ \ \ \ |I_n^1|^{\frac12-\frac1\alpha} \(\int_{I_n^1 \cap \{ |\hat{u}_{n} | \le C_\epsilon |I_n^1|^{-1/\alpha'} \} } |\hat{u}_{n}|^2 d\xi\)^{\frac12}
	\ge \(\frac{C_1}2\)^{\frac{3\alpha-2}{3\alpha}}  \epsilon^{\frac{1}{1-\theta}}.
	\label{d3}
\end{eqnarray}
We define $v_n^1$ by $\widehat{v_n^1}:= \hat{u}_{n} {\bf 1}_{I_n^1\cap \{  |
\hat{u}_{n} | \le C_\epsilon |I_n^1|^{-1/\alpha'}\}}$ and $q_n^1:=u_n-v_n^1$.
Then, (\ref{d3}) can be rewritten as 
$\norm{v_n^1}_{\hat{M}^{\alpha}_{2,\sigma}}
	\ge C' \epsilon^{\frac{1}{1-\theta}}$.
Further, we have 
\[
	|I_n^1|^{\frac{1}{\alpha'}} \abs{ \widehat{v_n^1} (h_n^1 (\xi + \xi_n^1))} \le C_\epsilon
	{\bf 1}_{[0,1]}(\xi).
\]

If $\limsup_{n\to\I}
\norm{q_n^1}_{\hat{M}^{\alpha}_{3\alpha/2,6\alpha/(3\alpha-2)}}
\le \epsilon$
then we have done.
Otherwise, the same argument with $u_n$ being replaced by $q_n^1$
enables us to define $I_n^2:=[h_n^2\xi_n^2,h_n^2(\xi_n^2+1)]$,
$v_n^2$, and $q_n^2$ (up to subsequence).
We repeat this argument and define $I_n^j:=[h_n^j\xi_n^j,h_n^j(\xi_n^j+1)]$, $v_n^j$, and $q_n^j$ inductively.
It is easy to see that
\[
	\norm{u_n}_{\hat{M}^{\alpha}_{2,\sigma}}^\sigma \ge
	\sum_{j=1}^N \norm{v_n^j}_{\hat{M}^{\alpha}_{2,\sigma}}^\sigma
	+ \norm{q_n^N}_{\hat{M}^{\alpha}_{2,\sigma}}^\sigma
\]
since supports of $\{v_n^j\}_{1\le j \le N}$ and $q_n^N$  are disjoint in the Fourier side and since $\sigma>2$.
Since $\|v_n^j\|_{\hat{M}^{\alpha}_{2,\sigma}}\ge C' \epsilon^{\frac{1}{1-\theta}}$
for each $j$, together with an embedding
$\|q_n^j\|_{\hat{M}^{\alpha}_{3\alpha/2,\sigma}}\le \|q_n^j\|_{\hat{M}^{\alpha}_{2,\sigma}}$,
we see that
\[
	\limsup_{n\to\I} \norm{q_n^J}_{\hat{M}^{\alpha}_{\frac{3\alpha}2,\frac{6\alpha}{3\alpha-2}}}
	\le \epsilon
\]
holds in $J=J(\epsilon)$ steps. Set $q_n:=q_n^J$.

We reorganize $v_n^j$ to obtain
mutual asymptotic orthogonality.
It is done as follows;
We collect all $k\ge2$ such that $|\log \frac{h_n^1}{h_n^k}| + \abs{\xi_n^1-\frac{h_n^k}{h_n^1}\xi_n^k}$ is bounded, and define $f_n^1:=v_n^1+\sum_{k} v_n^k$.
Since 
\begin{align*}
	&|I_n^1|^{\frac{1}{\alpha'}} \abs{ \widehat{v_n^k} (h_n^1 (\xi + \xi_n^1))} \\
	&{} =  \(\frac{h_n^1}{h_n^k}\)^{\frac{1}{\alpha'}} |I_n^k|^{\frac{1}{\alpha'}} \abs{ \widehat{v_n^k} \(  h_n^k \left[ \frac{h_n^1}{h_n^k} \left\{ \xi  + \( \xi_n^1 - \frac{h_n^k}{h_n^1} \xi_n^k \) \right\}   + \xi_n^k \right] \)} \\
	&{} \le  C_\eps \(\frac{h_n^1}{h_n^k}\)^{\frac{1}{\alpha'}} {\bf 1}_{[0,1]} \( \frac{h_n^1}{h_n^k} \left\{ \xi  + \( \xi_n^1 - \frac{h_n^k}{h_n^1} \xi_n^k \) \right\} \) ,
\end{align*}
we see that $|I_n^1|^{\frac{1}{\alpha'}} \F{f_n^1} (h_n^1 (\xi + \xi_n^1)) \le F_1(\xi)$
for some bounded and compactly supported function $F_1$.
Similarly, we define $f_n^j$ inductively.
It is easy to see that $f_n^j$ possesses all properties we want. 
This completes the proof of Lemma \ref{lem:pd:step1}. 
\end{proof}

\vskip2mm
\noindent
{\bf Step 2 -- Decomposition of each scale pieces.} 
We next decompose functions obtained in the previous decomposition.
This part is similar to \cite[Lemmas 3.4 and 3.5]{Shao}.

\begin{lemma}\label{lem:pd:step2}
Let $4/3<\alpha<2$ and $\sigma \in (\alpha',\frac{6\alpha}{3\alpha-2})$.
Let $\xi_n \in \R$ be a given sequence.
Let $F(\xi)$ be a nonnegative bounded function with compact support.
Suppose that a sequence $R_n \in \hat{L}^{\alpha}$ satisfy 
\begin{equation}\label{eq:pd:step2_support_assumption}
	|\widehat{R_n} (\xi) | \le F (\xi).
\end{equation}
Then, up to subsequence, there exist $\{\phi^a\}_a \subset \hat{L}^{\alpha} $ with $ |\widehat{\phi^a} (\xi)| \le F(\xi) $, 
$(y_n^a,s_n^a) \in \R^2$ with
\[
	\lim_{n\to\I} (|s_n^a - s_n^{\tilde{a}} |+ |\xi_n(s_n^a-s_n^{\tilde{a}})|+
	|y_n^a -y_n^{\tilde{a}} + 3(\xi_n)^2(s_n^a-s_n^{\tilde{a}})|) = \I
\]
for any $\tilde{a}\neq a$, and $\{R_n^a\}_{n,a}\subset \hat{L}^{\alpha}$ with $|\widehat{R_n^a} (\xi)| \le F(\xi)$ such that
\[
	R_n(x) = \sum_{a=1}^A  [P(\xi_n)^{-1} A(s_n^a) T({y_n^a}) P(\xi_n) \phi^a](x) + R_n^A(x)
\]
for any $A\ge1$. Moreover, it holds that
\begin{equation}\label{eq:pd:step2_Pythagorean}
	\sum_{a=1}^A 
	\norm{\psi^a}_{\hat{M}^{q}_{2,r}}^r
	+\limsup_{n\to\I} 
	\norm{R_n^A}_{\hat{M}^{q}_{2,r}}^r
	\le \limsup_{n\to\I} 
	\norm{R_n}_{\hat{M}^{q}_{2,r}}^r
	<\I
\end{equation}
for any $2<q'<r<\I$ and $A\ge1$. 
Furthermore, we have
\begin{equation}\label{eq:pd:step2_smallness}
	\limsup_{n\to\I} \norm{ e^{-t\d_x^3} [P({\xi_n}) R_n^A]}_{L(\R)}
	\to 0
\end{equation}
as $A\to\I$.
\end{lemma}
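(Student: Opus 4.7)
The plan is to extract profiles iteratively, exploiting the observation that the deformations $\mathcal{T}_n^{(s,y)}:=P(\xi_n)^{-1}A(s)T(y)P(\xi_n)$ act on the Fourier side as pure phase multiplications: using $\hat{P}(\xi_n)=T(-\xi_n)$, $\hat{A}(s)=e^{is\xi^3}$, and $\hat{T}(y)=e^{-iy\xi}$, one computes
\[
\mathcal{F}[\mathcal{T}_n^{(s,y)}\phi](\xi) = e^{is(\xi-\xi_n)^3 - iy(\xi-\xi_n)}\widehat{\phi}(\xi).
\]
Hence any profile $\phi^a$ arising as a weak $\hat{L}^\alpha$ limit of $(\mathcal{T}_n^{(s_n^a,y_n^a)})^{-1}R_n^{a-1}$ inherits $|\widehat{\phi^a}|\le F$ by lower semicontinuity, and the pointwise Fourier bound on the remainder $R_n^a:=R_n^{a-1}-\mathcal{T}_n^{(s_n^a,y_n^a)}\phi^a$ is preserved up to a multiplicative constant that can be absorbed into $F$.

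To launch the iteration, I set $\Lambda_a := \limsup_{n\to\infty}\|e^{-t\d_x^3}[P(\xi_n)R_n^{a-1}]\|_{L(\R)}$ with $R_n^0:=R_n$, and terminate as soon as $\Lambda_a=0$. If $\Lambda_a>0$, Theorem \ref{thm:rST} combined with the uniform $\hat{M}^\alpha_{2,\sigma}$ bound coming from $|\widehat{R_n^{a-1}}|\le F$ forces $\|R_n^{a-1}\|_{\hat{M}^\alpha_{3\alpha/2,\infty}}$ to be bounded below, so that some dyadic Fourier cube carries a non-negligible portion of $\widehat{R_n^{a-1}}$. The Galilean-type identity \eqref{eq:GtA}, rewritten as
\[
P(\xi_n)^{-1}A(t)P(\xi_n)=e^{-it\xi_n^3}T(-3\xi_n^2 t)e^{3i\xi_n t\d_x^2}A(t),
\]
shows that such a frequency concentration is transported in physical space by a Schr\"odinger-like evolution whose effect is parametrized by $(s_n^a,y_n^a)\in\R^2$. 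Rellich-type compactness, using the compact support of $F$, then yields, up to a subsequence, a non-zero weak $\hat{L}^\alpha$ limit $\phi^a$ of $(\mathcal{T}_n^{(s_n^a,y_n^a)})^{-1}R_n^{a-1}$.

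The orthogonality condition on $(s_n^a,y_n^a)$ follows as in Lemma \ref{lem:orthty}: if for some $\widetilde{a}\neq a$ the quantity $|s_n^a-s_n^{\widetilde{a}}|+|\xi_n(s_n^a-s_n^{\widetilde{a}})|+|y_n^a-y_n^{\widetilde{a}}+3\xi_n^2(s_n^a-s_n^{\widetilde{a}})|$ remained bounded along a subsequence, then $(\mathcal{T}_n^{(s_n^a,y_n^a)})^{-1}\mathcal{T}_n^{(s_n^{\widetilde{a}},y_n^{\widetilde{a}})}$ would converge strongly to an invertible deformation, forcing $\phi^a=0$, a contradiction. The decoupling \eqref{eq:pd:step2_Pythagorean} in $\hat{M}^q_{2,r}$ is the direct analogue of Lemma \ref{lem:decouple}: because the extracted deformations act as pure phases on each dyadic Fourier cube, the local $L^2$-Pythagorean identity holds on each cube, and the elementary Brezis--Lieb-type inequality used in the proof of Lemma \ref{lem:decouple} lifts this to the global $\ell^r$ decoupling. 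The smallness \eqref{eq:pd:step2_smallness} then follows by combining \eqref{eq:pd:step2_Pythagorean} with the chain of inequalities \eqref{eq:ST3} applied to $P(\xi_n)R_n^A$, since infinitely many profiles with a uniform lower bound on $\|\phi^a\|_{\hat{M}^q_{2,r}}$ would contradict the uniform $\hat{M}^q_{2,r}$-bound deriving from $|\widehat{R_n^a}|\le F$.

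The hard part will be the concentration step producing $(s_n^a,y_n^a)$: one must convert the frequency concentration guaranteed by Theorem \ref{thm:rST} into a genuine physical-space concentration of $A(t)R_n^{a-1}$, using \eqref{eq:GtA} to strip off the rapid Schr\"odinger-like oscillation generated by $P(\xi_n)$ and then invoking compactness inside the Fourier-localized class $\{f:|\widehat{f}|\le F\}$ to obtain a non-trivial weak limit. Verifying that this extraction can be iterated while retaining the pointwise Fourier bound, and that termination with the required smallness is automatic, is the heart of the construction.
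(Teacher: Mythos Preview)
Your overall iterative structure and the handling of orthogonality and decoupling are fine and match the paper's approach. The gap is in the extraction of the parameters $(s_n^a,y_n^a)$ and the deduction of \eqref{eq:pd:step2_smallness}.

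Your proposed mechanism --- use \eqref{eq:ST3} to force a lower bound on $\|R_n^{a-1}\|_{\hat{M}^\alpha_{3\alpha/2,\infty}}$ and then read off space--time parameters from the resulting ``Fourier cube concentration'' --- cannot work here. The hypothesis $|\widehat{R_n^{a-1}}|\le F$ with $F$ bounded and compactly supported already pins down the frequency scale and location; the lower bound on $\|R_n^{a-1}\|_{\hat{M}^\alpha_{3\alpha/2,\infty}}$ is then just a lower bound on $\|\widehat{R_n^{a-1}}\|_{L^{(3\alpha/2)'}}$ over a fixed cube and carries no information whatsoever about $(s,y)$. Rellich compactness is likewise useless without first undoing the correct translation and Airy flow: the sequence $\{R_n^{a-1}\}$ can perfectly well converge weakly to zero while having $L$-norm bounded below. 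For the same reason, your last paragraph's claim that \eqref{eq:pd:step2_smallness} follows from \eqref{eq:pd:step2_Pythagorean} and \eqref{eq:ST3} is circular: \eqref{eq:pd:step2_Pythagorean} never says $\|R_n^A\|_{\hat{M}^q_{2,r}}\to 0$, only that $\sum_a\|\phi^a\|^r<\infty$, and to leverage this you need a quantitative lower bound $\|\phi^a\|\ge \beta(\Lambda_a)$ --- which is precisely the missing concentration step.

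The paper resolves this differently. It iterates not on $\Lambda_a$ but on the abstract weak-limit quantity
\[
\mu_\xi(R):=\sup\{\ell(\phi):\phi\text{ is a weak }\hat{L}^\alpha\text{ limit of }(\mathcal{T}_{n_k}^{(s_{n_k},y_{n_k})})^{-1}R_{n_k}\},
\]
so that profile extraction and the bound $\ell(\phi^{a+1})\ge\tfrac12\mu_\xi(R^a)$ are trivial by definition, and \eqref{eq:pd:step2_Pythagorean} then forces $\mu_\xi(R^A)\to 0$. The real work is the separate implication $\mu_\xi(R^A)\to 0\Rightarrow$ \eqref{eq:pd:step2_smallness}. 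This is done by a H\"older interpolation in space--time: with $\widetilde\alpha\in(4/3,\alpha)$ one writes
\[
\|e^{-t\partial_x^3}[P(\xi_n)R_n^A]\|_{L(\R)}\le C\,\bigl\||\partial_x|^{1/3\alpha}e^{-t\partial_x^3}[P(\xi_n)R_n^A]\bigr\|_{L^{3\widetilde\alpha}_{t,x}}^{\widetilde\alpha/\alpha}\,\bigl\||\partial_x|^{1/3\alpha}e^{-t\partial_x^3}[P(\xi_n)R_n^A]\bigr\|_{L^\infty_{t,x}}^{1-\widetilde\alpha/\alpha}.
\]
The $L^{3\widetilde\alpha}$ factor is bounded uniformly in $A$ via \eqref{eq:ST3} in the exponent $\widetilde\alpha$ together with the pointwise Fourier bound $|\widehat{R_n^A}|\le F$ (here one uses $F\in L^{q'}$ for the relevant $q$). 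The $L^\infty_{t,x}$ factor is where $(s_n,y_n)$ finally appear: one picks $(-s_n,y_n)$ to nearly realize the supremum, writes the value as a pairing with a fixed test function $\chi$ (using that $\widehat{\chi}\equiv 1$ on $\operatorname{supp}F$), and recognizes this pairing as testing $(\mathcal{T}_n^{(s_n,y_n)})^{-1}R_n^A$ against a fixed function. By definition of $\mu_\xi$, this is bounded by $C\mu_\xi(R^A)\to 0$. The two cases $\xi_n\to\xi_0\in\R$ and $|\xi_n|\to\infty$ are handled separately (in the latter one removes the derivative $|\partial_x|^{1/3\alpha}$ via H\"ormander--Mikhlin before interpolating). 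This $L^\infty$ step, not frequency concentration, is the missing ingredient in your proposal.
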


\vskip2mm

\begin{remark}
This lemma itself is a profile decomposition type result.
The main differences between Lemma \ref{lem:pd:step2} and 
Theorem \ref{thm:pd}
are that (i) a strong assumption \eqref{eq:pd:step2_support_assumption} is made;
(ii) a family of deformations $\{\mathcal{G}_n\}_n \subset G$ is replaced by a family of deformations
of the form
$\{P(\xi_n)^{-1} A(s_n) T(y_n) P(\xi_n)\}_n$ with a \emph{fixed} sequence 
$\{\xi_n \}_n \subset \R$ and
some $\{(s_n,y_n)\} \subset \R^2$;
and (iii) a remainder is small as in \eqref{eq:pd:step2_smallness}.
\end{remark}

\begin{proof}
For a sequence $R=\{R_n\}_n \subset \hat{L}^{\alpha}$ with \eqref{eq:pd:step2_support_assumption},
we introduce $\mu_\xi(R)$  as follows:
\[
	\mu_\xi(R):= \sup \left\{ 
	\ell(\phi)
	\ \left|\ \phi \in {{\mathcal M}}_\xi(R) \right.\right\},
\]
where
\[
	\mathcal{M}_\xi(R):= \left\{ \phi \in \hat{L}^{\alpha}\ \left| \ 
	\begin{aligned}
	&\phi=\text{w-}\lim_{k\to\I} P(\xi_{n_k})^{-1}T(y_{n_k})^{-1}A(s_{n_k})^{-1} P(\xi_{n_k})R_{n_k},\, \\
	&\exists (s_n,y_n) \in \R^2,\, \exists n_k\text{ : subsequence}
	\end{aligned}
	\right.\right\}.
\]
Then, we first show the decomposition
with the smallness $\mu_\xi (R^A)\to0$ as $A\to\I$ instead of \eqref{eq:pd:step2_smallness}.
This part is done as in Theorem \ref{thm:pd1}.
Before the proof, remark that $P^{-1} (\xi_n) A(s_n)T(y_n)P(\xi_n)$ is a multiplier-like 
deformation.
Indeed, 
$\F {P}(\xi_n)^{-1}{A}(s_n){T}({y_n}){P}(\xi_n)\F^{-1}=
e^{-iy_n(\xi -\xi_n) + i s_n(\xi-\xi_n)^3 }$ is phase-like.
Therefore, if a sequence $R$ satisfies \eqref{eq:pd:step2_support_assumption} then
any $\phi \in {{\mathcal M}}_\xi (R)$ satisfies \eqref{eq:pd:step2_support_assumption}.

Since the decomposition is shown in the 
essentially same way as in Theorem \ref{thm:pd1},
we only treat the first step the decomposition, that is, the extraction of $\phi^1$ and $(s_n^1,y_n^1)$
under the assumption $\mu_\xi (R)>0$.
By assumption $\mu_\xi (R)>0$ there exist $\phi^1\in \mathcal{M}_\xi (R)$ such that
$\ell(\phi^1) \ge \frac12 \mu_\xi (R)$.
Further, by definition of $\mathcal{M}_\xi (R)$, there exists $\{(s_n^1,y_n^1)\}_n$ such that
\[
	P(\xi_n)^{-1} T({y_{n}^1})^{-1}A(s_{n}^1)^{-1} P(\xi_n) R_{n} \rightharpoonup \phi^1(x) \wIN \hat{L}^{\alpha}
\]
as $n\to\I$ up to subsequence. Then,
\[
	\hat{P}(\xi_n)^{-1} \hat{T}({y_n^1})^{-1} \hat{A}(s_n^1)^{-1} \hat{P}(\xi_n) \widehat{R_n}(\xi) \rightharpoonup \widehat{\phi^1}(\xi)
	\wIN {L}^{\alpha'}.
\]
As mentioned above, $\phi^1 $ satisfies \eqref{eq:pd:step2_support_assumption}.
We set 
$$R^1_n:= R_n - P(\xi_n)^{-1}A(s_n^1)T({y^1_n})P(\xi_n)\phi^1.$$
Then, 
\[
	P(\xi_n)^{-1} T({y_{n}^1})^{-1}A(s_{n}^1)^{-1} P(\xi_n) R_{n}^1 \rightharpoonup 0 \wIN \hat{L}^{\alpha}
\]
as $n\to\I$. 
Since $R_n$ and $\psi^1$ satisfy \eqref{eq:pd:step2_support_assumption},
we have $|\F R_n^1(\xi)| \le 2 F(\xi)$
for any $n$. 
The decouple inequality \eqref{eq:pd:step2_Pythagorean} is shown 
just as in Lemma \ref{lem:decouple}\footnote{In fact, the proof is even easier than 
Lemma \ref{lem:decouple} because the considering deformation $P^{-1} (\xi_n) A(s_n)T(y_n)P(\xi_n)$ is multiplier-like.}.
We note that 
$F \in L^{q'} \hookrightarrow M^{q'}_{2,r}$ for $2<q'<r< \I$
since $F$ is bounded and compactly supported,
and so that $\limsup_{n\to\I} \norm{R_n}_{\hat{M}^{q}_{2,r}} \le \norm{F}_{M^{q'}_{2,r}}$ is finite
by means of the assumption \eqref{eq:pd:step2_support_assumption}.
Repeat the above procedure to obtain all the results but \eqref{eq:pd:step2_smallness}.

Let us show \eqref{eq:pd:step2_smallness}.
By extracting subsequence, we may suppose that either $ \xi_n \to \xi_0 \in \R$
or $|\xi_n| \to \I $ as $n\to\I$.
We first consider the case $\lim_{n\to\I} \xi_n =\xi_0$.
Define $\widetilde{\alpha}\in (4/3,\alpha)$ by $2/{\widetilde{\alpha}}=1/\alpha+3/4$. 
By H\"older's inequality,
\begin{eqnarray}
\lefteqn{{}\ \ \norm{e^{-t\d_x^3}\left[ P(\xi_n) R_n^A\right]}_{L(\R)}}
\label{const1}\\ 
	&\le& \norm{ |\d_x|^{\frac1{3\alpha}} 
	e^{-t\d_x^3} \left[P(\xi_n) R_n^A\right]}_{L^{3\widetilde{\alpha}}_{t,x}}^{\frac{\widetilde{\alpha}}{\alpha}}
	\norm{ |\d_x|^{\frac1{3\alpha}} e^{-t\d_x^3} 
	\left[P(\xi_n) P_n^A\right]}_{L^\I_{t,x}}^{1-\frac{\widetilde{\alpha}}{\alpha}} .
	\nonumber
\end{eqnarray}
Since $4/3<\widetilde{\alpha}<\alpha<2$, we have 
\begin{eqnarray*}
\lefteqn{\norm{ |\d_x|^{\frac1{3\alpha}} e^{-t\d_x^3}\left[ P(\xi_n) R_n^A \right]}_{L^{3\widetilde{\alpha}}_{t,x}}}\\
	&=& \norm{ |\d_x|^{\frac1{3\widetilde{\alpha}}} e^{-t\d_x^3}  |\d_x|^{\frac1{3\alpha}-\frac1{3\widetilde{\alpha}}} \left[P(\xi_n) R_n^A\right]}_{L^{3\widetilde{\alpha}}_{t,x}} \\
	&\le& C \norm{ |I|^{-\frac{1}{3\widetilde{\alpha}}} \norm{|\xi|^{\frac1{3\alpha}-\frac1{3\widetilde{\alpha}}} \F P(\xi_n) R_n^A}_{L^{(\frac{3\widetilde{\alpha}}{2})'}(I)} }_{\ell^{\frac{6\widetilde{\alpha}}{3\widetilde{\alpha}-2}}_{\mathcal{D}}} \\
	&\le& C \norm{ |I|^{-\frac{1}{3\widetilde{\alpha}}} 
	\norm{\F P(\xi_n) R_n^A}_{L^{2}(I)}  \norm{|\xi|^{\frac1{3\alpha}-\frac1{3\widetilde{\alpha}}}}_{L^{\frac{6\widetilde{\alpha}}{3\widetilde{\alpha}-4}}(I)} }_{\ell^{\frac{6\widetilde{\alpha}}{3\widetilde{\alpha}-2}}_{\mathcal{D}}} \\
	&\le& C \norm{ |I|^{\frac12 -(\frac2{3\widetilde{\alpha}}+\frac14)} 
	\norm{\F P(\xi_n) R_n^A}_{L^{2}(I)} }_{\ell^{\frac{6\widetilde{\alpha}}{3\widetilde{\alpha}-2}}_{\mathcal{D}}} \\
&\le& C 
	\norm{ P(\xi_n) R_n^A }_{\hat{M}^{\frac{12\widetilde{\alpha}}{3\widetilde{\alpha}+8}}_{2,\frac{6\widetilde{\alpha}}{3\widetilde{\alpha}-2}}}.
\end{eqnarray*}
Thus, thanks to \eqref{eq:ST3} and
\eqref{eq:pd:step2_Pythagorean} with $q={\frac{12\widetilde{\alpha}}{3\widetilde{\alpha}+8}}$
and $r=\frac{6\widetilde{\alpha}}{3\widetilde{\alpha}-2}$, we see that
\begin{eqnarray}
	\lefteqn{\sup_{A\ge1}\limsup_{n\to\I} \norm{ |D|^{\frac1{3\alpha}} e^{-t\d_x^3} \left[P(\xi_n) R_n^A\right] }_{L^{3\widetilde{\alpha}}_{t,x}}}
	\label{const2} \\
	&\le& C \limsup_{n\to\I}
	\norm{ R_n^{A} }_{\hat{M}^{q}_{2,r}}
	\le C\norm{ F }_{L^{q'}}
	<\I.\nonumber
\end{eqnarray}

Let $\chi(x)$  be a smooth function such that
$\widehat{\chi}$ is compactly supported and $\widehat{\chi}\equiv1$ on $\supp F$.
Set $\chi_{n}(x) = P(\xi_n)\chi(x)$.
Then, $\chi_n *_{x} (e^{-t\d_x^3}  [P(\xi_n) R_n^A])=e^{-t\d_x^3}  [P(\xi_n) R_n^A]$.
There exists $(s_n,y_n)$ such that
\begin{eqnarray}
	\lefteqn{\norm{ |\d_x|^{\frac1{3\alpha}} e^{-t\d_x^3} 
	\left[P(\xi_n) R_n^A\right]}_{L^\I_{t,x}}}
	\label{const3}\\
	&\le& 2 |( \chi_n*_{x} |\d_x|^{\frac1{3\alpha}} e^{-t\d_x^3} 
	\left[P(\xi_n) R_n^A\right] )(-s_n,y_n)|.
	\nonumber
	\end{eqnarray}
A computation shows that 
\begin{align*}
	&(\chi_n *_{x}|\d_x|^{\frac1{3\alpha}} e^{-t\d_x^3} 
	\left[P(\xi_n) R_n^A\right])(-s_n,y_n) \\
	&{}=\int \chi_n (-x) |\d_x|^{\frac1{3\alpha}}  [T(y_n)^{-1}A(s_n)^{-1}P(\xi_n) R_n^A](x) dx \\
	&{}=\int \( P(\xi_n)|\d_x|^{\frac1{3\alpha}} [P(\xi_n)^{-1}
	\chi(-\cdot)] \)(x)
	[P(\xi_n)^{-1} T(y_n)^{-1}A(s_n)^{-1}P(\xi_n) R_n^A](x) dx.
\end{align*}
Since $\xi_n\to \xi_0\in \R$ as $n\to\I$,
one verifies that $P(\xi_n)|\d_x|^{\frac1{3\alpha}} [P(\xi_n)^{-1} \chi(-\cdot)]$
converges to some function strongly in $\hat{L}^{\alpha'}$ as $n\to\I$.
Hence, by definition of $\mu_\xi$, we see that
\[
	\limsup_{n\to\I}| (\chi_n*|\d_x|^{\frac1{3\alpha}}e^{-t\d_x^3} P(\xi_n) R_n^A)(-s_n,y_n)|
	\le C_{\chi} \mu_\xi(R^A).
\]
Combining the above inequality with (\ref{const1}), (\ref{const2}) and (\ref{const3}), we obtain \eqref{eq:pd:step2_smallness} from $\mu_\xi(R^A)\to0$
as $A\to\I$.

If $\lim_{n\to\I} |\xi_n|=\I$ then it holds from H\"ormander-Mikhlin multiplier theorem that
\begin{eqnarray}
	\norm{ e^{-t\d_x^3} [P(\xi_n) R_n^A] }_{L(\R)}
	\le C|\xi_n|^{\frac1{3\alpha}} \norm{ e^{-t\d_x^3}  [P(\xi_n) R_n^A] }_{L^{3\alpha}_{t,x}(\R^2)}
	\label{high1}
\end{eqnarray}
for large $n$.
Let $\widetilde{\alpha} \in (4/3,\alpha)$ be the same number as in the previous case. 
Then,
\begin{eqnarray}
\lefteqn{|\xi_n|^{\frac1{3\alpha}}\norm{e^{-t\d_x^3}  [P({\xi_n}) R_n^A]}_{L^{3\alpha}_{t,x}}}\label{high2} \\
	&\le&
	\(|\xi_n|^{\frac1{3\widetilde{\alpha}}}\norm{ e^{-t\d_x^3}  [P(\xi_n) R_n^A]}_{L^{3\widetilde{\alpha}}_{t,x}}\)^{\frac{\widetilde{\alpha}}{\alpha}}
	\norm{e^{-t\d_x^3}  [P(\xi_n) R_n^A]}_{L^\I_{t,x}}^{1-\frac{\widetilde{\alpha}}{\alpha}}.
	\nonumber
\end{eqnarray}
Since support of $\F R_n^A$ is a subset of  $\supp F$, arguing as in Lemma \ref{lem:scale}, we have
\begin{align*}
	\norm{ e^{-t\d_x^3} P(\xi_n) R_n^A }_{L^{3\widetilde{\alpha}}_{t,x}}
	&{}= \norm{ |\d_x|^{\frac1{3\widetilde{\alpha}}} e^{-t\d_x^3}  |\d_x|^{-\frac1{3\widetilde{\alpha}}} P(\xi_n) R_n^A}_{L^{3\widetilde{\alpha}}_{t,x}} \\
	&{}\le C \norm{ |I|^{\frac12-\frac1{\widetilde{\alpha}}} \norm{|\xi-\xi_n|^{-\frac1{3\widetilde{\alpha}}} \F R_n^A}_{L^2(I)} }_{\ell^{\frac{6\widetilde{\alpha}}{3\widetilde{\alpha}-2}}_{\mathcal{D}}} \\
	&{}\le C_{F}  |\xi_n|^{-\frac1{3\widetilde{\alpha}}} \norm{ |I|^{\frac{\widetilde{\alpha}-2}{2\widetilde{\alpha}}} \norm{\F R_n^A}_{L^2(I)} }_{\ell^{\frac{6\widetilde{\alpha}}{3\widetilde{\alpha}-2}}_{\mathcal{D}}}
\end{align*}
for large $n$.
Thus, it follows from \eqref{eq:pd:step2_Pythagorean} that
\begin{eqnarray}
\lefteqn{\sup_{A\ge1} \limsup_{n\to\I}|\xi_n|^{\frac1{3\widetilde{\alpha}}}\norm{ e^{-t\d_x^3} P({\xi_n}) R_n^A }_{L^{3\widetilde{\alpha}}_{t,x}}}
\label{high3}\\
	&\le& C \limsup_{n\to\I}
	\norm{R_n}_{\hat{M}^{\widetilde{\alpha}}_{2,\frac{6\widetilde{\alpha}}{3\widetilde{\alpha}-2}}}
<\I.\nonumber
\end{eqnarray}
We estimate $L^\I_{t,x}$-norm.
There exists $(s_n,y_n)$ such that
\[
	\norm{ e^{-t\d_x^3}  [P(\xi_n) P_n^A]}_{L^\I_{t,x}}
	\le 2 | e^{-t\d_x^3}  [P(\xi_n) P_n^A](s_n,y_n)|.
\]
The estimate for 
\begin{eqnarray}
\limsup_{n\to\I}\norm{e^{-t\d_x^3}  [P(\xi_n) R_n^A]}_{L^\I_{t,x}}\to0
\label{high4}
\end{eqnarray}
as $A\to\infty$ is essentially the same as in the previous case.
The difference is that we do not have to care about unboundedness of $\xi_n$
because the derivative $|\d_x|^{1/3\alpha}$ is removed. 
From (\ref{high1}), (\ref{high2}), (\ref{high3}) and (\ref{high4}), 
we have \eqref{eq:pd:step2_smallness}. This completes the 
proof of Lemma \ref{lem:pd:step2}.
\end{proof}

\vskip2mm
\noindent
{\bf Step 3 --Completion of the proof of Theorem \ref{thm:cc}.} 
Let $\{u_{n}\}\subset\hat{L}^{\alpha}$ be bounded 
sequence satisfying (\ref{upM}) and (\ref{belowm}). 
Let $\eps=\eps(m,M)>0$ to be chosen later.
Let $J=J(\eps)\ge1$,
$\{ I_n^j=[h_n^j \xi_n^j,h_n^j(\xi_n^j+1)] \}_n \subset \mathcal{D}$ ($1\le j\le J$),
 $\{ f_n^j \}_n \subset \hat{L}^{\alpha}$ ($1\le j\le J$), and $q_n$ be sequences 
given in Lemma \ref{lem:pd:step1}. 
Set
\[
	\widehat{R_n^j}(\xi):=|h_n^j|^{\frac{1}{\alpha'}} \widehat{f_n^j} (h_n^j (\xi + \xi_n^j)).
\]
Namely, $R_n^j = P({\xi_n^j})^{-1} {D} (h_n^j)^{-1}  {f}_n^j$.
Then, 
by means of \eqref{eq:pd:step1_ptwisebdd}, $\{R_n^j\}_n$ satisfies assumption of
Lemma \ref{lem:pd:step2} with $\{ \xi_n \}_n :=\{ \xi^j_n \}_n$ for each $j$.
Then, thanks to Lemma \ref{lem:pd:step2}, 
for every $1\le j \le J$, there exists a family $\{ \phi^{j,a} \}_{a} \subset
\mathcal{M}_{\xi^j}(R^j)$,
and a family $\{ (y^{j,a}_n,s^{j,a}_n) \}_{n,a} \in \R \times \R$
such that
\[
	R_n^j = \sum_{a=1}^A P(\xi_n^j)^{-1} A(s_n^{j,a}) T(y_n^{j,a}) P(\xi_n^j) \phi^{j,a} +  R_n^{j,A}
\]
with
\[
	\limsup_{n\to\I} \norm{e^{-t\d_x^3} [P(\xi_n^j) R_n^{j,A} ]}_{L(\R)}
	\to 0
\]
as $A\to\I$
and that 
\[
	\lim_{n\to\I} (|s_n^{j,a} - s_n^{j,\tilde{a}} |+ |\xi_n^j(s_n^{j,a} -s_n^{j,\tilde{a}})|
	+|y_n^{j,a} -y_n^{j,\tilde{a}} + 3(\xi_n^j)^2(s_n^{j,a}-s_n^{j,\tilde{a}})|) = \I
\]
for any $a\neq\tilde{a}$.
Remark that 
\begin{eqnarray}
	f_n^j &=& D(h_n^j) P(\xi_n^{j})  R_n^j
	\label{fff}\\
	&=& \sum_{a=1}^A  D(h_n^j) A(s_n^{j,a}) T(y_n^{j,a}) P(\xi_n^j) \phi^{j,a} +  D(h_n^j) P(\xi_n^{j}) R_n^{j,A}.
	\nonumber
\end{eqnarray}
We choose $A=A(\eps)$ so that 
\begin{eqnarray}
	\limsup_{n\to\I} \norm{e^{-t\d_x^3} D(h_n^j) P(\xi^j_n) R_n^{j,A}}_{L(\R)}
	\le \frac{\epsilon}{J}
	\label{ej}
\end{eqnarray}
holds for any $1 \le j \le J$.
Notice that this is possible by means of the scale invariance
\[
	\norm{e^{-t\d_x^3} D(h_n^j)P(\xi^j_n) R_n^{j,A}}_{L(\R)}
	=\norm{ e^{-t\d_x^3} P(\xi^j_n) R_n^{j,A}}_{L(\R)}.
\]
Let $r_n:=\sum_{j=1}^J  D(h_n^j) P(\xi_n^{j}) R_n^{j,A}+ q_n$.
By Lemma \ref{lem:pd:step1} (\ref{ufq}) and (\ref{fff}), we have
\begin{eqnarray}
	u_n= \sum_{j=1}^J f_n^j + q_n = \sum_{j=1}^J \sum_{a=1}^{A}  \mathcal{G}_n^{j,a} \phi^{j,a} + r_n,
	\label{ufqr}
\end{eqnarray}
where $\mathcal{G}_n^{j,a}:=D(h_n^j) A(s_n^{j,a}) T(y_n^{j,a}) P(\xi_n^j)$.
It is easy to see that $\{\mathcal{G}_n^{j,a}\}_n \subset G$ 
are pairwise orthogonal families.
Recall that, for each $(j,a)$, the number of 
the pair $(\widetilde{j},\widetilde{a})$ such that
$\{\mathcal{G}_{n}^{j,a}\}_n$ and $\{\mathcal{G}_{n}^{\widetilde{j},\widetilde{a}}\}_n$ are not
space-time nonresonant is at most one (see, Remark \ref{rem:realorthty}).
Let $\mathcal{A} = \mathcal{A}(\eps):= \{(j,a)\ |\ 1\le j \le J,\, 1\le a \le A \}$.
We divide $\mathcal{A}$ into disjoint subsets $\mathcal{A}_k$ ($1\le k \le K$) so that
\begin{enumerate}
\item $1\le \# \mathcal{A}_k \le 2$ is non-decreasing in $k$;
\item if $(j,a) \in \mathcal{A}_k$ and $(\widetilde{j},\widetilde{a}) \in \mathcal{A}_{\widetilde{k}}$, $k\neq \widetilde{k}$,
then $\{\mathcal{G}_{n}^{j,a}\}_n$ and $\{\mathcal{G}_{n}^{\widetilde{j},\widetilde{a}}\}_n$ are space-time nonresonant;
\item if $\# A_k =2$ and if
$(j,a), (\widetilde{j},\widetilde{a}) \in \mathcal{A}_k$,
$(j,a) \neq (\widetilde{j},\widetilde{a})$, then
$\{\mathcal{G}_{n}^{j,a}\}_n$ and $\{\mathcal{G}_{n}^{\widetilde{j},\widetilde{a}}\}_n$ are not space-time
nonresonant.
\end{enumerate}
Let $K'$ be the number such that $\max \{k \ |\ \# \mathcal{A}_k = 1\}$.
For $1 \le k \le K'$, we identify $k$ and the unique pair $(j,a) \in \mathcal{A}_k$.
Then, we have
\[
	u_n=  \sum_{k=1}^{K'} \mathcal{G}^k_n \phi^{k} + \sum_{k=K'+1}^K \sum_{(j,a)\in \mathcal{A}_k}  \mathcal{G}^{j,a}_{n} \phi^{j,a} + r_n.
\]
By definition of $r_n$, (\ref{ej}) and Lemma \ref{lem:pd:step1} 
(\ref{qe}), we have
\begin{align*}
	\norm{ e^{-t\d_x^3} u_n}_{L(\R)}
	&{}\le \norm{ e^{-t\d_x^3} (u_n - r_n)}_{L(\R)}
	+\norm{ e^{-t\d_x^3} r_n}_{L(\R)} \\
	&{}\le \norm{e^{-t\d_x^3} (u_n - r_n)}_{L(\R)}
	+C\eps.
\end{align*}
Combining the above inequality with the argument 
used in the proof of Lemma \ref{lem:realorthty2}, 
one can verify that
\begin{align*}
	\norm{ e^{-t\d_x^3} u_n}_{L(\R)}^{3\alpha}
	\le{}&  \sum_{k=1}^{K'} \norm{e^{-t\d_x^3} \mathcal{G}^{k}_{n} \phi^{k}}_{L(\R)}^{3\alpha}
	+  \sum_{k=K'+1}^{K} \norm{ \sum_{(j,a)\in \mathcal{A}_k} e^{-t\d_x^3} \mathcal{G}^{j,a}_{n} \phi^{j,a}}_{L(\R)}^{3\alpha}  \\
	&{} +  C(1+M^{3\alpha-1})\epsilon +o(1)
\end{align*}
as $n\to\I$. Further,  by the triangle inequality,
\[
	\norm{ \sum_{(j,a)\in \mathcal{A}_k} e^{-t\d_x^3} \mathcal{G}^{j,a}_{n} \phi^{j,a}}_{L(\R)}^{3\alpha} 
	\le 2^{3\alpha} \sum_{(j,a)\in \mathcal{A}_k}  \norm{ e^{-t\d_x^3} \mathcal{G}^{j,a}_{n} \phi^{j,a}}_{L(\R)}^{s\alpha}.
\]
Combining the above estimates and going back to the notation $(j,a)$, one has
\[
	\norm{ e^{-t\d_x^3} u_n}_{L(\R)}^{3\alpha}
	\le  2^{3\alpha}\sum_{(j,a)\in \mathcal{A}} 
	\norm{ e^{-t\d_x^3} \mathcal{G}^{j,a}_{n} \phi^{j,a} }_{L(\R)}^{3\alpha}  +  C(1+M^{3\alpha-1})\epsilon +o(1).
\]
By (\ref{belowm}), we can take $\epsilon=\epsilon(m,M)$ small
and $n$ large enough to get
\[
	C_\alpha m^{3\alpha} \le \sum_{(j,a)\in \mathcal{A}} 
	\norm{ e^{-t\d_x^3} \mathcal{G}^{j,a}_{n} \phi^{j,a} }_{L(\R)}^{3\alpha} .
\]
By the refined Stein-Tomas inequality (Theorem \ref{thm:rST} \eqref{eq:ST3})  and Lemma \ref{lem:scale},
\[
	\norm{ e^{-t\d_x^3} \mathcal{G}^{j,a}_{n} \phi^{j,a} }_{L(\R)} 
	\le C 
	\norm{ \mathcal{G}^{j,a}_{n}\phi^{j,a}  }_{\hat{M}^{\alpha}_{2,\sigma}}
	\le C 
	\norm{ \phi^{j,a}  }_{\hat{M}^{\alpha}_{2,\sigma}}\]
for $\alpha' < \sigma < \frac{6\alpha}{3\alpha-2}$.
Since $3\alpha>\sigma$, we have
\begin{align*}
	C_\alpha m^{3\alpha} &{}\le C\( \sup_{j,a} \norm{ e^{-t\d_x^3} \mathcal{G}^{j,a}_{n} \phi^{j,a} }_{L(\R)}  \)^{3\alpha-\sigma}  \sum_{(j,a) \in \mathcal{A} } \norm{ \phi^{j,a}  }_{\hat{M}^{\alpha}_{2,\sigma}}^\sigma \\
	&{}\le C\(  \sup_{j,a} \norm{ e^{-t\d_x^3} \mathcal{G}^{j,a}_{n} \phi^{j,a} }_{L(\R)}  \)^{3\alpha-\sigma}  M^\sigma.
\end{align*}
Thus, there exists $(j_0,a_0)$ such that
\begin{equation}\label{eq:pf_lbound}
	\norm{ e^{-t\d_x^3} \mathcal{G}^{j_0,a_0}_{n} \phi^{j_0,a_0} }_{L(\R)} 
	\ge C_\alpha\(\frac{m^{3\alpha}}{M^\sigma}\)^{\frac1{3\alpha-\sigma}}.
\end{equation}

Now, up to subsequence, we have
\[
	(\mathcal{G}^{j_0,a_0}_{n})^{-1} u_n \rightharpoonup \phi^{j_0,a_0} + q=:\psi \wIN \hat{L}^\alpha
\]
as $n\to\I$, where $q$ is a weak limit of $(\mathcal{G}^{j_0,a_0}_{n})^{-1} q_n$.
Indeed, by Lemma \ref{lem:pd:step1} (\ref{ufq}), we have
\[
	u_n = \sum_{1\le j \le J ,\,j\neq j_0} f^j_n + f^{j_0}_n + q_n .
\]
As in the proof of the first assertion of Lemma \ref{lem:orthty},
one has
$(\mathcal{G}^{j_0,a_0}_{n})^{-1} f^j_n \rightharpoonup 0$ weakly in $\hat{L}^\alpha$
as $n\to\I$ for $j\neq j_0$.
Further, $(\mathcal{G}^{j_0,a_0}_{n})^{-1} f^{j_0}_n \rightharpoonup \phi^{j_0,a_0}$ as $n\to\I$.
Therefore, we have the above limit.
Then, one sees from Lemma \ref{lem:scale} and 
Lemma \ref{lem:pd:step1} (\ref{qe}) that, for any bounded set $K \subset \Z^2$,
\begin{align*}
	&\sum_{(j,k) \in K} \( |\tau_k^j |^{-\frac{1}{3\alpha}} \norm{\F q}_{L^{(\frac{3\alpha}{2})'}(\tau_k^j)}\)^{\frac{6\alpha}{3\alpha-2}} \\
	&{}\le \limsup_{n\to\I} \sum_{(j,k) \in K} \( |\tau_k^j |^{-\frac{1}{3\alpha}} \norm{\F (\mathcal{G}^{j_0,a_0}_{n})^{-1} q_n}_{L^{(\frac{3\alpha}{2})'}(\tau_k^j)}\)^{\frac{6\alpha}{3\alpha-2}} \\
 	&{}\le C  \limsup_{n\to\I} 
	\norm{q_n}_{\hat{M}^{\alpha}_{\frac{3\alpha}2,\frac{6\alpha}{3\alpha-2}}}^{\frac{6\alpha}{3\alpha-2}}
	\le C\epsilon^{\frac{6\alpha}{3\alpha-2}}.
\end{align*}
Taking supremum in $K$, one obtains
	$\norm{q}_{\hat{M}^{\alpha}_{\frac{3\alpha}2,\frac{6\alpha}{3\alpha-2}}}
	\le C \epsilon$.
Thanks to \eqref{eq:ST3} and Lemma \ref{lem:scale},
\[
	\norm{e^{-t\d_x^3} \mathcal{G}^{j_0,a_0}_{n} q}_{L(\R)} 
	\le \norm{ \mathcal{G}^{j_0,a_0}_{n} q}_{\hat{M}^{\alpha}_{\frac{3\alpha}2,\frac{6\alpha}{3\alpha-2}}}
	\le C \epsilon.
\]
Finally, using Theorem \ref{thm:rST} \eqref{eq:ST3}, 
Lemma \ref{lem:scale}, and \eqref{eq:pf_lbound}, and
choosing $\epsilon=\epsilon(m,M)>0$ even smaller if necessary, we reach to the estimate
\begin{align*}
	\norm{\psi}_{\hat{M}^{\alpha}_{2,\frac{6\alpha}{3\alpha-2}}}
	&{}\ge C\norm{ e^{-t\d_x^3} \mathcal{G}^{j_0,a_0}_{n} \psi}_{L(\R)} \\
	&{}\ge C\norm{ e^{-t\d_x^3} \mathcal{G}^{j_0,a_0}_{n} \phi^{j_0,a_0}}_{L(\R)} -C\eps \\
	&{}\ge \frac{C}2\(\frac{m^{3\alpha}}{M^\sigma}\)^{\frac1{3\alpha-\sigma}} =: \beta(m,M),
\end{align*}
which completes the proof of Theorem \ref{thm:cc}. 

\subsection{Two improvements in special cases} 
We consider two improvements of Theorem \ref{thm:pd},
under some additional assumptions.

The first one is the case when functions  in a sequence are real-valued.
This is nothing but the case of Theorem \ref{thm:pd_r}.

\vskip2mm
\noindent
{\it Proof of Theorem \ref{thm:pd_r}.}
In addition to the assumption of Theorem \ref{thm:pd},
we assume that $u_n$ is real valued.
We already have a decomposition
\[
	u_n = \sum_{j=1}^J \mathcal{G}^j_n \psi^j + r_n^J
\]
by Theorem \ref{thm:pd}.
We now show that this is rewritten as in \eqref{eq:pf:decomp}.
Fix $j$.
If $|\xi_n^j|$ is bounded in $n$ then, extracting subsequence if necessary,
$\xi_n^j \to \xi^j\in \R$ as $n\to\I$. Then, $(\mathcal{G}^j_n)^{-1} u_n \rightharpoonup
\psi^j$ in $\hat{L}^{\alpha}$ implies
\[
	A(s_n^j)^{-1} T (y_n^j)^{-1} D(h_n^j)^{-1} u_n \rightharpoonup P(\xi^j) \psi^j
	\IN \hat{L}^{\alpha}.
\]
Since the left hand side is real-valued, so is the right hand side.
Then, $P(\xi^j)\psi^j  =\Re (P(\xi^j)\psi^j)$. 
Denoting $P(\xi^j) \psi^j$ again by $\psi^j$, we may let $\xi^j_n \equiv 0$.

Next consider the case $|\xi_n^j| \to \I$ as $n\to\I$.
In particular, assume that $\xi_n^j \to\I$ as $n\to\I$.
Then, the convergence $\overline{(\mathcal{G}^j_n)^{-1} u_n} \rightharpoonup
\overline{\psi^j}$ in $\hat{L}^{\alpha}$ implies
\[
	P(-\xi_n^j)^{-1} A(s_n^j)^{-1} T (y_n^j)^{-1} D(h_n^j)^{-1} u_n \rightharpoonup \overline{\psi^j}
	\IN \hat{L}^{\alpha}.
\]
Therefore,  there exists $k$ such that
$\{\mathcal{G}_n^k\}_n$ is not orthogonal to the family $\{\overline{\mathcal{G}^j}_n\}_n:= \{ D(h_n^j)T (y_n^j) A(s_n^j) P(-\xi_n^j)\}_n$.
Indeed, if not then the above convergence implies $\eta(r_n^J)\ge \hMn{\overline{\psi^j}}{\alpha}{2}{\sigma}$ for all $J\ge1$, a contradiction.
Then, one can replace $\{ \mathcal{G}_n^k\}_n$ and $\psi^k$
by $\{\overline{\mathcal{G}^j}_n\}_n$ and $\overline{\psi^j}$, respectively.
Denoting $\psi^j/2$ again by $\psi^j$, we obtain the result.
This is the reason why $c_j=2$ when $|\xi_n^j|\to\I$ as $n\to\I$. 
This completes the proof of Theorem \ref{thm:pd_r}. 

\vskip2mm

The second one is exclusion of deformations $D(h)$ and $P(\xi)$ under
uniform boundedness in a stronger topologies.
This is the key for Theorem \ref{thm:minimal2}.

\begin{proposition}\label{prop:pd:addtionalbound}
(i) Under the assumptions in Theorem \ref{thm:pd},
assume in addition that $\{u_n\}_n$ is uniformly bounded in 
$\hat{L}^{\alpha_1} \cap \hat{L}^{\alpha_2}$ for some 
$1< \alpha_1 < \alpha < \alpha_2 < \I$.
Then, the assertions of Theorem \ref{thm:pd} hold with 
$h_n^j \equiv 1$. Furthermore, we have 
$$\norm{\psi^j}_{\hat{L}^{\rho}} \le \limsup_{n\to\I}\norm{u_n}_{\hat{L}^{\rho}} $$
for all $j\ge 1$ and $\alpha_1 \le \rho \le \alpha_2$.

\vskip1mm
\noindent
(ii) In addition to the assumption of Theorem \ref{thm:pd},
if $\{u_n\}_n$ is uniformly bounded in $\hat{L}^{\alpha_1} \cap \dot{H}^s$ for some
$1< \alpha_1 < \alpha $ and $s>0$
then, the assertions of Theorem \ref{thm:pd} hold with $h_n^j \equiv 1$,
$\xi_n^j \equiv 0$. Furthermore, we have
$$\norm{\psi^j}_{\dot{H}^s} \le \limsup_{n\to\I}\norm{u_n}_{\dot{H}^s} $$
for all $j\ge1$.
\end{proposition}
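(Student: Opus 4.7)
The plan is to invoke Theorem \ref{thm:pd} to obtain a decomposition with families $\mathcal{G}_n^j = D(h_n^j) A(s_n^j) T(y_n^j) P(\xi_n^j)$ and weak limits $(\mathcal{G}_n^j)^{-1} u_n \rightharpoonup \psi^j$ in $\hat{L}^\alpha$, and then to use the supplementary boundedness to eliminate the dilations $h_n^j$ (both parts) and, in (ii), the Fourier translations $\xi_n^j$. The key computational input is that $A$, $T$, $P$ are isometries on every $\hat{L}^\rho$ while $(\mathcal{G}_n^j)^{-1}$ contributes only the dilation factor $(h_n^j)^{1/\alpha'-1/\rho'}$ to the $\hat{L}^\rho$ norm, and that $A$, $T$ are isometries on $\dot{H}^s$.

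For (i), I would argue by contradiction. Assuming $\psi^j\ne 0$, if $h_n^j\to\infty$ on a subsequence then $\rho=\alpha_2>\alpha$ gives a negative exponent, so $\|(\mathcal{G}_n^j)^{-1}u_n\|_{\hat{L}^{\alpha_2}}\to 0$; pairing against Schwartz test functions (which are dense in $\hat{L}^{\alpha'}$ and embed into $\hat{L}^{\alpha_2'}$), combined with uniform $\hat{L}^\alpha$ boundedness, would force $(\mathcal{G}_n^j)^{-1}u_n\rightharpoonup 0$ in $\hat{L}^\alpha$, contradicting $\psi^j\ne 0$. The case $h_n^j\to 0$ is symmetric with $\rho=\alpha_1$. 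Hence $h_n^j\in 2^{\mathbb{Z}}$ is pinched between positive constants; after extracting a subsequence on which $h_n^j\equiv h^j$, the commutation identities of Section \ref{sec:setup} let me push $D(h^j)$ through $A,T,P$ and absorb it into the profile $\psi^j$, at the cost of rescaling $s_n^j,y_n^j,\xi_n^j$ by fixed finite factors. This produces a reparametrized decomposition with $h_n^j\equiv 1$ that still satisfies the orthogonality of Definition \ref{def:orthty}. Once $h_n^j\equiv 1$, $(\mathcal{G}_n^j)^{-1}$ is an $\hat{L}^\rho$-isometry for every $\rho$, so $(\mathcal{G}_n^j)^{-1}u_n$ is uniformly bounded in $\hat{L}^\rho$ for $\rho\in[\alpha_1,\alpha_2]$; a weakly $\hat{L}^\rho$-convergent subsequence must have limit equal to $\psi^j$ by uniqueness of distributional limits, and lower semicontinuity of the norm would deliver $\|\psi^j\|_{\hat{L}^\rho}\le\limsup_n\|u_n\|_{\hat{L}^\rho}$.

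For (ii), the same $\hat{L}^{\alpha_1}$ argument handles $h_n^j\to 0$. To rule out $h_n^j\to\infty$ or $|\xi_n^j|\to\infty$, I would work directly on the Fourier side: pair against $\phi$ Schwartz with $\hat\phi$ compactly supported in some $K\subset\mathbb{R}\setminus\{0\}$ (disjoint from a prescribed subsequential limit of $\xi_n^j$ when needed); such $\phi$ remain dense in $\hat{L}^{\alpha'}$ since $\alpha'<\infty$. A change of variables $\zeta=h_n^j(\eta-\xi_n^j)$ rewrites the pairing as
\[
(h_n^j)^{-1/\alpha}\int e^{i\Phi_n(\zeta)}\,\hat u_n(\zeta)\,\overline{\hat\phi\bigl(\zeta/h_n^j+\xi_n^j\bigr)}\,d\zeta,
\]
whose integrand is supported in $h_n^j(K-\xi_n^j)$; in each of the diverging cases a routine lower bound gives $|\zeta|\gtrsim M_n\to\infty$ on this support. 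The $\dot{H}^s$-tail estimate $\|\hat u_n\|_{L^2(|\zeta|\ge M)}\le M^{-s}\|u_n\|_{\dot{H}^s}$, Cauchy--Schwarz, and $\|\hat\phi(\cdot/h+\xi)\|_{L^2}=h^{1/2}\|\hat\phi\|_{L^2}$ then yield a pairing of order $(h_n^j)^{1/2-1/\alpha}M_n^{-s}\to 0$, using $1/2-1/\alpha<0$ and $s>0$. Hence $h_n^j$ and $\xi_n^j$ are both bounded; after reducing to $h_n^j\equiv 1$ as in (i) and then extracting $\xi_n^j\to\xi^j\in\mathbb{R}$, the strong continuity of $\xi\mapsto P(\xi)\psi^j$ in $\hat{L}^\alpha$ (which is just translation continuity in $L^{\alpha'}$) lets me absorb $P(\xi^j)$ into the profile to achieve $\xi_n^j\equiv 0$. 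Then $\mathcal{G}_n^j=A(s_n^j)T(y_n^j)$ is an isometry on $\dot{H}^s$, and the bound $\|\psi^j\|_{\dot{H}^s}\le\limsup_n\|u_n\|_{\dot{H}^s}$ follows exactly as in (i).

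The main obstacle I expect is the bookkeeping of how each reparametrization interacts with both the orthogonality of Definition \ref{def:orthty} and the smallness \eqref{eq:pd:limit} of the remainder. Absorbing $D(h^j)$ or $P(\xi^j)$ rescales the remaining parameters only by fixed finite quantities, so orthogonality is preserved; and the $o(1)$ corrections pushed into $r_n^l$ vanish in $\hat{L}^\alpha$ as $n\to\infty$, so \eqref{eq:pd:limit} is preserved. Verifying this cleanly across all profile pairs, and making sure the case-split on $h_n^j$ and $\xi_n^j$ exhausts every subsequence (including the mixed case $h_n^j\to\infty$ with $|\xi_n^j|\to\infty$), is the main place where care is needed.
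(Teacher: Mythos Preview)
Your proposal is correct and follows the paper's strategy: use duality and the scaling behavior of the deformations to force boundedness of $h_n^j$ (and $\xi_n^j$ in (ii)), then absorb the limiting values into the profiles. One simplification you miss in (ii): since $\hat{L}^{\alpha_1}\cap\dot H^s\hookrightarrow L^2=\hat L^2$ (split $\hat u_n$ at $|\xi|=1$), part (i) with $\alpha_2=2$ already yields $h_n^j\equiv 1$, after which the paper disposes of $\xi_n^j$ by the cleaner bound $|(\psi^j,g)|\le C\|u_n\|_{\dot H^s}\|P(\xi_n^j)g\|_{\dot H^{-s}}\to 0$ for $g$ with compactly supported transform, avoiding your explicit Fourier change of variables.
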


\begin{proof}
Suppose that $u_n$ is uniformly bounded in $\hat{L}^{\alpha_1} \cap \hat{L}^{\alpha_2}$,
$\alpha_1<\alpha<\alpha_2$, and that
\[
	P(\xi_n)^{-1} A(s_n)^{-1} T (y_n)^{-1} D_\alpha(h_n)^{-1} u_n \rightharpoonup \psi
	\IN \hat{L}^{\alpha}
\]
for some $\psi \not \equiv 0$.
Then, for $g \in C^\I$ such that $\hat{g}$ has a compact support,
\begin{align*}
	|(\psi, g)|
	&{}\le 2\abs{\int (P(\xi_n)^{-1} A(s_n)^{-1} T (y_n)^{-1} D_\alpha(h_n)^{-1} u_n)(x)
	\overline{g(x)} dx} \\
 	&{}= 2\abs{\int u_n(x)
 	\overline{(D_{\alpha'}(h_n)T(y_n) A(s_n)P(\xi_n) g)( x)} dx} \\
	&{}\le 2\norm{u_n}_{\hat{L}^{\alpha_1} \cap \hat{L}^{\alpha_2}}
	\norm{D_{\alpha'}(h_n)T(y_n) A(s_n)P(\xi_n) g}_{\hat{L}^{\alpha_1'} + \hat{L}^{\alpha_2'} } \\
	&{}\le C \norm{D_{\alpha'}(h_n)T(y_n) A(s_n)P(\xi_n) g}_{\hat{L}^{\alpha_1'} + \hat{L}^{\alpha_2'} }
\end{align*}
for large $n$.
If $h_n \to 0$ as $n\to\I$ then
\[
	\norm{D_{p'}(h_n)T(y_n) A(s_n)P(\xi_n) g}_{\hat{L}^{\alpha_1'}}
	= (h_n)^{\frac1{\alpha_1}-\frac1\alpha} \norm{g}_{\hat{L}^{\alpha_1'}}
	\to 0
\]
as $n\to\I$.
Similarly,
if $h_n \to \I$ as $n\to\I$ then 
$$\norm{D_{\alpha'}(h_n)T(y_n) A(s_n)P(\xi_n) g}_{\hat{L}^{\alpha_2'}}
= (h_n)^{\frac1{\alpha_2}-\frac1\alpha} \norm{g}_{\hat{L}^{\alpha_2'}}
\to 0$$ as $n\to\I$.
In the both cases, we have $\psi \equiv 0$, a contradiction.
Thus, we conclude that $|\log h_n|$ is bounded.
Extracting subsequence, we have $h_n \to h_0 >0$ as $n\to\I$.
Then,
\begin{align*}
	&{} P(h_n \xi_n)^{-1} A(s_n/(h_n)^3)^{-1} T (y_n/h_n)^{-1}  u_n \\
	&{}= D_\alpha(h_n) P(\xi_n)^{-1} A(s_n)^{-1} T (y_n)^{-1} D_\alpha(h_n)^{-1} u_n 
	\rightharpoonup D_\alpha(h_0)\psi
	\IN \hat{L}^{p}.
\end{align*}
Hence, denoting $(h_n \xi_n, s_n/(h_n)^3,y_n/h_n)$ and $D_\alpha(h_0)\psi$
 again by $(\xi_n,s_n,y_n)$ and $\psi$, respectively,
we may let $h_n\equiv 1$. Under the new notation,
we have
\[
	\norm{\psi}_{\hat{L}^{\rho}}
	\le \limsup_{n\to\I} \norm{P(\xi_n)^{-1} A(s_n)^{-1} T (y_n)^{-1}  u_n}_{\hat{L}^\rho}
	= \limsup_{n\to\I} \norm{u_n}_{\hat{L}^\rho}.
\]
for all $\alpha_1 \le \rho \le \alpha_2$.

Next, let us suppose that $u_n$ is bounded in $\hat{L}^{\alpha_1} \cap \dot{H}^{s}$ ($\alpha_1<\alpha$, $s>0$).
Note that this implies $u_n$ is bounded in $L^2$.
Hence, the above argument gives us $h_n^j\equiv 1$ for all $j\ge 1$.
Let us show $\xi_n^j\equiv 0$ for all $j\ge1$.
For $g \in C^\I$ such that $\hat{g}$ has a compact support, we have
\begin{align*}
	|(\psi, g)|
	&{}\le 2\abs{\int (P(\xi_n)^{-1} A(s_n)^{-1} T (y_n)^{-1}u_n)(x)
	\overline{g(x)} dx} \\
 	&{}= 2\abs{\int u_n(x)
 	\overline{(T(y_n) A(s_n)P(\xi_n) g)( x)} dx} \\
	&{}\le 2\norm{u_n}_{\dot{H}^s}
	\norm{T(y_n) A(s_n)P(\xi_n) g}_{\dot{H}^{-s}} \le C \norm{P(\xi_n) g}_{\dot{H}^{-s}}
\end{align*}
for large $n$.
If $|\xi_n| \to \I$ as $n\to\I$ then $\norm{P(\xi_n) g}_{\dot{H}^{-s}} \to 0$
as $n\to\I$.
This gives us $\psi \equiv 0$, a contradiction.
Hence, $\xi_n$ is bounded. By extracting subsequence, $\xi_n \to \xi_0 \in \R$
as $n\to \I$.
Then,
\[
	A(s_n)^{-1} T (y_n)^{-1}  u_n 
	= P(\xi_n) P(\xi_n)^{-1} A(s_n)^{-1} T (y_n)^{-1} u_n 
	\rightharpoonup P(\xi_0)\psi
	\IN \hat{L}^{p}.
\]
Thus, denoting $P(\xi_0)\psi$ again by $\psi$, we may let $\xi_n \equiv 0$
and we have the bound $\norm{\psi}_{\dot{H}^s} \le \limsup_{n\to\I} \norm{u_n}_{\dot{H}^s}$.
\end{proof}

%
%

\section{Quick review on well-posedness of \eqref{NLS}}\label{sec:NLS}

In this section, we briefly summarize well-posedness   
and stability  results for \eqref{NLS} which are need to 
prove Theorem \ref{thm:ENK}. 

\subsection{Well-posedness for NLS}

We first consider well-posedness for \eqref{NLS} in 
$\hat{L}^{\alpha}$-space and $\hM{\alpha}{\rho}\sigma$-space. 
The initial value problem \eqref{NLS} is formulated as
\begin{equation}\label{INLS}
	v(t) = e^{-it\d_x^2} v_0 + i\mu\int_{0}^{t} e^{-i(t-t')\d_x^2} 
	(|v|^{2\alpha}v)(t') dt' .
\end{equation}

The following well-posedness result  plays an important role in this subsection. 
This kind of result is well known (see \cite{Kato2, VV}, for example).

\begin{proposition}\label{prop:LWP_NLS} 
Let $4/3<\alpha < 4$. Then 
there exists a number $\delta>0$ such that
if a data $v_0 \in \mathcal{S}'$ and
an interval $I \ni 0$  satisfies
\[
	\norm{e^{-it\d_x^2} v_0}_{L^{3\alpha}_{t,x}(I\times \R)}
	\le \delta
\]
then there exists a unique solution $v(t)$ to \eqref{INLS} which satisfies
\[
	\norm{v}_{L^{3\alpha}_{t,x}(I\times \R)} \le 2 \norm{e^{-it\d_x^2} v_0}_{L^{3\alpha}_{t,x}(I\times \R)}.
\]
Further, the solution belongs to
$L^p_t(I,L^q_x)$ for any $p,q \in (2,\I)$ with $2/p + 1/q = 1/\alpha$.
\end{proposition}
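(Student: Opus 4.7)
The plan is to solve \eqref{INLS} by a Picard contraction in the scale-invariant scattering space $X := L^{3\alpha}_{t,x}(I \times \R)$, which is the norm dictated by the $\hat L^\alpha$-critical scaling of the nonlinearity $|v|^{2\alpha}v$. Setting
\[
\Phi(v)(t) := e^{-it\partial_x^2}v_0 + i\mu \int_0^t e^{-i(t-s)\partial_x^2}(|v|^{2\alpha}v)(s)\,ds,
\]
I would show that $\Phi$ is a strict contraction on the closed ball $B_{2\delta} := \{v \in X : \|v\|_X \le 2\delta\}$ once $\delta > 0$ is chosen small. The hypothesis already yields $\|e^{-it\partial_x^2}v_0\|_X \le \delta$, so the free evolution contributes at most half the radius.

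The analytic core is the inhomogeneous Schr\"odinger Strichartz estimate
\[
\Bigl\|\int_0^t e^{-i(t-s)\partial_x^2}F(s)\,ds\Bigr\|_{L^{3\alpha}_{t,x}} \le C\|F\|_{L^{3\alpha/(2\alpha+1)}_{t,x}},
\]
obtained by combining, via $TT^*$, the two $\hat L^r$-Stein--Tomas type bounds
\[
\|e^{-it\partial_x^2}f\|_{L^{3\alpha}_{t,x}} \le C\|f\|_{\hat L^\alpha}, \qquad \|e^{-it\partial_x^2}f\|_{L^{3\alpha/(\alpha-1)}_{t,x}} \le C\|f\|_{\hat L^{\alpha'}}
\]
(the pairs $(3\alpha,3\alpha)$ and $(3\alpha/(\alpha-1),3\alpha/(\alpha-1))$ are $\hat L^\alpha$- and $\hat L^{\alpha'}$-admissible for the Schr\"odinger scaling $2/p+1/q=1/r$, respectively), together with the Christ--Kiselev lemma to pass to the retarded integral; these $\hat L^r$-Stein--Tomas estimates are the Schr\"odinger analogs of Proposition \ref{prop:ho_inho}, in the spirit of \cite{G1,G2,GV}. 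Combined with the H\"older identity $\||v|^{2\alpha}v\|_{L^{3\alpha/(2\alpha+1)}_{t,x}} = \|v\|_X^{2\alpha+1}$ and the pointwise bound $\bigl||v_1|^{2\alpha}v_1 - |v_2|^{2\alpha}v_2\bigr| \le C(|v_1|^{2\alpha}+|v_2|^{2\alpha})|v_1 - v_2|$, one obtains $\|\Phi(v)\|_X \le \delta + C\|v\|_X^{2\alpha+1}$ and $\|\Phi(v_1)-\Phi(v_2)\|_X \le C(4\delta)^{2\alpha}\|v_1 - v_2\|_X$. Choosing $\delta$ small enough that $C(2\delta)^{2\alpha+1} \le \delta$ and $C(4\delta)^{2\alpha} \le 1/2$ then makes $\Phi$ a contraction on $B_{2\delta}$, and the unique fixed point is the desired solution.

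The hard part is the $\hat L^r$-Stein--Tomas estimates themselves: the diagonal pair $(3\alpha, 3\alpha)$ falls outside the classical $L^2$-Keel--Tao admissibility $2/p + 1/q = 1/2$ (which it satisfies only at the mass-critical $\alpha = 2$), so for $\alpha \in (4/3, 4)\setminus\{2\}$ one must rely on the $\hat L^r$-framework. The symmetric range $4/3 < \alpha < 4$ around $\alpha = 2$ is precisely what is needed so that both $\hat L^\alpha$ and $\hat L^{\alpha'}$ Stein--Tomas estimates are simultaneously available for the $TT^*$ composition to close.

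Finally, for the supplementary claim that $v \in L^p_t L^q_x$ for every $(p,q) \in (2,\infty)^2$ with $2/p + 1/q = 1/\alpha$, I would apply the corresponding mixed-norm $\hat L^\alpha$-admissible Schr\"odinger Strichartz estimates to each term of the integral equation \eqref{INLS}, bounding the nonlinearity in the dual Strichartz norm by the already-controlled quantity $\|v\|_X^{2\alpha+1}$ from the fixed-point argument.
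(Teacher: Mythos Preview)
Your proposal is correct and follows essentially the same approach as the paper: a contraction argument in $L^{3\alpha}_{t,x}$ based on the inhomogeneous estimate $\|\int_0^t e^{-i(t-s)\partial_x^2}F\,ds\|_{L^{3\alpha}_{t,x}} \le C\|F\|_{L^{3\alpha/(2\alpha+1)}_{t,x}}$, which the paper obtains (see Lemma~\ref{Nn}(ii)) exactly as you do, via the diagonal $\hat L^\alpha$- and $\hat L^{\alpha'}$-Stein--Tomas bounds combined through $TT^*$ and the Christ--Kiselev lemma. Your explanation of why the range $4/3<\alpha<4$ arises---namely that both $\alpha$ and $\alpha'$ must exceed $4/3$ for the two Stein--Tomas inequalities to hold---is a nice clarification that the paper leaves implicit.
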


\begin{proof}
Proposition \ref{prop:LWP_NLS} is an immediate consequence of the estimate
\[
	\norm{\Phi[v]}_{L^{3\alpha}_{t,x}(I\times \R)}
	\le \norm{e^{-it\pt_{x}^{2}}v_0}_{L^{3\alpha}_{t,x}(I\times \R)}
	+ C\norm{v}_{L^{3\alpha}_{t,x}(I\times \R)}^{2\alpha+1}
\]
for $4/3<\alpha<4$,
where $\Phi[v]$ is the right hand side of \eqref{INLS}.
This inequality follows from Strichartz's estimate for non-admissible pairs 
(see Kato \cite{Kato2} or Lemma \ref{Nn} (ii), below),
and H\"older inequality. 
\end{proof}

\vskip2mm

\begin{remark}
Well-posedness of \eqref{INLS} in a space like $L^{p}_t(I;L^q_x)$ also
holds for $\frac{1+\sqrt{17}}{4}<\alpha \le 4/3$ if we allow the case $p\neq q$.
\end{remark}

To prove well-posedness in 
$\hat{L}^{\alpha}$-space, we show the following 
generalized Strichartz estimate for the Schr\"{o}dinger equation.

\begin{lemma}\label{Nn} 
(i) (homogeneous estiamtes) Let $I$ be an interval. 
Let $(p,q)$ satisfy
\[
	0\le\frac{1}{p}<\frac14,\quad\quad 0 \le \frac1q < \frac12 - \frac1p.
\]
Then, for any $f\in\hat{L}^r$,
\begin{equation}\label{eq:mixed}
	\norm{|\d_x|^\tau e^{-it\d_x^2} f}_{L^p_x L^q_t(I)} \le C\norm{f}_{\hat{L}^r},
\end{equation}
where
\[
	\frac1r = \frac2p + \frac1q,\quad \tau=-\frac1p+\frac1q.
\]
and positive constant $C$ depends 
only on $r$ and $s$. 

\vskip1mm
\noindent
(ii) (inhomogeneous estimates) Let $4/3<r<4$ and let 
$(p_{j},q_{j})$ ($j=1,2$) satisfy
\[
	0\le\frac{1}{p_{j}}<\frac14,\quad\quad 0 \le \frac{1}{q_{j}} < 
	\frac12 - \frac1p_{j}.
\]
Then, the inequalities 
\begin{equation}
\left\|
\int_0^te^{-i(t-t')\pt_{x}^{2}}F(t')dt'
\right\|_{L_t^{\infty}(I;\hat{L}_x^{r})}
\le C_{1}
\||\d_x|^{-\tau_{2}}
F\|_{L_x^{p_{2}'}L_{t}^{q_{2}'}(I)},
\label{k}
\end{equation}
and
\begin{equation}
\left\||\d_x|^{\tau_{1}}
\int_0^te^{-i(t-t')\pt_{x}^{2}}F(t')dt'
\right\|_{L_x^{p_{1}}L_{t}^{q_{1}}(I)}
\le C_{2}
\||\d_x|^{-\tau_{2}}
F\|_{L_x^{p_{2}'}L_{t}^{q_{2}'}(I)} \label{lllll}
\end{equation}
hold for any $F$ satisfying $|\d_x|^{-\tau_{2}}
F\in L_x^{p_{2}'}L_{t}^{q_{2}'}$, where
\[
	\frac1r = \frac{2}{p_{1}} + \frac{1}{q_{1}},
	\quad \tau_{1}=-\frac{1}{p_{1}}+\frac{1}{q_{1}}
\]
and
\[
	\frac{1}{r'} = \frac{2}{p_{2}} + \frac{1}{q_{2}},
	\quad \tau_{2}=-\frac{1}{p_{2}}+\frac{1}{q_{2}},
\]
where 
the constant $C_{1}$ depends on $r$, $\tau_{1}$ and $I$, and 
the constant $C_{2}$ depends on $r$, $\tau_{1}$, $\tau_{2}$ and $I$.
\end{lemma}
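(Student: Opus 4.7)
My plan is to establish part (i) by interpolation between several endpoint estimates, and then to derive part (ii) from part (i) through duality and the Christ-Kiselev lemma.

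For part (i), I first observe that the constraints $\frac{1}{r}=\frac{2}{p}+\frac{1}{q}$ and $\tau=-\frac{1}{p}+\frac{1}{q}$ are forced by the dilation symmetry $u\mapsto\lambda^{1/\alpha}u(\lambda^2 t,\lambda x)$ of the free Schr\"odinger flow, so only the uniformity of the constant is at stake. I would then identify three extremal estimates: (a) the classical one-dimensional Strichartz inequality $\|e^{-it\d_x^2}f\|_{L^6_{t,x}}\le C\|f\|_{L^2}$, corresponding to $(1/p,1/q,1/r,\tau)=(1/6,1/6,1/2,0)$; (b) the Kato local smoothing bound $\||\d_x|^{1/2}e^{-it\d_x^2}f\|_{L^\infty_xL^2_t}\le C\|f\|_{L^2}$, corresponding to $(0,1/2,1/2,1/2)$; and (c) the trivial inversion estimate $\|e^{-it\d_x^2}f\|_{L^\infty_{t,x}}\le\|\hat f\|_{L^1}$, corresponding to $(0,0,0,0)$. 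Each is classical or follows from a short Plancherel/change-of-variables computation. Multilinear Riesz-Thorin interpolation of the bilinear form $(f,G)\mapsto\langle |\d_x|^\tau e^{-it\d_x^2}f,G\rangle$ in the four parameters $(1/p,1/q,1/r,\tau)$ then sweeps out the open region $\{0\le 1/p<1/4,\ 0\le 1/q<1/2-1/p\}$, which is exactly the range required in (i).

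For part (ii), I would apply (i) with $r$ replaced by $r'$ (so that the triple $(p_2,q_2,\tau_2)$ of the statement automatically satisfies the scaling and tilt conditions of (i)) and dualize under the Parseval pairing $(\hat{L}^{r'})^*=\hat{L}^{r}$ to obtain
\[
\Bigl\|\int_{\R} e^{it'\d_x^2}F(t',\cdot)\,dt'\Bigr\|_{\hat L^{r}}\le C\||\d_x|^{-\tau_2}F\|_{L^{p_2'}_xL^{q_2'}_t}.
\]
Since $e^{-it\d_x^2}$ is an isometry on $\hat{L}^r$, this immediately yields the non-retarded form of \eqref{k}. Composing this dual estimate with the homogeneous estimate of (i) for the exponents $(p_1,q_1,\tau_1)$ produces the non-retarded analogue of \eqref{lllll}. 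The passage from the full-line operator $\int_{\R}$ to the truncated Duhamel operator $\int_0^t$ is then a direct application of the Christ-Kiselev lemma in the time variable, with $L^{p_j}_x$ serving as the target Banach space; the strict inequality $q_j'<q_j$ required by Christ-Kiselev is guaranteed by $1/q_j<1/2-1/p_j<1/2$.

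The main technical obstacle is the borderline nature of the Kato endpoint (b), which lies precisely on the line $1/q=1/2-1/p$ that is excluded by the strict inequality in (i); hence the interpolation must stop just short of it, combining (b) with (a) and (c) in the interior of the admissible region. Together with the passage from the usual $L^p_{t,x}$ setting to the mixed $L^p_xL^q_t$ formulation, this explains the strict inequalities in the statement. The overall framework closely parallels the treatment of the Airy equation in \cite[Propositions 2.1 and 2.5]{MS}, where analogous multilinear Riesz-Thorin and Christ-Kiselev arguments are carried out in detail; the Schr\"odinger case is in fact somewhat simpler because the dispersion relation $\xi\mapsto\xi^2$ is nondegenerate.
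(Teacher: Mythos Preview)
Your overall strategy for part (ii) --- dualize the homogeneous estimate and apply the Christ--Kiselev lemma --- is exactly what the paper does (it cites \cite[Lemma~2]{MR} for the mixed-norm version). The problem is in part (i): your three endpoints do not span the region you claim.

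Work in the $(1/p,1/q)$-plane. The target region is the open quadrilateral with closure having vertices $(0,0)$, $(1/4,0)$, $(1/4,1/4)$, $(0,1/2)$. Your endpoints are (a) $(1/6,1/6)$, (b) $(0,1/2)$, (c) $(0,0)$, and Riesz--Thorin only reaches their convex hull, which is the triangle
\[
\{\,0\le 1/p\le 1/6,\ 1/p\le 1/q\le 1/2-2/p\,\}.
\]
This misses every point with $1/p>1/6$ (e.g.\ $(1/5,0)$) and every point below the diagonal $1/q=1/p$; in particular it never reaches $r<2$, since on your hull $1/r=2/p+1/q\le 1/2$. But the mass-subcritical range $\alpha<2$ of the paper requires precisely $r=\alpha<2$, so the gap is not cosmetic.

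The paper's proof interpolates instead between Kato smoothing $(0,1/2)$, the Kenig--Ruiz maximal estimate $\||\partial_x|^{-1/4}e^{-it\partial_x^2}f\|_{L^4_xL^\infty_t}\le C\|f\|_{L^2}$ at $(1/4,0)$, and the Stein--Tomas inequality $\|e^{-it\partial_x^2}f\|_{L^{3r}_{t,x}}\le C\|f\|_{\hat L^r}$ for $r>4/3$, which supplies the whole diagonal segment from $(0,0)$ toward $(1/4,1/4)$. Those three ingredients together have convex hull equal to the full quadrilateral. Your endpoint (a) is just the $r=2$ case of Stein--Tomas (equivalently, the midpoint of the Kato/Kenig--Ruiz segment), so you are effectively using a strict subset of the paper's inputs. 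To repair the argument you must add the Kenig--Ruiz estimate and the $\hat L^r$-Stein--Tomas family; the trivial $L^\infty$ bound (c) alone cannot substitute for either.
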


\begin{remark}
Remark that we take a space-time norm of the form $L_x^p L_t^q$ in \eqref{eq:mixed}.
This is why we gain derivative by $|\d_x|^\tau$.
Also remark that a similar estimate for a space-time norm of the form $L_t^p L_x^q$ is known in \cite{HT}.
\end{remark}

\begin{proof}
(\ref{eq:mixed}) is obtained by interpolating the Kato's 
smoothing effect \cite[Theorem 4.1]{KPV1}, 
the Kenig-Ruiz estimate \cite[Theorem 2.5]{KPV1} 
and the Stein-Tomas estimate for the Schr\"{o}dinger equation
\footnote{This estimate goes back to \cite{To}. 
We can prove this inequality 
by using the argument similar to \cite[Lemma 2.2]{MS}}
\begin{eqnarray*}
\norm{e^{-it\d_x^2} f}_{L^{3r}_x L^{3r}_t(I)} \le C\norm{f}_{\hat{L}^r}
\end{eqnarray*}
for $r>4/3$. 

The inhomogeneous estimates (\ref{k}) and (\ref{lllll}) follows 
from the homogeneous estimate (\ref{eq:mixed}) and the Christ-Kiselev 
lemma by \cite[Lemma 2]{MR}. 
\end{proof}

\vskip2mm

Inequality \eqref{eq:mixed} and the following inequality yields the
local well-posedness in $\hat{L}^{\alpha}$ and $\hM{\alpha}{\frac{3\alpha}2}{2(\frac{3\alpha}2)'}$,
respectively;
\begin{proposition}\label{prop:rSTNLS}
Assume that $ \alpha >4/3$. Then,
\[
	\norm{e^{-it\d_x^2} f}_{L^{3\alpha}_{t,x}(\R\times \R)}
	\le C \hMn{f}{\alpha}{\frac{3\alpha}2}{2(\frac{3\alpha}2)'} 
\]
holds for all $f\in \hM{\alpha}{\frac{3\alpha}2}{2(\frac{3\alpha}2)'}$.
Further, the embedding $\hat{L}^\alpha \hookrightarrow \hM{\alpha}{\frac{3\alpha}2}{2(\frac{3\alpha}2)'}$ holds
if $\alpha > 4/3 $.
\end{proposition}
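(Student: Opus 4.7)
The plan is to prove the refined Stein--Tomas estimate via a bilinear decomposition argument parallel to the Airy version (Theorem \ref{thm:rST}), and to obtain the embedding as a direct consequence of Proposition \ref{prop:gm_interpolation} (Remark \ref{rem:gMorrey} (v)). Since $\alpha > 4/3$ gives $(3\alpha/2)' < \alpha' < 2(3\alpha/2)'$, the embedding $\hat{L}^\alpha \hookrightarrow \hat{M}^\alpha_{3\alpha/2,\,2(3\alpha/2)'}$ is immediate; the real content is the refined inequality.

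First I would establish a bilinear estimate of the following type: for $I,J \in \mathcal{D}_j$ (dyadic intervals of length $2^{-j}$) with $\mathrm{dist}(I,J)\sim 2^{-j}$, and $f_I,g_J$ having $\hat{f_I}$ supported in $I$ and $\hat{g_J}$ supported in $J$,
\[
\bigl\| e^{-it\d_x^2} f_I \cdot \overline{e^{-it\d_x^2} g_J} \bigr\|_{L^{3\alpha/2}_{t,x}(\R^2)}
\le C\,|I|^{\kappa}\,\|\hat{f_I}\|_{L^{(3\alpha/2)'}(I)}\,\|\hat{g_J}\|_{L^{(3\alpha/2)'}(J)},
\]
for a suitable scaling exponent $\kappa$. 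To see this, write the product as
\[
(e^{-it\d_x^2} f_I)(x)\,\overline{(e^{-it\d_x^2} g_J)(x)} = \iint \hat{f_I}(\xi)\overline{\hat{g_J}(\eta)}\, e^{ix(\xi-\eta)-it(\xi^2-\eta^2)}\, d\xi d\eta,
\]
and change variables $(\xi,\eta)\mapsto(\tau_1,\tau_2)=(\xi-\eta,\xi^2-\eta^2)=(\xi-\eta,(\xi-\eta)(\xi+\eta))$. The Jacobian $|\xi+\eta|$ is bounded below by $\mathrm{dist}(I,J)\sim |I|$ under the Whitney separation assumption. Thus $F_I\overline{G_J}$ is represented as a Fourier transform in $(t,x)$ of a function whose $L^{(3\alpha/2)'}$ norm is controlled by the right-hand side; Hausdorff--Young (valid since $3\alpha/2\ge 2$ for $\alpha>4/3$) then yields the displayed bilinear inequality with $\kappa$ determined by scaling.

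Second I would use a Whitney decomposition of $\{(\xi,\eta):\xi\ne\eta\}$ into dyadic pairs $(I,J)$ with $|I|=|J|$ and $\mathrm{dist}(I,J)\sim|I|$ to decompose
\[
|e^{-it\d_x^2} f|^2 = \sum_{j\in\Z}\sum_{(I,J)\in W_j}(e^{-it\d_x^2}f_I)\overline{(e^{-it\d_x^2}f_J)},
\]
where $f_I$ is the Fourier localization of $f$ to $I$. Taking $L^{3\alpha/2}_{t,x}$ norms and using the bilinear estimate on each summand, the goal reduces to controlling
\[
\Bigl\|\sum_{j,(I,J)\in W_j} |I|^{\kappa}\,\|\hat{f}\|_{L^{(3\alpha/2)'}(I)}\,\|\hat{f}\|_{L^{(3\alpha/2)'}(J)} \cdot \Phi_{I,J}\Bigr\|_{L^{3\alpha/2}_{t,x}}
\]
where $\Phi_{I,J}$ are the normalized pieces. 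Almost-orthogonality in $(t,x)$-Fourier side (the change of variables above sends different Whitney pairs at the same scale to essentially disjoint frequency sets) lets me apply a vector-valued inequality and sum in $(I,J)$ in the $\ell^{3\alpha/2}$ sense, reducing to an $\ell^{3\alpha/2}$-sum over pairs, which is then bounded by the $\ell^{2(3\alpha/2)'}$-norm of the sequence $\{|I|^{-1/(3\alpha)}\|\hat{f}\|_{L^{(3\alpha/2)'}(I)}\}_{I\in\mathcal{D}}$ by Cauchy--Schwarz/H\"older combined with the $\ell^r$-embeddings of the Morrey norms; this sequence norm is precisely $\|f\|_{\hat{M}^\alpha_{3\alpha/2,\,2(3\alpha/2)'}}$.

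The main obstacle I expect is verifying the almost-orthogonality step and bookkeeping the precise Lebesgue exponent at which the $\ell^r$ summation closes up to the specific index $2(3\alpha/2)'=6\alpha/(3\alpha-2)$. The choice of $r=2(3\alpha/2)'$ is forced (as in the Airy case) by the requirement that $r$ be simultaneously large enough so that the embedding from $\hat{L}^\alpha$ holds and small enough so that the bilinear sum converges at the endpoint of the Hausdorff--Young inequality. Once the bilinear estimate and the correct exponent in the Whitney summation are matched, the proof concludes. The embedding claim is then the application of $\hat{L}^p\hookrightarrow\hat{M}^p_{q,r}$ for $q'<p'<r$ (Remark \ref{rem:gMorrey} (v)) with $p=\alpha$, $q=3\alpha/2$, $r=2(3\alpha/2)'$, whose inequalities are equivalent to $\alpha>4/3$.
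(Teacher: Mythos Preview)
Your approach is the same as the paper's in spirit---square the linear evolution, change variables to $(\xi-\eta,\xi^2-\eta^2)$, apply Hausdorff--Young, then Whitney-decompose---but two points need correction.

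First, the Jacobian of $(\xi,\eta)\mapsto(\xi-\eta,\xi^2-\eta^2)$ is $2|\xi-\eta|$, not $|\xi+\eta|$. This matters: in the Schr\"odinger case the resulting kernel is $|\xi-\eta|^{-2/(3\alpha-2)}$ with no factor of $|\xi+\eta|$ (unlike the Airy case, where both appear). Consequently only a standard Whitney decomposition of $\{\xi\neq\eta\}$ is needed, and the separation you want is $|\xi-\eta|\sim|I|$, which is exactly what Whitney pairs provide.

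Second, and more importantly, your order of operations creates the almost-orthogonality obstacle you worry about. The paper (see Remark~\ref{rem:rSTNLS} and the proof of Theorem~\ref{thm:b1}) applies Hausdorff--Young \emph{globally first}, obtaining
\[
\norm{e^{-it\d_x^2}f}_{L^{3\alpha}_{t,x}}^{2(\frac{3\alpha}{2})'}\le C\iint_{\R^2}\frac{|\hat f(\xi)|^{(\frac{3\alpha}{2})'}|\hat f(\eta)|^{(\frac{3\alpha}{2})'}}{|\xi-\eta|^{\frac{2}{3\alpha-2}}}\,d\xi\,d\eta,
\]
and only \emph{then} inserts the Whitney partition $\sum_{(I,I')\in\mathcal W}{\bf 1}_I(\xi){\bf 1}_{I'}(\eta)=1$ into the integral. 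Since the Whitney pieces partition the integration domain, the sum is exact---no orthogonality is needed. On each piece $|\xi-\eta|\sim|I|$, so the kernel pulls out as $|I|^{-2/(3\alpha-2)}$, and absorbing neighboring $I'$ into a fixed enlargement of $I$ gives directly
\[
\sum_{I\in\mathcal D}|I|^{-\frac{2}{3\alpha-2}}\|\hat f\|_{L^{(3\alpha/2)'}(I)}^{\frac{6\alpha}{3\alpha-2}}=\hMn{f}{\alpha}{\frac{3\alpha}{2}}{2(\frac{3\alpha}{2})'}^{2(\frac{3\alpha}{2})'}.
\]
This bypasses the summation issue entirely; your route would require justifying that the bilinear pieces are almost orthogonal across scales in $L^{3\alpha/2}_{t,x}$, which is doable but unnecessary. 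Your treatment of the embedding via Remark~\ref{rem:gMorrey}(v) is correct.
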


The inequality is shown as in Theorem \ref{thm:rST} (see Remark \ref{rem:rSTNLS}).
The $\alpha=2$ case is given in \cite{BV,CK}.
Now, let us see how the well-posedness results are deduced.
If either $v_0 \in \hM{\alpha}{\frac{3\alpha}2}{2(\frac{3\alpha}2)'}$ or $v_0 \in \hat{L}^{\alpha}$ then
the above inequalities imply that $\|e^{-it\d_x^2} v_0\|_{L^{3\alpha}_{t,x}(I\times \R)}
\le \delta$ holds at least for small interval $I=I(v_0)$.
Then, we obtain a solution $u(t)$ on $I$ belonging to $L^{3\alpha}_{t,x}(I \times \R)$ thanks to
Proposition \ref{prop:LWP_NLS}.
Further, by applying \eqref{k}, we see that 
\[
	\norm{\Phi[v]-e^{-it\d_x^2}v_0 }_{L^\I_t(I,\hat{L}^{\alpha}_x)}
	\le C\norm{v}_{L^{3\alpha}_{t,x}(I\times \R)}^{2\alpha+1}.
\]
Finally, the linear part $e^{-it\d_x^2}v_0$ belongs to $C(\R; \hat{L}^\alpha)$
(resp. $C(\R;\hat{M}^{\alpha}_{\frac{3\alpha}2,2(\frac{3\alpha}2)'})$)
if $v_0\in\hat{L}^\alpha$ (resp. $v_0\in \hat{M}^\alpha_{\frac{3\alpha}2,2(\frac{3\alpha}2)'}$).
Thus, we obtain the following.

\begin{proposition}[Local well-posedness in $\hat{L}^{\alpha}$ and $\hM{\alpha}{\frac{3\alpha}2}{2(\frac{3\alpha}2)'}$]
Let $4/3<\alpha<4$. 

\vskip1mm
\noindent
(i) For any $u_0 \in \hat{L}^{\alpha}_x$, there exists a unique solution 
$u(t) \in C (I; \hat{L}^{\alpha})$.

\vskip1mm
\noindent
(ii) For any $u_0 \in \hM{\alpha}{\frac{3\alpha}2}{2(\frac{3\alpha}2)'}$, 
there exists a unique solution 
$u(t) \in C (I; \hM{\alpha}{\frac{3\alpha}2}{2(\frac{3\alpha}2)'})$.
Furthermore, $u(t) - e^{-it\d_x^2}u_0 \in C(I, \hat{L}^{\alpha})$ holds.
\end{proposition}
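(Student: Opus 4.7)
The plan is to reduce both (i) and (ii) to the small-data contraction result Proposition \ref{prop:LWP_NLS} by producing an interval $I\ni 0$ on which $\norm{e^{-it\d_x^2}u_0}_{L^{3\alpha}_{t,x}(I\times\R)}\le\delta$, and then to lift the regularity of the resulting $L^{3\alpha}_{t,x}$-solution to $C(I;X)$ (with $X=\hat L^\alpha$ or $X=\hM{\alpha}{\frac{3\alpha}2}{2(\frac{3\alpha}2)'}$) by means of the inhomogeneous Strichartz estimate \eqref{k}.

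First I would obtain the local smallness of the linear evolution. For $u_0\in\hat L^\alpha$, the embedding $\hat L^\alpha\hookrightarrow \hM{\alpha}{\frac{3\alpha}2}{2(\frac{3\alpha}2)'}$ together with the Stein--Tomas-type inequality of Proposition \ref{prop:rSTNLS} gives $\norm{e^{-it\d_x^2}u_0}_{L^{3\alpha}_{t,x}(\R\times\R)}<\infty$; for $u_0\in\hM{\alpha}{\frac{3\alpha}2}{2(\frac{3\alpha}2)'}$ the global bound is direct. In either case, dominated convergence yields $\norm{e^{-it\d_x^2}u_0}_{L^{3\alpha}_{t,x}(I\times\R)}\to 0$ as $|I|\downarrow 0$, so the smallness hypothesis of Proposition \ref{prop:LWP_NLS} is met on some $I=I(u_0)\ni 0$, producing a unique solution $v\in L^{3\alpha}_{t,x}(I\times\R)$ to \eqref{INLS} with $\norm{v}_{L^{3\alpha}_{t,x}(I\times\R)}\le 2\delta$.

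Next I would identify the ambient space of $v$. Applying \eqref{k} to the Duhamel term $v-e^{-it\d_x^2}u_0$ with a pair $(p_2,q_2)$ dictated by the $\hat L^\alpha$-scaling $1/\alpha'=2/p_2+1/q_2$ of Lemma \ref{Nn}(ii) and the associated $\tau_2$, H\"older's inequality gives $\norm{|\d_x|^{-\tau_2}(|v|^{2\alpha}v)}_{L^{p_2'}_xL^{q_2'}_t(I)}\lesssim \norm{v}_{L^{3\alpha}_{t,x}(I\times\R)}^{2\alpha+1}$, so that $v-e^{-it\d_x^2}u_0\in C(I;\hat L^\alpha)$. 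Combined with strong continuity of $e^{-it\d_x^2}$ on $\hat L^\alpha$ (immediate by dominated convergence on the Fourier side, where the symbol $e^{-it\xi^2}$ is unimodular and continuous in $t$), this settles (i); combined with strong continuity of $e^{-it\d_x^2}$ on $\hM{\alpha}{\frac{3\alpha}2}{2(\frac{3\alpha}2)'}$, it settles (ii), including the extra claim that $v-e^{-it\d_x^2}u_0\in C(I;\hat L^\alpha)$. Uniqueness in $C(I;X)$ reduces to uniqueness in $L^{3\alpha}_{t,x}(I\times\R)$ after possibly shrinking $I$, which is furnished by Proposition \ref{prop:LWP_NLS}.

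The most delicate point will be verifying the strong continuity of $e^{-it\d_x^2}$ on the generalized Morrey space, and the parallel fact that the $L^{3\alpha}_{t,x}(I\times\R)$-norm of the linear evolution shrinks with $|I|$ when only the Morrey norm of the datum is controlled, because Schwartz functions are not dense in $\hM{\alpha}{\frac{3\alpha}2}{2(\frac{3\alpha}2)'}$ in general. I would handle this by selecting, for each $\varepsilon>0$, a finite collection of dyadic frequency intervals $\{\tau^j_k\}$ that carries all but $\varepsilon$ of the $\ell^{2(3\alpha/2)'}_{j,k}$-mass of $\hat u_0$, applying the elementary local $L^2$ continuity of multiplication by $e^{-it\xi^2}$ on each truncated piece, and absorbing the uniform tail via the Stein--Tomas bound of Proposition \ref{prop:rSTNLS}. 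Exactly the same truncation argument simultaneously produces the required $L^{3\alpha}_{t,x}$-smallness on short intervals in case (ii).
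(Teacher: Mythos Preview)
Your proposal is correct and follows essentially the same route as the paper: obtain local smallness of the linear evolution in $L^{3\alpha}_{t,x}$ via Proposition~\ref{prop:rSTNLS}, invoke Proposition~\ref{prop:LWP_NLS} for existence and uniqueness in $L^{3\alpha}_{t,x}$, and then upgrade the Duhamel term to $C(I;\hat L^\alpha)$ through the inhomogeneous estimate~\eqref{k} with the diagonal choice $p_2=q_2=3\alpha/(\alpha-1)$ (so $\tau_2=0$ and $p_2'=q_2'=3\alpha/(2\alpha+1)$), exactly as the paper does in the paragraph preceding the proposition.

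Your extra care about strong continuity of $e^{-it\partial_x^2}$ on $\hM{\alpha}{\frac{3\alpha}2}{2(\frac{3\alpha}2)'}$ is warranted since the paper merely asserts it, but the truncation argument you propose is heavier than needed. Because the outer exponent $r=2(\tfrac{3\alpha}{2})'$ is \emph{finite}, a double application of dominated convergence suffices: for each dyadic $\tau$ one has $\|(e^{-it\xi^2}-1)\hat u_0\|_{L^{(3\alpha/2)'}(\tau)}\to 0$ as $t\to 0$, the summand is dominated termwise by $2^r\bigl(|\tau|^{2/(3\alpha)-1/\alpha}\|\hat u_0\|_{L^{(3\alpha/2)'}(\tau)}\bigr)^r$, and this sequence lies in $\ell^1_{\mathcal D}$ by hypothesis. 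The same observation also renders superfluous your second concern about $L^{3\alpha}_{t,x}$-smallness in case (ii), which you already dispatched correctly in your first paragraph by absolute continuity of the integral once Proposition~\ref{prop:rSTNLS} gives the global bound.
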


\begin{remark}
It is obvious from the proof that a similar well-posedness result holds in all 
$\hat{M}^\alpha_{\rho,\sigma}$ space satisfying
\[
	\hat{L}^{\alpha} \hookrightarrow \hat{M}^{\alpha}_{\rho,\sigma} \hookrightarrow
\hM{\alpha}{\frac{3\alpha}2}{2(\frac{3\alpha}2)'}.
\]
Notice that the $\hat{M}^{\alpha}_{2,\sigma}$ space satisfies the above relation
if $4/3 < \alpha < 2$ and $\alpha' < \sigma \le 2(\frac{3\alpha}2)'= 6\alpha/(3\alpha-2)$.
This is nothing but Theorem \ref{thm:LWP_N2}.
On the other hand,
the first assertion of the above proposition is Theorem \ref{thm:LWP_N1}.
\end{remark}

As a corollary of this proposition, 
we obtain small data scattering in $\hM{\alpha}{\frac{3\alpha}2}{2(\frac{3\alpha}2)'}$.

\begin{corollary}
Let $4/3<\alpha<4$ and . Assume that $v_0 \in \hat{L}^{\alpha}$
or $v_0 \in \hM{\alpha}{\frac{3\alpha}2}{2(\frac{3\alpha}2)'}$. 
There exists $\eps >0$ such that
if $\hM{\alpha}{\frac{3\alpha}2}{2(\frac{3\alpha}2)'} < \eps$ then $v_0 \in \mathcal{S}_{\mathrm{NLS}}$.
\end{corollary}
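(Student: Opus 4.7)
The plan is to deduce this from the refined linear estimate of Proposition~\ref{prop:rSTNLS} combined with the small-data well-posedness of Proposition~\ref{prop:LWP_NLS}, applied on the full line $I = \R$. First, Proposition~\ref{prop:rSTNLS} furnishes a constant $C_0$ with
\[
\|e^{-it\d_x^2} v_0\|_{L^{3\alpha}_{t,x}(\R\times\R)} \le C_0 \hMn{v_0}{\alpha}{\frac{3\alpha}2}{2(\frac{3\alpha}2)'}.
\]
Choosing $\eps := \delta/C_0$, where $\delta$ is the constant from Proposition~\ref{prop:LWP_NLS}, the smallness hypothesis makes the right-hand side strictly less than $\delta$, so Proposition~\ref{prop:LWP_NLS} produces a unique global solution $v$ of \eqref{INLS} with $\|v\|_{L^{3\alpha}_{t,x}(\R\times\R)} \le 2 C_0 \eps < \infty$.

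Next, to obtain scattering I would show that $\{e^{it\d_x^2} v(t)\}_{t}$ is Cauchy in (an affine copy of) $\hat{L}^\alpha$ as $t \to \pm\infty$. For $0 \le t_1 \le t_2$, the Duhamel formula gives
\[
e^{it_2 \d_x^2} v(t_2) - e^{it_1 \d_x^2} v(t_1) = i\mu \int_{t_1}^{t_2} e^{it' \d_x^2} (|v|^{2\alpha} v)(t')\,dt'.
\]
Estimating the right-hand side via the inhomogeneous bound \eqref{k} of Lemma~\ref{Nn}(ii) with the Stein--Tomas-type choice $(p_2, q_2, \tau_2) = \bigl(\tfrac{3\alpha}{\alpha-1}, \tfrac{3\alpha}{\alpha-1}, 0\bigr)$, whose H\"older dual $p_2' = q_2' = \tfrac{3\alpha}{2\alpha+1}$ matches the H\"older pairing $|v|^{2\alpha}v \in L^{3\alpha/(2\alpha+1)}_{t,x}$, I obtain
\[
\bigl\| e^{it_2 \d_x^2} v(t_2) - e^{it_1 \d_x^2} v(t_1) \bigr\|_{\hat{L}^\alpha} \le C \|v\|_{L^{3\alpha}_{t,x}((t_1, t_2) \times \R)}^{2\alpha+1},
\]
and the right-hand side tends to $0$ as $t_1, t_2 \to +\infty$ by dominated convergence, since $v \in L^{3\alpha}_{t,x}(\R\times\R)$. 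A symmetric argument treats $t \to -\infty$. When $v_0 \in \hat{L}^\alpha$ this proves $e^{it\d_x^2}v(t)$ converges in $\hat{L}^\alpha$ in both time directions, i.e.\ $v_0 \in \mathcal{S}_{\mathrm{NLS}}$; when $v_0$ only belongs to the Morrey space, the same Cauchy property says that $e^{it\d_x^2} v(t) - v_0$ converges in $\hat{L}^\alpha$, which is the natural extension of the definition of $\mathcal{S}_{\mathrm{NLS}}$ in view of the $C(\R;\hat{L}^\alpha)$-regularity of $v(t) - e^{-it\d_x^2} v_0$ already established in the preceding well-posedness proposition.

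No substantial obstacle arises: the only bookkeeping step is verifying the admissibility conditions $0 \le 1/p_2 < 1/4$ and $0 \le 1/q_2 < 1/2 - 1/p_2$ for the pair above, and both reduce to the hypothesis $4/3 < \alpha < 4$ in which Proposition~\ref{prop:LWP_NLS} already operates; there is nothing new to check beyond what the refined linear estimate provides.
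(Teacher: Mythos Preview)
Your proof is correct and follows exactly the approach the paper intends: the paper states this corollary without proof, viewing it as an immediate consequence of Propositions~\ref{prop:rSTNLS} and~\ref{prop:LWP_NLS} together with the inhomogeneous estimate \eqref{k}, and your argument is precisely the standard fill-in of that reasoning. Your verification of the admissibility of $(p_2,q_2)=\bigl(\tfrac{3\alpha}{\alpha-1},\tfrac{3\alpha}{\alpha-1}\bigr)$ and your handling of the Morrey-space case via convergence of $e^{it\d_x^2}v(t)-v_0$ in $\hat L^\alpha$ are both appropriate.
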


\subsection{Persistence of regularity for NLS}
Next we show the persistent property of solution 
to (\ref{NLS}). 

\begin{lemma}[Persistence of $L^p_xL^q_t$- and $L^p_tL^q_x$-regularities]
\label{pri} 
Let $4/3<\alpha<4$ and $s\ge0$. Let $\hat{t}
\in\rre$ and let $I$ be a time interval containing 
$\hat{t}$. 
Assume that $v\in 
C(I;\hat{L}_{x}^{\alpha}(\rre))$ 
is a solution to (\ref{NLS}) satisfying 
$\|v\|_{L^{3\alpha}_{t,x}(I\times \R)}\le M$ for some $M$.
Then, the following two assertions hold:

\vskip1mm
\noindent
(i) If $|\d_x|^{s}v(\hat{t})\in \hat{L}^{\alpha}(\rre)$
then, for any 
\[
	\tau \in 
	\begin{cases}
	(\frac1\alpha-\frac34, \frac32 - \frac2\alpha) & \text{ if } \alpha <2,\\
	[-\frac1{2\alpha}, \frac1\alpha] & \text{ if } \alpha \ge 2,
	\end{cases}
\]
there exists
a constant $C=C(\alpha, s, \tau, M)$ such that
\begin{equation} \label{pri1}
\||\d_x|^{s}v
\|_{L_{t}^{\infty}\hat{L}_{x}^{\alpha}(I\times\rre)}
+\||\d_x|^{s+\tau}v\|_{L_{x}^{p}L_{t}^{q}(I)}
\le C\||\d_x|^{s}v(\hat{t})
\|_{\hat{L}^{\alpha}},
\end{equation}
holds, where $(p,q)$ satisfies 
\begin{eqnarray}
\frac{1}{\alpha}=\frac2p+\frac1q,\qquad 
\tau=-\frac1p+\frac1q. \label{prime3}
\end{eqnarray}

\vskip1mm
\noindent
(ii) If $v(\hat{t})\in \dot{H}^s(\rre)$
then, there exists $C=C(M)$ such that
\begin{equation}\label{pri2}
\|v\|_{L_{t}^{\infty}(I;\dot{H}^{s}(\rre))}
+\| |\d_x|^s v
\|_{L_{t}^{p}(I;L_{x}^{q}(\R))} 
\le C\|v(\hat{t})\|_{\dot{H}^{s}}.
\end{equation}
holds, where $(p,q)$ satisfies 
\begin{eqnarray}
0\le\frac 1p\le\frac14,\qquad
\frac12=\frac2p+\frac1q.
\label{prime4}
\end{eqnarray}
\end{lemma}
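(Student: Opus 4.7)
The plan is to prove both (i) and (ii) by a standard partition-plus-bootstrap argument driven by the scattering norm bound $\|v\|_{L^{3\alpha}_{t,x}(I\times\R)} \le M$ together with the generalized Strichartz estimates of Lemma \ref{Nn}. By absolute continuity of the Lebesgue integral, I would partition $I$ into finitely many consecutive closed subintervals $I_1,\ldots,I_N$, sharing only endpoints, such that $\|v\|_{L^{3\alpha}_{t,x}(I_j\times\R)} \le \eta$ on each $I_j$, where $\eta>0$ is a small parameter to be fixed and $N=N(M,\eta)$. This reduces the task to a uniform bound on each $I_j$, which then propagates by gluing.

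For part (i), fix $(p,q)$ and admissible $\tau$ as in the statement and set $I_j=[t_{j-1},t_j]$. Apply $|\d_x|^{s}$ to the Duhamel formula based at $t_{j-1}$ and estimate both $\||\d_x|^s v\|_{L^\infty_t \hat{L}^\alpha_x(I_j)}$ and $\||\d_x|^{s+\tau}v\|_{L^p_x L^q_t(I_j)}$ using \eqref{eq:mixed} for the free part and \eqref{k}, \eqref{lllll} for the Duhamel term, with the dual pair $(p_2,q_2)$ chosen so that $\tau_2=-\tau$ (so that $s-\tau_2=s+\tau$ matches the scaling of the left-hand side). Control the nonlinearity by the Kenig-Ponce-Vega-type fractional Leibniz/chain rule, which combined with H\"older in the $L^{3\alpha}_{t,x}$ exponent gives
\[
\||\d_x|^{s+\tau}(|v|^{2\alpha}v)\|_{L^{p_2'}_x L^{q_2'}_t(I_j)} \le C\|v\|_{L^{3\alpha}_{t,x}(I_j\times\R)}^{2\alpha}\||\d_x|^{s+\tau}v\|_{L^p_x L^q_t(I_j)}.
\]
Choosing $\eta$ so that $C\eta^{2\alpha}\le 1/2$ and running a continuity-in-time bootstrap on $I_j$ absorbs the nonlinear term and yields \eqref{pri1} on $I_j$ with an absolute constant. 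A finite iteration across the $N$ subintervals (using that $\||\d_x|^s v(t_j)\|_{\hat{L}^\alpha}$ is controlled by the bound on $I_j$) produces the full estimate on $I$ with a constant $C(\alpha,s,\tau,M)$.

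Part (ii) follows the same template with the classical $L^2$-based Strichartz estimates (the $r=2$ case of Lemma \ref{Nn}) and $\dot{H}^s$ playing the role of $|\d_x|^s\hat{L}^\alpha$; here the admissibility condition \eqref{prime4} is the standard Schr\"odinger one, and the nonlinear estimate is the familiar $\||\d_x|^s(|v|^{2\alpha}v)\|_{L_t^{p'}L_x^{q'}}\lesssim \|v\|_{L^{3\alpha}_{t,x}}^{2\alpha}\||\d_x|^s v\|_{L_t^{p}L_x^{q}}$. The main obstacle is the nonlinear estimate on $|\d_x|^{s+\tau}(|v|^{2\alpha}v)$ when $s+\tau$ is non-integer: the nonlinearity $|z|^{2\alpha}z$ has only limited smoothness, so the fractional chain rule must be applied symmetrically in $v$ and $\bar v$. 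The hypothesis $\alpha>4/3$ provides enough regularity for this, while the precise range of $\tau$ in the statement---notably the stricter lower bound $\tau>1/\alpha-3/4$ when $\alpha<2$---is dictated by the requirement that both $(p,q)$ and the conjugate partner $(p_2,q_2)$ lie within the Kato-smoothing range $0\le 1/p_j<1/4$, $0\le 1/q_j<1/2-1/p_j$ allowed by Lemma \ref{Nn}.
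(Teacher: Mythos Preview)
Your overall scheme—partition $I$ into subintervals on which $\|v\|_{L^{3\alpha}_{t,x}}\le\eta$, bootstrap on each piece via Lemma~\ref{Nn}, then iterate—is exactly what the paper does. The issue is with your choice of conjugate pair: taking $\tau_2=-\tau$ so that the derivative orders match does \emph{not} yield an admissible $(p_2,q_2)$ for every $\tau$ in the stated range. From $2/p_2+1/q_2=1-1/\alpha$ and $-1/p_2+1/q_2=-\tau$ one gets $1/p_2=(1-1/\alpha+\tau)/3$ and $1/q_2=(1-1/\alpha-2\tau)/3$; for example at $\alpha=9/5$ and $\tau$ near the top of $(1/\alpha-3/4,\,3/2-2/\alpha)$ one finds $1/q_2<0$, and for $\alpha\ge 2$ with $\tau$ near $1/\alpha$ one finds $1/p_2\ge 1/4$. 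So your explanation of the $\tau$-range as joint admissibility of $(p,q)$ and $(p_2,q_2)$ is off: the stated range is exactly the admissibility of $(p,q)$ alone.

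The paper avoids this by not closing the bootstrap in the $(p,q,\tau)$ norm. It adds the diagonal norm $\||\d_x|^{s}v\|_{L^{3\alpha}_{t,x}(I_j)}$ to the left-hand side and takes $\tau_2=0$, so the Duhamel term is placed in $L^{3\alpha/(2\alpha+1)}_{t,x}$, which is admissible for all $4/3<\alpha<4$. The bound
\[
\||\d_x|^{s}(|v|^{2\alpha}v)\|_{L^{3\alpha/(2\alpha+1)}_{t,x}(I_j)} \le C\|v\|_{L^{3\alpha}_{t,x}(I_j)}^{2\alpha}\||\d_x|^{s}v\|_{L^{3\alpha}_{t,x}(I_j)}
\]
then closes the loop in the diagonal norm, and the mixed term $\||\d_x|^{s+\tau}v\|_{L^{p}_xL^{q}_t}$ is carried along as a passenger via \eqref{lllll} with $\tau_1=\tau$, $\tau_2=0$. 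This also sidesteps the fractional chain rule in genuinely mixed $L^{p}_xL^{q}_t$ spaces that your version would require.
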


\begin{proof}
Without loss of generality, we may assume 
that $\hat{t}=0$ and $\inf I=0$. 
We divide the time interval $I$ into 
$N$ subintervals such that 
\begin{eqnarray*}
N\le1+\left(\frac{M}{\eta}\right)^{3\alpha},
\qquad I=\bigcup_{j=1}^{N}I_{j},\qquad 
I_{j}=[t_{j-1},t_{j}]
\end{eqnarray*}
with 
$\|v\|_{L^{3\alpha}_{t,x}(I_{j}\times \R)}\le\eta$
for any $1\le j\le N$, where $\eta$ is fixed later. 
Notice that such subdivision exists by the argument 
similar to the proof of Proposition \ref{lsy}. 

We shall prove \eqref{pri1}. To this end, we show
\begin{eqnarray}
\qquad\||\d_x|^{s}v
\|_{L_{t}^{\infty}\hat{L}_{x}^{\alpha}
(I_{j}\times\rre)}
+\||\d_x|^{s+\tau}v
\|_{L_{x}^{p}L_{t}^{q}(I_{j})}
\le C\||\d_x|^{s}v(t_{j})
\|_{\hat{L}^{\alpha}}
\label{r10}
\end{eqnarray}
for any $1\le j\le N$, where $p,q$ satisfy (\ref{prime3}). 
We first consider the case $j=1$. 
By Lemma \ref{Nn}, we have
\begin{eqnarray*}
\lefteqn{\||\d_x|^{s}v
\|_{L_{t}^{\infty}\hat{L}_{x}^{\alpha}
(I_{j}\times\rre)}
+\||\d_x|^{s+\tau}v
\|_{L_{x}^{p}L_{t}^{q}(I_{j})}+
\||\d_x|^{s}v\|_{L^{3\alpha}_{t,x}(I_{j}\times \R)}
}\\
&\le&C\||\d_x|^{s}v(0)
\|_{\hat{L}^{\alpha}}
+C\||\d_x|^{s}(|v|^{2\alpha}v)
\|_{L_{t,x}^{\frac{3\alpha}{2\alpha+1}}(I_{j})}\\
&\le&C\||\d_x|^{s}v(0)
\|_{\hat{L}^{\alpha}}
+C\|v\|_{L^{3\alpha}_{t,x}(I_{j}\times \R)}^{2\alpha}
\||\d_x|^{s}v\|_{L^{3\alpha}_{t,x}(I_{j}\times \R)}\\
&\le&C\||\d_x|^{s}v(0)
\|_{\hat{L}^{\alpha}}
+C\eta^{2\alpha}
\||\d_x|^{s}v\|_{L^{3\alpha}_{t,x}(I_{j}\times \R)}.
\end{eqnarray*}
Choosing $\eta$ sufficiently small so that 
$C\eta^{2\alpha}<1$, we have (\ref{r10}) for 
$j=1$. In particular, we obtain  
$\||\d_x|^{s}v(t_{1})\|_{\hat{L}^{\alpha}}\le C$. 
Hence a similar argument as above we have 
(\ref{r10}) for $j=2$. Repeating this argument, 
we obtain (\ref{r10}) for any $1\le j\le N$. 
Summing the inequalities (\ref{r10}) over all 
subintervals, we have (\ref{pri1}). 

The proof of \eqref{pri2} is done in a similar way.
We use (usual) Strichartz's estimates instead.
\end{proof}

\subsection{Stability for NLS}

In this section we consider the 
nonlinear Schr\"{o}dinger equation 
with the perturbation:
\begin{eqnarray}
\left\{
\begin{array}{l}
\displaystyle{
i\pt_t\tilde{v}-\pt_x^2\tilde{v}=
-\mu|\tilde{v}|^{2\alpha}\tilde{v}+e,
\qquad t,x\in\rre,}\\
\displaystyle{\tilde{v}(\hat{t},x)=\tilde{v}_{0}(x),
\qquad\qquad\qquad\qquad \   x\in\rre}
\end{array}
\right.
\label{NS}
\end{eqnarray}
with the perturbation $e$ small in a suitable 
sense and the initial data $\tilde{v}_{0}$ 
close to $v_{0}$. 

\vskip2mm

\begin{proposition}[Long time stability 
for NLS]\label{lsys}
Assume $4/3<\alpha<4$ and $\hat{t}\in\rre$. 
Let $I$ be a time interval containing $\hat{t}$ 
and let $\tilde{v}$ be a solution to (\ref{NS}) 
on $I\times\rre$ for some function $e$. 
Assume that $\tilde{v}$ satisfies 
\[
	\|\tilde{v}\|_{L^{3\alpha}_{t,x}(I\times \R)}\le M,
\]
for some 
$M>0$. 
Then there exists 
$\varepsilon_{1}=\varepsilon_1(M)
>0$ 
such that if $v(\hat{t})$ and $\tilde{v}(\hat{t})$ satisfy
\begin{eqnarray*}
\|e^{-i(t-\hat{t})\pt_{x}^{2}}
(v(\hat{t})-\tilde{v}(\hat{t}))
\|_{L^{3\alpha}_{t,x}(I\times \R)}+
\|e\|_{L_{t,x}^{\frac{3\alpha}{2\alpha + 1}}(I\times\rre)}
\le\varepsilon
\end{eqnarray*}
and $0<\varepsilon<\varepsilon_{1}$, 
then there exists a solution 
$v\in 
{L^{3\alpha}_{t,x}(I\times \R)}$ 
to (\ref{NLS}) on 
$I\times\rre$ satisfies 
\begin{eqnarray}
\|v-\tilde{v}\|_{L^{3\alpha}_{t,x}(I\times \R)}
&\le&C\varepsilon,\label{tt10s}\\
\|
|v|^{2\alpha}v
-|\tilde{v}|^{2\alpha}\tilde{v}
\|_{L_{t,x}^{\frac{3\alpha}{2\alpha+1}}
(I\times\rre)}&\le&C\varepsilon,\label{tt11s}
\end{eqnarray}
where the constant $C$ depends on 
$L$. If, further, if $v(\hat{t})-\tilde{v}(\hat{t}) \in \hat{L}^{\alpha}$ then
\begin{equation}\label{tt12s}
	\|v -\tilde{v}\|_{L^\I(I; \hat{L}^{\alpha}_x)}
	\le \| v(\hat{t})-\tilde{v}(\hat{t}) \|_{\hat{L}^{\alpha}_x}+ C\varepsilon.
\end{equation}
\end{proposition}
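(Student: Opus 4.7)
The plan is to mirror the two-step approach used for the gKdV equation in Lemma \ref{ssy} and Proposition \ref{lsy}. First, I would establish a short-time stability companion: if $\|\tilde v\|_{L^{3\alpha}_{t,x}(I\times\R)}\le\varepsilon_0$ for a suitably small $\varepsilon_0$ depending only on $\alpha$, and if the hypotheses on the difference of data and on $e$ hold with $0<\varepsilon<\varepsilon_0$, then the desired bounds \eqref{tt10s}--\eqref{tt12s} follow. Setting $w=v-\tilde v$, the Duhamel formula for $w$ combined with the inhomogeneous Strichartz estimate \eqref{lllll} with $(p_1,q_1)=(3\alpha,3\alpha)$ (so $\tau_1=0$) and a dual pair yielding $L^{\frac{3\alpha}{2\alpha+1}}_{t,x}$ on the right, together with the pointwise bound $||v|^{2\alpha}v-|\tilde v|^{2\alpha}\tilde v|\le C(|v|^{2\alpha}+|\tilde v|^{2\alpha})|w|$ and H\"older, gives
\[
\|w\|_{L^{3\alpha}_{t,x}(I)}\le C\varepsilon+C(\varepsilon_0+\|w\|_{L^{3\alpha}_{t,x}(I)})^{2\alpha}\|w\|_{L^{3\alpha}_{t,x}(I)}.
\]
A standard continuity argument then yields $\|w\|_{L^{3\alpha}_{t,x}(I)}\le C\varepsilon$ provided $\varepsilon_0$ is chosen so that $C\varepsilon_0^{2\alpha}<1$, and feeding this back into the H\"older bound produces \eqref{tt11s}. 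The persistence bound \eqref{tt12s} comes from applying the $L^\I_t\hat L^\alpha_x$ estimate \eqref{k} to the same Duhamel identity, which gives $\|w\|_{L^\I_t\hat L^\alpha_x(I)}\le\|w(\hat t)\|_{\hat L^\alpha}+C\varepsilon$.

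Second, to pass from short-time to long-time, I would partition $I=\bigcup_{j=1}^N I_j$ into consecutive subintervals on each of which $\|\tilde v\|_{L^{3\alpha}_{t,x}(I_j\times\R)}\le\varepsilon_0$. Since $L^{3\alpha}_{t,x}$ is a single (non-mixed) Lebesgue norm, the number $N$ of such pieces is controlled by $N\le 1+(M/\varepsilon_0)^{3\alpha}$, which is considerably simpler than the mixed-norm book-keeping that appeared for gKdV. On each $I_j$ I would like to apply the short-time result, so I must verify that the free evolution of $w(t_{j-1})=v(t_{j-1})-\tilde v(t_{j-1})$ has small $L^{3\alpha}_{t,x}(I_j)$ norm. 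Using the Duhamel representation of $e^{-i(t-t_{j-1})\d_x^2}w(t_{j-1})$ in terms of $w(\hat t)$, the accumulated nonlinear differences on $I_1,\ldots,I_{j-1}$, and $e$, and then applying \eqref{lllll} on $I$ and \eqref{tt11s} on each earlier $I_k$, I obtain an inductive inequality of the form
\[
\eta_j\le C_1\varepsilon+C_1\sum_{k=1}^{j-1}\eta_k,\qquad \eta_j:=\|e^{-i(t-t_{j-1})\d_x^2}w(t_{j-1})\|_{L^{3\alpha}_{t,x}(I_j)}.
\]
Choosing $\beta\ge C_0C_1+1$ and $\eta_j:=\beta^{j-1}C_1\varepsilon$ exactly as in the proof of Proposition \ref{lsy}, and defining $\varepsilon_1(M):=C_1^{-1}\beta^{-N+1}\varepsilon_0$, an induction on $j$ shows that the short-time hypotheses are met on each $I_j$, yielding \eqref{tt10s} and \eqref{tt11s} by summation. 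Finally, \eqref{tt12s} is obtained by applying \eqref{k} to the difference $w$ on the whole interval $I$ and using the already-established bound on $||v|^{2\alpha}v-|\tilde v|^{2\alpha}\tilde v|$ in $L^{\frac{3\alpha}{2\alpha+1}}_{t,x}(I)$.

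The only delicate point, and the step where I expect to spend the most care, is the verification of the inductive step: when propagating the error from $I_1,\ldots,I_{j-1}$ into $I_j$ via the free Schr\"odinger flow on $I$, one needs that the dual-Strichartz estimate \eqref{lllll} preserves the partition, i.e.\ the bound on $\|e^{-i(t-t_{j-1})\d_x^2}w(t_{j-1})\|_{L^{3\alpha}_{t,x}(I_j)}$ is controlled by the $L^{\frac{3\alpha}{2\alpha+1}}_{t,x}$ nonlinear mass on $[\hat t,t_{j-1}]$ only, not on the full $I$. This is legitimate because the relevant integral is over $t'\in[\hat t,t_{j-1}]$, and \eqref{lllll} can be applied on the enlarged interval $[\hat t,\sup I]$ after cutting off the source in time, but the bookkeeping must be done carefully so that the constants depend only on $\alpha$ (not on $I$). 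The rest of the argument is a straightforward transcription of the gKdV proof, with $S(I)\cap L(I)$ replaced throughout by $L^{3\alpha}_{t,x}(I\times\R)$ and $N(I)$ replaced by $L^{\frac{3\alpha}{2\alpha+1}}_{t,x}(I\times\R)$.
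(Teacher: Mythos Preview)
Your proposal is correct and follows exactly the approach the paper indicates: the authors write only that ``the proof follows from the argument similar to the proof of Proposition~\ref{lsy}'' and omit the details, so your transcription of the short-time/long-time iteration with $L^{3\alpha}_{t,x}$ replacing $S\cap L$ and $L^{\frac{3\alpha}{2\alpha+1}}_{t,x}$ replacing $N$ is precisely what is intended. The simplification you note---that the non-mixed norm makes the subinterval count $N\le 1+(M/\varepsilon_0)^{3\alpha}$ immediate---is the only real difference from the gKdV case.
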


\begin{proof}
The proof follows from the argument similar to the proof of Proposition \ref{lsy}
or as in \cite{M1}.
We omit the detail. 
\end{proof}

%
%
\section{Embedding NLS into gKdV}\label{sec:ENK}

In this section, we prove  
Theorem \ref{thm:ENK}. 
As we mentioned in Introduction, 
we prove existence of a global solution $u_{n}$ to (\ref{gKdV}) 
by constructing approximating solution via the solution to 
the one dimensional nonlinear 
Schr\"{o}dinger equation 
\begin{equation}\label{NLS2}
	i\pt_tv-\pt_x^2v=-\mu C_0 |v|^{2\alpha}v,
\end{equation}
where 
\[
	C_0 = \frac{2\Gamma(\alpha+\frac{3}2)}{3\sqrt{\pi} \Gamma(\alpha+2)}.
\]
With this constant, assumption \eqref{asmp:gKdV_NLS} is written as 
$$ d_{+,\mathrm{gKdV}} < 2^{1-\frac1\sigma}(C_0)^{-\frac1{2\alpha}} d_{\mathrm{NLS}}.$$
Let $v$ be a solution to \eqref{NLS2} with
the following conditions;
\begin{eqnarray}
\left\{
\begin{array}{l}
\displaystyle{
v(T_{0})=e^{-iT_{0}\pt_{x}^{2}}\phi
\qquad\qquad\qquad\qquad\qquad\quad\text{if}\ |T_{0}|<\infty,
}\\
\displaystyle{\lim_{t\to T_{0}}\|v(t)-e^{-it\pt_{x}^{2}}\phi
\|_{\hat{L}_{x}^{\alpha}}=0\qquad\qquad
\qquad\text{if}\ T_{0}=\pm\infty}.
\end{array}
\right.\label{ic11}
\end{eqnarray}
We now claim that $v$ global and scatters for both time direction.
Let us begin with the case $T_0\in\R$.
Remark that if $v$ solves \eqref{NLS2} then $(C_0)^{\frac1{2\alpha}} v$ solves \eqref{NLS}.
Hence, assumption of the theorem yields
\[
	\hMn{ (C_0)^{\frac1{2\alpha}} \phi }{\alpha}{2}{\sigma} < 2^{1-\frac1\sigma} (C_0)^{\frac1{2\alpha}} d_{+,\mathrm{gKdV}} < d_{\mathrm{NLS}}.
\]
Since $e^{-t\d_x^3}$ is isometry in $\hat{M}^{\alpha}_{2,\sigma}$, $(C_0)^{\frac1{2\alpha}} e^{-T_0\d_x^3}\phi \in \mathcal{S}_{+,\mathrm{NLS}}
\cap \mathcal{S}_{-,\mathrm{NLS}}$
and so $v$ scatters for both time direction.
Next, if $T_0=\I$ then by definition $v$ scatters for positive time direction and
$\hMn{v(t)}{\alpha}{2}{\sigma} \to \hMn{\phi}{\alpha}{2}{\sigma}$ as $t\to\I$.
Therefore, we can take $T\in\R$ from maximal existence time of $v$ so that
$\hMn{(C_0)^{1/2\alpha} v(T)}{\alpha}{2}{\sigma}< d_{\mathrm{NLS}}$.
This implies that $v$ scatters also for negative time.
The case $T_0=-\I$ is handled in the same way.
Thus,
\begin{eqnarray*}
v\in C(\rre;\hat{L}_{x}^{\alpha}(\rre))
\cap L_{t,x}^{3\alpha}(\rre\times\rre).
\end{eqnarray*}
We let $v_{\pm}\in
\hat{L}_{x}^{\alpha}$ be scattering states
such that 
\begin{eqnarray}
\lim_{T\to\infty}\|v(\pm T)-e^{\mp iT\pt_{x}^{2}}v_{\pm}
\|_{\hat{L}_{x}^{\alpha}}=0.
\label{sca}
\end{eqnarray}
We further introduce $v_n$ as a solution of \eqref{NLS2}
with
\begin{equation}
\left\{
\begin{aligned}
&v_n (T_{0})=P_{|\xi| \le \xi_n^{1/4}} e^{-iT_{0}\pt_{x}^{2}}\phi
&& \text{if}\ |T_{0}|<\infty,
\\
&\lim_{t\to T_{0}}\|v_n(t)- P_{|\xi| \le \xi_n^{1/4}} e^{-it\pt_{x}^{2}}\phi
\|_{\hat{L}_{x}^{\alpha}}=0
&& \text{if}\ T_{0}=\pm\infty,
\end{aligned}
\right.\label{ic12}
\end{equation}
where $P_{|\xi|\le a}={{\mathcal F}}^{-1}\varphi(\xi){{\mathcal F}}$ 
with even bump function $\varphi$ satisfying $supp\varphi\subset[-a,a]$. 
The long time stability for NLS (Proposition 
\ref{lsys}) yields
\begin{eqnarray}
v_{n}\to v\ \text{in}\ 
C(\rre;\hat{L}_{x}^{\alpha}(\rre))
\cap L_{t,x}^{3\alpha}(\rre\times\rre).
\label{con}
\end{eqnarray}
In particular, $v_{n}$ satisfies the uniform (in $n$) space-time 
bound
\begin{eqnarray*}
\|v_n\|_{L_{t,x}^{3\alpha}(\rre\times\rre)} 
\le C(\phi).
\end{eqnarray*} 
By the persistence of regularity for (\ref{NLS}) 
(Lemma \ref{pri}), we obtain
\begin{eqnarray}
\||\d_{x}|^{s}v_{n}\|_{L_{t}^{\infty}
\hat{L}_{x}^{\alpha}}
+\||\d_{x}|^{s+\tau}v
\|_{L_{x}^{p}L_{t}^{q}}
\le C\xi_{n}^{s/4}\label{43}
\end{eqnarray}
for any $s\ge0$, where $1/\alpha-3/4<\tau<3/2-2/\alpha$ 
and $(p,q)$ satisfies (\ref{prime3}). 
Further, since $\|P_{|\xi| \le \xi_n^{1/4}} e^{-iT_{0}\pt_{x}^{2}}\phi\|_{H^s_x} =
O(\xi_n^{\frac{s}4 -\frac18+\frac1{4\alpha}})$
for any $s\ge 0$, it follows that
\begin{eqnarray}
	\| |\d_x|^{s} v_n \|_{L^p_t(\R, L^q_x)} 
	&=& O(\xi_n^{\frac{s}4 -\frac18+\frac1{4\alpha}}),\label{eq:432}\\
	\| |\d_x|^{s} \d_t v_n \|_{L^p_t(\R, L^q_x)} 
	&=& O(\xi_n^{\frac{s+2}4 -\frac18+\frac1{4\alpha}})\nonumber
\end{eqnarray}
for any Schr\"odinger admissible pair $(p,q)$ 
(i.e., $(p,q)$ satisfies (\ref{prime4})) and $0\le s < 2\alpha $.

The convergence \eqref{con} gives us 
\begin{equation}\label{451}
\sup_n \|v_n \|_{L(|t|>T)} \to 0
\end{equation}
as $T\to\infty$.
Similarly, by \eqref{sca} and \eqref{con},
\begin{equation}\label{452}
\sup_n \|v_{n}(\pm T)-e^{\mp iT\pt_{x}^{2}}v_{\pm}
\|_{\hat{L}_{x}^{\alpha}}
\to0
\end{equation} 
as $T\to\infty$.

Next, we construct a global solution 
$u_{n}$ to (\ref{gKdV}). As in \cite{KKSV}, we 
introduce an approximate solution $\tilde{u}$ to (\ref{gKdV}):
\begin{equation}
\label{ap}
\tilde{u}_{n}(t,x):=\left\{
\begin{aligned}
&Re[e^{-ix\xi_{n}-it\xi_{n}^{3}}
v_{n}(-3\xi_{n}t,x+3\xi_{n}^{2}t)],
&&\text{if}\  |t| \le \frac{T}{3\xi_{n}},\\
&e^{-(t-\frac{T}{3\xi_{n}})\pt_{x}^{3}}
Re[e^{-ix\xi_{n}-\frac{i}{3}T\xi_{n}^{2}}
v_{n}(-T,x+\xi_{n}T)],
&&\text{if}\ t>\frac{T}{3\xi_{n}}, \\
&e^{-(t+\frac{T}{3\xi_{n}})\pt_{x}^{3}}
Re[e^{-ix\xi_{n}+\frac{i}{3}T\xi_{n}^{2}}
v_{n}(T,x-\xi_{n}T)],
&& \text{if}\ t<-\frac{T}{3\xi_{n}},
\end{aligned}
\right.
\end{equation}
where $T$ is a large parameter independent of $n$ 
which will be chosen later.

\begin{lemma}[Space-time bound for $\tilde{u}_{n}$] 
\label{qa}
Assume $5/3<\alpha<2$. We have
\begin{eqnarray}
\|\tilde{u}_{n}
\|_{L_{t}^{\infty}(\rre;\hat{L}_{x}^{\alpha})}
+\|\tilde{u}_{n}\|_{L(\rre)}
+\|\tilde{u}_{n}\|_{S(\rre)}
\le C,\label{qaz}
\end{eqnarray}
where $C$ is a positive constant independent of $T$ 
and $n$.
\end{lemma}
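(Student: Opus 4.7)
\vspace{2mm}
\noindent\emph{Proof proposal for Lemma \ref{qa}.} The plan is to split the time line into the outer regions $|t|>T/(3\xi_n)$ and the middle region $|t|\le T/(3\xi_n)$ and estimate $\tilde u_n$ on each separately. On the outer regions, $\tilde u_n$ is by construction the Airy evolution of the real-valued datum $\mathrm{Re}[e^{-ix\xi_n\mp iT\xi_n^2/3}v_n(\mp T,x\pm\xi_n T)]$, whose $\hat L^\alpha$-norm is dominated by $\|v_n(\mp T)\|_{\hat L^\alpha}$ since $P(\xi_n)$, spatial translations, and taking the real part are $\hat L^\alpha$-contractions. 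Thanks to the convergence \eqref{con} of $v_n$ to the global NLS solution $v$ and the scattering identity \eqref{452}, this datum is bounded in $\hat L^\alpha$ uniformly in $n$ and $T$, so Proposition \ref{prop:ho_inho}(i) directly controls all three norms on the outer regions.

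On the middle region, write $\tilde u_n=\mathrm{Re}(V_n)$ with $V_n(t,x)=e^{-ix\xi_n-it\xi_n^3}v_n(-3\xi_n t,x+3\xi_n^2 t)$ and introduce Galilean coordinates $\tau=-3\xi_n t$, $y=x+3\xi_n^2t$ (Jacobian $3\xi_n$). The $L^\infty_t\hat L^\alpha_x$ bound is immediate: at each $t$, $P(\xi_n)$, $T(-3\xi_n^2t)$, and the scalar phase $e^{-it\xi_n^3}$ are $\hat L^\alpha$-isometries, so $\|\tilde u_n(t)\|_{\hat L^\alpha_x}\le\|v_n(-3\xi_n t)\|_{\hat L^\alpha_x}\le C$. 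The $L(\mathbb{R})$ bound follows from the change of variables in the non-mixed $L^{3\alpha}_{t,x}$ (giving a Jacobian $(3\xi_n)^{-1/(3\alpha)}$) together with the commutation $|\d_x|^{1/(3\alpha)}P(\xi_n)=P(\xi_n)(|\d_y+\xi_n|^{1/(3\alpha)})$ and the pointwise symbol bound $|\eta+\xi_n|^{1/(3\alpha)}\le C(\xi_n^{1/(3\alpha)}+|\eta|^{1/(3\alpha)})$: the leading $\xi_n^{1/(3\alpha)}$ exactly cancels the Jacobian and leaves $\|v_n\|_{L^{3\alpha}_{\tau,y}}$, which is uniformly bounded by Lemma \ref{pri}(i), while the residual $\||\d_y|^{1/(3\alpha)}v_n\|_{L^{3\alpha}_{\tau,y}}$ picks up only $\xi_n^{1/(12\alpha)}$ from \eqref{43} (since $v_n(T_0)$ is Fourier-localized to $|\xi|\le\xi_n^{1/4}$), which is crushed by the Jacobian.

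The hardest part, and the main obstacle, is the mixed-norm $S(\mathbb{R})=L^{5\alpha/2}_xL^{5\alpha}_t$ estimate on the middle region. The Galilean shift $x\mapsto x-\xi_n\tau$ does \emph{not} preserve mixed Lebesgue norms with $p_x\ne p_t$, and Minkowski's inequality goes the wrong direction, so the straightforward change-of-variables argument that worked for $L(\mathbb{R})$ fails here. To circumvent this I will exploit the cancellation built into the choice of the constant $C_0$: a direct computation using the NLS equation satisfied by $v_n$ gives
\[
(\d_t+\d_x^3)\tilde u_n-\mu\d_x(|\tilde u_n|^{2\alpha}\tilde u_n)=F_n,
\]
where the $O(\xi_n)$-term produced by the phase $e^{-ix\xi_n-it\xi_n^3}$ is cancelled to leading order against the gKdV nonlinearity via the identity $(2\alpha+1)C_1=3C_0$ obtained from the Fourier expansion $|\cos\theta|^{2\alpha}\sin\theta=\sum_{k\ge1}C_k\sin(k\theta)$. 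The residual $F_n$ consists of (i) the higher harmonics $|v_n|^{2\alpha+1}\sin(k\varphi)$, $k\ge2$, which carry fast oscillation $e^{-ikx\xi_n}$ and hence contribute $o(1)$ in $N(\mathbb{R})$ by a non-stationary-phase / integration-by-parts argument, and (ii) lower-order Schrödinger-type corrections involving $\d_y v_n$, $\d_y^2 v_n$, $\d_y^3 v_n$ which are controlled by Lemma \ref{pri} combined with the $\xi_n^{1/4}$ Fourier localization.

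Putting this into Duhamel's formula, $\tilde u_n$ satisfies the gKdV equation with forcing $F_n$ on the middle region and matches a linear Airy evolution at the endpoints, so Proposition \ref{prop:ho_inho} applied to the inhomogeneous Airy equation gives
\[
\|\tilde u_n\|_{S\cap L(\mathbb{R})}\le C\|\tilde u_n(0)\|_{\hat L^\alpha}+C\|\tilde u_n\|_{S(\mathbb{R})}^{2\alpha}\|\tilde u_n\|_{L(\mathbb{R})}+C\||\d_x|^{-1}F_n\|_{N(\mathbb{R})},
\]
and a standard continuity (bootstrap) argument, using the already-established $L(\mathbb{R})$ bound to start, closes the $S(\mathbb{R})$ estimate uniformly in $T$ and $n$. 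The subtlest step will be verifying that $\||\d_x|^{-1}F_n\|_{N(\mathbb{R})}$ does not degenerate as $T$ grows; this requires controlling the higher harmonics by integration against the dispersive phase and the sub-leading derivative terms via persistence of regularity for $v_n$ in the appropriate Lemma \ref{pri} norms, both of which are uniform in $T$ because $v\in L^{3\alpha}_{t,x}(\mathbb{R}\times\mathbb{R})$.
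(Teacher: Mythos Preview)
Your treatment of the outer regions $|t|>T/(3\xi_n)$ and of the $L^\infty_t\hat L^\alpha_x$ bound is correct and matches the paper. For the $L$ bound on the middle region the paper argues by the interpolation
\[
\|\tilde u_n\|_{L}\le C\|\tilde u_n\|_{L^{3\alpha}_{t,x}}^{1-\frac1{3\alpha}}\|\d_x\tilde u_n\|_{L^{3\alpha}_{t,x}}^{\frac1{3\alpha}}
\]
and evaluates both \emph{non-mixed} factors via the area-preserving Galilean change of variables and \eqref{43}; your symbol-splitting is a minor variant of this.

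The divergence is entirely in the $S$ bound on the middle interval, and here your proposal has a genuine gap. The paper treats this bound as \emph{direct}: it asserts in one line that \eqref{43} yields $\|\tilde u_n\|_{S(|t|\le T/(3\xi_n))}\le C$, with no appeal to the gKdV equation, to $F_n$, or to any bootstrap. Your route---write $\tilde u_n$ as a gKdV solution with forcing $F_n$ and close by continuity---fails as stated for two reasons. First, your bootstrap inequality reads $\|\tilde u_n\|_S\le C\|\tilde u_n(0)\|_{\hat L^\alpha}+C\|\tilde u_n\|_L\,\|\tilde u_n\|_S^{2\alpha}+C\||\d_x|^{-1}F_n\|_N$, and neither $\|\tilde u_n(0)\|_{\hat L^\alpha}\sim\|\phi\|_{\hat L^\alpha}$ nor $\|\tilde u_n\|_L$ is small; the map $X\mapsto A+B X^{2\alpha}$ with $A,B=O(1)$ need not have a small fixed point, so a ``standard continuity argument'' does not close without a further subdivision-and-iteration scheme \`a la Proposition~\ref{lsy}. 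Second, and more seriously, $\||\d_x|^{-1}F_n\|_N$ is \emph{not} obviously bounded uniformly in $n$: the higher-harmonic piece $R_n^3$ in \eqref{g} carries an explicit factor $\xi_n$, and the paper does not kill it by ``integration by parts in $N$'' but rather by constructing the explicit corrector $e_{n,1}$ in Lemma~\ref{err} that absorbs the fast oscillation at the level of the Duhamel integral. The quantity you need is exactly what Lemma~\ref{gas} controls, and the paper's proof of Lemma~\ref{gas} \emph{uses} Lemma~\ref{qa}; so your scheme inverts the logical order and is circular unless you redo those forcing estimates independently.

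In short: you are right that the shear kills naive change of variables for the mixed $L^{5\alpha/2}_xL^{5\alpha}_t$ norm, but the cure is not a nonlinear bootstrap. The paper's intended direct argument is of the same flavor as its later $S$-estimate for $e_{n,1}$ in Lemma~\ref{err}: use a one-dimensional Gagliardo--Nirenberg inequality in $t$ to trade $L^{5\alpha}_t$ for $L^{5\alpha/2}_t$ plus a $t$-derivative, so that both resulting factors live in the non-mixed $L^{5\alpha/2}_{t,x}$ where the shear is measure-preserving, and then balance the $\xi_n$ coming from $\d_\tau v_n-\xi_n\d_y v_n$ against the Jacobian $\xi_n^{-1/(5\alpha)}$ using \eqref{43} and \eqref{eq:432}.
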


\begin{proof}
We split the interval of integrals into 
$|t|>T/(3\xi_{n})$ and $|t|\le T/(3\xi_{n})$. 
In the interval $|t|>T/(3\xi_{n})$, each norms 
appearing in the left hand side of (\ref{qaz}) 
are uniformly bounded in $n$ by the homogenous estimate 
for Airy equation (Proposition \ref{ho}) 
and the uniform space-time bound for $v_{n}$ (\ref{43}). 
In the interval $|t|\le T/(3\xi_{n})$, the space-time bound for $v_{n}$ 
(\ref{43}) yields 
\begin{eqnarray*}
\|\tilde{u}_{n}
\|_{L_{t}^{\infty}([-\frac{T}{3\xi_{n}},\frac{T}{3\xi_{n}}];\hat{L}_{x}^{\alpha})}
+
\|\tilde{u}_{n}
\|_{S(|t|\le\frac{T}{3\xi_{n}})}
\le C,
\end{eqnarray*}
where $C$ is a positive constant independent of $T$ and $n$. 
Combining the interpolation and (\ref{43}), we see
\begin{align*}
\|\tilde{u}_{n}
\|_{L(|t|\le\frac{T}{3\xi_{n}})}
\le{}&C\|\tilde{u}_{n}
\|_{L^{3\alpha}_{t,x}(|t|\le{T}/{3\xi_{n}})}^{1-\frac1{3\alpha}}
\|\pt_{x}\tilde{u}_{n}
\|_{L^{3\alpha}_{t,x}(|t|\le{T}/{3\xi_{n}})}^{\frac1{3\alpha}} \\
\le{}&C\xi_{n}^{-\frac{1}{3\alpha}(1-\frac1{3\alpha})
+(1-\frac{1}{3\alpha})\frac1{3\alpha}}=C.
\end{align*}
Collecting the above inequalities, we obtain (\ref{qaz}). 
\end{proof}

\vskip2mm

\begin{lemma}\label{osi} 
Assume $5/3<\alpha<2$. Let $\phi\in
\hat{L}_{x}^{\alpha}$ 
and let $\{\xi_{n}\}_{n\ge1}\subset(0,\infty)$ 
such that $\xi_{n}\to\infty$ as $n\to\infty$. 
Then, we have
\begin{equation}\label{osii}
\begin{aligned}
&\lim_{T\to\infty}
\lim_{n\to\infty}
\|
e^{-t\pt_{x}^{3}}[e^{-ix\xi_{n}}e^{-iT\pt_{x}^{2}}\phi]
\|_{L([0,\infty))}
=0, \\
&\lim_{T\to-\infty}
\lim_{n\to\infty}
\|
e^{-t\pt_{x}^{3}}[e^{-ix\xi_{n}}e^{-iT\pt_{x}^{2}}\phi]
\|_{L((-\I,0])}
=0.
\end{aligned}
\end{equation}
\end{lemma}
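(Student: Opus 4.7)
The plan is to exploit the intertwining identity~\eqref{eq:GtA} to rewrite $A(t)P(\xi_n)S(-T)\phi$ as a Schr\"odinger-like evolution after a change of variables, and then to use the resulting Jacobian factor $\xi_n^{-1}$ to produce the smallness. By density it suffices to establish the first identity for $\phi$ with $\hat{\phi}\in C_c^\infty(\R)$: for general $\phi\in\hat{L}^\alpha$, splitting $\phi=\phi_1+\phi_2$ with $\hat{\phi_1}\in C_c^\infty$ and $\|\phi_2\|_{\hat{L}^\alpha}<\varepsilon$, the combination of Theorem~\ref{thm:rST}, Lemma~\ref{lem:scale} applied to $P(\xi_n)$ and $S(-T)$ (both multiplier/phase-like on $\hat{M}^\alpha_{2,\sigma}$), and the embedding $\hat{L}^\alpha\hookrightarrow\hat{M}^\alpha_{2,\sigma}$ from Remark~\ref{rem:gMorrey} bound the $\phi_2$-contribution by $C\|\phi_2\|_{\hat{M}^\alpha_{2,\sigma}}\le C\varepsilon$ uniformly in $n$ and $T$; letting $\varepsilon\to 0$ then handles the tail.

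For such $\phi$, I would apply~\eqref{eq:GtA} together with the commutativity of $S$ and $A$ (both are Fourier multipliers) to write
\[
	A(t)P(\xi_n)S(-T)\phi = e^{-it\xi_n^3}P(\xi_n)T(-3\xi_n^2 t)S(3\xi_n t-T)A(t)\phi.
\]
Taking modulus, which removes $P(\xi_n)$ and the global phase, using that $|\d_x|^{1/(3\alpha)}$ commutes through every deformation, and performing the change of variables $(t,x)\mapsto(s,y)=(3\xi_n t-T,\,x+3\xi_n^2 t)$ of Jacobian $3\xi_n$, I obtain
\[
	\|A(\cdot)P(\xi_n)S(-T)\phi\|_{L(\R_+)}^{3\alpha}
	= \frac{1}{3\xi_n}\iint_{[-T,\I)\times\R}\bigl|S(s)A(r_n(s))|\d_x|^{\frac{1}{3\alpha}}\phi(y)\bigr|^{3\alpha}\,dy\,ds,
\]
where $r_n(s):=(s+T)/(3\xi_n)$.

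It therefore suffices, for each fixed $T$, to bound the double integral uniformly in $n$. For $\hat{\phi}\in C_c^\infty(\R)$ supported in $[-K,K]$, the integrand admits the oscillatory representation
\[
	F_n(s,y) = \frac{1}{2\pi}\int_{-K}^{K} e^{iy\eta-is\eta^2+i(s+T)\eta^3/(3\xi_n)}|\eta|^{\frac{1}{3\alpha}}\hat{\phi}(\eta)\,d\eta,
\]
whose phase has second $\eta$-derivative $-2s+2(s+T)\eta/\xi_n$; for $\xi_n$ large (depending on $T$) this is comparable to $-2s$ uniformly on $|\eta|\le K$. Standard van~der~Corput / stationary-phase estimates then give $|F_n(s,y)|\le C(\phi)(1+|s|)^{-1/2}$ on the classically allowed region $|y|\lesssim_\phi 1+|s|$ with rapid decay outside, and since $3\alpha/2>2$ (using $\alpha>5/3>4/3$) we conclude that $\|F_n\|_{L^{3\alpha}_{s,y}(\R^2)}\le C(\phi,T)$ uniformly in $n$. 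Consequently
\[
	\|A(\cdot)P(\xi_n)S(-T)\phi\|_{L(\R_+)} \le C(\phi,T)\,\xi_n^{-\frac{1}{3\alpha}}\to 0
\]
as $n\to\I$ for each fixed $T$, which in particular gives the double limit stated in the lemma. The second identity follows by the same argument applied to $t\le 0$, where the change of variables sends $t\le 0$ to $s\le -T$ and the same Jacobian factor $\xi_n^{-1}$ produces the smallness.

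The main technical difficulty is precisely the uniform-in-$n$ bound on $\|F_n\|_{L^{3\alpha}_{s,y}(\R^2)}$: morally $F_n$ solves a Schr\"odinger equation perturbed by the small Airy term $(i/(3\xi_n))\d_y^3$, so one would like to apply a Schr\"odinger Stein--Tomas bound (Proposition~\ref{prop:rSTNLS}) directly, but since the ``initial data'' $A(r_n(s))|\d_x|^{1/(3\alpha)}\phi$ depends on~$s$, such an estimate cannot be invoked verbatim. For $\hat{\phi}\in C_c^\infty$ the stationary-phase computation above circumvents this cleanly, and the density/refined-Stein--Tomas step from the first paragraph finishes the proof for general $\phi\in\hat{L}^\alpha$.
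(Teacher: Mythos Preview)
Your argument has a genuine gap at the commutation step. The derivative $|\d_x|^{1/(3\alpha)}$ does \emph{not} commute with $P(\xi_n)$: in Fourier space the former is multiplication by $|\eta|^{1/(3\alpha)}$ while $\hat{P}(\xi_n)$ is the shift $\eta\mapsto\eta+\xi_n$. The correct identity is
\[
|\d_x|^{\frac1{3\alpha}}P(\xi_n)g = P(\xi_n)\,m_n(D)g,\qquad m_n(\zeta)=|\zeta-\xi_n|^{\frac1{3\alpha}},
\]
so after your change of variables the right-hand side should involve $m_n(D)\phi$ rather than $|\d_x|^{1/(3\alpha)}\phi$. On $\supp\hat\phi\subset[-K,K]$ one has $m_n\sim\xi_n^{1/(3\alpha)}$, hence $\|F_n\|_{L^{3\alpha}_{s,y}}^{3\alpha}$ acquires an extra factor $\xi_n$ which \emph{exactly cancels} the Jacobian gain $(3\xi_n)^{-1}$. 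Consequently the quantity $\|A(\cdot)P(\xi_n)S(-T)\phi\|_{L(\R_+)}$ does not tend to $0$ as $n\to\infty$ for fixed $T$; in fact, tracking the limit one finds it converges to a fixed positive multiple of $\|S(\cdot)\phi\|_{L^{3\alpha}(J\times\R)}$ for an appropriate half-line $J$. So the mechanism you propose --- decay purely from the Jacobian $\xi_n^{-1}$ --- cannot work.

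The paper's proof is organized differently and the smallness comes from $T$, not from $\xi_n$. One writes the oscillatory integral at each fixed $t$, uses van der Corput/dispersive reasoning to get $\||\d_x|^{1/(3\alpha)}A(t)P(\xi_n)e^{-iT\d_x^2}\phi\|_{L^\infty_x}\lesssim \xi_n^{1/(3\alpha)}|T-3\xi_n t|^{-1/2}$ (the factor $\xi_n^{1/(3\alpha)}$ is precisely the one you lost), interpolates with the trivial $L^2_x$ bound $\lesssim\xi_n^{1/(3\alpha)}$, and then integrates in $t$. After the change of variable $s=3\xi_n t-T$ the $\xi_n$'s cancel and one is left with an integral of $(1+|s|)^{-(3\alpha-2)/2}$ over a half-line whose distance to the origin is governed by $T$; this is what gives the $T^{-(3\alpha/2-1)}$ (or similar) decay in the double limit. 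In short: your change of variables is correct, but you must keep the symbol $|\zeta-\xi_n|^{1/(3\alpha)}$ rather than $|\zeta|^{1/(3\alpha)}$, after which the only surviving smallness parameter is $T$.
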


\begin{proof} 
By Proposition 
\ref{prop:ho_inho}, 
it suffices to prove this lemma when $\phi$ 
satisfies $\hat{\phi}\in C_{c}^{\infty}(\rre)$. 
By the argument similar to \cite{KKSV}, we obtain
\begin{eqnarray*}
\||\d_{x}|^{\frac1{3\alpha}}
e^{-t\pt_{x}^{3}}[e^{-ix\xi_{n}}e^{-iT\pt_{x}^{2}}\phi]
\|_{L_{x}^{\infty}}
\le C\frac{\xi_{n}^{\frac1{3\alpha}}}{
(T-3\xi_{n}t)^{1/2}}\|\phi\|_{L_{x}^{1}}
\end{eqnarray*}
and
\begin{eqnarray*}
\||\d_{x}|^{\frac1{3\alpha}}
e^{-t\pt_{x}^{3}}[e^{-ix\xi_{n}}e^{-iT\pt_{x}^{2}}\phi]
\|_{L_{x}^2}
\le C\xi_{n}^{\frac1{3\alpha}}
\|\phi\|_{H_{x}^{\frac1{3\alpha}}}.
\end{eqnarray*}
Interpolating between the above two inequalities, 
we have
\begin{eqnarray*}
\||\d_{x}|^{\frac1{3\alpha}}
e^{-t\pt_{x}^{3}}[e^{-ix\xi_{n}}e^{-iT\pt_{x}^{2}}\phi]
\|_{L_{x}^{3\alpha}}^{3\alpha}
\le C(\phi)\frac{\xi_{n}}{
(T-3\xi_{n}t)^{\frac{3\alpha}{2}-1}}.
\end{eqnarray*}
Integrating with respect to $t$ variable, we obtain 
\begin{eqnarray*}
\|
e^{-t\pt_{x}^{3}}[e^{-ix\xi_{n}}e^{-iT\pt_{x}^{2}}\phi]
\|_{L([0,\infty))
}^{3\alpha}
\le C(\phi)
T^{-\frac{3\alpha}{2}+1}\to0,
\end{eqnarray*}
as $T\to\infty$. This completes 
the proof. 
\end{proof}

\vskip2mm

\begin{lemma}[Approximation of gKdV for large time]
\label{gal} 
Assume $5/3<\alpha<2$. Let 
$\tilde{u}_{n}$ be given by (\ref{ap}). 
Then we have
\begin{equation}
\label{gall}
\lim_{T\to\infty}
\limsup_{n\to\infty}
\||\d_x|^{-1} \{ 
(\pt_{t}+\pt_{x}^{3})\tilde{u}_{n}
-\mu\pt_{x}(|\tilde{u}_{n}|^{2\alpha}
\tilde{u}_{n})\}\|_{N(|t|>\frac{T}{3\xi_{n}})}
=0.
\end{equation}
\end{lemma}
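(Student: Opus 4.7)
The key observation is that on the region $|t| > T/(3\xi_n)$ the approximate solution $\widetilde{u}_n$ is defined as a pure linear Airy evolution of the data at $t = \pm T/(3\xi_n)$, so $(\pt_t + \pt_x^3)\widetilde{u}_n \equiv 0$ there. Since $|\pt_x|^{-1}\pt_x$ is bounded on the $N$-space (Riesz transform in the Fourier side), the quantity in \eqref{gall} is comparable to
\[
\||\widetilde{u}_n|^{2\alpha}\widetilde{u}_n\|_{N(|t|>T/(3\xi_n))}.
\]
By Lemma \ref{lem:nlest}(i) this is bounded by $C\|\widetilde{u}_n\|_{S(|t|>T/(3\xi_n))}^{2\alpha}\|\widetilde{u}_n\|_{L(|t|>T/(3\xi_n))}$. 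The $S$-factor is controlled uniformly in $T,n$ by Lemma \ref{qa}, so the whole proof reduces to establishing
\begin{equation}\label{eq:reduced}
\lim_{T\to\I}\limsup_{n\to\I}\|\widetilde{u}_n\|_{L(|t|>T/(3\xi_n))} = 0.
\end{equation}

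I will focus on $t > T/(3\xi_n)$; the other half is analogous. On this region,
\[
\widetilde{u}_n(t,x) = e^{-(t - T/(3\xi_n))\pt_x^3}\, w_n(x),\qquad w_n(x) := \Re\bigl[e^{-ix\xi_n - iT\xi_n^2/3}\, v_n(-T, x+\xi_n T)\bigr],
\]
so after the change of variable $s = t - T/(3\xi_n)$ and using that $e^{-s\pt_x^3}$ is real-linear and commutes with translations in $x$, together with translation invariance of the $L(\R)$-norm, a direct computation of the type $e^{-ix\xi_n}h(x+\xi_n T) = e^{i\xi_n^2 T}(P(\xi_n)h)(x+\xi_n T)$ absorbs the translation and gives
\[
\|\widetilde{u}_n\|_{L(t>T/(3\xi_n))} \le \bigl\|e^{-s\pt_x^3}\bigl[e^{-ix\xi_n}\,v_n(-T,x)\bigr]\bigr\|_{L([0,\I))} + \bigl\|e^{-s\pt_x^3}\bigl[e^{-ix\xi_n}\,\overline{v_n(-T,x)}\bigr]\bigr\|_{L([0,\I))}.
\]

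Next I use \eqref{452} together with the Stein--Tomas type bound \eqref{ho} for the Airy group in the $L$-norm to replace $v_n(-T)$ by its scattering asymptotic $e^{iT\pt_x^2}v_-$ at negligible cost:
\[
\sup_n\bigl\|e^{-s\pt_x^3}\bigl[e^{-ix\xi_n}(v_n(-T) - e^{iT\pt_x^2}v_-)\bigr]\bigr\|_{L([0,\I))} \le C\sup_n\|v_n(-T) - e^{iT\pt_x^2}v_-\|_{\hat{L}^\alpha} \longrightarrow 0
\]
as $T\to\I$, and similarly for the complex conjugate piece (which is governed by $\overline{v_-}$). After this replacement, it remains to show
\[
\lim_{T\to\I}\lim_{n\to\I}\bigl\|e^{-s\pt_x^3}\bigl[e^{-ix\xi_n}\,e^{iT\pt_x^2}v_\pm\bigr]\bigr\|_{L([0,\I))} = 0,
\]
which is exactly Lemma \ref{osi} (the sign of $T$ in the Schr\"odinger group is immaterial; the proof of Lemma \ref{osi} only uses the dispersive estimate $\||\pt_x|^{1/(3\alpha)}e^{-t\pt_x^3}[e^{-ix\xi_n}e^{\mp iT\pt_x^2}\phi]\|_{L^\I_x}\lesssim \xi_n^{1/(3\alpha)}|T\mp 3\xi_n t|^{-1/2}$, whose integration in $t\ge 0$ in $L^{3\alpha}$ produces the factor $T^{-(3\alpha/2-1)}$ in the same fashion). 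Running the symmetric argument on $t<-T/(3\xi_n)$ with $v_n(T)\approx e^{-iT\pt_x^2}v_+$ and applying Lemma \ref{osi} directly yields the analogous conclusion there, establishing \eqref{eq:reduced} and hence \eqref{gall}.

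The only slightly delicate step is the algebraic reduction in the paragraph surrounding $w_n$: one has to correctly absorb the phase $e^{-iT\xi_n^2/3}$ and the physical translation $x\mapsto x+\xi_n T$ so that the problem reduces cleanly to Lemma \ref{osi}. All other ingredients---Strichartz/admissibility, the nonlinear estimate of Lemma \ref{lem:nlest}, and the uniform $S$-bound from Lemma \ref{qa}---are already in hand.
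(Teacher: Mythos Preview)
Your proposal is correct and follows essentially the same route as the paper: vanish the linear part on $|t|>T/(3\xi_n)$, bound the nonlinearity via Lemma~\ref{lem:nlest}(i), control the $S$-factor by Lemma~\ref{qa}, and reduce the $L$-factor to Lemma~\ref{osi} after replacing $v_n(\mp T)$ by the scattering state via \eqref{452} and the homogeneous estimate~\eqref{ho}. One minor algebraic slip: in your $\Re$-split the conjugate piece should carry $e^{+ix\xi_n}\overline{v_n(-T,x)}$ rather than $e^{-ix\xi_n}\overline{v_n(-T,x)}$, but since the Airy group and $|\pt_x|^{1/(3\alpha)}$ commute with complex conjugation, its $L$-norm equals that of $e^{-ix\xi_n}v_n(-T,x)$ anyway, so the conclusion is unaffected.
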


\begin{proof}
Since $\tilde{u}_{n}$ satisfies the Airy 
equation for $|t|>T/(3\xi_n)$, the linear part of \eqref{gall} vanishes.
Hence, we estimate
$\| |\tilde{u}_{n}|^{2\alpha}\tilde{u}_{n}\|_{N(|t|>\frac{T}{3\xi_{n}})}$.
We consider the case $t>T/(3\xi_{n})$ only 
since the case $t<-T/(3\xi_{n})$ being similar. 
Lemma \ref{lem:nlest} (i) implies
\begin{equation}\label{N4}
\|
|\tilde{u}_{n}|^{2\alpha}\tilde{u}_{n}\|_{N([\frac{T}{3\xi_{n}},\I))}
\le C
\|\tilde{u}_{n}
\|_{S([\frac{T}{3\xi_{n}},\I))}^{2\alpha}
\|\tilde{u}_{n}\|_{L([\frac{T}{3\xi_{n}},\I)}.
\end{equation}
By \eqref{qaz}, we have the bound $\|\tilde{u}_{n} \|_{S([\frac{T}{3\xi_{n}},\I))} \le C$.
On the other hand, 
Proposition \ref{prop:ho_inho} (\ref{ho}) 
yields
\begin{align*}
\|\tilde{u}_{n}\|_{L([\frac{T}{3\xi_{n}},\I))}
={}&\|e^{-t\pt_{x}^{3}}[e^{-ix\xi_{n}}v_{n}(T)]\|_{L([0,\I))}\\
\le{}&
\|e^{-t\pt_{x}^{3}}[e^{-ix\xi_{n}}(v_{n}(T)-e^{-iT\pt_{x}^{2}}v_+)]
\|_{L([0,\I))}\\
&+\|e^{-t\pt_{x}^{3}}[e^{-ix\xi_{n}}e^{-iT\pt_{x}^{2}}v_+]\|_{L([0,\I))}
\\
\le{}&
C\|v_{n}(T)-e^{-iT\pt_{x}^{2}}v_+
\|_{\hat{L}_{x}^{\alpha}}
+\|e^{-t\pt_{x}^{3}}[e^{-ix\xi_{n}}e^{-iT\pt_{x}^{2}}v_+]\|_{L([0,\I))}.
\end{align*}
This implies
\begin{multline*}
	\limsup_{n\to\I} \|\tilde{u}_{n}\|_{L([\frac{T}{3\xi_{n}},\I))} 
	\le C\sup_n \|v_{n}(T)-e^{-iT\pt_{x}^{2}}v_+
\|_{\hat{L}_{x}^{\alpha}} \\
	+ \limsup_{n\to\I} \|e^{-t\pt_{x}^{3}}[e^{-ix\xi_{n}}e^{-iT\pt_{x}^{2}}v_+]\|_{L([0,\I))}
	=0
\end{multline*}
as $ T\to\infty$ together with \eqref{452} and Lemma \ref{osi}. 
Hence we obtain (\ref{gall}). 
\end{proof}

\vskip2mm

Next, we consider the approximation of 
gKdV in the middle interval $ |t| \le T/(3\xi_{n})$. 
A direct calculation yields
\begin{align}\label{l}
(\pt_t+\pt_x^3)\tilde{u}_{n}
={}&3
\mu C_{0}\xi_{n}Im[e^{-ix\xi_{n}-it\xi_{n}^{3}}
(|v_{n}|^{2\alpha}v_{n})(-3\xi_{n}t,x+3\xi_{n}^{2}t)]
\\
&+Re[e^{-ix\xi_{n}-it\xi_{n}^{3}}
(\pt_{x}^{3}v_{n})(-3\xi_{n}t,x+3\xi_{n}^{2}t)]
\nonumber
\end{align}
and
\begin{eqnarray*}
\mu\pt_x(|\tilde{u}_{n}|^{2\alpha}
\tilde{u}_{n})
&=&(2\alpha+1)\mu\xi_{n}
|Re[e^{-ix\xi_{n}-it\xi_{n}^{3}}
v_{n}(-3\xi_{n}t,x+3\xi_{n}^{2}t)]|^{2\alpha}\\
& &\qquad\times
Im[e^{-ix\xi_{n}-it\xi_{n}^{3}}
v_{n}(-3\xi_{n}t,x+3\xi_{n}^{2}t)]\\
& &+(2\alpha+1)\mu
|Re[e^{-ix\xi_{n}-it\xi_{n}^{3}}
v_{n}(-3\xi_{n}t,x+3\xi_{n}^{2}t)]|^{2\alpha}\\
& &\qquad\times
Re[e^{-ix\xi_{n}-it\xi_{n}^{3}}
(\pt_{x}v_{n})(-3\xi_{n}t,x+3\xi_{n}^{2}t)].
\end{eqnarray*}
Notice that 
\begin{eqnarray*}
\lefteqn{|Re[e^{-ix\xi_{n}-it\xi_{n}^{3}}
v_{n}(-3\xi_{n}t,x+3\xi_{n}^{2}t)]|^{2\alpha}
Im[e^{-ix\xi_{n}-it\xi_{n}^{3}}
v_{n}(-3\xi_{n}t,x+3\xi_{n}^{2}t)]}\\
&=&
(|v_{n}|^{2\alpha+1})(-3\xi_{n}t,x+3\xi_{n}^{2}t)
|Re[e^{-ix\xi_{n}-it\xi_{n}^{3}+iArg v_{n}}
]|^{2\alpha}\\
& &\qquad\qquad\times
Im[e^{-ix\xi_{n}-it\xi_{n}^{3}+iArg v_{n}}]\\
&=&
(|v_{n}|^{2\alpha+1})(-3\xi_{n}t,x+3\xi_{n}^{2}t)
\sum_{k=1}^{\infty}
C_{k}Im[e^{-ik(x\xi_{n}+t\xi_{n}^{3}-Arg v_{n})}]\\
&=&
C_{1}Im[e^{-ix\xi_{n}-it\xi_{n}^{3}}
(|v_{n}|^{2\alpha}v_{n})(-3\xi_{n}t,x+3\xi_{n}^{2}t)
]\\
& &+
\sum_{k=2}^{\infty}
C_{k}Im[e^{-ik(x\xi_{n}+t\xi_{n}^{3})}(|v_{n}|^{2\alpha+1-k}v_{n}^{k})
(-3\xi_{n}t,x+3\xi_{n}^{2}t)],
\end{eqnarray*}
where $Arg v_{n}=Arg v_{n}(-3\xi_{n}t,x+3\xi_{n}^{2}t)$ and 
$C_{k}$ is a $k$-th Fourier-sin coefficients for 
an odd function $f(\theta)=|\cos \theta |^{2\alpha}\sin \theta$, i.e.,
$C_k$ is the constant appearing in the expansion
\[
	|\cos \theta|^{2\alpha} \sin \theta = \sum_{k=1}^\I C_k \sin (k\theta).
\]
Or equivalently,
\begin{eqnarray*}
C_{k}=\frac{1}{\pi}
\int_{-\pi}^{\pi}f(\theta)\sin (k\theta) d\theta.
\end{eqnarray*}
An elementary computation shows that
\[
	C_1 = \frac{2 \Gamma(\alpha + \frac12) \Gamma(\frac32)}{\pi \Gamma (\alpha+2)} 
	= \frac{2 \Gamma(\alpha+ \frac{3}2) }{\sqrt{\pi} (2\alpha+1) \Gamma (\alpha+2)}  = \frac3{2\alpha+1} C_0.
\]
Then we have
\begin{equation}\label{g}
\begin{aligned}
&(\pt_t+\pt_x^3)\tilde{u}_{n}-\mu\pt_x(|\tilde{u}_{n}|^{2\alpha}
\tilde{u}_{n})
\\
={}&Re[e^{-ix\xi_{n}-it\xi_{n}^{3}}
(\pt_{x}^{3}v_{n})(-3\xi_{n}t,x+3\xi_{n}^{2}t)]
\\
&{} -(2\alpha+1)\mu
|Re[e^{-ix\xi_{n}-it\xi_{n}^{3}}
v_{n}(-3\xi_{n}t,x+3\xi_{n}^{2}t)]|^{2\alpha}
\\
& \qquad\times
Re[e^{-ix\xi_{n}-it\xi_{n}^{3}}
(\pt_{x}v_{n})(-3\xi_{n}t,x+3\xi_{n}^{2}t)]
\\
& -
(2\alpha+1)\mu\xi_{n}
\sum_{k=2}^{\infty}
C_{k}Im[e^{-ik(x\xi_{n}+t\xi_{n}^{3})}(|v_{n}|^{2\alpha+1-k}v_{n}^{k})
(-3\xi_{n}t,x+3\xi_{n}^{2}t)]
\\
=:{}&R_{n}^{1}+R_{n}^{2}+R_{n}^{3}.
\end{aligned}
\end{equation}
To evaluate the right hand side of (\ref{g}), 
we introduce a function
$e_{n}$ defined by
\begin{eqnarray}
\left\{
\begin{array}{l}
\displaystyle{
(\pt_{t}+\pt_{x}^{3})e_{n}
=R_{n}^{1}+R_{n}^{2}+R_{n}^{3},
}\\
\displaystyle{
e_{n}(0,x)=0.
}
\end{array}
\right.\label{er}
\end{eqnarray} 
Set $e_{n}=:e_{n,1}+e_{n,2}$, where 
\begin{eqnarray*}
\lefteqn{e_{n,1}=(2\alpha+1)\mu\xi_{n}^{-2}}\\
& &\times
\sum_{k=2}^{\infty}
C_{k}Im\left[e^{-ikx\xi_{n}}
\frac{e^{-ikt\xi_{n}^{3}}-e^{-ik^{3}t\xi_{n}^{3}}
}{i(k-k^{3})}
(|v_{n}|^{2\alpha+1-k}v_{n}^{k})(-3\xi_{n}t,x+3\xi_{n}^{2}t)
\right].
\end{eqnarray*}
A direct calculation yields 
\begin{eqnarray*}
(\pt_{t}+\pt_{x}^{3})e_{n,1}
&=&R_{n}^{3}+R_{n}^{4},\qquad\quad e_{n,1}(0,x)=0,\\
(\pt_{t}+\pt_{x}^{3})e_{n,2}
&=&R_{n}^{1}+R_{n}^{2}-R_{n}^{4},\qquad e_{n,2}(0,x)=0
\end{eqnarray*}
where $R_{n}^{4}$ is given by
\begin{eqnarray*}
R_{n}^{4}
=
\sum_{\ell=1}^{4}
\sum_{k=2}^{\infty}
Im\left[G_{n}^{\ell,k}
(-3\xi_{n}t,x+3\xi_{n}^{2}t)
(e^{-ikt\xi_{n}^{3}}-e^{-ik^{3}t\xi_{n}^{3}})e^{-ikx\xi_{n}}\right]
\end{eqnarray*}
with
\begin{eqnarray*}
G_{n}^{1,k}(t,x)
&=&3(2\alpha+1)\mu\frac{C_{k}}{ik}
\pt_{x}(|v_{n}|^{2\alpha+1-k}v_{n}^{k})(t,x),\\
G_{n}^{2,k}(t,x)
&=&-3(2\alpha+1)\mu\frac{C_{k}}{1-k^{2}}\xi_{n}^{-1}
\pt_{x}^{2}(|v_{n}|^{2\alpha+1-k}v_{n}^{k})(t,x),\\
G_{n}^{3,k}(t,x)
&=&-3(2\alpha+1)\mu\frac{C_{k}}{i(k-k^{3})}\xi_{n}^{-1}
\pt_{t}(|v_{n}|^{2\alpha+1-k}v_{n}^{k})(t,x),\\
G_{n}^{4,k}(t,x)
&=&(2\alpha+1)\mu\frac{C_{k}}{i(k-k^{3})}\xi_{n}^{-2}
\pt_{x}^{3}(|v_{n}|^{2\alpha+1-k}v_{n}^{k})(t,x).
\end{eqnarray*}

\vskip2mm

\begin{lemma}[Error control] \label{err}
Fix $T>0$. 
Let $e_{n}$ be a solution to (\ref{er}). 
Then,
\begin{equation}\label{er1}
\lim_{n\to\infty} \(
\|e_{n}\|_{L_{t}^{\infty}\hat{L}_{x}^{\alpha}
([-\frac{T}{3\xi_n},\frac{T}{3\xi_{n}}])}
+\|e_{n}\|_{L([-\frac{T}{3\xi_n},\frac{T}{3\xi_{n}}])}
+\|e_{n}\|_{S([-\frac{T}{3\xi_n},\frac{T}{3\xi_{n}}])}\)=0.
\end{equation}
\end{lemma}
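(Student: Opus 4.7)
My plan is to estimate $e_{n,1}$ and $e_{n,2}$ separately, exploiting the explicit closed form of the former and applying the inhomogeneous Strichartz estimate of Proposition \ref{prop:ho_inho} (ii) to the latter. Both estimates rest on the change of variables $(\tau,y)=(-3\xi_n t,\, x+3\xi_n^2 t)$, which has Jacobian $3\xi_n$ and carries $[-T/(3\xi_n),T/(3\xi_n)]\times\R$ onto $[-T,T]\times\R$, converting norms of the oscillating translated profile $v_n(-3\xi_n t,\, x+3\xi_n^2 t)$ into norms of $v_n(\tau,y)$ on $[-T,T]$ up to explicit powers of $\xi_n^{-1}$.

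First I would estimate $e_{n,1}$ directly. Its defining formula carries the prefactor $\xi_n^{-2}$ and a series in $k\ge 2$ whose absolute convergence follows from two facts: the Fourier--sine coefficients $C_k$ of $|\cos\theta|^{2\alpha}\sin\theta$ decay like $k^{-(2\alpha+1)}$ (thanks to the finite H\"older regularity of $|\cos\theta|^{2\alpha}$ away from its zeros), and $||v_n|^{2\alpha+1-k}v_n^k|=|v_n|^{2\alpha+1}$ pointwise, independently of $k$. Performing the change of variables, the $L_t^\infty\hat{L}_x^\alpha$, $L$, and $S$ norms of each summand reduce to norms of $|v_n|^{2\alpha+1}$ on $[-T,T]\times\R$, modulo small positive powers of $\xi_n^{-1/\alpha}$ from the Jacobian; these are uniformly bounded by \eqref{43} and are in any case dwarfed by the $\xi_n^{-2}$ prefactor, whence $e_{n,1}\to 0$ in all three norms as $n\to\infty$.

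Next I would estimate $e_{n,2}$ via Proposition \ref{prop:ho_inho} (ii), which gives
\[
\|e_{n,2}\|_{L_t^\infty\hat{L}^\alpha\cap L\cap S([-T/(3\xi_n),T/(3\xi_n)])}
\lesssim \big\| |\pt_x|^{-1}(R_n^1+R_n^2-R_n^4)\big\|_{N([-T/(3\xi_n),T/(3\xi_n)])}.
\]
Each source is a rapid oscillation $e^{-ix\xi_n-it\xi_n^3}$ times a profile whose Fourier support lies essentially below frequency $\xi_n^{1/4}$, thanks to the initial cut-off in \eqref{ic12} and the persistence estimates \eqref{43}, \eqref{eq:432}. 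Consequently $|\pt_x|^{-1}$ acts essentially as multiplication by $\xi_n^{-1}$. For $R_n^1$, this $\xi_n^{-1}$ combines with the growth $\|\pt_x^3 v_n\|\lesssim \xi_n^{3/4+\delta}$ and with the Jacobian powers to yield a net negative power of $\xi_n$. For $R_n^2$, the nonlinear factor $|{\rm Re}[e^{\cdot}v_n]|^{2\alpha}{\rm Re}[e^{\cdot}\pt_x v_n]$ is handled as in Lemma \ref{lem:nlest}, with the $\pt_x v_n$ loss $\xi_n^{1/4+\delta}$ (from \eqref{eq:432}) easily beaten by $\xi_n^{-1}$. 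For $R_n^4$, each $G_n^{\ell,k}$ ($\ell=2,3,4$) carries an explicit $\xi_n^{-1}$ or $\xi_n^{-2}$, making its contribution even smaller; the $\pt_t$ appearing in $G_n^{3,k}$ is replaced via the NLS equation by $\pt_x^2 v_n$ plus a nonlinear term, both of which are controlled analogously.

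The hard part will be the simultaneous bookkeeping of the powers of $\xi_n$ across all the terms: the gain of $\xi_n^{-1}$ from $|\pt_x|^{-1}$ acting on high oscillations must be delicately weighed against the Sobolev loss $\xi_n^{s/4+\delta}$ from derivatives of $v_n$ and the Jacobian losses $\xi_n^{-1/(3\alpha)}$, $\xi_n^{-2/(5\alpha)}$, etc., coming from the time rescaling in the $L$ and $S$ norms. A secondary obstacle is the $k$-summability within $R_n^4$: three derivatives of $|v_n|^{2\alpha+1-k}v_n^k$ cost a factor as large as $k^3$ via the phase $k\,{\rm Arg}\,v_n$, so the series converges only because $C_k/(k^3-k)\cdot k^3\lesssim k^{-(2\alpha+1)}$, which is summable precisely in the regime $\alpha\in(5/3,2)$ under consideration.
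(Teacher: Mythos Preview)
Your treatment of $e_{n,1}$ matches the paper's: both exploit the explicit $\xi_n^{-2}$ prefactor and bounds on $|v_n|^{2\alpha+1}$ after the change of variables, and the paper carries this out with concrete Sobolev embeddings and an interpolation step for the $L$ and $S$ norms.

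For $e_{n,2}$ your route diverges from the paper's and, as written, has a gap. You assert that each of $R_n^1,R_n^2,R_n^4$ is ``a rapid oscillation $e^{-ix\xi_n}$ times a low-frequency profile,'' so that $|\partial_x|^{-1}$ gains a factor $\xi_n^{-1}$. This is correct for $R_n^1$ and for each summand of $R_n^4$ (which carries $e^{-ikx\xi_n}$ explicitly), but \emph{not} for
\[
R_n^2=-(2\alpha{+}1)\mu\,|\tilde u_n|^{2\alpha}\,\Re\bigl[e^{-ix\xi_n-it\xi_n^3}(\partial_x v_n)(-3\xi_n t,x+3\xi_n^2 t)\bigr].
\]
The factor $|\tilde u_n|^{2\alpha}$ is a nonlinear function of an oscillating quantity and carries Fourier content at all \emph{even} multiples of $\xi_n$, including frequency zero; only after multiplying by the second factor does the product localize to odd multiples of $\xi_n$. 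The heuristic is therefore salvageable, but making it rigorous requires Fourier-expanding $|\cos\theta|^{2\alpha}\cos\theta$ and treating each harmonic---essentially repeating for $R_n^2$ the expansion already performed to isolate $R_n^3$. You do not mention this, and without it the $\xi_n^{-1}$ gain on $R_n^2$ is unjustified.

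The paper sidesteps this issue by a cleaner choice of dual norm: it does not use the $N$-norm for $e_{n,2}$ at all. For $R_n^1$ it uses the elementary $L_t^1\hat L_x^\alpha$ bound, so that the short time interval (length $O(\xi_n^{-1})$) gives
\[
\|R_n^1\|_{L_t^1\hat L_x^\alpha}\lesssim T\xi_n^{-1}\|\partial_x^3 v_n\|_{L_t^\infty\hat L_x^\alpha}\lesssim T\,\xi_n^{-1/4}.
\]
For $R_n^2$ and $R_n^4$ it applies Proposition~\ref{prop:ho_inho}\,(ii) with the conjugate-acceptable pair $(s_2,r)=(1,\alpha)$; since $Y(I;1,\alpha)$ carries a full $|\partial_x|$, this exactly cancels the $|\partial_x|^{-1}$ arising from writing the source as $\partial_x F$, leaving $\|R_n^j\|_{L_x^{\widetilde p(1,\alpha)}L_t^{\widetilde q(1,\alpha)}}$ to be bounded directly by H\"older and \eqref{43}, with the Jacobian factor $\xi_n^{-1/\widetilde q(1,\alpha)}$ supplying the decay. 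No frequency-localization argument is needed.
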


\begin{proof}
By the definition of $e_{n,1}$, we have
\begin{eqnarray*}
\lefteqn{\|e_{n,1}\|_{L_{t}^{\infty}\hat{L}_{x}^{\alpha}
([-\frac{T}{3\xi_n},\frac{T}{3\xi_{n}}])}
}\\
&\le&C\xi_{n}^{-2}
\sum_{k\ge2}
\frac{|C_{k}|}{k^{3}}
\|(|v_{n}|^{2\alpha+1-k}v_{n}^{k})
(3\xi_{n}t,x+3\xi_{n}^{2}t)
\|_{L_{t}^{\infty}\hat{L}_{x}^{\alpha}
([-\frac{T}{3\xi_n},\frac{T}{3\xi_{n}}])}.
\end{eqnarray*}
Since $L^{\alpha}\hookrightarrow\hat{L}^{\alpha}$ and 
$\dot{H}^{\frac12-\frac{1}{\alpha(2\alpha+1)}}\hookrightarrow
L^{\alpha(2\alpha+1)}$ for $1<\alpha\le 2$, 
we see from \eqref{eq:432} that
\begin{align*}
&\|(|v_{n}|^{2\alpha+1-k}v_{n}^{k})
(-3\xi_{n}t,x+3\xi_{n}^{2}t)
\|_{L_{t}^{\infty}\hat{L}_{x}^{\alpha}
([-\frac{T}{3\xi_n},\frac{T}{3\xi_{n}}])}\\
&{}=C
\|(|v_{n}|^{2\alpha+1-k}v_{n}^{k})(t,x)
\|_{L_{t}^{\infty}\hat{L}_{x}^{\alpha}
([-T,T])}\\
&{} \le C
\||v_{n}|^{2\alpha+1-k}v_{n}^{k}
\|_{L_{t}^{\infty}L_{x}^{\alpha}
([-T, T])}\\
&{}=C
\|v_{n}\|_{L_{t}^{\infty}L_{x}^{\alpha(2\alpha+1)}
([-T,T])}^{2\alpha+1}\\
&{}\le C
\|v_{n}\|_{L_{t}^{\infty}
{H}_{x}^{\frac12-\frac{1}{\alpha(2\alpha+1)}}}^{2\alpha+1}\\
&{}\le C\xi_n^{\frac12},
\end{align*}
which implies 
\begin{eqnarray}
\|e_{n,1}\|_{L_{t}^{\infty}\hat{L}_{x}^{\alpha}
([-\frac{T}{3\xi_n},\frac{T}{3\xi_{n}}])}
&\le&C\xi_{n}^{-\frac32}
\sum_{k=2}^{\infty}\frac{|C_{k}|}{k^{3}}
\le C\xi_{n}^{-\frac32}\to0
\label{fg1}
\end{eqnarray}
as $n\to\infty$.

Next we evaluate the $L$-norm of $e_{n,1}$. An interpolation shows
\begin{eqnarray*}
\|e_{n,1}\|_{L
([-\frac{T}{3\xi_n},\frac{T}{3\xi_{n}}])}
\le
\|e_{n,1}\|_{L^{3\alpha}_{t,x}
([-\frac{T}{3\xi_n},\frac{T}{3\xi_{n}}])}^{1-\frac1{3\alpha}}
\|\pt_{x}e_{n,1}\|_{L^{3\alpha}_{t,x}
([-\frac{T}{3\xi_n},\frac{T}{3\xi_{n}}])}^{\frac1{3\alpha}}. 
\end{eqnarray*}
By the definition of $e_{n,1}$, we see
\begin{eqnarray*}
\lefteqn{
\|\pt_{x}e_{n,1}\|_{L^{3\alpha}_{t,x}
([-\frac{T}{3\xi_n},\frac{T}{3\xi_{n}}])}}\\
&\le&
C\xi_{n}^{-1}
\sum_{k=2}^{\infty}
\frac{|C_{k}|}{k^{2}}
\|(|v_{n}|^{2\alpha+1-k}v_{n}^{k})
(-3\xi_{n}t,x+3\xi_{n}^{2}t)
\|_{L^{3\alpha}_{t,x} ([-\frac{T}{3\xi_n},\frac{T}{3\xi_{n}}])}\\
& &+
C\xi_{n}^{-2}
\sum_{k=2}^{\infty}
\frac{|C_{k}|}{k^{3}}
\|\pt_{x}(|v_{n}|^{2\alpha+1-k}v_{n}^{k})
(-3\xi_{n}t,x+3\xi_{n}^{2}t)
\|_{L^{3\alpha}_{t,x} ([-\frac{T}{3\xi_n},\frac{T}{3\xi_{n}}])}.
\end{eqnarray*}
Change of variables, the embedding $\dot{W}_{x}^{\frac12-\frac1{\alpha(2\alpha+1)},
\frac{6\alpha(2\alpha+1)}{6\alpha^2+3\alpha-4}}
\hookrightarrow L_x^{3\alpha(2\alpha+1)}$ and \eqref{eq:432} 
yield
\begin{eqnarray*}
\lefteqn{
\|\pt_{x}^{j}(|v_{n}|^{2\alpha+1-k}v_{n}^{k})
(-3\xi_{n}t,x+3\xi_{n}^{2}t)
\|_{L^{3\alpha}_{t,x} ([-\frac{T}{3\xi_n},\frac{T}{3\xi_{n}}])}}\\
&\le&
\|\pt_{x}^{j}(|v_{n}|^{2\alpha+1-k}v_{n}^{k})
(t,x)
\|_{ L^{3\alpha}_{t,x} ([-T,T])}\\
&\le&
\|v_{n}\|_{L_{t,x}^{3\alpha(2\alpha+1)}
([-T,T])}^{2\alpha}
\|\pt_{x}^{j}v_{n}
\|_{L_{t,x}^{3\alpha(2\alpha+1)}
([-T, T])}\\
&\le&
\||\pt_{x}|^{\frac12-\frac1{\alpha(2\alpha+1)}}v_{n}\|_{L_t^{3\alpha(2\alpha+1)}L_{x}^{\frac{6\alpha(2\alpha+1)}{6\alpha^2+3\alpha-4}}
([-T,T])}^{2\alpha}\\
& &\qquad\times
\||\pt_{x}|^{j+\frac12-\frac1{\alpha(2\alpha+1)}}v_{n}
\|_{L_{t,x}^{3\alpha(2\alpha+1)}L_{x}^{\frac{6\alpha(2\alpha+1)}{6\alpha^2+3\alpha-4}}
([-T, T])}\\
&\le&C\xi^{-\frac{1}{3\alpha}+\frac12+\frac{j}{4}}
\end{eqnarray*}
for $j=0,1$. Hence we obtain
\begin{eqnarray*}
\|\pt_{x}e_{n,1}\|_{ L^{3\alpha}_{t,x}
([-\frac{T}{3\xi_n},\frac{T}{3\xi_{n}}])}
&\le& C\xi_{n}^{-\frac12-\frac{1}{3\alpha}}
\sum_{k=2}^{\infty}
\frac{|C_{k}|}{k^{2}}
+C\xi_{n}^{-\frac54-\frac{1}{3\alpha}}
\sum_{k=2}^{\infty}
\frac{|C_{k}|}{k^{3}}\\
&\le& C\xi_{n}^{-\frac12-\frac{1}{3\alpha}}.
\end{eqnarray*}
In a similar way, 
\begin{eqnarray*}
\|e_{n,1}\|_{ L^{3\alpha}_{t,x}
([-\frac{T}{3\xi_n},\frac{T}{3\xi_{n}}])}
\le
C\xi_{n}^{-\frac32-\frac{1}{3\alpha}}.
\end{eqnarray*}
Hence, we have
\begin{eqnarray}
\|e_{n}\|_{L
([-\frac{T}{3\xi_n},\frac{T}{3\xi_{n}}])}
\le C\xi_{n}^{-\frac32}.\label{fg2}
\end{eqnarray}

Next we evaluate the $S([-\frac{T}{3\xi_n},\frac{T}{3\xi_{n}}])$-norm of $e_{n,1}$. 
We easily see 
\begin{align*}
	&\norm{ (|v_n|^{2\alpha+1-k} v_n^k)(-3\xi_n t , x+ 3\xi_n^2 t) 
	}_{L^{\frac{5\alpha}2}_x L^{5\alpha}_t ([-\frac{T}{3\xi_n},\frac{T}{3\xi_{n}}])}
	\\
	&{}=
	\norm{ v_n(-3\xi_n t , x+ 3\xi_n^2 t) 
	}_{L^{\frac{5\alpha(2\alpha+1)}2}_x L^{5\alpha(2\alpha+1)}_t 
	([-\frac{T}{3\xi_n},\frac{T}{3\xi_{n}}])}^{2\alpha+1} =: I^{2\alpha+1}.
\end{align*}
Let us estimate $I$. 
For simplicity, we put $\rho =5\alpha(2\alpha+1)$.
Change of variable and the Gagliardo-Nirenberg inequality yield 
\begin{eqnarray*}
\lefteqn{\norm{ v_n(-3\xi_n t , x+ 3\xi_n^2 t) }_{ L^{\rho}_t ([-\frac{T}{3\xi_n},\frac{T}{3\xi_{n}}])}}\\
&\le&C \xi_n^{-\frac1\rho} \norm{v_n(t, x- \xi_n t)}_{L^\rho_t (\R)}\\
&\le&C \xi_n^{-\frac1\rho}	C\norm{v_n(t, x- \xi_n t)}_{L^{\frac{\rho}{2}}_t (\R)}^{1-\frac1\rho}
	\norm{\d_t (v_n(t, x- \xi_n t))}_{L^{\frac{\rho}{2}}_t (\R)}^{\frac1\rho}.
\end{eqnarray*}
Hence,
\[
	I \le C\xi_n^{-\frac1\rho} \norm{v_n}_{L^{\frac{\rho}{2}}_{t,x} (\R^2)}^{1-\frac1\rho}
	\norm{\d_t v_n - \xi_n \d_x v_n }_{L^{\frac{\rho}{2}}_{t,x} (\R^2)}^{\frac1\rho}.
\]
Since $(\frac\rho2, \frac{2\rho}{\rho-8})$ is a Schr\"odinger admissible pair,
it follows from \eqref{eq:432} that
\[
	\norm{v_n}_{L^{\frac{\rho}{2}}_{t,x}}
	\le \norm{|\d_x|^{\frac12 -\frac6\rho} v_n }_{L^{\frac{\rho}{2}}_t L^{\frac{2\rho}{\rho-8}}_x}
	=O(\xi_n^{-\frac3{2\rho}+\frac1{4\alpha}}).
\]
Similar estimates hold for $\d_t v_n$ and $\d_x v_n$.
Combining above estimates, we conclude that
\[
	I 
	=O(\xi_n^{\frac1{2(2\alpha+1)}}).
\]
Thus,
\[
	\norm{ e_{n,1} }_{S ([-\frac{T}{3\xi_n},\frac{T}{3\xi_{n}}])}
	= O(\xi_n^{-\frac32}).
\]
To evaluate $e_{n,2}$, we employ
the inhomogeneous estimate for Airy equation (\ref{inho}).
Since $(1,\alpha)$ is a conjugate-acceptable pair,
\begin{eqnarray}
\lefteqn{\|e_{n,2}\|_{L_{t}^{\infty}\hat{L}_{x}^{\alpha}
([-\frac{T}{3\xi_n},\frac{T}{3\xi_{n}}])}
+\|
e_{n,2}\|_{L
([-\frac{T}{3\xi_n},\frac{T}{3\xi_{n}}])}
+\|e_{n,2}\|_{S([-\frac{T}{3\xi_{n}},\frac{T}{3\xi_{n}}])}
}\label{rr3}\\
&\le&
\|R_{n}^{1}\|_{L_{t}^1\hat{L}_{x}^{\alpha}
([-\frac{T}{3\xi_n},\frac{T}{3\xi_{n}}])}
+
\|
R_{n}^{2}\|_{L^{\widetilde{p}(1,\alpha)}_x L_t^{\widetilde{q}(1,\alpha)}
([-\frac{T}{3\xi_n},\frac{T}{3\xi_{n}}])}\nonumber\\
& &
+
\|
R_{n}^{4}\|_{L^{\widetilde{p}(1,\alpha)}_x L_t^{\widetilde{q}(1,\alpha)}
([-\frac{T}{3\xi_n},\frac{T}{3\xi_{n}}])}.\nonumber
\end{eqnarray}
By (\ref{43}), we have
\begin{equation}\label{r3}
\|R_{n}^{1}\|_{L_{t}^1\hat{L}_{x}^{\alpha}
([-\frac{T}{3\xi_n},\frac{T}{3\xi_{n}}])}
\le C\xi_{n}^{-\frac14}T\|
v_{n}\|_{L_{t}^{\infty}\hat{L}_{x}^{\alpha}([-T,T])}\to0
\end{equation}
as $n\to\infty$ and
\begin{eqnarray}
\lefteqn{\|R_{n}^{2}
\|_{ L^{\widetilde{p}(1,\alpha)}_x L_t^{\widetilde{q}(1,\alpha)}
([-\frac{T}{3\xi_n},\frac{T}{3\xi_{n}}])}}\label{r4}\\
&\le& 
C\xi^{-\frac{1}{\widetilde{q}(1,\alpha)}}
\|v_{n}\|_{L_{x}^{p(0,\alpha)}L_{t}^{q(0,\alpha)}
([-T, T])}^{2\alpha}
\|\pt_{x}v_{n}
\|_{L_{x}^{p(1,\alpha)}L_{t}^{q(1,\alpha)}
([-T,T])}
\nonumber\\
&\le& 
C\xi_{n}^{-\frac{15}{16 \widetilde{q}(1,\alpha)}}\to0
\nonumber
\end{eqnarray}
as $n\to\infty$.
In a similar way
\begin{eqnarray}
\|R_{n}^{4}
\|_{ L^{\widetilde{p}(1,\alpha)}_x L_t^{\widetilde{q}(1,\alpha)}
([-\frac{T}{3\xi_n},\frac{T}{3\xi_{n}}])}
\le C\xi_{n}^{-\frac{15}{16 \widetilde{q}(1,\alpha)}}\to0
\label{r5}
\end{eqnarray}
as $n\to\infty$. Combining (\ref{rr3}), (\ref{r3}), 
(\ref{r4}) and (\ref{r5}), we have
\begin{eqnarray}
\|e_{n}\|_{S([-\frac{T}{3\xi_n},\frac{T}{3\xi_{n}}])}
\to0\label{fg3}
\end{eqnarray}
as $n\to\infty$. From (\ref{fg1}), (\ref{fg2}) and (\ref{fg3}), 
we have (\ref{er1}). 
\end{proof}

\begin{lemma}(Approximation of gKdV for middle interval) 
\label{gas} Fix $T\in\rre$. 
Let $\tilde{u}_{n}$ and $e_{n}$ be given by 
(\ref{ap}) and (\ref{er}). Then we have
\begin{eqnarray}
\lefteqn{\lim_{n\to\infty}
\||\d_{x}|^{-1}
[(\pt_{t}+\pt_{x}^{3})(\tilde{u}_{n}-e_{n})}\label{galli}\\
& &\qquad\qquad
-\mu\pt_{x}\{|\tilde{u}_{n}-e_{n}|^{2\alpha}
(\tilde{u}_{n}-e_{n})\}]\|_{N([-\frac{T}{3\xi_n},\frac{T}{3\xi_{n}}])}
=0.
\nonumber
\end{eqnarray}
\end{lemma}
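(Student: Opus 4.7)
The plan is to reduce the statement to a nonlinear difference estimate that is already contained in Lemma~\ref{lem:nlest}(ii). The starting observation is that the defining equation~\eqref{er} of $e_n$, combined with the splitting~\eqref{g}, gives
\[
(\pt_t+\pt_x^3)(\tilde u_n - e_n) = \mu\,\pt_x(|\tilde u_n|^{2\alpha}\tilde u_n).
\]
Subtracting $\mu\pt_x(|\tilde u_n - e_n|^{2\alpha}(\tilde u_n - e_n))$ from both sides and applying $|\d_x|^{-1}$ produces
\[
|\d_x|^{-1}\bigl[(\pt_t+\pt_x^3)(\tilde u_n-e_n) - \mu\pt_x(|\tilde u_n-e_n|^{2\alpha}(\tilde u_n-e_n))\bigr] = \mu\,|\d_x|^{-1}\pt_x F_n,
\]
where $F_n := |\tilde u_n|^{2\alpha}\tilde u_n - |\tilde u_n-e_n|^{2\alpha}(\tilde u_n-e_n)$.

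Since $|\d_x|^{-1}\pt_x$ is, up to a factor of $i$, the spatial Hilbert transform, it commutes with $|\d_x|^{s(L)}$ and is bounded on $L^{\widetilde p(N)}_x L^{\widetilde q(N)}_t$ by one-dimensional Calder\'on--Zygmund theory applied slicewise in $t$; hence it is bounded on $N(I)$. Writing $J_n := [-T/(3\xi_n), T/(3\xi_n)]$, the task reduces to showing $\|F_n\|_{N(J_n)} \to 0$ as $n\to\infty$. For this I would invoke Lemma~\ref{lem:nlest}(ii) with $u = \tilde u_n$, $v = \tilde u_n - e_n$, $s = s(L)$, $r = \alpha$ (so that $X(I;s,r) = L(I)$, $Y(I;s,r) = N(I)$, and $u-v = e_n$), yielding
\begin{align*}
\|F_n\|_{N(J_n)} \le{}& C\bigl(\|\tilde u_n\|_{L(J_n)}+\|\tilde u_n-e_n\|_{L(J_n)}\bigr)\bigl(\|\tilde u_n\|_{S(J_n)}+\|\tilde u_n-e_n\|_{S(J_n)}\bigr)^{2\alpha-1}\|e_n\|_{S(J_n)} \\
&{}+ C\bigl(\|\tilde u_n\|_{S(J_n)}+\|\tilde u_n-e_n\|_{S(J_n)}\bigr)^{2\alpha}\|e_n\|_{L(J_n)}.
\end{align*}

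To close the argument I would use Lemma~\ref{qa} to obtain the uniform bound $\|\tilde u_n\|_{L(J_n)}+\|\tilde u_n\|_{S(J_n)}\le C$ independent of $n$ and $T$, together with Lemma~\ref{err} to conclude $\|e_n\|_{L(J_n)}+\|e_n\|_{S(J_n)}\to 0$. Feeding these into the inequality above forces $\|F_n\|_{N(J_n)}\to 0$, which is~\eqref{galli}. I do not expect a serious obstacle here: all delicate stationary-phase, dispersive, and frequency-localization arguments are already encapsulated in Lemmas~\ref{qa} and~\ref{err}, the residual terms $R_n^1,R_n^2,R_n^3,R_n^4$ having been designed precisely so that $e_n$ absorbs everything except the genuine gKdV nonlinearity $\mu\pt_x(|\tilde u_n|^{2\alpha}\tilde u_n)$. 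The only mildly subtle bookkeeping is the mapping property of $|\d_x|^{-1}\pt_x$ on $N(I)$, which is a routine consequence of the boundedness of the Hilbert transform on mixed-norm Lebesgue spaces with exponents in $(1,\infty)$.
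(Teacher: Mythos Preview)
Your proposal is correct and follows essentially the same route as the paper's proof: both identify the expression inside the norm as $\mu\,|\d_x|^{-1}\pt_x\bigl(|\tilde u_n|^{2\alpha}\tilde u_n - |\tilde u_n-e_n|^{2\alpha}(\tilde u_n-e_n)\bigr)$, apply Lemma~\ref{lem:nlest}(ii), and then conclude via Lemmas~\ref{qa} and~\ref{err}. You are simply more explicit than the paper about the boundedness of $|\d_x|^{-1}\pt_x$ on $N(I)$ (which the paper leaves implicit) and about citing Lemma~\ref{qa} for the uniform bounds on $\tilde u_n$.
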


\begin{proof}
In the proof we omit $([-\frac{T}{3\xi_n},\frac{T}{3\xi_{n}}])$, for simplicity.
We first note
\begin{eqnarray*}
\lefteqn{(\pt_{t}+\pt_{x}^{3})(\tilde{u}_{n}-e_{n})
-\mu\pt_{x}\{|\tilde{u}_{n}-e_{n}|^{2\alpha}
(\tilde{u}_{n}-e_{n})\}}\\
&=&
\mu\pt_{x}\{|\tilde{u}_{n}|^{2\alpha}\tilde{u}_{n}
-|\tilde{u}_{n}-e_{n}|^{2\alpha}(\tilde{u}_{n}-e_{n})\}.
\end{eqnarray*}
Lemma \ref{lem:nlest} implies
\begin{eqnarray*}
\lefteqn{\||\tilde{u}_{n}|^{2\alpha}\tilde{u}_{n}
-|\tilde{u}_{n}-e_{n}|^{2\alpha}
(\tilde{u}_{n}-e_{n})\|_{N}}\\
&\le&
C(\|\tilde{u}_{n}\|_{L}+\|e_{n}\|_{L})
(\|\tilde{u}_{n}\|_{S}+\|e_{n}\|_{S})^{2\alpha-1}
\|e_{n}\|_{S}\\
& &+
C(\|\tilde{u}_{n}\|_{S}+\|e_{n}\|_{S})^{2\alpha}
\|e_{n}\|_{L}.
\end{eqnarray*}
By Lemma \ref{err}, 
letting $n\to\infty$ in the above inequalities, we obtain 
(\ref{galli}). 
\end{proof}

\vskip2mm

\begin{lemma}[Initial condition] \label{ic} 
Take a parameter $T$ so that $T>T_{0}$ if $|T_{0}|<\infty$ and
arbitrarily positive if $T_{0}=\pm\infty$. 
Let $u_{n}(t_{n})$ and $\tilde{u}_{n}(t)$ be 
given by (\ref{tttg}
) and (\ref{ap}), respectively. 
Then we have
\begin{eqnarray}
\lim_{n\to\infty}
\|u_{n}(t_{n})-\tilde{u}_{n}(t_{n})\|_{
\hat{L}_{x}^{\alpha}}=0.
\label{ic1}
\end{eqnarray}
\end{lemma}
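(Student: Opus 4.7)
The plan is to compare $u_n(t_n)$ and $\tilde{u}_n(t_n)$ after stripping off a common unitary ``envelope'' built from $P(\xi_n)$, $T(\cdot)$ and a phase factor, reducing matters to a comparison between the Airy-propagated profile $A(t_n)\phi$ (modulated by a Schr\"odinger group) and the NLS solution $v_n$ evaluated at a nearby time. The key identity is the Galilean-type relation \eqref{eq:GtA},
\[
A(t)P(\xi)=e^{-it\xi^{3}}P(\xi)T(-3\xi^{2}t)S(3\xi t)A(t),
\]
which is tailor-made for converting an Airy evolution of a highly oscillatory datum into an (approximate) Schr\"odinger evolution. I will also use that $P(\xi)$, $T(y)$, $A(t)$, $S(t)$ and multiplication by a unimodular constant are isometries on $\hat{L}^{\alpha}$, and that $\|\Re f\|_{\hat{L}^{\alpha}}\le\|f\|_{\hat{L}^{\alpha}}$.

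\textbf{Case 1: $|T_{0}|<\infty$.} Then $t_{n}\to 0$ and $-3\xi_{n}t_{n}\to T_{0}$, so for large $n$ one has $|t_{n}|\le T/(3\xi_{n})$ and
$\tilde{u}_{n}(t_{n})=\Re\bigl[e^{-it_{n}\xi_{n}^{3}}P(\xi_{n})T(-3\xi_{n}^{2}t_{n})v_{n}(-3\xi_{n}t_{n})\bigr].$
Applying \eqref{eq:GtA} to $u_{n}(t_{n})=\Re[A(t_{n})P(\xi_{n})\phi]$ yields
$u_{n}(t_{n})=\Re\bigl[e^{-it_{n}\xi_{n}^{3}}P(\xi_{n})T(-3\xi_{n}^{2}t_{n})S(3\xi_{n}t_{n})A(t_{n})\phi\bigr].$
Therefore
\[
\|u_{n}(t_{n})-\tilde{u}_{n}(t_{n})\|_{\hat{L}^{\alpha}}\le \|S(3\xi_{n}t_{n})A(t_{n})\phi-v_{n}(-3\xi_{n}t_{n})\|_{\hat{L}^{\alpha}}.
\]
Strong continuity of $A$ at $0$ gives $A(t_{n})\phi\to\phi$ in $\hat{L}^{\alpha}$, and since $S(\cdot)$ is an isometry, $S(3\xi_{n}t_{n})A(t_{n})\phi\to e^{-iT_{0}\d_{x}^{2}}\phi=v(T_{0})$. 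On the other hand, uniform convergence $v_{n}\to v$ in $C(\R;\hat{L}^{\alpha})$ from \eqref{con} together with continuity of $v$ gives $v_{n}(-3\xi_{n}t_{n})\to v(T_{0})$. This proves \eqref{ic1}.

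\textbf{Case 2: $T_{0}=\pm\infty$.} Treat $T_{0}=+\infty$; the other case is symmetric. For large $n$ one has $t_{n}<-T/(3\xi_{n})$, so
$\tilde{u}_{n}(t_{n})=A(t_{n}+\tfrac{T}{3\xi_{n}})\Re\bigl[e^{iT\xi_{n}^{2}/3}P(\xi_{n})T(\xi_{n}T)v_{n}(T)\bigr].$
Split $A(t_{n})=A(t_{n}+\tfrac{T}{3\xi_{n}})A(-\tfrac{T}{3\xi_{n}})$ and apply \eqref{eq:GtA} with $t=-T/(3\xi_{n})$ to get
$u_{n}(t_{n})=A(t_{n}+\tfrac{T}{3\xi_{n}})\Re\bigl[e^{iT\xi_{n}^{2}/3}P(\xi_{n})T(\xi_{n}T)S(-T)A(-\tfrac{T}{3\xi_{n}})\phi\bigr].$
Using again that the envelope is an isometry on $\hat{L}^{\alpha}$,
\[
\|u_{n}(t_{n})-\tilde{u}_{n}(t_{n})\|_{\hat{L}^{\alpha}}\le \bigl\|S(-T)A(-\tfrac{T}{3\xi_{n}})\phi-v_{n}(T)\bigr\|_{\hat{L}^{\alpha}}.
\]
As $n\to\infty$, $T/(3\xi_{n})\to 0$, so $S(-T)A(-T/(3\xi_{n}))\phi\to e^{-iT\d_{x}^{2}}\phi$, while \eqref{con} gives $v_{n}(T)\to v(T)$. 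Hence $\limsup_{n}\|u_{n}(t_{n})-\tilde{u}_{n}(t_{n})\|_{\hat{L}^{\alpha}}\le \|e^{-iT\d_{x}^{2}}\phi-v(T)\|_{\hat{L}^{\alpha}}$, which tends to $0$ as $T\to\infty$ by the asymptotic condition \eqref{ic11} for $v$ at $T_{0}=+\infty$.

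The main (and essentially the only) subtlety is in Case 2: the single limit $\lim_{n\to\infty}=0$ is to be read as a double limit $\lim_{T\to\infty}\limsup_{n\to\infty}=0$, exactly matching the formulation of Lemma \ref{gal}. This is the form in which \eqref{ic1} is fed into the long-time stability estimate (Proposition \ref{lsy}): one first chooses $T$ large to control the ``scattering tail'' and then sends $n\to\infty$. Other than this bookkeeping, everything reduces to the algebraic identity \eqref{eq:GtA}, isometric invariance of $\hat{L}^{\alpha}$ under $P,T,A,S$, and the convergence \eqref{con}, all of which are already established.
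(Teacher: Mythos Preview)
Your proof is correct and follows the same strategy as the paper: the paper carries out the identical reduction on the Fourier side by hand, while you package it via the operator identity \eqref{eq:GtA}, arriving at exactly the same quantity $\|S(-T)A(-T/(3\xi_n))\phi - v_n(T)\|_{\hat L^\alpha}$ (resp.\ its $|T_0|<\infty$ analogue). Your remark about the double limit in Case~2 is well taken and in fact matches what the paper's own argument actually yields---their ``term $\|e^{iT\xi^2}\hat\phi-\hat v(T,\xi)\|_{L^{\alpha'}}$'' does not depend on $n$ and only vanishes after letting $T\to\infty$, which is precisely the form used in the subsequent stability argument.
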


\begin{proof}
We first consider the case $|T_{0}|<\infty$. 
Notice that in this case we necessarily have 
$t_{n}\to0$ as $n\to\infty$. 
Since $|t_{n}|\le T/(3\xi_{n})$ 
for $n$ sufficiently large, we have 
\begin{eqnarray*}
\lefteqn{
\|u_{n}(t_{n})-\tilde{u}_{n}(t_{n})\|_{
\hat{L}_{x}^{\alpha}}}\\
&\le&
\|e^{-t_{n}\pt_{x}^{3}}[e^{-ix\xi_{n}}\phi(x)]
-e^{-ix\xi_{n}-it_{n}\xi_{n}^{3}}
v_{n}(-3\xi_{n}t_{n},x+3\xi_{n}^{2}t_{n})\|_{
\hat{L}_{x}^{\alpha}}\\
&=&
\|e^{it_{n}(\xi-\xi_{n})^{3}}\hat{\phi}(\xi)
-e^{-it_{n}\xi_{n}^{3}+3it_{n}\xi_{n}^{2}\xi}
\hat{v}_{n}(-3\xi_{n}t_{n},\xi)\|_{
L_{\xi}^{\alpha'}}\\
&=&
\|e^{it_{n}\xi^{3}-3it_{n}\xi_{n}\xi^{2}}\hat{\phi}(\xi)
-
\hat{v}_{n}(-3\xi_{n}t_{n},\xi)\|_{
L_{\xi}^{\alpha'}}.
\end{eqnarray*}
Since $t_{n}\to0$ and $-3t_{n}\xi_{n}\to T_{0}$, 
we have 
\begin{eqnarray*}
\lefteqn{
\|e^{it_{n}\xi^{3}-3it_{n}\xi_{n}\xi^{2}}\hat{\phi}(\xi)
-
\hat{v}_{n}(-3\xi_{n}t_{n},\xi)\|_{
L_{\xi}^{\alpha'}}
}\\
&\le&
\|(e^{it_{n}\xi^{3}}-1)e^{-3it_{n}\xi_{n}\xi^{2}}\hat{\phi}(\xi)\|_{
L_{\xi}^{\alpha'}}
+\|e^{-3it_{n}\xi_{n}\xi^{2}}\hat{\phi}(\xi)
-\hat{v}(-3\xi_{n}t_{n},\xi)\|_{
L_{\xi}^{\alpha'}}\\
& &+
\|\hat{v}(-3\xi_{n}t_{n},\xi)-\hat{v}_{n}(-3\xi_{n}t_{n},\xi)\|_{
L_{\xi}^{\alpha'}}\\
&\to&0
\end{eqnarray*}
as $n\to\infty$, where we used the dominated convergence 
theorem, (\ref{ic11}) and (\ref{con}). This proves 
(\ref{ic1}) for $|T_{0}|<\infty$. 

Next we consider the case $T_{0}=\pm\infty$. 
We treat the case $T_{0}=\infty$ only since the 
case $T_{0}=-\infty$ being similar. 
For $n$ sufficiently large, $-3t_{n}\xi_{n}>T$. 
Hence
\begin{eqnarray*}
\lefteqn{
\|u_{n}(t_{n})-\tilde{u}_{n}(t_{n})\|_{
\hat{L}_{x}^{\alpha}}}\\
&\le&
\|e^{-(t_{n}+\frac{T}{3\xi_{n}})\pt_{x}^{3}}
[e^{\frac{T}{3\xi_{n}}\pt_{x}^{3}}
[e^{-ix\xi_{n}}\phi(x)]
-e^{-ix\xi_{n}+\frac{i}{3}T\xi_{n}^{2}}
v_{n}(T,x-\xi_{n}T)]\|_{
\hat{L}_{x}^{\alpha}}\\
&=&
\|e^{\frac{T}{3\xi_{n}}\pt_{x}^{3}}
[e^{-ix\xi_{n}}\phi(x)]
-e^{-ix\xi_{n}+\frac{i}{3}T\xi_{n}^{2}}
v_{n}(T,x-\xi_{n}T)\|_{
\hat{L}_{x}^{\alpha}}\\
&=&
\|e^{-i\frac{T}{3\xi_{n}}(\xi-\xi_{n})^{3}}
\hat{\phi}(\xi)
-e^{\frac{i}{3}T\xi_{n}^{2}-iT\xi_{n}\xi}
\hat{v}_{n}(T,\xi)
\|_{
L_{\xi}^{\alpha'}}\\
&=&
\|e^{-i\frac{T}{3\xi_{n}}\xi^{3}+iT\xi^{2}}
\hat{\phi}(\xi)
-\hat{v}_{n}(T,\xi)
\|_{
L_{\xi}^{\alpha'}}.
\end{eqnarray*}
Since $\xi_{n}\to\infty$, the dominated 
convergence theorem, (\ref{ic11}) and (\ref{con}) 
yield
\begin{eqnarray*}
\lefteqn{
\|e^{-i\frac{T}{3\xi_{n}}\xi^{3}+iT\xi^{2}}
\hat{\phi}(\xi)
-\hat{v}_{n}(T,\xi)
\|_{
L_{\xi}^{\alpha'}}
}\\
&\le&\|(e^{-i\frac{T}{3\xi_{n}}\xi^{3}}-1)
e^{iT\xi^{2}}\hat{\phi}(\xi)\|_{
L_{\xi}^{\alpha'}}
+\|e^{iT\xi^{2}}\hat{\phi}(\xi)-\hat{v}(T,\xi)\|_{
L_{\xi}^{\alpha'}}\\
& &+\|\hat{v}(T,\xi)-\hat{v}_{n}(T,\xi)
\|_{
L_{\xi}^{\alpha'}}\\
&\to&0
\end{eqnarray*}
as $n\to\infty$, which proves (\ref{ic1}) for the case 
$T_{0}=\infty$. This completes the proof of 
Lemma \ref{ic}. 
\end{proof}

\vskip2mm
\noindent
{\it Proof of Theorem \ref{thm:ENK}.}
By Lemma \ref{qa}, there exist two positive constants 
$A$ and $M$ which are independent of $T$ and $n$ 
such that 
\begin{eqnarray*}
\|\tilde{u}_{n}\|_{L_{t}^{\infty}(\rre;
\hat{L}_{x}^{\alpha})}&\le&A,\\
\|\tilde{u}_{n}\|_{S(\rre)}+
\|\tilde{u}_{n}\|_{L(\rre)}&\le&M.
\end{eqnarray*}
For the above $M$, let $\varepsilon_{1}=\varepsilon_{1}(M)$ be given by 
Lemma \ref{lsy} and let $C$ be a constant appearing 
in Lemma \ref{lsy}. Then. Lemma \ref{gal} yields that 
for any $\varepsilon$ satisfying $0<\varepsilon<C\varepsilon_{1}$, 
there exists a positive constant $T_{\varepsilon}$ 
such that if $T\ge T_{\varepsilon}$, then 
\begin{eqnarray}
\label{y10}\\
\lim_{n\to\infty}
\||\d_{x}|^{-1}\{(\pt_{t}+\pt_{x}^{3})\tilde{u}_{n}
-\mu\pt_{x}(|\tilde{u}_{n}|^{2\alpha}\tilde{u}_{n})\}
\|_{ N (|t|>\frac{T}{3\xi_{n}} )}<\frac{\varepsilon}{2}.
\nonumber
\end{eqnarray}
We now choose
\begin{eqnarray*}
T:=\left\{
\begin{array}{l}
\displaystyle{\max\{T_{\varepsilon},2|T_{0}|\}
\quad\ if\ T_{0}=\pm\infty,}\\
\displaystyle{T_{\varepsilon}\qquad\qquad\quad\quad\  
\ if\ |T_{0}|<\infty.}
\end{array}
\right.
\end{eqnarray*}
We first apply the long time stability for gKdV 
in the time interval $\{|t| \le T/(3\xi_{n})\}$. 
Lemmas \ref{gas} and \ref{ic} lead that 
there exits a nonnegative integer $N_{1}=N_{1}(\varepsilon,
T_{\varepsilon})$ such that if $n\ge N_{1}$, then 
$|t_{n}|\le T/(3\xi_{n})$ and 
\begin{eqnarray*}
\lefteqn{\|u_{n}(t_{n})-\tilde{u}_{n}(t_{n})
\|_{\hat{L}_{x}^{\alpha}}}\\
& &+\||\d_{x}|^{-1} \{(\pt_{t}+\pt_{x}^{3})\tilde{u}_{n}
-\mu\pt_{x}(|\tilde{u}_{n}|^{2\alpha}\tilde{u}_{n})\}
\|_{N (|t|\le\frac{T}{3\xi_{n}})}\le\frac{\varepsilon}{C}.
\end{eqnarray*}
Hence, by Proposition \ref{lsy}, there exists a unique 
solution $u\in C(I;\hat{L}_{x}^{\alpha})$ to (\ref{gKdV}) 
satisfying 
\begin{equation}
\|u_{n}-\tilde{u}_{n}
\|_{L_{t}^{\infty}(I;\hat{L}_{x}^{\alpha})}
+\|u_{n}-\tilde{u}_{n}\|_{S(I)}+
\|u_{n}-\tilde{u}_{n}\|_{L(I)}
\le \frac{\varepsilon}{2}, \label{y11}
\end{equation}
where $I=[-\frac{T}{3\xi_n},\frac{T}{3\xi_{n}}]$.
Especially, we have
\begin{equation}
\norm{u_{n}\(\pm\frac{T}{3\xi_{n}}\)
-\tilde{u}_{n}\(\pm\frac{T}{3\xi_{n}}\)
}_{\hat{L}_{x}^{\alpha}}
\le \frac{\varepsilon}{2}.
\label{y12}
\end{equation}
Next we apply the long time stability for gKdV 
in the time intervals $t\ge T/(3\xi_{n})$ and $t\le-T/(3\xi_{n})$, respectively.
Combining (\ref{y10}), (\ref{y11}), (\ref{y12}) 
and Lemma \ref{lsy}, we find that there exists a unique 
global solution $u\in C(\R;\hat{L}_{x}^{\alpha})$ to (\ref{gKdV}) 
satisfying 
\begin{eqnarray*}
\|u_{n}-\tilde{u}_{n}
\|_{L_{t}^{\infty}(\R;\hat{L}_{x}^{\alpha})}
+\|u_{n}-\tilde{u}_{n}\|_{S}+
\|u_{n}-\tilde{u}_{n}\|_{L}
\le C\varepsilon.
\end{eqnarray*} 
Combining the above inequality and Lemma \ref{qa}
we have Theorem \ref{thm:ENK}. 

%
%

\appendix

\section{On generalized Morrey spaces}

In this appendix, we give the following interpolation type inequality
for the generalized Morrey spaces.

\begin{proposition}\label{prop:gm_interpolation}
Suppose that $0<q< p<r < \I $.  
If $s$ satisfies
\[
	\frac{1}{s}\times\(1-\frac{p}r\) + \frac{1}p\times\frac{p}r < \frac1q
\]
then, for any $f\in L^q(\R)$, we have
\[
	\norm{f}_{{M}^p_{q,r}}
	\le C \norm{f}_{{M}^p_{s,\I} }^{1-\frac{p}r} \norm{f}_{{M}^p_{p,\I}}^{\frac{p}r}.
\]
In particular, $L^p \hookrightarrow M^{p}_{q,r}$.
\end{proposition}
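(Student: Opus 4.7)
The plan is to combine a pointwise Hölder interpolation on each dyadic interval with a summation argument over the dyadic lattice, in the spirit of the standard Morrey interpolation. First, I would exploit the \emph{strict} inequality in the hypothesis to reduce to an equality case. Set $\theta = p/r$ and define $\tilde q$ by $1/\tilde q = (1-\theta)/s + \theta/p$. The strict hypothesis yields $\tilde q > q$, so Hölder on each dyadic $I$ gives $\|f\|_{L^q(I)} \le |I|^{1/q-1/\tilde q}\|f\|_{L^{\tilde q}(I)}$, and multiplying by $|I|^{1/p-1/q}$ and taking the $\ell^r$-norm produces the trivial embedding $\|f\|_{M^p_{q,r}} \le \|f\|_{M^p_{\tilde q, r}}$. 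This reduces the problem to the case where $q$ is replaced by $\tilde q$, for which $1/\tilde q = (1-\theta)/s + \theta/p$ holds as an equality.

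Next, apply Hölder interpolation in $L^{\tilde q}$ on each fixed $I$:
\[
\|f\|_{L^{\tilde q}(I)} \le \|f\|_{L^s(I)}^{1-\theta}\|f\|_{L^p(I)}^{\theta}.
\]
An elementary exponent calculation gives $1/p - 1/\tilde q = (1-\theta)(1/p-1/s)$, so multiplying by $|I|^{1/p-1/\tilde q}$ yields the pointwise bound
\[
|I|^{1/p-1/\tilde q}\|f\|_{L^{\tilde q}(I)} \le \bigl(|I|^{1/p-1/s}\|f\|_{L^s(I)}\bigr)^{1-\theta}\|f\|_{L^p(I)}^{\theta}.
\]
Raising to the $r$-th power, using the identity $r\theta = p$, and summing over $I \in \mathcal D$ gives
\[
\|f\|_{M^p_{\tilde q,r}}^{r} \le \sum_{I \in \mathcal D} \bigl(|I|^{1/p-1/s}\|f\|_{L^s(I)}\bigr)^{r-p}\|f\|_{L^p(I)}^{p}.
\]
Bounding the first factor uniformly by $\|f\|_{M^p_{s,\infty}}^{r-p}$ and controlling the remaining sum by the dyadic structure — at each scale $j$, $\sum_k \|f\|_{L^p(\tau_k^j)}^{p} = \|f\|_{L^p}^p$ — should then combine with scale-wise decay of $b_I = |I|^{1/p-1/s}\|f\|_{L^s(I)}$ inherited from the finite Morrey norms to produce the desired bound. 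The ``in particular'' claim $L^p \hookrightarrow M^p_{q,r}$ follows by choosing $s = p$, for which the hypothesis reduces to $q < p$.

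The main obstacle is the final summation step: a naive interchange of sup and sum would leave $\sum_{I \in \mathcal D}\|f\|_{L^p(I)}^{p}$, which diverges across dyadic scales since it accumulates $\|f\|_{L^p}^{p}$ at every scale. The resolution is to \emph{not} pull the $b_I^{r-p}$ factor out as a sup but to use the gap provided by the strict hypothesis, i.e., the slack $1/q - 1/\tilde q > 0$ (equivalently $|I|^{\epsilon}$ for some $\epsilon > 0$ in the relevant direction), together with Hölder for series, to achieve geometric convergence across scales. The most delicate bookkeeping is to balance the sup-type $M^p_{s,\infty}$ control with the scale-wise $L^p$ identity so that both the $j \to +\infty$ and $j \to -\infty$ tails are geometrically summable.
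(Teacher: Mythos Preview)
The obstacle you flag is real, and your proposed resolution does not close it. Once you have reduced from $q$ to $\tilde q$ (with $1/\tilde q = (1-\theta)/s + \theta/p$), the slack $1/q - 1/\tilde q$ has already been consumed; at the $\tilde q$ level the pointwise H\"older between $L^s(I)$ and $L^p(I)$ is \emph{sharp}, so no residual $|I|^{\epsilon}$ appears in
\[
\sum_{I\in\mathcal D}\bigl(|I|^{1/p-1/s}\|f\|_{L^s(I)}\bigr)^{r-p}\|f\|_{L^p(I)}^{p}.
\]
Keeping the slack by not reducing does not help either: the H\"older step still lands you at $\tilde q$, and the passage from $L^{\tilde q}(I)$ back to $L^q(I)$ exactly cancels the extra power of $|I|$. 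Nor does H\"older-for-series rescue the sum, since $a_I=\|f\|_{L^p(I)}^p$ lies in no $\ell^\sigma$ with $\sigma<\infty$ (it contributes $\|f\|_{L^p}^p$ at every scale $j$) while $b_I$ is only known to lie in $\ell^\infty$. The approach, as written, cannot be completed: summing over the full dyadic lattice requires a mechanism that prevents each scale from contributing the full $\|f\|_{L^p}^p$, and pointwise H\"older provides none.

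The missing idea, and what the paper does, is a level-set decomposition relative to the scale: set
\[
f_{n,I}(x)=f(x)\,{\bf 1}_{I\cap\{2^{n}\le |I|^{1/p}|f(x)|\le 2^{n+1}\}}(x),\qquad n\in\Z,\ I\in\mathcal D.
\]
On each piece one interpolates (via H\"older in $x$) between a trivial bound and a Chebyshev bound involving $\|f\|_{M^p_{s,\infty}}$; the strict hypothesis is exactly the condition making these two bounds cross, giving geometric decay $2^{-\delta|n-n_0|}$ uniformly in $I$. After summing in $n$, the remaining sum $\sum_{I\in\mathcal D}\int|f_{n,I}|^p\,dx$ is finite---bounded by $(p+1)\|f\|_{L^p}^p$---because for fixed $n$ the sets $\{2^n\le 2^{-j/p}|f|\le 2^{n+1}\}$ at different scales $j$ overlap on at most $\lfloor p\rfloor+1$ values of $j$. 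This scale-disjointness is the heart of the argument and is invisible from the pure H\"older-on-intervals viewpoint.
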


\begin{proof}
Set
\[
	{f}_{n,I}(x) := {f}(x)  {\bf 1}_{I\cap \{ 2^{n}  \le |I|^{\frac1p} |f(x)| \le  2^{n+1}\}}(x)
\]
for $I \in \mathcal{D}$ and $n\in \Z$. 
Let $\theta=1-p/r$. 
By the H\"older inequality in $x$, 
\begin{eqnarray}
\label{aa1}\\
	 \int |{f}_{n,I}(x)|^{q} \,dx
	&=& \int |{f}_{n,I}(x)|^{\theta q} |{f}_{n,I}(x)|^{(1-\theta)q} \,dx\nonumber\\
	&\le& \(\int |{f}_{n,I}(x)|^{\frac{\theta q p}{p-(1-\theta)q}} \,dx\)^{1-\frac{(1-\theta)q}{p}}\(\int |{f}_{n,I}(x)|^{p} \,dx\)^{\frac{(1-\theta)q}{p}}.\nonumber
\end{eqnarray}
By definition of ${f}_{n,I}$, we have 
\begin{align*}
	\(\int |{f}_{n,I}(x)|^{\frac{\theta q p}{p-(1-\theta)q}} \,dx\)^{1-\frac{(1-\theta)q}{p}}
	&{}\le  C2^{\theta qn} |I|^{-\frac{\theta q}{p}}  \(\int_{I\cap \{ |f| \ge  2^{n} |I|^{-\frac1p} \}} \,dx \)^{1-\frac{(1-\theta)q}{p}}.
\end{align*}
It is obvious that
\[
	\(\int_{I\cap \{ |f| \ge  2^{n} |I|^{-\frac1p} \}} \,dx \)^{1-\frac{(1-\theta)q}{p}}
	\le |I|^{1-\frac{(1-\theta)q}{p}}.
\]
One sees from Chebyshev's inequality that
\begin{align*}
	&\(\int_{I\cap \{ |f| \ge  2^{n} |I|^{-\frac1p} \}} \,dx \)^{1-\frac{(1-\theta)q}{p}}\\
	&{}\le \(\frac{\int_I |{f}(x)|^s \,dx}{2^{ns} |I|^{-\frac{s}p}}\)^{1-\frac{(1-\theta)q}{p}} \\
	&{}\le 2^{-(1-\frac{(1-\theta)q}{p}) sn} |I|^{1-\frac{(1-\theta)q}{p}} \(\sup_{I \in \mathcal{D}}
	 |I|^{\frac1p-\frac1s} \norm{f}_{L^s(I)}\)^{(1-\frac{(1-\theta)q}{p})s}.
\end{align*}
Namely,
\begin{eqnarray*}
	\lefteqn{\(\int |{f}_{n,I}(x)|^{\frac{\theta p q}{p-(1-\theta)q}} \,dx\)^{1-\frac{(1-\theta)q}{p}} }\\
	&\le& C |I|^{1-\frac{q}p}  
	\min \( 2^{\theta q n} , 2^{\theta q n -(1-\frac{(1-\theta)q}{p}) sn}
	\norm{f}_{M^{p}_{s,\I}}^{(1-\frac{(1-\theta)q}{p})s} \) \\
	 &=& C |I|^{1-\frac{q}p} \norm{f}_{M^{p}_{s,\I}}^{\theta q}
	\min \( 2^{\theta q (n-n_0)} , 2^{(\theta -\frac{s}q+\frac{(1-\theta)s}{p} )q(n-n_0)} \),
\end{eqnarray*}
where, we chose $n_0\in \R$ by $2^{n_0} = \norm{f}_{M^{q}_{s,\I}}$.
Since 
\[
	\theta -\frac{s}q+\frac{(1-\theta)s}{p} <0< \theta 
\]
by assumption, there exists $\delta=\delta(p,q,s,\theta)>0$ such that
\begin{eqnarray}
	\(\int |{f}_{n,I}(x)|^{\frac{\theta p q}{q-(1-\theta)p}} \,dx\)^{1-\frac{(1-\theta)p}{q}}
	\le C 2^{-\delta |n-n_0|} |I|^{1-\frac{q}p} \norm{f}_{M^{p}_{s,\I}}^{\theta q}
	\label{aa2}
\end{eqnarray}
for all $n\in \Z$ and $I \in \mathcal{D}$.

Note that
\[
	\int_I |f(x)|^q dx = \sum_{n\in\Z} \int_{\R} |f_{n,I}(x)|^q dx
\]
for any $I \in \mathcal{D}$ since $q<\I$ by assumption.
The inequalities (\ref{aa1}) and (\ref{aa2}) yield 
\begin{align*}
	&\norm{f}_{M^p_{q,r}}^r \\
	&{} = \sum_{I \in \mathcal{D}} \( \sum_{n\in\Z} |I|^{\frac{q}{p}-1} 
	\norm{f_{n,I}}_{L^q(\R)}^q\)^{{r}/q} \\
	&{} \le C\sum_{I \in \mathcal{D}} \( \sum_{n\in\Z} 
	2^{-\delta |n-n_0|} \norm{f}_{M^{p}_{s,\I}}^{\theta q} \(\int |{f}_{n,I}(x)|^{p} \,dx\)^{\frac{(1-\theta)q}{p}} \)^{r/q}\\
	&{} = C \norm{f}_{M^{p}_{s,\I}}^{\theta r} \sum_{I \in \mathcal{D}} \( \sum_{n\in\Z} 
	\( 2^{-\delta' |n-n_0|}  \int |{f}_{n,I}(x)|^{p} \,dx\)^{{q}/{r}} \)^{r/q} \\
	&{} \le C_{\delta'} \norm{f}_{M^{p}_{s,\I}}^{\theta r} \sum_{I \in \mathcal{D}}  \sum_{n\in\Z} 
	 2^{-\frac{\delta'}{2} |n-n_0|}  \int |{f}_{n,I}(x)|^{p} \,dx ,
\end{align*}
where we have used the H\"older inequality in $n$ to yield the last line.
Thus,
\[
	\norm{f}_{M^p_{q,r}}^r
	\le 
	C \norm{f}_{M^{p}_{s,\I}}^{\theta r} \sup_{n\in\Z}\sum_{I \in \mathcal{D}} \int |{f}_{n,I}(x)|^{p} \,dx
\]

Finally, for any fixed $n$, we have
\begin{align*}
	\sum_{I \in \mathcal{D}} \int |{f}_{n,I}(x)|^{p} \,dx
	&{}= \sum_{j\in \Z} \sum_{I \in \mathcal{D}_j}  \int |{f}_{n,I}(x)|^{p} \,dx \\
	&{}= \sum_{j\in \Z} \int_{\{ 2^{n}  \le 2^{-\frac{j}p} |f(x)| \le  2^{n+1}\}} |f(x)|^{p} \,dx,
\end{align*}
where we have used the fact that elements of $\mathcal{D}_j$ are mutually disjoint
and $\cup_{I\in{\mathcal{D}_j}} I = \R$.
Since $\{ 2^{n}  \le 2^{-\frac{j}p} |f(x)| \le  2^{n+1}\} $
and $\{ 2^{n}  \le 2^{-\frac{j'}p} |f(x)| \le  2^{n+1}\}$
are disjoint as long as $|j-j'| > p$, we have
\[
	\sum_{j\in \Z}  \int_{\{ 2^{n}  \le 2^{-\frac{j}p} |f(x)| \le  2^{n+1}\}} |f(x)|^{p} \,dx
	\le (p+1) \norm{f}_{L^p(\R)}^{p} =(p+1) \norm{f}_{L^p(\R)}^{(1-\theta)r},
\]
which completes the proof. 
\end{proof}

\section{On refined Stein-Tomas estimate}\label{sec:rST}

In this subsection, we prove the first inequality of 
the refined Stein-Tomas estimate \eqref{eq:ST3}.
\begin{theorem}\label{thm:b1}
Let $4/3\le p<\I$.
Then, there exist a constant $C=C(p)$ such that
\begin{equation}\label{eq:ST2}
\begin{aligned}
	\norm{ |\d_x|^{ 1/3p } e^{-t\d_x^3} f }_{ L^{3p}_{t,x} }
	\le{}& C
	\hMn{f}{p}{\frac{3p}{2}}{2(\frac{3p}2)'}.
\end{aligned}
\end{equation}
for any $f \in \hM{p}{\frac{3p}2}{2(\frac{3p}{2})'}$.
\end{theorem}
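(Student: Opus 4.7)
The plan is to adapt the Bourgain--Shao bilinear-to-linear strategy to the $\hat{L}^p$ setting and reduce Theorem \ref{thm:b1} to a bilinear Stein--Tomas estimate for the Airy propagator. Set $u := |\d_x|^{1/3p} e^{-t\d_x^3} f$ and $q := 3p/2$; note $q \ge 2$ since $p \ge 4/3$, so $q' \le 2$. Using the identity $\|u\|_{L^{3p}_{t,x}}^2 = \|u\,\bar u\|_{L^{q}_{t,x}}$, I would expand $u\bar u = \sum_{(Q_1,Q_2)} u_{Q_1}\overline{u_{Q_2}}$ along a Whitney decomposition of $\R\times\R$ into pairs $(Q_1,Q_2) \in \mathcal{D}\times\mathcal{D}$ of equal length with $\mathrm{dist}(Q_1,Q_2) \sim |Q_1|$, where $u_Q := |\d_x|^{1/3p} e^{-t\d_x^3} f_Q$ and $\widehat{f_Q} = \hat{f}\chi_Q$.

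The key step (Step 1) is the bilinear estimate: for any Whitney pair $(Q_1,Q_2)$,
\[
\|u_{Q_1}\overline{u_{Q_2}}\|_{L^{q}_{t,x}} \le C\,|Q_1|^{2/q-2/p}\,\|\hat{f}\|_{L^{q'}(Q_1)}\|\hat{f}\|_{L^{q'}(Q_2)}.
\]
I would prove this by writing $u_{Q_1}\overline{u_{Q_2}}$ as an inverse Fourier transform in $(x,t)$, performing the change of variables $(\xi_1,\xi_2) \mapsto (\eta,\mu) = (\xi_1-\xi_2, \xi_1^3-\xi_2^3)$ whose Jacobian $|J| = 3|\xi_1^2-\xi_2^2|$ is of size $|Q_1|^2$ by the Whitney separation and comparable size of the frequencies, and then applying Hausdorff--Young at the dual exponent $q' \le 2$. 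The factor $|J|^{1-q'} \sim |Q_1|^{2(1-q')}$ combined with Hölder on $Q_1\times Q_2$ supplies the power $|Q_1|^{2/q-2/p}$. The two-to-one nature of the map is handled by a standard branch split.

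In Step 2, I would sum over the Whitney decomposition. Set $a_Q := |Q|^{1/q-1/p}\|\hat{f}\|_{L^{q'}(Q)}$, so that $\|f\|_{\hat{M}^p_{q,r}} = \|a_Q\|_{\ell^r_{\mathcal{D}}}$. Organising pairs by scale and position, the triangle inequality in $L^q_{t,x}$ together with almost-orthogonality of the products $u_{Q_1}\overline{u_{Q_2}}$ with distinct $\eta$-frequencies yields
\[
\|u\|_{L^{3p}_{t,x}}^2 \le C \sum_{(Q_1,Q_2)\text{ Whitney}} a_{Q_1} a_{Q_2}.
\]
The standard Whitney-to-Littlewood-Paley argument (Schur's test on the dyadic tree) then bounds the right-hand side by $C\|a_Q\|_{\ell^{r}_{\mathcal{D}}}^2$ with $r = 2q' = \frac{6p}{3p-2}$, which is exactly $\|f\|_{\hat{M}^p_{q,2q'}}^2$. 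The exponent $r = 2q'$ emerges from the combinatorics: one factor of $q'$ from the Hausdorff--Young loss, doubled because the bilinear form is quadratic in $a_Q$.

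The main obstacle is Step 1: obtaining the sharp Jacobian dependence in the Hausdorff--Young bound so that the resulting power of $|Q_1|$ matches $|Q|^{1/q-1/p}$ defining the $\hat{M}^p_{q,\,\cdot}$ norm, and verifying that the endpoint exponent $r = 2q'$ saturates the summation in Step 2. A secondary technicality is the Schur-type summation of the bilinear bounds across Whitney scales, which requires a geometric decay in the scale separation; this decay is provided by an additional off-diagonal gain from the Jacobian lower bound when $Q_1$ and $Q_2$ live at comparable but not identical scales. The hypothesis $p \ge 4/3$ is essential precisely because it is equivalent to $q' \le 2$, without which Hausdorff--Young is unavailable in the bilinear step.
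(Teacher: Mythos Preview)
Your overall strategy (square, bilinearise, change variables, Hausdorff--Young, Whitney) matches the paper's, but the order of operations in your write-up creates a genuine gap in Step~2. Once you have the piecewise bilinear bound $\|u_{Q_1}\overline{u_{Q_2}}\|_{L^q}\le C\,a_{Q_1}a_{Q_2}$, the triangle inequality and the fact that each $Q$ has $O(1)$ Whitney partners give only
\[
\|u\|_{L^{3p}}^2\le C\sum_{(Q_1,Q_2)\in\mathcal{W}}a_{Q_1}a_{Q_2}\le C\sum_{Q}a_Q^2=C\|a\|_{\ell^2}^2,
\]
which is $\|u\|_{L^{3p}}\le C\|f\|_{\hat{M}^p_{q,2}}$. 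Since $2q'>2$ for $p>4/3$, one has $\ell^2\hookrightarrow\ell^{2q'}$, so this is a bound by a \emph{stronger} norm than $\hat{M}^p_{q,2q'}$ and hence a \emph{weaker} inequality than the theorem. Schur's test cannot upgrade $\sum a_{Q_1}a_{Q_2}$ to $\|a\|_{\ell^{2q'}}^2$; that inequality is false in general (take $a_Q=1$ on $N$ intervals). The ``factor of $q'$ from Hausdorff--Young'' you invoke was already consumed in Step~1 to produce the first-power bound $a_{Q_1}a_{Q_2}$, so there is nothing left to extract in Step~2.

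The paper's fix is to apply Hausdorff--Young \emph{globally} to $u\bar u$ before any decomposition. This yields
\[
\|u\|_{L^{3p}}^{2q'}\le C\iint_{\R^2}\frac{|\xi\eta|^{\frac{1}{3p-2}}|\hat f(\xi)|^{q'}|\hat f(\eta)|^{q'}}{|\xi+\eta|^{\frac{2}{3p-2}}|\xi-\eta|^{\frac{2}{3p-2}}}\,d\xi\,d\eta,
\]
with the $q'$-th powers of $|\hat f|$ already present inside the integral. Only then does one introduce a Whitney decomposition of $\R^2\setminus\{\xi=\pm\eta\}$; on each Whitney rectangle the kernel is bounded by $|I|^{-2/(3p-2)}$, and the contribution becomes $a_I^{q'}a_{I'}^{q'}$, so the sum is $\le C\sum_I a_I^{2q'}=C\|a\|_{\ell^{2q'}}^{2q'}$, which is exactly the claimed bound. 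A secondary point: your Whitney decomposition is of $\{\xi\neq\eta\}$ only, so the Jacobian $|\xi^2-\eta^2|=|\xi-\eta||\xi+\eta|$ is not bounded below by $|Q_1|^2$ near the anti-diagonal; the paper's Whitney relation is designed to avoid both $\{\xi=\eta\}$ and $\{\xi=-\eta\}$, and this is needed already in your Step~1.
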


\begin{proof}
We argue as in Shao \cite{Shao}.
The square of the left hand side of \eqref{eq:ST2} is equal to
\[
	\norm{\iint e^{ix(\xi-\eta) + it(\xi^3-\eta^3)} |\xi\eta|^{1/3p} \hat{f}(\xi)\overline{\hat{f}(\eta)}\,d\xi d\eta }_{L^{3p/2}_{t,x}}.
\]
Changing variables by $a=\xi-\eta$ and $b=\xi^3-\eta^3$, we have
\[
\norm{\iint e^{ixa + itb} |\xi\eta|^{\frac{1}{3p}} \hat{f}(\xi)\overline{\hat{f}(\eta)}\frac1{3|\xi^2-\eta^2|} \,dadb }_{L^{3p/2}_{t,x}}.
\]
Since $3p/2\ge2$,
we use Hausdorff-Young inequality to deduce that this is bounded by
\begin{eqnarray*}
	\lefteqn{C \norm{|\xi\eta|^{1/3p} \hat{f}(\xi)\overline{\hat{f}(\eta)}|\xi^2-\eta^2|^{-1} }_{L^{(3p/2)'}_{a,b}}}\\
	&=& C \left\{ \iint_{\R^2} \frac{|\xi\eta|^{\frac1{3p-2}} |\hat{f}(\xi)|^{(\frac{3p}{2})'}|\hat{f}(\eta)|^{(\frac{3p}{2})'}}
	{|\xi-\eta|^{\frac{2}{3p-2}} |\xi + \eta|^{\frac{2}{3p-2}}}
	\,d\xi d\eta \right\}^{1-\frac{2}{3p}}.
\end{eqnarray*}
Thus, we have
\begin{equation*}
	\norm{ |\d_x|^{ 1/3p } e^{-t\d_x^3} f }_{ L^{3p}_{t,x} }^{2(\frac{3p}{2})'}
	\le C \iint_{\R^2}  \frac{ |\xi\eta|^{\frac{1}{3p-2}} |\hat{f}(\xi)|^{(\frac{3p}{2})'}|\hat{f}(\eta)|^{(\frac{3p}{2})'}}
	{|\xi+\eta|^{\frac{2}{3p-2}}|\xi-\eta|^{\frac{2}{3p-2}}}
	\,d\xi d\eta   .
\end{equation*}

We now introduce a Whitney-type decomposition.
For an interval $I \in \mathcal{D}_j$, there exists a unique interval $J \in \mathcal{D}_{j-1}$
such that $I\subset J$. We call $J$ as a \emph{parent} of $I$.
For two intervals $I,I' \in \mathcal{D}$, we introduce a binary relation $\sim_{\mathcal{W}}$
so that
$I \sim_{\mathcal{W}} I'$ holds if the following three conditions are satisfied;
(i) $I$ and $I'$ belong to same $\mathcal{D}_{j}$, that is, $|I| = |I'|$;
(ii) $I$ is not neighboring neither $I'$ nor $-I'$; 
and (iii) a parent of $I$ is neighboring either a parent of $I'$ or a parent of $-I'$.
Set $\mathcal{W} := \{ (I,I') \in \mathcal{D}\times \mathcal{D} \ |\ I \sim_{\mathcal{W}} I' \}$.
Notice that if $I\sim_{\mathcal{W}} I'$ then $|I| \le \min (\dist (I,I'),\dist (I,-I')) \le 2 |I|$ and 
 that
for any $I\in \mathcal{D}$, $\# \{I' \in \mathcal{D} \ |\ I\sim_\mathcal{W} I'\} 
= 2,4$ or $6$.
Then, we have the following Whitney-type decomposition of $\R\times \R$;
\[
	\sum_{(I,I') \in{\mathcal{W}} } {\bf 1}_{I} (\xi) {\bf 1}_{I'} (\eta) = 1,\qquad
	(\xi,\eta) \in \R^2 \setminus \{(\xi,\pm\xi) \ |\ \xi \in \R\}.
\]

Let $\mathcal{W}$ be as above.
Since $ |\xi\eta| \le \max (|\xi+\eta|^2,|\xi-\eta|^2)$ for any $(\xi,\eta) \in \R^2$,
one sees that
\[
	\frac{|\xi \eta|}{|\xi+\eta|^2|\xi-\eta|^2} \le |I|^{-2}
\]
for any $(\xi,\eta) \in I \times I'$ with $(I,I') \in \mathcal{W}$.
We hence obtain
\begin{align*}
	&\iint_{\R^2}  \frac{ |\xi\eta|^{\frac{1}{3p-2}} |\hat{f}(\xi)|^{(\frac{3p}{2})'}|\hat{f}(\eta)|^{(\frac{3p}{2})'}}
	{|\xi+\eta|^{\frac{2}{3p-2}}|\xi-\eta|^{\frac{2}{3p-2}}}
	\,d\xi d\eta \\
	&{}= \iint_{\R^2} \sum_{(I,I') \in {\mathcal{W}} } \(\frac{|\xi \eta|  }
	{|\xi+\eta|^2 |\xi-\eta|^2}\)^{\frac1{3p-2}} |\hat{f}(\xi)|^{(\frac{3p}{2})'} |\hat{f}(\eta)|^{(\frac{3p}{2})'} {\bf 1}_I(\xi)  {\bf 1}_{I'} (\eta)
	\,d\xi d\eta \\
	&{}\le  \sum_{ I \in \mathcal{D} }\sum_{I';\, I\sim_{\mathcal{W}} I'}
	|I|^{-\frac{2}{3p-2}}
	\int_I |\hat{f}(\xi)|^{(\frac{3p}{2})'} \,d\xi \int_{I'} |\hat{f}(\eta)|^{(\frac{3p}{2})'} \,d\eta .
\end{align*}
We choose a slightly larger interval 
containing $I$ and either $I'$or $-I'$ but still of length
comparable to $I$\footnote{More specifically, it is enough to take a parent of a parent of a parent of $I$.}, still denote by $I$, we obtain
\begin{eqnarray*}
\lefteqn{\iint_{\R^2}  \frac{ |\xi\eta|^{\frac{1}{3p-2}} |\hat{f}(\xi)|^{(\frac{3p}{2})'}|\hat{f}(\eta)|^{(\frac{3p}{2})'}}
	{|\xi+\eta|^{\frac{2}{3p-2}}|\xi-\eta|^{\frac{2}{3p-2}}}
	\,d\xi d\eta }\\
	&\le& C
	\sum_{I \in \mathcal{D}} |I|^{-\frac{2}{3p-2}}
	\(\int_{I\cup (-I)} |\hat{f}(\xi)|^{(\frac{3p}{2})'} \,d\xi\)^2  \\
		&\le& C
	\sum_{I \in \mathcal{D}} 
	|I|^{-\frac{2}{3p-2}} 
	\|\hat{f}\|_{L^{(\frac{3p}2)'}(I)}^{\frac{6p}{3p-2}}\ \\
	&=& C \hMn{f}{p}{\frac{3p}2}{2(\frac{3p}2)'}^{2(\frac{3p}2)'},
\end{eqnarray*}
which completes the proof. 
\end{proof}

\begin{remark}\label{rem:rSTNLS}
Proposition \ref{prop:rSTNLS} can be shown in the same way (see also \cite{BV}).
More precisely, we have
\[
	\norm{ e^{-it\d_x^2} f }_{ L^{3p}_{t,x} }^{2(\frac{3p}2)'}
	\le C\iint_{\R^2}  \frac{  |\hat{f}(\xi)|^{(\frac{3p}{2})'}|\hat{f}(\eta)|^{(\frac{3p}{2})'}}
	{|\xi-\eta|^{\frac{2}{3p-2}}}
	\,d\xi d\eta .
\]
The rest of proof is essentially the same. 
\end{remark}

\vskip3mm
\noindent {\bf Acknowledgments.}
The part of this work was done while the authors were 
visiting at Department of Mathematics at the University of
California, Santa Barbara whose hospitality they gratefully
acknowledge. 
S.M. is partially supported by JSPS, Grant-in-Aid for Young Scientists (B) 24740108.
J.S. is partially supported by JSPS, Strategic Young Researcher Overseas
Visits Program for Accelerating Brain Circulation and by MEXT,
Grant-in-Aid for Young Scientists (A) 25707004.

\end{document}